\newtheorem{lemma}{Lemma}[section]
\newtheorem{theorem}[lemma]{Theorem}
\newtheorem{proposition}[lemma]{Proposition}
\newtheorem{corollary}[lemma]{Corollary}
\newtheorem{conjecture}[lemma]{Conjecture}
\theoremstyle{definition}
\newtheorem{definition}[lemma]{Definition}
\newtheorem{remark}[lemma]{Remark}
\def\keywords{
    \vspace{1ex}
    \noindent
    \if@twocolumn
      \small{\bf  Keywords}\/---$\!$    \else
      \begin{center}\small\ {\bf Keywords}\end{center}\quotation\small
    \fi}
\def\endkeywords{\vspace{0.6em}\par\if@twocolumn\else\endquotation\fi
    \normalsize\rm}
\renewcommand{\O}{\ensuremath{\mathcal O}}
\newcommand{\calR}{\ensuremath{\mathcal R}}
\renewcommand{\L}{\ensuremath{\mathcal L}}
\newcommand{\calS}{\ensuremath{\mathcal S}}
\newcommand{\ID}{\ensuremath{\mathbb D}}
\newcommand{\x}{\ensuremath{\bf x}}
\DeclareMathOperator{\Sp}{Sp}
\DeclareMathOperator{\loc}{loc}
\DeclareMathOperator{\comp}{comp}
\DeclareMathOperator{\Vol}{Vol}
\DeclareMathOperator{\Loc}{Loc}
\DeclareMathOperator{\Int}{Int}
\newcommand{\mb}[1]{\ensuremath{\mathbb{#1}}}
\newcommand{\N}{{\mb{N}}}
\newcommand{\R}{{\mb{R}}}
\newcommand{\C}{{\mb{C}}}
\newcommand{\w}{\ensuremath{w}}
\newcommand{\W}{\ensuremath{W}}
\newcommand{\y}{\ensuremath{y}}
\newcommand{\eps}{\varepsilon}
\newcommand{\M}{\ensuremath{\mathcal M}}
\newcommand{\T}{\ensuremath{\mathbb T}}
\newcommand{\B}{\ensuremath{\mathscr B}}
\let \Re \relax
\DeclareMathOperator{\Re}{Re}
\let \Im \relax
\DeclareMathOperator{\Im}{Im}
\newcommand{\ovl}[1]{\overline{#1}}
\renewcommand{\S}{\ensuremath{\mathbb S}}
\DeclareMathOperator{\supp}{supp}
\DeclareMathOperator{\dist}{dist}
\DeclareMathOperator{\vect}{span}
\DeclareMathOperator{\id}{Id}
\renewcommand{\d}{\ensuremath{\partial}}
\let \div \relax
\DeclareMathOperator{\div}{div}
\def\x{x}
\newcommand{\E}{\mathscr E}
\newcommand{\Z}{\mathbb Z}
\renewcommand{\H}{\ensuremath{\mathcal H}}
\newcommand{\K}{\ensuremath{\mathcal K}}
\newcommand{\rouge}[1]{{\color{red} #1}}
\newcommand\bna{\begin{eqnarray*}}
\newcommand\ena{\end{eqnarray*}}
\newcommand\bnan{\begin{eqnarray}}
\newcommand\enan{\end{eqnarray}}
\newcommand\bnp{\begin{proof}}
\newcommand\enp{\end{proof}}
\newcommand\bneq{\begin{eqnarray*}\left\lbrace \begin{array}{rcl}}
\newcommand\eneq{\end{array} \right.\end{eqnarray*}}
\newcommand\bneqn{\begin{eqnarray}\left\lbrace \begin{array}{rcl}}
\newcommand\eneqn{\end{array} \right.\end{eqnarray}}
\newcommand\nor[2]{\left\|#1\right\|_{#2}}
\newcommand\sgn{\textnormal{sgn}}
\newcommand\e{\varepsilon}
\newcommand{\co}{\ensuremath{\mathfrak K}}
\newcommand\gl[2]{\left\langle #1, #2\right\rangle_g}
\newcommand\gln[1]{\left| #1 \right|_g^{2}}
\newcommand\Lap{\Delta_{g}}
\newcommand\nablag{\nabla_{g}}
\newtheorem{formule}{Formula}
\newcommand\intS{\int_{\Sigma}}
\newcommand\rr{r} 
\begin{document}
\title{Observability of the heat equation, geometric constants in control theory, and a conjecture of Luc Miller}

\author{Camille Laurent\footnote{CNRS UMR 7598 and Sorbonne Universit\'es UPMC Univ Paris 06, Laboratoire Jacques-Louis Lions, F-75005, Paris, France, email: laurent@ann.jussieu.fr} and Matthieu L\'eautaud\footnote{\'Ecole Polytechnique, Centre de Math\'ematiques Laurent Schwartz UMR7640,  91128 Palaiseau cedex France, email: matthieu.leautaud@polytechnique.edu.} 
}

\date{}

\maketitle

\begin{abstract}
This article is concerned in the first place with the short-time observability constant of the heat equation from a subdomain $\omega$ of a bounded domain $\M$. The constant is of the form $e^{\frac{\co}{T}}$, where $\co$ depends only on the geometry of $\M$ and $\omega$. Luc Miller~\cite{Miller:04} conjectured that $\co$ is (universally) proportional to the square of the maximal distance from $\omega$ to a point of $\M$. 
We show in particular geometries that $\co$ may blow up like $|\log(r)|^2$ when $\omega$ is a ball of radius $r$, hence disproving the conjecture. 
We then prove in the general case the associated upper bound on this blowup.
We also show that the conjecture is true for positive solutions of the heat equation. 

The proofs rely on the study of the maximal vanishing rate of (sums of) eigenfunctions. They also yield lower and upper bounds for other geometric constants appearing as tunneling constants or approximate control costs.

As an intermediate step in the proofs, we provide a uniform Carleman estimate for Lipschitz metrics. The latter also implies uniform spectral inequalities and observability estimates for the heat equation in a bounded class of Lipschitz metrics, which are of independent interest.
\end{abstract}

\begin{keywords}
  \noindent
Observability, eigenfunctions, spectral inequality, heat equation, tunneling estimates, control cost.

\medskip
\textbf{2010 Mathematics Subject Classification:}
35K05,  
35L05, 
93B07, 
93B05, 
35P20.  
\end{keywords}

\tableofcontents

\section{Introduction and main results}

We are interested in several constants appearing in the study of eigenfunctions concentration and control theory, and the links between them. In the whole paper, we are given a connected compact Riemannian manifold $(\M,g)$ with or without boundary $\d \M$, we denote by $\Delta_g$ the (negative) Laplace-Beltrami operator on $\M$. In case $\d \M \neq \emptyset$, we denote by $\Int(\M)$ the interior of $\M$, so that $\M = \d \M \sqcup \Int(\M)$ (see e.g.~\cite[Chapter~1]{Lee:book}).
For readability, we first focus in the next section on the results concerning the observability constant for the heat equation.

\subsection{The control cost for the heat equation}
Here, we study the so-called \emph{cost of controllability} of the heat equation. 
It is well known since the seminal papers of Lebeau-Robbiano \cite{LR:95} and Fursikov-Imanuvilov \cite{FI:96} that for any time $T>0$, the heat equation is controlable to zero.
More precisely, by duality, the controlability problem is equivalent to the observability problem for solutions of the free heat equation (see e.g. \cite[Section 2.5.2]{Cor:book}):
For any non empty open set $\omega$ and $T>0$, there exist $C_{T,\omega}$ such that we have 
\bnan
\label{e:co-heat}
 \nor{e^{T\Delta_g}u}{L^2(\M)}^2  \leq C_{T,\omega}^2 \int_0^T \nor{e^{t\Delta_g}u}{L^2(\omega)}^2 dt , \quad \text{ for all $T>0$ and all $u \in L^2(\M)$.}
 \enan
 Here, $(e^{t\Delta_g})_{t>0}$ denotes the semigroup generated by the {\em Dirichlet} Laplace operator on $\M$ (otherwise explicitely defined).
The observability constant $C_{T,\omega}$ is then directly related to the cost of the control to zero and has been the object of several studies. 

It has been proved by Seidman \cite{Seidman:84} in dimension one (in the closely related case of a boundary observation) and by Fursikov-Imanuvilov~\cite{FI:96} in general (see also \cite{Miller:10} for obtaining this result via the Lebeau-Robbiano method), that the cost in small time blows up at most exponentially:
\bnan
\label{defcoheat}
\omega \neq \emptyset \quad \implies \quad \text{ there is } C,\co>0 \text{ such that } C_{T,\omega}\leq Ce^{\frac{\co}{T}} \quad \text{for all } T>0.
\enan
Gu\"ichal \cite{Guichal:85} in one dimension and Miller~\cite{Miller:04} in the general case proved that exponential blowup indeed occurs:
$$
\overline{\omega} \neq \M \quad \implies \quad \text{ there is } c>0 \text{ such that } C_{T,\omega}\geq c e^{\frac{c}{T}}\quad \text{for all } T>0.
$$
This suggest to define 
\bnan
\label{e:def-Kheat}
\co_{heat} (\omega) =  \inf \left\{ \co >0 , \exists C>0 \text{ s.t. \eqref{e:co-heat} holds with } C_{T,\omega}= Ce^{\frac{\co}{T}}\right\},
\enan
which, according to the abovementionned results satisfies $\co_{heat}(\omega) <\infty$ as soon as $\omega \neq \emptyset$ and $\co_{heat}(\omega)>0$, as soon as $\overline{\omega} \neq \M$.
This constant depends only on the geometry of the manifold $(\M,g)$ and the subset $\omega$. It is expected to contain geometric features of short time heat propagation, and has thus received a lot of attention in the past fifteen years~\cite{Miller:04,Miller:04ARMA,Miller:06c,TT:07,Miller:10,EZ:11s,TT:11,BP:17,Darde:17,EV:17,NTTV:18,PhunglogCarl}.

In this direction, the result of Miller~\cite{Miller:04} is actually more precise and provides a geometric lower bound: for all $(\M, g), \omega$, we have 
$$\co_{heat} (\omega) \geq \frac{\L(\M,\omega)^2}{4},
$$
where, for $E \subset \M$, we write
\begin{equation}
\label{e:def-L-omega-M}
\L(\M,E) = \sup_{x \in \M}\dist_g(x,E) .
\end{equation} The proof relies on heat kernel estimates. In~\cite{Miller:04,Miller:06b}, Luc Miller also proved that in case $\omega$ satisfies the Geometric Control Condition in $(\M, g)$ (see~\cite{BLR:92}) we have 
$$\co_{heat} (\omega)\leq \alpha_{*} L_{\omega}^2,$$ 
where $L_\omega$ is the maximal length of a ``ray of geometric optics'' (i.e. geodesic curve in case $\d \M =\emptyset$) not intersecting $\omega$, and $\alpha_*\leq 2$ is an absolute constant (independent of the geometry).
Based on these results and the idea that the heat kernel provides the most concentrated solutions of the heat equation, he formulated the following conjecture~\cite[Section~2.1]{Miller:04}-\cite[Section~3.1]{Miller:06c}.
\begin{conjecture}[Luc Miller]
\label{c:Miller-conj}
For all $(\M,g)$ and $\omega \subset \M$ such that $\overline{\omega} \neq \M$, we have $\co_{heat} (\omega) = \frac{\L(\M,\omega)^2}{4}$.
\end{conjecture}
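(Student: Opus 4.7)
The lower bound $\co_{heat}(\omega) \geq \L(\M,\omega)^2/4$ is already established by Miller via heat-kernel concentration, so the plan is to prove the matching upper bound $\co_{heat}(\omega) \leq \L(\M,\omega)^2/4$. Writing $L := \L(\M,\omega)$, my overall scheme is the Lebeau--Robbiano strategy in its quantitative form: (i) establish a \emph{sharp} spectral inequality of the form $\|u\|_{L^2(\M)} \leq C_\varepsilon e^{(L/2+\varepsilon)\sqrt{\Lambda}} \|u\|_{L^2(\omega)}$ for every $\varepsilon>0$, every $\Lambda>0$ and every finite linear combination $u \in E_\Lambda := \mathrm{span}\{\phi_j : \lambda_j \leq \Lambda\}$ of low-frequency eigenfunctions; (ii) combine this with the standard dissipation estimate $\|e^{t\Delta_g}v\|_{L^2(\M)} \leq e^{-\Lambda t}\|v\|_{L^2(\M)}$ for $v \perp E_\Lambda$, via the usual telescoping time-splitting, which converts the exponent $(L/2+\varepsilon)^2$ from the spectral inequality into the same constant in the short-time observability inequality, modulo a multiplicative factor that can be driven to $1$ by optimizing the geometric series of cut-off frequencies.

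The heart of the argument is thus step (i), for which I would pursue two complementary strategies. The first is a Carleman approach: construct, for each $\varepsilon>0$, a Lipschitz weight $\varphi$ on $\M$ whose level sets approximate those of $\dist_g(\cdot,\omega)$ and whose oscillation satisfies $\mathrm{osc}(\varphi) = L + O(\varepsilon)$, then run a local Carleman estimate with weight $e^{\tau\varphi}$ on the augmented operator $-\Delta_g - \partial_s^2$ on $\M \times (-1,1)$, as in the Lebeau--Robbiano trick. The second is transmutation: represent $e^{t\Delta_g}$ in terms of the half-wave propagator through Kannai's formula; since solutions of the half-wave equation propagate at unit speed, any mass initially concentrated at a point $x_0$ realizing $\dist_g(x_0,\omega) = L$ reaches $\omega$ by time $L$, and one would try to extract an observability estimate for the half-wave at time $L$ that transmutes into the sharp heat constant $L^2/4$.

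The hard part, and the one I expect to be the genuine obstacle, is proving that no element of $E_\Lambda$ vanishes on $\omega$ faster than $e^{-L\sqrt{\Lambda}}$. Heat-kernel intuition says the extremal combination should be a Gaussian quasimode of width $\Lambda^{-1/2}$ centered at $x_0$, whose $L^2(\omega)$-mass is indeed of order $e^{-L\sqrt{\Lambda}}$; but the question is whether \emph{every} $u \in E_\Lambda$ obeys this same Gaussian bound. The Carleman route quantifies this vanishing through the oscillation of a \emph{pseudo-convex} weight, and achieving oscillation arbitrarily close to $L$ forces $\varphi$ to essentially match $\dist_g(\cdot,\omega)$, which may be incompatible with the sub-ellipticity requirements on $\varphi$, especially near $\partial\M$ or near the cut locus of $\omega$. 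The transmutation route is bottlenecked by the fact that the half-wave equation admits no observability inequality at times shorter than the maximal ray length $L_\omega$, which can be strictly larger than $L = \L(\M,\omega)$. In view of both obstructions, before committing to a proof I would test the conjecture on explicit models --- for instance $\omega$ a small ball at the center of an annulus, or at the tip of a thin domain --- by diagonalizing $-\Delta_g$ and quantifying the vanishing on $\omega$ of the worst-case spectral combinations; if even in such models the best exponent one can produce exceeds $L^2/4$, that would be strong evidence that the geometric quantity actually governing $\co_{heat}(\omega)$ is finer than $\L(\M,\omega)$, and the conjecture must be revised.
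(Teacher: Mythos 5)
Your overall strategy cannot succeed because the statement you are trying to prove is false, and this paper in fact disproves it. The obstruction sits exactly where you anticipate: step~(i), the sharp spectral inequality $\|u\|_{L^2(\M)}\leq C_\varepsilon e^{(L/2+\varepsilon)\sqrt{\Lambda}}\|u\|_{L^2(\omega)}$ for $u\in E_{\leq\Lambda}$, fails when $\omega$ is a small ball in rotationally symmetric geometries. On the round sphere $\S^2$, the equatorial spherical harmonics $\psi_k(s,\theta)=c_k(\sin s)^k e^{ik\theta}$ (eigenvalue $\lambda_k=k(k+1)$) satisfy $\|\psi_k\|_{L^2(B(N,r))}\lesssim c_k\, r^{k+1}$, hence $-\log\|\psi_k\|_{L^2(B(N,r))}/\sqrt{\lambda_k}\sim|\log r|$. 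This forces $\co_{eig}(B(N,r))\geq|\log r|+O(1)$, and via Proposition~\ref{p:link-eigenfct-heat-etc}, $\co_{heat}(B(N,r))\geq\tfrac14|\log r|^2+O(|\log r|)$, which diverges as $r\to0^+$ while $\L(\S^2,B(N,r))^2/4\leq\pi^2/4$ stays bounded. The same mechanism --- Agmon decay of whispering-gallery eigenmodes, governed by the degenerate distance $d_A$ built from $R(s)^{-2}\,ds^2$ rather than by $\dist_g$ --- yields $\co_{heat}(B(x_0,r))\geq C|\log r|^2$ on the sphere, on generic revolution surfaces, and on the unit disk (Theorem~\ref{thm:counterexamples} and Corollary~\ref{t:agmon-introenkappa}), and disproves Conjecture~\ref{c:Miller-conj}.

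Your closing paragraph is actually the right instinct: you propose testing $\omega$ a small ball centered in explicit models, you allow that the worst-case spectral vanishing might exceed $L^2/4$, and you conclude that the geometric quantity controlling $\co_{heat}$ would then be finer than $\L(\M,\omega)$. That is precisely what happens, and the obstructions you flag in the Carleman and transmutation routes are symptomatic of the same fact: the pseudoconvex weight cannot follow $\dist_g(\cdot,\omega)$ near a pole (where concentration is controlled by the angular quantum number, not the geodesic distance), and the transmutation exponent is pinned to the maximal ray length $L_\omega$, which exceeds $\L(\M,\omega)$ exactly in these examples. Two further facts from the paper sharpen the picture: the lower bound $\co_{heat}(B(x_0,r))\geq C|\log r|^2$ is matched by a uniform upper bound $\co_{heat}(B(x_0,r))\leq C|\log r|^2+C$ (Theorem~\ref{t:heatlog}), so the logarithmic-squared blowup is sharp; and the conjectured constant $\L(\M,\omega)^2/4$ is recovered if one restricts to \emph{positive} solutions of the heat equation (Theorem~\ref{thmpositive}), confirming that the heat-kernel heuristic behind the conjecture captures only part of the concentration phenomenology and fails for sign-changing solutions built from these extremal eigenfunctions.
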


Note that it has been proved in \cite{Lissy:15} that, in the related context of the 1D heat equation with a boundary observation, the factor $\frac{1}{4}$ might not be correct (and should be replaced by $\frac12$, see Section \ref{sectpreviousres} below). Our first result disproves Conjecture~\ref{c:Miller-conj} in a stronger sense. 
\begin{theorem}[Counterexamples]
\label{thm:counterexamples}
Assume $(\M, g)$ is one of the following 
\begin{enumerate}
\item $\M =\S^n \subset \R^{n+1}$ and $g$ is the canonical metric (see Section~\ref{s:sphere});
\item $\M=\mathcal{S}\subset \R^3$ is a surface of revolution diffeomorphic to the sphere $\S^2$, and $g$ is the metric induced by the Euclidean metric on $\R^3$ (with additional non degeneracy conditions, see Section~\ref{s:revol});
\item $\M = \ID=\left\{(x_1,x_2)\in \R^2\left| \ x_1^2+x_2^2\leq 1\right.\right\}\subset \R^2$ is the unit disk, $g$ the Euclidean metric and Dirichlet conditions are taken on $\d \M$ (see Section \ref{sectDisk}).
\end{enumerate}
Then, for any $C>0$, there exists $\omega\subset \M$ so that $ \co_{heat} (\omega) \geq C \L(\M,\omega)$ and $\co_{heat} (\omega) \geq C$.

More precisely, assume that $x_0$ is either
\begin{enumerate}
\item any point in $\S^n$,
\item one of the two points that intersect the axis of revolution of $\mathcal{S}\subset \R^3$,
\item the center of $\ID$.
\end{enumerate}
Then, there exists $C>0$ and $r_0>0$ so that we have 
\bnan
\label{lowerlogheat}
\co_{heat} (B_g(x_0,r)) \geq C|\log(r)|^2
\enan
for any $0<r\leq  r_0$.
\end{theorem}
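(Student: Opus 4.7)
The plan is to derive \eqref{lowerlogheat} directly by testing the observability inequality \eqref{e:co-heat} against a sequence of eigenfunctions of $-\Delta_g$ that vanish at $x_0$ to an order increasing with the eigenvalue. The first statement of the theorem then follows from \eqref{lowerlogheat}: since $\M$ is compact, $\L(\M, B_g(x_0, r))$ stays bounded as $r\to 0$ while $|\log r|^2\to +\infty$, which yields both $\co_{heat}(B_g(x_0,r))\geq C\L(\M, B_g(x_0,r))$ and $\co_{heat}(B_g(x_0,r))\geq C$ once $r$ is small enough.

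In each of the three geometries, the symmetry at $x_0$ allows a separation of variables in coordinates $(\rho,\varphi)$, with $\rho=\dist_g(\cdot,x_0)$, and provides an eigenbasis labelled by an angular mode $m\in\mathbb{N}$. For each large $m$, I would select an $L^2$-normalized eigenfunction $\phi_m$ with eigenvalue $\lambda_m\leq C_0 m^2$ whose radial part vanishes exactly like $\rho^m$ at $x_0$. On $\ID$, take $\phi_m(\rho,\theta) = c_m J_m(j_{m,1}\rho)e^{im\theta}$, where $j_{m,1}$ is the first positive zero of the Bessel function $J_m$; the classical asymptotics $j_{m,1}=m+O(m^{1/3})$ and $|J_{m+1}(j_{m,1})|\asymp m^{-1/3}$ give the required eigenvalue bound and $L^2$-normalization. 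On $\S^n$, take the highest-weight spherical harmonic of degree $m$, namely (up to a rotation placing $x_0$ at the pole) $(x_1+ix_2)^m$ restricted to the sphere, which vanishes like $(\sin\rho)^m$ at $x_0$ and has eigenvalue $m(m+n-1)$. On the surface of revolution $\mathcal{S}$, separation of variables $\phi_m=u_m(s)e^{im\varphi}$ reduces the problem to a singular Sturm--Liouville equation along a meridian; its solutions in the $m$-th angular sector must behave like $\rho^m$ at the endpoints on the axis of revolution, and the non-degeneracy hypotheses ensure a bound $\lambda_m\leq C_0 m^2$ on the lowest eigenvalue of each sector.

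From the Taylor or Frobenius expansion of $\phi_m$ at $x_0$, together with uniform-in-$m$ estimates on the $L^2$ normalization (Stirling's formula on the disk, analogous computations for the other cases), I would obtain a vanishing estimate of the form
\begin{equation*}
  \|\phi_m\|_{L^2(B_g(x_0,r))}\leq P(m)\,(C_1 r)^m,\qquad m\geq 1,\ 0<r\leq r_0,
\end{equation*}
with $P$ polynomial and $C_1$ depending only on $(\M,g)$. Plugging $u=\phi_m$ into \eqref{e:co-heat} and using $e^{t\Delta_g}\phi_m=e^{-t\lambda_m}\phi_m$ gives
\begin{equation*}
  e^{-2T\lambda_m}=\|e^{T\Delta_g}\phi_m\|_{L^2(\M)}^2 \leq C_{T,B_g(x_0,r)}^{2}\,T\,\|\phi_m\|_{L^2(B_g(x_0,r))}^2 \leq C_{T,B_g(x_0,r)}^{2}\,T\,P(m)^2 (C_1 r)^{2m},
\end{equation*}
hence $2\log C_{T,B_g(x_0,r)}\geq -2TC_0 m^2 + 2m\log(1/(C_1 r)) - O(\log m)$. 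Maximizing the right-hand side in $m$ (the optimum is attained near $m\simeq|\log r|/(2TC_0)$, an admissible integer once $r,T$ are small enough) yields $\log C_{T,B_g(x_0,r)}\geq |\log r|^2/(4C_0 T)+O(\log|\log r|)$, which in view of the definition \eqref{e:def-Kheat} of $\co_{heat}$ is exactly \eqref{lowerlogheat}.

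The main technical obstacle is the precise quantitative control, uniform in $m$, of the two ingredients used above: the leading constant in the Frobenius expansion of $\phi_m$ at $x_0$ (which determines $C_1$) and the upper bound $\lambda_m\leq C_0 m^2$ on the lowest eigenvalue of each angular sector. On $\ID$ and $\S^n$, both follow from classical special-function estimates. The most delicate case is the surface of revolution $\mathcal{S}$: the meridian ODE is not explicitly solvable, so one must propagate the Frobenius solution from the singular endpoint, control its $L^2$-mass away from the axis, and bound $\lambda_m$ via min-max with an appropriate trial function, all uniformly in $m$; this is where the non-degeneracy hypotheses on $\mathcal{S}$ play an essential role.
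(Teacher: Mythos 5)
Your strategy coincides with the paper's: in each geometry construct a family of rotational eigenfunctions $\phi_m$ vanishing to order $\sim m$ at $x_0$, prove a vanishing estimate on $L^2(B_g(x_0,r))$ of the form $e^{-m\,d_A(r)}$ (up to prefactors controlled in $m$), insert $u=e^{-t\lambda_m}\phi_m$ into \eqref{e:co-heat}, and optimize the resulting exponent. The paper factors the last step through $\co_{eig}$ and Proposition~\ref{p:link-eigenfct-heat-etc}, optimizing over $T$ at fixed $\lambda$ instead of over $m$ at fixed $T$, but this is the same computation read in a different order. Your derivation of the first assertion of the theorem from \eqref{lowerlogheat} is also correct.

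The genuine gap is the one you acknowledge, and it is not a detail: for a general surface of revolution the uniform-in-$m$ vanishing estimate is the entire content of the lower bound, and ``propagate the Frobenius solution from the singular endpoint, control its $L^2$-mass away from the axis'' names the destination without getting there. The difficulty is not the local Frobenius behavior at the pole, which is always $\sim s^m$; it is to show that the normalization constant relating this local expansion to the $L^2$-normalized eigenfunction grows no faster than $e^{O(m)}$, i.e.\ that the eigenfunction does not decay much faster than $e^{-m\,d_A(s)}$ when tunneling from the equator (where its mass concentrates) towards the pole. The paper proves this by a semiclassical Agmon estimate (Theorem~\ref{lmagmon}): a weighted integration by parts with weight $e^{\phi_M/h}$ tested against the conjugated operator, exploiting the nondegenerate minimum of the Agmon potential $1/R(s)^2$ at the equator and the logarithmic blow-up~\eqref{equivdAlog} of $d_A$ at the poles; this is precisely where the nondegeneracy assumptions enter, and it replaces the ODE continuation you sketch. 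Two smaller points: the polynomial prefactor $P(m)$ you posit is stronger than what this method yields (it gives $\lambda_k^{C_0}e^{C\sqrt{\lambda_k}}$, i.e.\ $e^{O(m)}$); this only shifts $C_1$ and $r_0$ and does not affect the conclusion. And the $O(\log|\log r|)$ remainder in your final display should be a loss rather than a gain, also harmless.
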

Here, $B_g(x_0,r)$ denotes the geodesic ball of $\M$ centered at $x_0$ of radius $r$. 
The results we obtain are slightly more precise. In particular, the constant $C$ is an explicit geometric constant.
The lower bounds are related to an appropriate Agmon distance associated to the problem. We refer to Corollary~\ref{t:agmon-introenkappa} below for more precise estimates. 

Note also that this blow up of $\co_{heat} (B(x_0,r))$ for small $r$ does not always happen and is due here to a particular (de)concentration phenomenum. For instance on $\M=\mathbb{T}^1$, the set $\omega=B(x_0,r)$ always satisfies the Geometric Control Condition for any time $T>1-2r$. Abstract results (see \eqref{upperboundMiller} below for more details) give $\co_{heat} (B(x_0,r))\leq \alpha^*$ for any $r>0$ and blowup does not occur.

\medskip
Our next result shows that the blow up given by~\eqref{lowerlogheat} is actually optimal as far as the asymptotics of $\co_{heat}$ for small balls is concerned.
 We prove the following observability result from small balls (closely related to previous results of Jerison-Lebeau \cite{JL:99}, see Section~\ref{subsub:LR-Jerison-Lebeau} below).
\begin{theorem}
\label{t:heatlog}
For all $x_0\in \M$, there exist $C>0$ such that for  all $r>0$ we have
$$
\co_{heat} (B(x_0,r)) \leq C|\log(r)|^2+C .
$$
\end{theorem}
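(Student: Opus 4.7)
The proof would proceed via the standard Lebeau--Robbiano two-step strategy. In the first step, one converts the heat observability bound into a \emph{spectral inequality} whose constant has the correct dependence on $r$. Indeed, denoting by $\Pi_\lambda$ the spectral projector onto eigenspaces of $-\Lap$ with eigenvalues at most $\lambda$, the Lebeau--Robbiano method turns any uniform estimate
\begin{equation*}
\|\Pi_\lambda u\|_{L^2(\M)} \leq K_0 \, e^{K_1 \sqrt{\lambda}} \, \|\Pi_\lambda u\|_{L^2(\omega)}, \qquad \lambda \geq 0,
\end{equation*}
into \eqref{e:co-heat} with $\co_{heat}(\omega) \leq \alpha K_1^2$ for some universal $\alpha$. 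It therefore suffices to prove that, for some $C$ depending on $x_0$ but not on $r \in (0,r_0]$, one has $K_1 = K_1(r) \leq C(1+|\log r|)$.

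In the second step, one proves this refined spectral inequality using the Jerison--Lebeau lifting. Given $u_\lambda := \Pi_\lambda u = \sum_{\lambda_j \leq \lambda} a_j \phi_j$, extend it into an auxiliary variable $s\in\R$ by
\begin{equation*}
v(s,x) = \sum_{\lambda_j \leq \lambda} a_j \phi_j(x) \cosh\!\bigl(\sqrt{\lambda_j}\, s\bigr),
\end{equation*}
which solves the elliptic equation $(\partial_s^2 + \Lap) v = 0$ on $\M \times \R$ with $v(\cdot,0) = u_\lambda$ and $\partial_s v(\cdot,0) = 0$. The uniform Carleman estimate for Lipschitz metrics advertised in the abstract then delivers, at every scale $\rho \leq \rho_0$ and every center $y \in \M$, a doubling (or three-ball) inequality of the form
\begin{equation*}
\|u_\lambda\|_{L^2(B(y,2\rho))} \leq e^{C\sqrt{\lambda}} \, \|u_\lambda\|_{L^2(B(y,\rho))},
\end{equation*}
with $C$ independent of $y$ and $\rho$. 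Chaining this from $B(x_0,r)$ and doubling $N = \lceil \log_2(\rho_0/r)\rceil$ times reaches a fixed-size ball $B(x_0,\rho_0)$, with cumulative factor $e^{CN\sqrt{\lambda}} = e^{C(1+|\log r|)\sqrt{\lambda}}$. The standard fixed-scale spectral inequality on $B(x_0,\rho_0)$, applied at the end, transfers the mass from this ball to all of $\M$ at the cost of a further $e^{C\sqrt\lambda}$ factor, thereby producing the spectral inequality with $K_1(r) \lesssim 1 + |\log r|$ that Step~1 requires.

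The main obstacle is to obtain the doubling/three-ball inequality with a constant $C$ that is genuinely \emph{independent of the radius $\rho$} (as well as of the center $y$) as $\rho \to 0$. This is exactly the content of the uniform Carleman estimate for Lipschitz metrics announced in the abstract: rescaling a ball of radius $\rho$ to unit size converts the elliptic problem into one for a Laplacian lying in a bounded class of Lipschitz metrics, for which the Carleman constant, and hence the doubling constant, is uniform. If the three-ball rather than pure doubling form is used, one must additionally control the interpolation exponents $\alpha^N$ with $\alpha \in (0,1)$ across the $N \sim |\log r|$ iterations; this is handled by a standard absorption/convexity argument exploiting the $L^2(\M)$ norm of $u_\lambda$ already present on the left-hand side. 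Squaring the resulting $C(1+|\log r|)$ in the Lebeau--Robbiano conversion of Step~1 then produces exactly the $|\log r|^2$ in the final bound.
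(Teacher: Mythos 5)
Your overall two-step plan matches the paper: derive a spectral inequality with exponent $K_1(r) \lesssim 1+|\log r|$ and plug it into the Lebeau--Robbiano/Miller conversion (this is indeed how the paper proceeds, via its Theorem on uniform spectral inequality and \cite[Corollary~1]{Miller:10}). The gap is in the middle step, and it is the heart of the matter.

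You assert that the uniform Carleman estimate at a fixed (unit) scale, applied after rescaling, yields a \emph{doubling} inequality $\|u_\lambda\|_{L^2(B(y,2\rho))} \leq e^{C\sqrt\lambda}\|u_\lambda\|_{L^2(B(y,\rho))}$ with $C$ independent of $\rho$ and $y$, and that iterating it $N\sim|\log r|$ times gives the desired $e^{C|\log r|\sqrt\lambda}$. But a scale-uniform Carleman estimate on a fixed ball delivers a \emph{three-ball interpolation} inequality, not a doubling inequality: one gets $\|v\|_{B_\rho} \leq C\|v\|_{B_{\rho/2}}^{\alpha_0}\|v\|_{B_{2\rho}}^{1-\alpha_0}$ with a \emph{fixed} exponent $\alpha_0\in(0,1)$. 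Promoting this to a genuine doubling bound (with the crucial uniformity in $\rho$) for sums of eigenfunctions is precisely the content of Jerison--Lebeau and is not cheaper than the full argument. If instead you chain the three-ball inequality $N$ times, the interpolation exponent degrades to $\alpha_0^N \sim r^{|\log\alpha_0|}$; feeding this into the final step gives a blowup that is \emph{polynomial} in $1/r$, not logarithmic, and the ``standard absorption/convexity argument'' you gesture at does not repair this --- the quantity it would produce is exactly what the paper's independently proved Bardos--Phung-type bound $C_\eps r^{-\eps}$ reflects, strictly weaker than $|\log r|^2$.

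The paper's device is genuinely different and is where the $\log$ comes from: it does not iterate across scales at all. Instead, it proves a \emph{single} interpolation inequality from $B_r$ directly up to a fixed-size ball $B_{r_0/4}$, with exponent $\alpha_r = \frac{\log 2}{\log(2r_0/r)+\log 2} \sim |\log r|^{-1}$. This is obtained from an Aronszajn-type Carleman estimate with the \emph{singular weight} $\bar\rho^{-\tau}$, which weights all scales between $r$ and $r_0$ simultaneously; the exponent $\alpha_r$ then falls out of a one-shot optimization in $\tau$ over the range $[\tau_0,\infty)$. The uniform Lipschitz Carleman estimate in the appendix is used for a separate purpose --- the boundary-observation interpolation at the very smallest scale, after the scaling argument --- not to produce the doubling you want. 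So to close your argument you would need either to cite the uniform-in-scale doubling estimate of Jerison--Lebeau as a black box (which already contains the singular-weight argument internally), or to replace your iteration with the single singular-weight interpolation step.
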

Note that Bardos and Phung~\cite{BP:17,PhunglogCarl} recently proved independently that $\co_{heat} (B(x_0,r)) \leq \frac{C_\epsilon}{r^\epsilon} +C_\epsilon$ for all $\epsilon>0$ in case $\M \subset \R^n$ is star-shaped w.r.t. $x_0$.

These results seem to suggest that $\L(\M,\omega)$ is not the only appropriate parameter needed for estimating $\co_{heat} (\omega)$.
There are indeed some solutions of the heat equation concentrating more than the heat kernel for small times. 
Our last result concerning the heat equation goes actually in the opposite direction. It provides with a large class of solutions of the heat equation, namely {\em positive} solutions, that do not concentrate more than the heat kernel, thus proving Conjecture~\ref{c:Miller-conj} when restricted to this class of solutions.

Recall that $\L(\M,E)$ is defined in~\eqref{e:def-L-omega-M}.
\begin{theorem}
\label{thmpositive}
Assume that $(\M, g)$ has geodesically convex boundary $\d \M$.
Then, for any nonempty open set $\omega\subset \M$ and $z_0\in\M$, for any $\e>0$, there exists $C,D>0$ so that for any $0<T\leq D$, we have 
\bnan
\label{estimpos1}\nor{u(T)}{L^2(\M)}^2\leq  \frac{C}{T} e^{\frac{(1+\e)\mathcal{L}(M,\omega)^2}{2T}}\int_0^{T}\nor{u(t, \cdot)}{L^2(\omega)}^2~dt ,\\
\label{estimpos2}\nor{u(T)}{L^2(\M)}^2\leq  \frac{C}{T} e^{\frac{(1+\e)\mathcal{L}(M,z_0)^2}{2T}}\int_0^{T}u(t,z_0)^2~dt ,
\enan
for all $u_0 \in L^2(M)$ such that $u_0 \geq 0$ a.e. on $\M$ and associated solution $u$ to 
$$
(\d_t - \Delta_g) u =0 \text{ on }\R^+_* \times \Int(\M), \quad u|_{t=0} = u_0  \text{ in } \Int(\M), \quad \d_\nu u =0 \text{ on }\R^+ \times \d \M .
$$
\end{theorem}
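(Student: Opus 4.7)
The key idea is that since $u_0\ge 0$, both sides of \eqref{estimpos1}--\eqref{estimpos2} can be written as positive bilinear forms in $u_0$, so that it is enough to prove a \emph{pointwise} comparison between the associated kernels. Writing $u(T)=e^{T\Delta_g}u_0$ with the Neumann heat kernel $p_t$ and expanding $\|u(T)\|_{L^2(M)}^2=\int_M u(T,x)^2\,dx$ by Chapman--Kolmogorov,
\[
\|u(T)\|_{L^2(M)}^2 \;=\; \iint_{M\times M} u_0(w)\,u_0(z)\,p_{2T}(w,z)\,dw\,dz,
\]
while interchanging integrals gives $\int_0^T u(t,z_0)^2\,dt = \iint u_0(w)u_0(z)\,K_{z_0}(w,z)\,dw\,dz$ with $K_{z_0}(w,z):=\int_0^T p_t(z_0,w)p_t(z_0,z)\,dt$, and analogously $\int_0^T\|u(t)\|_{L^2(\omega)}^2\,dt = \iint u_0 u_0\,K_\omega$ with $K_\omega(w,z):=\int_0^T\int_\omega p_t(y,w)p_t(y,z)\,dy\,dt$. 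It therefore suffices to prove the pointwise kernel bounds
\[
p_{2T}(w,z) \le \tfrac{C}{T}\,e^{(1+\e)\mathcal{L}(M,z_0)^2/(2T)}K_{z_0}(w,z),\quad p_{2T}(w,z) \le \tfrac{C}{T}\,e^{(1+\e)\mathcal{L}(M,\omega)^2/(2T)}K_\omega(w,z)
\]
for all $w,z\in M$.

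To establish these I would invoke the sharp Aronson--Varadhan two-sided Gaussian estimates on the Neumann heat kernel of $(M,g)$: for every $\e>0$ and $t$ small,
\[
c_\e\, t^{-n/2}\,e^{-Ct}\,e^{-d(x,y)^2/(4(1-\e)t)} \;\le\; p_t(x,y) \;\le\; C_\e\, t^{-n/2}\,e^{-d(x,y)^2/(4(1+\e)t)}.
\]
The geodesic convexity of $\partial M$ is precisely what guarantees the sharp lower Gaussian bound with Euclidean constant $1/(4(1-\e))$, by preventing reflections that would otherwise shorten the effective distance. For the point-observation case \eqref{estimpos2} the verification is immediate: since $d(z_0,w),\,d(z_0,z)\le\mathcal{L}(M,z_0)$, inserting the lower bound into $K_{z_0}$ and integrating $t$ over $[(1-\delta)T,T]$ yields $K_{z_0}(w,z)\ge c\,T^{1-n}\,e^{-(1+\e)\mathcal{L}(M,z_0)^2/(2T)}$; combined with the trivial upper bound $p_{2T}\le CT^{-n/2}$ this gives $p_{2T}/K_{z_0}\le CT^{n/2-1}e^{(1+\e)\mathcal{L}(M,z_0)^2/(2T)}\le CT^{-1}e^{(1+\e)\mathcal{L}(M,z_0)^2/(2T)}$ for $T\le D\le 1$.

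The set-observation case \eqref{estimpos1} is more delicate and requires a \emph{midpoint selection}: for each pair $(w,z)$, let $m\in M$ denote the midpoint of a minimizing geodesic from $w$ to $z$ (well defined in $M$ thanks to geodesic convexity of $\partial M$) and choose $y^*\in\omega$ with $d(y^*,m)\le\mathcal{L}(M,\omega)+\eta$. The Euclidean parallelogram identity $d(y^*,w)^2+d(y^*,z)^2 = 2d(y^*,m)^2+\tfrac12 d(w,z)^2 \le 2\mathcal{L}(M,\omega)^2 + \tfrac12 d(w,z)^2$ holds on $M$ up to a curvature-dependent correction that is quadratic in the distances and uniformly bounded on the compact $M$, hence absorbable into the $(1+\e)$ factor. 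Localising the $y$-integral to an $\omega$-ball of radius $\sqrt{T}$ around $y^*$ (gaining a volume factor $\sim T^{n/2}$) and the $t$-integral to $[(1-\delta)T,T]$ then yields
\[
K_\omega(w,z) \;\ge\; c\,T^{1-n/2}\,e^{-\mathcal{L}(M,\omega)^2/(2(1-\e)T)\,-\,d(w,z)^2/(8(1-\e)T)}.
\]
Dividing into the sharp upper Gaussian bound $p_{2T}(w,z)\le CT^{-n/2} e^{-d(w,z)^2/(8(1+\e)T)}$, the two $d(w,z)^2$ terms nearly cancel; the residue, of order $\e\, d(w,z)^2/T \le \e\,\mathrm{diam}(M)^2/T$, is absorbed into $(1+\e)\mathcal{L}(M,\omega)^2/(2T)$ by shrinking $\e$, using the crude bound $\mathrm{diam}(M)\le 2\mathcal{L}(M,\omega)+\mathrm{diam}(\omega)$.

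The main technical difficulty is the bookkeeping of several small parameters---the sharpness $\e$ in the Aronson estimates, the time-slab width $\delta$, the localisation radius around $y^*$, and the curvature error in the parallelogram identity---so that they combine to give the precise exponential constant $(1+\e)/(2T)$ in front of $\mathcal{L}^2$ and the precise polynomial prefactor $T^{-1}$. The hypothesis that $\partial M$ is geodesically convex plays a double role: it furnishes the interior midpoint needed for the parallelogram argument, and it ensures that the Aronson lower Gaussian bound with the Euclidean constant holds uniformly on $M$.
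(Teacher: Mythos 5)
Your proposal takes a genuinely different route from the paper. The paper applies the Li--Yau parabolic Harnack inequality (Theorem~\ref{t:Li-Yau}) \emph{directly} to the positive solution $u$: it compares $u(t,x)$ with $u(rt,y)$ for $r>1$ and nearby $y\in\omega$, integrates over a finite covering of $\M$ by small balls, integrates in $t$, and changes variables. You instead go through the Neumann heat kernel: reduce via Chapman--Kolmogorov and positivity to a pointwise kernel inequality, then try to verify that inequality with sharp Aronson-type Gaussian bounds and a parallelogram/midpoint argument. Both are in the spirit of Li--Yau (the sharp Gaussian bounds themselves come from Li--Yau), but the execution is different. Your reduction to a bilinear kernel inequality is a nice observation and the point-observation estimate~\eqref{estimpos2} goes through essentially as you say.

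There is, however, a genuine gap in the set-observation case~\eqref{estimpos1}, in the parallelogram step. On a Riemannian manifold the identity
$d(y^*,w)^2+d(y^*,z)^2 = 2d(y^*,m)^2 + \tfrac12 d(w,z)^2$
does \emph{not} hold, and the direction of the inequality you need is tied to the sign of the curvature. The Bruhat--Tits CN inequality for CAT$(0)$ spaces gives $d(y,m)^2 \le \tfrac12 d(y,w)^2 + \tfrac12 d(y,z)^2 - \tfrac14 d(w,z)^2$, which rearranges to $d(y,w)^2+d(y,z)^2 \ge 2d(y,m)^2 + \tfrac12 d(w,z)^2$ --- the \emph{reverse} of what you need --- on any manifold with some negative sectional curvature. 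For curvature bounded below by $-\kappa$ there is an error term, but it depends on $\kappa$ and on the diameter of $\M$ and is in general an $O(1)$ additive defect in the exponent; dividing by $t\approx T$ produces an extra factor of the form $e^{E/T}$ with $E$ independent of $\L(\M,\omega)$, which cannot be absorbed into $e^{(1+\e)\L(\M,\omega)^2/(2T)}$ when $E$ is not small compared with $\e\,\L(\M,\omega)^2$. Your claim that the correction is ``quadratic in the distances and absorbable into $(1+\e)$'' is therefore not justified for a general compact manifold (the theorem imposes no curvature-sign hypothesis). The paper's direct Li--Yau route avoids this issue entirely: the curvature enters only through the factor $e^{\frac{n\alpha K(t_2-t_1)}{\sqrt{2}(\alpha-1)}}$, which is bounded uniformly for $0<T\le D$ and is swept into the constant $C$, not the exponent. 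A secondary (smaller) gap: the sharp lower Gaussian bound $p_t(x,y)\ge c_\e t^{-n/2}e^{-d(x,y)^2/(4(1-\e)t)}$ for the \emph{Neumann} heat kernel with constant $\tfrac1{4(1-\e)}$ is asserted but not established; it follows from Li--Yau via a chaining argument, but writing that out would bring you most of the way back to the paper's direct proof.
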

Theorem~\ref{thmpositive} follows from classical Li-Yau estimates~\cite{LY:86}.
Notice that here, Neumann boundary conditions are taken ($\nu$ denotes a unit vector field normal to $\d \M$), and an additional geometric assumption is made (convexity of $\d \M$). The result still holds without the convexity assumption up to replacing $(1+\e)$ in the exponent by a geometric constant, see Remark~\ref{r:non-conv-Li-Yau}. 
We also recall that for nonnegative initial data $u_0 \geq0$, the solution of the heat equation remains nonnegative for all times. 
Of course, the counterexamples of Theorem~\ref{thm:counterexamples} prevent these estimates to hold in general. 
Estimate~\eqref{estimpos2} is particularly surprising (even without considering the value of the constants) and of course only true for positive solutions (otherwise just taking $z_0$ in a nodal set of an eigenfunction of $\Delta_g$ invalidates~\eqref{estimpos2}).
Finally, let us mention that the constants $C$ and $D$ are explicitely estimated by geometric quantities (see Remark~\ref{rkexplicitpos}).

\bigskip
Let us now put these results in a broader context, and introduce several related geometric constants appearing in tunneling estimates and control theory.

\subsection{Tunneling constants in control theory, and their links}
\label{sectlinkintro}

The lower bounds of Theorem~\ref{thm:counterexamples} are proved using very particular solutions to the heat equation arising from by eigenfunctions (exhibiting a very strong concentration far from $x_0$ as well as a strong deconcentration near $x_0$). It is therefore natural to study related constants measuring such (de)concentration properties.
In this section, we introduce all geometric constants studied in the paper and collect known links between them.

We first introduce spectral subspaces of the Laplace operator $\Delta_g$ (with Dirichlet boundary conditions if $\d \M \neq \emptyset$), which are at the core of most results presented here.
Namely, for $\lambda \in \Sp(-\Delta_g)$, the space
$$E_{\lambda} := \vect\{\psi \in L^2(\M), -\Delta_g \psi = \lambda \psi \}$$
denotes the eigenspace associated to the eigenvalue $\lambda$ and, for all $\lambda >0$, 
$$E_{\leq \lambda} :=  \vect\{ E_{\lambda_j}, \lambda_j \in \Sp(-\Delta_g), \lambda_j \leq \lambda\} ,
$$ the space of linear combinations of eigenfunctions associated to eigenvalues $\leq \lambda$.

Let us now introduce the constants studied in the article, else than that involved in~\eqref{e:co-heat}-\eqref{defcoheat}. 
For any nonempty open subset $\omega \subset \M$, we recall the following results: 
\begin{itemize}
\item Vanishing of eigenfunctions~\cite{DF:88,LR:95}: there exist $C,\co$ such that we have 
\bnan
\label{e:co-eig}
 \nor{\psi}{L^2(\M)}  \leq Ce^{\co\sqrt{\lambda}}\nor{\psi}{L^2(\omega)} , \quad \text{for all $\lambda \in \Sp(-\Delta_g)$ and $\psi \in E_\lambda$}.
 \enan
\item Vanishing of sums of eigenfunctions (so-called Lebeau-Robbiano spectral inequality)~\cite{LR:95,JL:99,LZ:98}: there exist $C,\co$ such that we have 
\bnan
\label{e:co-sum}
 \nor{u}{L^2(\M)}  \leq Ce^{\co\sqrt{\lambda}}\nor{u}{L^2(\omega)} , \quad \text{ for all $\lambda >0$ and all $u \in E_{\leq \lambda}$.}
 \enan
\item Infinite time observability of the heat equation~\cite{FI:96}: there exist $C,\co$ such that we have 
\bnan
\label{e:co-infty}
\int_{\R^+} e^{-\frac{2\co}{t}} \|e^{t\Delta_g}u\|_{L^2(\M)}^2 dt \leq C\int_{\R^+} \|e^{t\Delta_g}u\|_{L^2(\omega)}^2 dt , \quad \text{ for all $u \in L^2(\M)$.}
\enan
\item Approximate observability for the wave equation~\cite{LL:15},
\begin{align}
\label{e:wave}
(\d_t^2 - \Delta_g)u = 0, \quad  u|_{(0,T)\times \d \M}=0, \quad (u,\d_tu)|_{t=0} =(u_0, u_1):
\end{align}
For all $T> 2\L(\M,\omega)$, there exist $C,\co, \mu_0>0$ such that we have 
\begin{align}
\label{e:co-wave}
\|(u_0,u_1)\|_{L^2(\M) \times H^{-1}(\M)} \leq Ce^{\co \mu} \|u\|_{L^2((0,T)\times\omega)} + \frac{1}{\mu} \|(u_0,u_1)\|_{H^1_0(\M)\times L^2(\M)}, \nonumber \\ \text{ for all $\mu \geq \mu_0$ and all $(u_0, u_1) \in H^1_0(\M) \times L^2(\M)$, and $u$ solution to~\eqref{e:wave}}.
\end{align}
\end{itemize}
Recall the definition of $\L(\M,\omega)$ in~\eqref{e:def-L-omega-M}.
Remark that this last estimate is equivalent to (see~\cite{LL:15} or Corollary~\ref{c:lambda=mu} below)
\bnan
\label{e:co-wave-bis}
 \|(u_0,u_1)\|_{H^1_0(\M)\times L^2(\M)}\leq C' e^{\co' \Lambda} \|u\|_{L^2((0,T)\times\omega)}, \quad \Lambda = \frac{\|(u_0,u_1)\|_{H^1_0(\M)\times L^2(\M)}}{\|(u_0,u_1)\|_{L^2(\M) \times H^{-1}(\M)} }, \nonumber \\ \text{ for all $(u_0, u_1) \in H^1_0(\M) \times L^2(\M)$, and $u$ solution to~\eqref{e:wave}.}
\enan

Note that in the reference~\cite{LL:15}, the observation term in the right hand-side of these inequalities is $\|u\|_{L^2(0,T;H^1(\omega))}$ instead of $\|u\|_{L^2((0,T)\times\omega)}$. That the stronger inequalities above holds is proved in~\cite[Section~5.3]{LL:17Hypo} (see also~\cite{LL:17approx}).

\bigskip
In all these inequalities, we are interested in the ``best constant $\co$'' such that the estimate holds for some $C$.  More precisely, we are interested in the way it depends on the geometry of $(\M,g)$ and $\omega$ (and, in the case of~\eqref{e:co-wave}, the time $T$).
Let us first formulate the precise definitions of these constants. These are the analogues to that of $\co_{heat} (\omega)$ given in~\eqref{e:def-Kheat}.

\begin{definition}
\label{def-coco}
Given $\omega\subset \M$ an open set, we define $\co_{eig} (\omega),  \co_\Sigma (\omega) , \co_{\infty}(\omega), \co_{wave}(\omega,T)$ to be the best exponents in the above estimates~\eqref{e:co-eig}-\eqref{e:co-wave}, namely:
\bna
\co_{eig} (\omega) =  \inf \left\{ \co >0 , \exists C>0 \text{ s.t. \eqref{e:co-eig} holds} \right\} ,
\ena
\bna
\co_{\Sigma} (\omega) =  \inf \left\{ \co >0 , \exists C>0 \text{ s.t. \eqref{e:co-sum} holds} \right\} ,
\ena
\bna
\co_{\infty} (\omega) =  \inf \left\{ \co >0 , \exists C>0 \text{ s.t. \eqref{e:co-infty} holds} \right\} ,
\ena
\begin{align}
\label{e:co=co'}
\co_{wave} (\omega,T) & =  \inf \left\{ \co >0 , \exists C>0, \mu_0>0  \text{ s.t. \eqref{e:co-wave} holds} \right\} \nonumber \\
&=  \inf \left\{ \co' >0 , \exists C'>0,  \text{ s.t. \eqref{e:co-wave-bis} holds} \right\}.
\end{align}
\end{definition}
A proof of the equality in~\eqref{e:co=co'} is given in Corollary~\ref{c:lambda=mu} below. Note that we may write $\co_{wave} (\omega,T) = + \infty$ if $T< 2\L(\M,\omega)$ since~\eqref{e:co-wave}-\eqref{e:co-wave-bis} are known not to hold (see the discussion in~\cite{LL:15}). However, $\co_{wave} (\omega,T) < + \infty$ as soon as $T>2\L(\M,\omega)$, by virtue of~\eqref{e:co-wave}-\eqref{e:co-wave-bis}.

\bigskip
Let us now collect some known facts concerning these constants, in addition to the already discussed bound $\co_{heat}(\omega)\geq \frac{\L(\M,\omega)^2}{4}$~\cite{Miller:04}. A first trivial (but useful) fact is that $\co_{eig} (\omega) \leq \co_{\Sigma} (\omega)$. 
The following properties can also be found in the literature:
\begin{enumerate}
\item For all  $(\M, g), \omega$ such that $\overline{\omega} \neq \M$, we have $\co_{\Sigma}(\omega)\geq \frac{\L(\M,\omega)}{2}$, see~\cite[Theorem~5.3]{Miller:10} (that $\co_{\Sigma}(\omega)>0$ had already been proved in~\cite{JL:99}). 
\item $\co_{\infty} (\omega) \leq \co_{heat} (\omega)$, \cite[Theorem~1]{Miller:06c}.
\item For all  $(\M, g), \omega$, we have $\co_{\infty} (\omega) \geq \frac{d_1(\omega)^2}{4}$, with $d_1(\omega) = \sup \left\{ r>0 , \exists x\in \M, B(x,r) \subset \M\setminus \overline{\omega} \right\}$, see~\cite{FCZ:00} and~\cite[Section~4.1]{Zua:01}.
\item Assume $\omega$ satisfies the Geometric Control Condition in $(\M, g)$ and denote by $L_\omega$ the maximal length of a ray of geodesic optics not intersecting $\omega$. Then, we have 
\bnan
\label{upperboundMiller}
\co_{heat} (\omega) \leq \alpha_* L_\omega^2
\enan with $\alpha_* = 2 \left(\frac{36}{37}\right)^2$, see~\cite{Miller:04,Miller:06b} (improved to $\alpha_* =3/4$ in~\cite{TT:07} and to $0.6966$ in~\cite{Darde:17}).
\item Assume $\omega$ satisfies the Geometric Control Condition in $(\M, g)$ and denote by $L_\omega$ the maximal length of a ray of geometric optics not intersecting $\omega$. Then, we have $\co_{\infty} (\omega) \leq \frac1{16} L_\omega^2$, see~\cite[Theorem~1.1]{EZ:11s}.
\item $\co_{heat} (\omega) \leq 4 \co_{\Sigma} (\omega)^2$, see~\cite[Corollary~1, see also the discussion in Section~2.4]{Miller:10} (see also~\cite{Sei:08} for a proof $\co_{heat} (\omega) \leq 8 \co_{\Sigma} (\omega)^2$).
\item If $(\omega,T)$ satisfy the geometric control condition~\cite{BLR:92}, then $\co_{wave}(\omega,T) =0$ (more precisely,~\eqref{e:co-wave}-\eqref{e:co-wave-bis} hold with $\co=0$). Conversally, if $(\M,g)$ is real-analytic and $(\overline{\omega},T)$ does not satisfy the geometric control condition (for a ray that only intersects $\d \M$ transversally), then $\co_{wave}(\omega,T) >0$, see~\cite{Leb:Analytic}.
\end{enumerate}
Notice that in all these statements, the constants $\co_{heat}$ and $\co_\infty$ (heat equation) are homogeneous to a square of a distance (as for the heat kernel), whereas the other ones are homogeneous to a distance (as for the wave kernel). 

Remark also that every comparison statement above follows, in the associated reference, from a proper inequality (the above statements being only a weak form of those). 

Also notice that the converse inequality $ \co_{\Sigma} (\omega)^2\leq C \co_{heat} (\omega)$ for a universal constant $C$ is certainly not true in general. For instance, in the case of  boundary control on an interval $(0,1)$ (see Section \ref{sectpreviousres}), $\co_{heat} (\{0\})$ is finite while it is easy to see that $\co_{\Sigma} (\{0\})$ is infinite since no spectral inequality can be true just by dimensional analysis.

\bigskip
We first complete the above list of comparison results  by the following proposition. 
\begin{proposition}[Other links between the constants]
\label{p:link-eigenfct-heat-etc}
We have 
\bna
\frac{\co_{eig}(\omega)^2}{4} \leq \co_{heat}(\omega) , \quad 
\frac{\co_{eig}(\omega)^2}{4} \leq \co_{\infty}(\omega) .
\ena
Also for all $T>0$, we have $\co_{eig}(\omega) \leq \co_{wave}(\omega,T)$. 
\end{proposition}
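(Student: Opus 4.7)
The plan is to use eigenfunctions as test functions in each of the estimates \eqref{e:co-heat}, \eqref{e:co-infty} and \eqref{e:co-wave-bis}, optimize over the free parameter, and compare the resulting inequality with the definition \eqref{e:co-eig} of $\co_{eig}(\omega)$. In each case, polynomial prefactors in $\lambda$ can be absorbed by an arbitrarily small increase of the exponent, so the only issue is to read off the sharp exponential rate.

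For the bound $\frac{\co_{eig}(\omega)^2}{4}\leq \co_{heat}(\omega)$, I fix $\co>\co_{heat}(\omega)$, $\psi\in E_\lambda$, and plug $u=\psi$ into \eqref{e:co-heat}. Since $e^{t\Delta_g}\psi=e^{-\lambda t}\psi$, the left-hand side equals $e^{-2\lambda T}\|\psi\|_{L^2(\M)}^2$, while the right-hand side is bounded by $C^2 T e^{2\co/T}\|\psi\|_{L^2(\omega)}^2$. Rearranging gives
\[
\|\psi\|_{L^2(\M)}\leq C\sqrt{T}\,e^{\co/T+\lambda T}\|\psi\|_{L^2(\omega)}.
\]
The exponent $\co/T+\lambda T$ is minimized at $T=\sqrt{\co/\lambda}$, where it takes the value $2\sqrt{\co\lambda}$. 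This yields $\co_{eig}(\omega)\leq 2\sqrt{\co}$ for every $\co>\co_{heat}(\omega)$, hence the claimed bound.

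For $\frac{\co_{eig}(\omega)^2}{4}\leq \co_{\infty}(\omega)$, plugging $u=\psi$ into \eqref{e:co-infty} gives
\[
\left(\int_0^\infty e^{-2\co/t-2\lambda t}\,dt\right)\|\psi\|_{L^2(\M)}^2 \leq \frac{C}{2\lambda}\|\psi\|_{L^2(\omega)}^2 .
\]
The main (mild) obstacle is a lower bound on this Laplace-type integral. Rescaling $t=s\sqrt{\co/\lambda}$ reduces it to $\sqrt{\co/\lambda}\int_0^\infty e^{-2\sqrt{\co\lambda}(s+1/s)}\,ds$; Laplace's method at the saddle point $s=1$ (or a direct pointwise lower bound on an interval of length $\co^{1/4}\lambda^{-3/4}$ around it) produces a lower bound of the form $c\,\co^{1/4}\lambda^{-3/4}\,e^{-4\sqrt{\co\lambda}}$. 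Substituting and absorbing polynomial factors yields $\|\psi\|_{L^2(\M)}\leq C'e^{2\sqrt{\co\lambda}}\|\psi\|_{L^2(\omega)}$ for every $\co>\co_{\infty}(\omega)$, giving the desired inequality.

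For $\co_{eig}(\omega)\leq \co_{wave}(\omega,T)$, I would work with \eqref{e:co-wave-bis} and take $(u_0,u_1)=(\psi,0)$, so that the solution is $u(t,x)=\cos(\sqrt\lambda t)\psi(x)$. Direct computations give $\|(u_0,u_1)\|_{H^1_0(\M)\times L^2(\M)}=\sqrt\lambda\,\|\psi\|_{L^2(\M)}$ and $\|(u_0,u_1)\|_{L^2(\M)\times H^{-1}(\M)}=\|\psi\|_{L^2(\M)}$, so $\Lambda=\sqrt\lambda$; moreover
\[
\|u\|_{L^2((0,T)\times\omega)}^2 \leq \Big(\frac{T}{2}+\frac{1}{4\sqrt\lambda}\Big)\|\psi\|_{L^2(\omega)}^2 .
\]
Inserting these in \eqref{e:co-wave-bis} gives $\|\psi\|_{L^2(\M)}\leq C''e^{\co'\sqrt\lambda}\|\psi\|_{L^2(\omega)}$ for every $\co'>\co_{wave}(\omega,T)$, valid for $\lambda$ above some threshold; the finite set of smaller eigenvalues is handled trivially by unique continuation of $\psi$ from $\omega$. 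This concludes the proposition; overall, each estimate amounts to ``plug in an eigenfunction and optimize'', the only step requiring more than elementary manipulation being the Laplace-type lower bound in the $\co_\infty$ case.
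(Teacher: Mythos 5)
Your proof is correct and follows essentially the same route as the paper's: plug the explicit solutions $e^{-t\lambda}\psi$ (resp.\ $\cos(t\sqrt{\lambda})\psi$) into~\eqref{e:co-heat}, \eqref{e:co-infty}, \eqref{e:co-wave-bis}, optimize the free parameter (choosing $T\sim\sqrt{\co/\lambda}$ in the heat case, Laplace's method at the saddle $s=1$ in the $\co_\infty$ case), and read off the exponential rate, absorbing polynomial prefactors. The only cosmetic difference is that in the heat case you bound the time integral crudely by $T$ whereas the paper computes it as $\frac{1-e^{-2T\lambda}}{2\lambda}$; this changes only the polynomial prefactor and not the resulting value of $\co_{eig}$.
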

Note that the last statement is empty if $T< 2\L(\M,\omega)$ since~\eqref{e:co-wave}-\eqref{e:co-wave-bis} are known not to hold (see the discussion in~\cite{LL:15}), but is nonempty if we have $\co_{wave}(\omega,T) <\infty$, that is if $T>2\L(\M,\omega)$, by virtue of~\eqref{e:co-wave}-\eqref{e:co-wave-bis}.

Hence, in order to produce lower bounds for $ \co_\Sigma(\omega), \co_{heat}(\omega), \co_{\infty}(\omega) , \co_{wave}(\omega,T)$, we shall product lower bounds for $\co_{eig}(\omega)$, i.e. construct sequences of eigenfunctions having a maximal vanishing rate on $\omega$. Note also that, summarizing the inequalities so far, we have:
\bnan
\label{e:comparison-constants}
\frac{\co_{eig}(\omega)^2}{4} \leq \co_{\infty}(\omega) \leq \co_{heat}(\omega) \leq 4 \co_\Sigma(\omega)^2,
\enan
so that the understanding of concentration properties for eigenfunctions and sums of eigenfunctions essentially contains those of the heat equation. Therefore, our main focus in the following is to produce:
\begin{itemize}
\item maximally vanishing eigenfunctions in particular geometries to yields a lower bound for $\co_{eig}$;
\item a uniform Lebeau-Robbiano spectral inequality on small balls to yields an upper bound for $\co_\Sigma$.
\end{itemize}

Note that reducing our attention to $\co_{eig}$ in the seek of lower bounds is already very restrictive!
Indeed, as soon as  the Schr\"odinger equation on $(\M,g)$ is observable from $\omega$ in finite time (in particular if $\omega$ satisfies the geometric control condition, see~\cite{BLR:92,Leb:92}), then $\co_{eig} (\omega)=0$ (more precisely, \eqref{e:co-eig} holds with $\co=0$).

\bigskip
Before starting to state these lower/upper bounds, let us give a link between $\co_{heat} (\omega)$ and $\co_{wave}(\omega,T)$, consequence of a result of Ervedoza-Zuazua~\cite{EZ:11} (weak observability with exponential cost for the wave equation implies observability of the heat equation).
\begin{proposition}
\label{propheatwave} 
There exist universal constants $\alpha_{1},\alpha_{2}>0$ so that for any $S>0$, we have
\bna
\co_{heat} (\omega)\leq \alpha_{1}S^2 + \alpha_{2}\co_{wave}(\omega,S)^{2}.
\ena \end{proposition}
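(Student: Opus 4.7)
The statement is essentially the heat-from-wave-observability theorem of Ervedoza--Zuazua~\cite{EZ:11}, in which weak wave observability (with exponential cost in the frequency parameter) implies observability of the heat equation, with a quadratic relation between the costs. I plan to follow their strategy, which combines the Kannai transmutation formula with a Lebeau--Robbiano frequency iteration.

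The starting point is the transmutation identity, which expresses the heat semigroup as a Gaussian average of the wave propagator:
\[
e^{T\Delta_g}u_0 = \frac{1}{\sqrt{4\pi T}} \int_\R e^{-s^2/(4T)} \cos(s\sqrt{-\Delta_g})u_0 \, ds.
\]
This identity is the bridge from the wave flow, where the assumption~\eqref{e:co-wave} provides control, to the heat flow, which we wish to control. On a spectrally localized datum $u_0 \in E_{\leq \mu^2}$, applying~\eqref{e:co-wave} to the pair $(u_0,0)$ (for which $\|(u_0,0)\|_{H^1_0\times L^2} \leq \mu \|(u_0,0)\|_{L^2\times H^{-1}}$) with the parameter chosen as $2\mu$ absorbs the subdominant term and yields
\[
\|u_0\|_{L^2(\M)}^2 \leq C e^{2(\co_{wave}(\omega,S)+\varepsilon)\mu} \int_0^S \int_\omega |\cos(t\sqrt{-\Delta_g})u_0|^2 \, dx\, dt.
\]

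Combining this wave estimate on spectral subspaces with the transmutation formula would then produce the heat observability via a Lebeau--Robbiano iteration. Decomposing an arbitrary $u_0 \in L^2(\M)$ into dyadic frequency packets $u_0 = \sum_k \pi_k u_0$ and splitting the Kannai integral at $s=S$, the part $s \in (0,S)$ is controlled by the wave observation on $(0,S)\times\omega$, while the tail $s>S$ enjoys Gaussian decay of order $e^{-S^2/(4T)}$. Summing over the dyadic scales $\mu \sim 2^k$, the wave-observability cost $\co_{wave}(\omega,S)\mu$ at scale $\mu$ produces, after the iteration (the same mechanism as in the passage $\co_{heat} \leq 4\co_\Sigma^2$ appearing in~\eqref{e:comparison-constants}), a contribution of order $\co_{wave}(\omega,S)^2$ to $\co_{heat}$, while the Gaussian tail contributes independently a term of order $S^2$ in the exponent. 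The two losses remaining decoupled, one concludes
\[
\co_{heat}(\omega) \leq \alpha_1 S^2 + \alpha_2 \co_{wave}(\omega,S)^2
\]
for universal constants $\alpha_1,\alpha_2>0$.

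The main obstacle will be the careful bookkeeping of these two distinct exponential losses --- the wave-observability cost and the Gaussian transmutation tail --- throughout the Lebeau--Robbiano iteration, so that in the final exponent they really appear additively as $S^2 + \co_{wave}(\omega,S)^2$ rather than coupled multiplicatively. The secondary technical point is that the Kannai split sends the wave observation on $(0,S)\times\omega$ to a quantity comparable to the heat observation on $(0,T)\times\omega$ only modulo the Gaussian tail, so the tail contribution must be controlled independently before the iteration is launched.
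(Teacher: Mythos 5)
Your proposal correctly identifies the Ervedoza--Zuazua paper as the source and the overall architecture (transmutation to waves, then a Lebeau--Robbiano frequency iteration, exactly as in $\co_{heat}\leq 4\co_\Sigma^2$), and you even anticipate the decoupled form $\alpha_1 S^2+\alpha_2\co_{wave}^2$ of the answer. However, there is a genuine gap at the crucial step, and it is not a ``secondary technical point.'' The Kannai formula $e^{T\Delta_g}u_0=\frac{1}{\sqrt{4\pi T}}\int_\R e^{-s^2/(4T)}\cos(s\sqrt{-\Delta_g})u_0\,ds$ expresses the heat \emph{state} as a Gaussian average of wave \emph{states}, i.e.\ it goes from wave to heat. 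After you apply the wave observability to $(u_0,0)$ with $\mu\sim\sqrt{\lambda}$, you are left needing the opposite inequality: bound the wave observation $\int_0^S\|\cos(t\sqrt{-\Delta_g})u_0\|_{L^2(\omega)}^2dt$ by the heat observation $\int_0^T\|e^{t\Delta_g}u_0\|_{L^2(\omega)}^2dt$. The Kannai formula simply does not give this, and splitting the Gaussian integral at $s=S$ does not produce it either; plugging Kannai into the heat observation gives a superposition of wave observations, which is the wrong direction.

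The paper's proof uses a different transmutation, the Ervedoza--Zuazua kernel $k_T(t,s)$ of Proposition~\ref{propoEZ}: it is \emph{compactly supported} in $t\in(0,T)$ and constructs a wave solution $w(s)=\int_0^T k_T(t,s)\,y(t)\,dt$ directly from the heat trajectory $y(t)=e^{t\Delta_g}u_0$, rather than from a Gaussian average of wave propagators. Because $k_T$ is supported in $(0,T)\times(-S,S)$ with the explicit pointwise bound~\eqref{e:estim-kT}, a single Cauchy--Schwarz (see~\eqref{estimobserv}--\eqref{estimkT}) bounds $\|w\|_{L^2((-S,S)\times\omega)}$ by the heat observation on $(0,T)\times\omega$; this is precisely the step your plan leaves open. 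The paper also uses a lower bound for $\|w(0),\partial_s w(0)\|$ in terms of $\|y(T)\|_{L^2}$ via a Laplace-method estimate of $\int_0^Te^{-\alpha(1/t+1/(T-t))}dt$, rather than the trivial $\|e^{T\Delta}u_0\|\le\|u_0\|$; this is where the $e^{-c\alpha/T}$ factor (hence the eventual $S^2/T$ term) actually enters. Your Lebeau--Robbiano iteration step (here delegated to Miller's Lemma~\ref{lmspectraldonneheat}) is fine, but you cannot launch it until the wave-to-heat observation comparison is established, and for that you need the $k_T$ kernel, not the Kannai kernel.
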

The proof of this result in Section~\ref{s:proof-wave-heat} is a little more precise about this estimate. In particular, several values of $(\alpha_1, \alpha_2)$ can be deduced from it.
The value of $\alpha_{1}$ is thought to be related to the cost of the boundary control of the 1D heat equation.
Note that, as in~\eqref{e:comparison-constants}, this yields
\bna
\frac{\co_{eig}(\omega)^2}{4} \leq \co_{\infty}(\omega) \leq \co_{heat}(\omega) \leq \alpha_{1}S^2 + \alpha_{2}\co_{wave}(\omega,S)^{2}, \quad \text{ for all } S>0 .
\ena
However, this upper bound seems for the moment less useful than that of~\eqref{e:comparison-constants}, since the proof of~\eqref{e:co-wave}-\eqref{e:co-wave-bis} in~\cite{LL:15} is more technically involved than that of~\eqref{e:co-sum} in~\cite{LR:95,JL:99,LZ:98}. The computation of $\co_{wave}(\omega,S)$ seems thus more intricate than that of $\co_\Sigma(\omega)$.

\subsection{Main results}

\subsubsection{Constructing maximally vanishing eigenfunctions: lower bound for $\co_{eig}$}
\label{s:intro-const} 
In this Section, we provide lower bounds for $\co_{eig}$ in three different geometries.
This then proves Theorem~\ref{thm:counterexamples} as a direct corollary of Proposition~\ref{p:link-eigenfct-heat-etc}.

\paragraph{The sphere}
We first state the results we obtain on two dimensional sphere $\S^2$, since they are particularly simple. The higher dimensional case $\S^n$ is completely similar.
The sphere $\S^2$ is parametrized by $(s, \theta) \in (0,\pi)\times \S^1$. We denote by $N$ (resp. $S$) the north pole described by $s=0$ (resp. the south pole described by $s=\pi$), and remark that $s$ is the geodesic distance to the point $N$.
\begin{theorem}
\label{t:th-sphere}
For $k\in \N$, the function 
$$
\psi_k (s,\theta)= c_k \sin (s)^k e^{ik\theta}, \quad  \quad c_k = \frac{k^{1/4}}{2^{1/2}\pi^{3/4}}\left( 1 + O(\frac{1}{k})\right) \text{ as } k\to +\infty
$$
satisfies 
\begin{align*}
& -\Delta_g \psi_k = k(k+1) \psi_k \quad \text{on} \quad  \S^2 , \quad \psi_k \in C^\infty(\S^2) , \quad \|\psi_k\|_{L^2(\S^2)} = 1,\\
&|\psi_k (s,\theta)| = c_k \sin (s)^k \leq c_k s^k\quad \text{ for } s\in[0,\pi], k\in \N ,\\
&\|\psi_k\|_{L^2(B(N,r))}^2 = \frac{c_k^2\pi}{k+1}\frac{\sin(r)^{2k+2}}{\cos(r)} (1+R) , \quad |R|\leq \frac{\tan(r)^2}{2k+2} \quad \text{ for } r\in[0,\frac{\pi}{2}), k\in \N .
\end{align*}
\end{theorem}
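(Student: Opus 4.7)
The plan is an explicit hands-on computation in geodesic polar coordinates around $N$. In the coordinates $(s,\theta)\in(0,\pi)\times \S^1$, the metric on $\S^2$ reads $g=ds^2+\sin^2(s)\,d\theta^2$, the volume element is $\sin(s)\,ds\,d\theta$, and
\[
\Delta_g u = \frac{1}{\sin s}\d_s\bigl(\sin s\,\d_s u\bigr) + \frac{1}{\sin^2 s}\d_\theta^2 u.
\]
Applied to $f(s,\theta)=\sin^k(s)\,e^{ik\theta}$, the $\theta$-part contributes $-k^2\sin^{k-2}(s)\,f$, while direct differentiation of the radial part yields $k^2\sin^{k-2}(s)\cos^2(s)\,f - k\sin^k(s)\,f$. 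Summing and using $\cos^2(s)-1=-\sin^2(s)$ gives $-\Delta_g f = k(k+1)f$. Smoothness at the two coordinate singularities (the poles) is not automatic from the polar expression; I would handle it by identifying $\psi_k$ with the restriction to $\S^2$ of the polynomial $c_k(x_1+ix_2)^k$ on $\R^3$, using that $\sin(s)\,e^{i\theta}=x_1+ix_2$ in these coordinates.

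The normalization $c_k$ is determined by
\[
\|\psi_k\|_{L^2(\S^2)}^2 = 2\pi c_k^2\int_0^\pi \sin^{2k+1}(s)\,ds = 2\pi c_k^2\cdot\sqrt{\pi}\,\frac{\Gamma(k+1)}{\Gamma(k+3/2)},
\]
via a standard Beta-function evaluation. The ratio of Gammas satisfies $\Gamma(k+3/2)/\Gamma(k+1)=k^{1/2}(1+O(1/k))$ as $k\to\infty$ (Wendel's asymptotic), so imposing $\|\psi_k\|_{L^2}=1$ produces the announced value of $c_k$.

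The pointwise bound is immediate from the elementary inequality $0\leq\sin(s)\leq s$ on $[0,\pi]$. For the $L^2$-mass on $B(N,r)=\{s\leq r\}$, I would estimate
\[
J_k(r):=\int_0^r\sin^{2k+1}(s)\,ds
\]
by integrating by parts with $u(s)=1/\cos(s)$ and $dv=\sin^{2k+1}(s)\cos(s)\,ds$. This yields
\[
J_k(r)=\frac{\sin^{2k+2}(r)}{(2k+2)\cos r}-\frac{1}{2k+2}\int_0^r\frac{\sin^{2k+3}(s)}{\cos^2(s)}\,ds.
\]
The remainder is nonnegative; rewriting its integrand as $\sin^{2k+1}(s)\tan^2(s)$ and using $\tan(s)\leq\tan(r)$ on $[0,r]$ for $r<\pi/2$ bounds it by $\tan^2(r)/(2k+2)\cdot J_k(r)$. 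Solving the resulting one-sided inequalities for $J_k(r)$ yields $J_k(r) = \frac{\sin^{2k+2}(r)}{(2k+2)\cos r}(1+R)$ with $|R|\leq\tan^2(r)/(2k+2)$, and multiplication by $2\pi c_k^2$ gives the claimed formula using $2\pi/(2k+2)=\pi/(k+1)$.

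Every step is elementary; the only point demanding a little care is the sharp asymptotic for $c_k$, for which one must quote the precise form of Wendel's asymptotic for the ratio of two Gamma functions (or equivalently use Stirling's formula carefully). The smoothness check at the poles is conceptually the only non-calculational ingredient, but it is settled at once by the polynomial representation $\psi_k = c_k(x_1+ix_2)^k\big|_{\S^2}$.
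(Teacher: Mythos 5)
Your computation is correct and the key steps all check out. You arrive at the same conclusions as the paper but by somewhat different elementary means on two sub-points. For the normalization $c_k$, you compute $\int_0^\pi\sin^{2k+1}s\,ds$ exactly as a Beta function $\sqrt\pi\,\Gamma(k+1)/\Gamma(k+3/2)$ and then invoke the ratio-of-Gammas asymptotic; the paper instead substitutes $x_3=\cos s$ and applies the Laplace method to $\int_{-1}^1(1-x_3^2)^k\,dx_3$. Both give $\|u_k\|_{L^2}\sim 2^{1/2}\pi^{3/4}k^{-1/4}$, and your exact Gamma identity is perhaps the cleaner path to the explicit error term. For the $L^2$-mass on $B(N,r)$, you integrate $J_k(r)=\int_0^r\sin^{2k+1}$ by parts with $u=1/\cos s$, $dv=\sin^{2k+1}s\cos s\,ds$, recognize the remainder integrand as $\sin^{2k+1}s\,\tan^2 s$, and bound $\tan^2 s\le\tan^2 r$ on $[0,r]$; this gives a one-sided inequality that yields $|R|\le\tan^2(r)/(2k+2)$ (indeed you get the sharper fact $R\le 0$). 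The paper instead changes variables $y=-\log\sin s$, reducing $J_k(r)$ to $\int_a^\infty e^{-(2k+2)y}f(y)\,dy$ with $f(y)=(1-e^{-2y})^{-1/2}$, and uses $|f(y)-f(a)|\le (y-a)\sup|f'|$ plus explicit Laplace-transform moments. The two arguments are essentially equivalent in effort; yours avoids the change of variable at the cost of noticing the right $u$, $dv$ split, and it incidentally shows $R\le 0$. The smoothness check and the eigenvalue computation match the paper's (harmonic homogeneous polynomial restricted to $\S^2$).
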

This result is a much more explicit, more precise (and simpler to prove) version of the general results we obtain on surfaces of revolution. We turn to the general case and shall explain at the end of the section the links with Theorem~\ref{t:th-sphere}.

\paragraph{Surfaces of revolutions}
The precise description of the geometry of the surfaces we consider is given in Section~\ref{s:revol} and we only give here the features required to state the result.
We consider $\M =\calS \subset \R^3$ a smooth compact surface diffeomorphic to the sphere $\S^2$. We assume moreover that it has revolution invariance around an axis, that intersects $\calS$ in two points, the north and the south poles, respectively $N, S \in \calS$. These points are the only invariant points of the revolution symmetry. 
The surface is then endowed with the metric $g$ inherited from the Euclidean metric on $\R^3$, which itself enjoy the rotation invariance.
Then, we describe (almost all) the surface by two coordinates, namely $s = \dist_g(\cdot , N)$, the geodesic distance to the north pole and $\theta$, the angle of rotation. The variable $s$ is in $(0,L)$ where $L=\dist_g(N,S)$.
The surface is characterized by the function $R(s)$ associating to $s$ the Euclidean distance in $\R^3$ to the symmetry axis, which, by definition, is rotationally invariant, and satisfies $R(0) = 0= R(L)$. This function $R$ is the ``profile'' of the revolution surface $\calS$.

\medskip
We shall now assume that $R$ reaches at $s_0$ a global maximum, and introduce the relevant Agmon distance to the ``equator'' $s=s_0$, defined by the eikonal equation
\bnan
\label{e:defdA}
 \big(d_A'(s) \big)^2-\left( \frac{1}{R(s)^2}- \frac{1}{R(s_0)^2} \right) = 0 , \quad d_A(s_0)=0 ,  \quad \sgn(d_A'(s_0)) = \sgn(s-s_0) ,
\enan
or, more explicitely, for $s\in (0,L)$, by
\bnan
\label{e:defbisdA}
d_A(s) = \left| \int_{s_0}^s \sqrt{ \frac{1}{R(y)^2}- \frac{1}{R(s_0)^2}} dy \right|  .
\enan
A more intrinsic definition of $d_A$ is given in Remark~\ref{r:def-intrinsic-dA} below (and requires additional notation).
\begin{theorem}
\label{t:agmon-intro}
Assume that $s \mapsto R(s)$ admits a {\em non-degenerate strict global} maximum at $s_0 \in (0,L)$. 
Then, for all $k\in\N$, there exists $\psi_k \in C^{\infty}(\calS)$,  and $\lambda_k \geq 0$ such that 
$$
\lambda_k=  \frac{k^2}{R(s_0)^2}+k\sqrt{\frac{|R''(s_0)|}{R^3(s_0)}} + O(k^{1/2}), \qquad \|\psi_k\|_{L^2(\calS)}=1, \qquad - \Delta_g \psi_k = \lambda_k \psi_k .
$$
Moreover, there exist $C,C_0,k_0>0$ such that, for all $k \in \N$, $k \geq k_0$ and all $0\leq r \leq s_0$, we have the estimate
$$
\nor{\psi_{k}}{L^2(B(N,r))}
\leq C\lambda_k^{C_0} e^{- d_A(r)R(s_0) \left( \sqrt{\lambda_k} - C\right)}.
$$
\end{theorem}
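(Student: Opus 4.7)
The plan is to exploit the rotational symmetry by separation of variables, reducing the two-dimensional eigenvalue problem to a family of 1D Schrödinger problems, and then apply standard semiclassical tools: harmonic-oscillator approximation at the bottom of the potential well for the eigenvalue asymptotic, and Agmon's weighted inequality for the decay estimate. Writing $\psi_k(s,\theta) = (2\pi)^{-1/2} f_k(s) e^{ik\theta}$, smoothness on $\calS$ forces $f_k(s) = O(s^{|k|})$ near $s=0$ (and the analogous condition at $s=L$), and $-\Delta_g \psi_k = \lambda \psi_k$ is equivalent to the Sturm--Liouville problem
$$
-f_k'' - \frac{R'}{R} f_k' + \frac{k^2}{R^2} f_k = \lambda f_k \quad \text{on }(0,L).
$$
The change of unknown $u_k := R^{1/2} f_k$ puts this into the standard Schrödinger form
$$
-u_k'' + V_k u_k = \lambda u_k, \qquad V_k(s) = \frac{k^2}{R(s)^2} + \frac{R''(s)}{2R(s)} - \frac{R'(s)^2}{4R(s)^2}.
$$
Under the non-degeneracy assumption, the leading symbol $1/R^2$ has a non-degenerate global minimum $1/R(s_0)^2$ at $s_0$ with Hessian $2|R''(s_0)|/R(s_0)^3$.

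Take $\lambda_k$ to be the lowest eigenvalue of this 1D problem and $u_k$ the $L^2$-normalized ground state; $\psi_k$ is then a true eigenfunction of $-\Delta_g$. With $h=1/k$, the 1D operator reads $h^2(-\partial_s^2 + W) + 1/R^2$, a small semiclassical perturbation near $s_0$ of the harmonic oscillator $-h^2 \partial_s^2 + (|R''(s_0)|/R(s_0)^3)(s-s_0)^2$. A standard two-sided comparison---rescaled Gaussian quasi-mode centered at $s_0$ for the upper bound, Agmon localization plus quadratic approximation of $1/R^2$ for the lower bound---yields
$$
\lambda_k = \frac{k^2}{R(s_0)^2} + k\sqrt{\frac{|R''(s_0)|}{R(s_0)^3}} + O(1),
$$
which implies in particular the announced $O(k^{1/2})$ expansion.

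For the decay estimate, I apply Agmon's weighted method on the left of $s_0$. Choose a slack $\varepsilon_k = O(1/k)$ and define the Lipschitz weight
$$
\Phi_k(s) := (1-\varepsilon_k) \int_s^{\sigma_k} \sqrt{(V_k(t) - \lambda_k)_+}\,dt, \quad s\in(0,\sigma_k],
$$
extended by $0$ for $s\geq \sigma_k$, where $\sigma_k < s_0$ is the left classical turning point shifted inward by $O(k^{-1/2})$ to avoid turning-point degeneracy. Applied to the eigenfunction $u_k$, the standard Agmon identity
$$
\int_0^L \bigl|(e^{\Phi_k} u_k)'\bigr|^2 ds + \int_0^L \bigl(V_k - \lambda_k - (\Phi_k')^2\bigr) e^{2\Phi_k} |u_k|^2 ds = 0,
$$
together with $(\Phi_k')^2 \leq (1-\varepsilon_k)^2 (V_k - \lambda_k)_+$, produces a uniform bound $\|e^{\Phi_k} u_k\|_{L^2(0, s_0-\eta)} \leq C \lambda_k^{C_0}$, the polynomial prefactor absorbing the $\varepsilon_k^{-1}$ loss and the contribution of the allowed region around $s_0$. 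In the forbidden region bounded away from $s_0$ one has $\sqrt{V_k - \lambda_k} = k\sqrt{R^{-2} - R(s_0)^{-2}}(1 + O(1/k))$, so
$$
\Phi_k(r) \geq (1-\varepsilon_k)\bigl(k\,d_A(r) - C\,d_A(r) - C\bigr) \geq R(s_0) d_A(r)\bigl(\sqrt{\lambda_k} - C'\bigr) - C''
$$
for $r\in(0, s_0]$, using $k = R(s_0)\sqrt{\lambda_k} + O(1)$ and $\varepsilon_k = O(1/k)$. The conclusion follows from $\|\psi_k\|_{L^2(B(N,r))}^2 = \int_0^r |u_k(s)|^2 ds$ (since $dV_g = R(s)\,ds\,d\theta$ and $B_g(N,r) = \{s < r\}$) and the monotonicity of $\Phi_k$ on $(0, s_0)$.

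The principal obstacle is the calibration of the Agmon slack $\varepsilon_k$: it must shrink fast enough that the weight reproduces the sharp semiclassical rate $k\,d_A(r)$ (i.e.\ the linear-in-$\sqrt{\lambda_k}$ leading order of the exponent), yet slowly enough that the resulting $\varepsilon_k^{-1}$ loss in the Agmon inequality remains polynomial in $\lambda_k$, explaining the $\lambda_k^{C_0}$ prefactor. The coordinate singularities of $V_k$ at the poles (where $R\to 0$ and $W\sim -1/(4s^2)$) cause no real difficulty since they lie deep in the forbidden region where $V_k - \lambda_k$ dominates; note that $d_A(r) \sim |\log r|$ as $r\to 0^+$ precisely reflects the centrifugal barrier and is consistent with $f_k(s) = O(s^{|k|})$ near the pole.
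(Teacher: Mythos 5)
Your proposal is essentially correct but follows a genuinely different technical route. The paper also separates variables and uses a harmonic-oscillator quasimode near $s_0$, but it keeps the weighted Sturm--Liouville form $-\frac{1}{R}\partial_s(R\partial_s f) + k^2 R^{-2} f = \lambda f$ in $L^2(R\,ds)$, does \emph{not} single out the ground state (it constructs a compactly supported quasimode and invokes the resolvent bound to find \emph{some} nearby eigenvalue, which is why it only records $O(k^{1/2})$ for the eigenvalue error), and carries out the Agmon estimate directly on the two-dimensional surface using the integration-by-parts identity of Lemma~\ref{l:IPPphi} together with the pointwise gradient bound $|\nabla_g\varphi|_g^2\geq k^2\,g(X_\theta,X_\theta)^{-1}|\varphi|^2$ of Lemma~\ref{lminfgrad}. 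The regularization of the weight also differs: the paper uses an additive log-correction $\phi = d_A - C_0 h\log(d_A/h)$ truncated at level $M$ (which is where the $k^{2C_0}$ prefactor originates), while you use a multiplicative slack $(1-\varepsilon_k)$ and shift the endpoint $\sigma_k$ off the turning point. Your Liouville transformation $u_k=R^{1/2}f_k$ is a clean way to reduce to the textbook one-dimensional Agmon inequality, and with $\varepsilon_k\sim 1/k$ the bookkeeping does close (in fact it yields an even smaller prefactor than $\lambda_k^{C_0}$); the paper's manifold-level argument avoids the $-\frac14 s^{-2}$ centrifugal singularity of $V_k$ entirely, which you must instead argue is harmless. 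Two places where your sketch would need to be fleshed out to be a full proof: (i) the \emph{lower} bound on the lowest eigenvalue of the $k$-th fiber operator (you assert "Agmon localization plus quadratic approximation" but do not carry out the IMS-type argument, whereas the paper sidesteps this entirely by not claiming minimality); and (ii) the uniformity of the expansion $\sqrt{V_k-\lambda_k}=k\sqrt{R^{-2}-R(s_0)^{-2}}(1+O(1/k))$, which fails near the turning point; your shift of $\sigma_k$ and the quadratic behavior of $d_A$ near $s_0$ (Lemma~\ref{lemma-prop-dA}) do control the resulting $O(1)$ loss, but you should make that estimate explicit rather than leaving it implicit in the final chain of inequalities.
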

This statement has to be completed by the asymptotic behavior of $d_A$ (proved in Lemma~\ref{lemma-prop-dA}) when $s\to 0$, namely
\bnan
\label{e:asympt-dA}
d_A(s) = -\log(s) + O(1) , \quad \text{as } s \to 0^+ .
\enan
That is to say that the equator and the poles are infinitely distant to each other for the Agmon distance $d_A$ (as opposed to the geodesic distance $\dist_g$).
Note that at first order, $d_A$ does not depend on the geometry of the surface $\calS$ close to the north pole $N$ ($s=0$). A similar statement holds close to the south pole $S$ ($s=L$).

This, together with Definition~\ref{def-coco} and Proposition~\ref{p:link-eigenfct-heat-etc}, yields the following direct corollary.
\begin{corollary}
\label{t:agmon-introenkappa}
Under the assumptions of Theorem \ref{t:agmon-intro}, for all $0\leq r \leq s_0$, we have the estimate
$$
\co_{eig} (B_g(N,r))\geq d_A(r)R(s_0).
$$
This yields also 
\bna
\co_{\Sigma} (B_g(N,r))\geq d_A(r)R(s_0),&\quad &\co_{wave} (B_g(N,r),T)\geq d_A(r)R(s_0) ,\quad  \textnormal{ for any }T>0,\\
\co_{\infty} (B_g(N,r))\geq \frac{\big(d_A(r)R(s_0)\big)^2}{4}, &\quad& \co_{heat} (B_g(N,r))\geq  \frac{\big(d_A(r)R(s_0)\big)^2}{4}.
\ena
\end{corollary}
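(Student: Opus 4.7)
The corollary is a direct combination of Theorem~\ref{t:agmon-intro} (which produces a quasimode of arbitrarily large eigenvalue with prescribed vanishing on $B_g(N,r)$) and the comparison results in Proposition~\ref{p:link-eigenfct-heat-etc} together with the trivial bound $\co_{eig}(\omega)\leq\co_{\Sigma}(\omega)$. The plan is first to deduce the lower bound on $\co_{eig}$ from the family $(\psi_k,\lambda_k)$ by a straightforward contradiction argument with Definition~\ref{def-coco}, and then to propagate the bound to the remaining four constants.

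\textbf{Step 1: Lower bound for $\co_{eig}$.} Fix $0\leq r\leq s_0$ and write $\alpha := d_A(r)R(s_0)$. The goal is $\co_{eig}(B_g(N,r))\geq\alpha$. Suppose for contradiction that $\co_{eig}(B_g(N,r))<\alpha$. Then by definition there exist $\co<\alpha$ and $C_1>0$ such that
\[
\nor{\psi}{L^2(\M)}\leq C_1 e^{\co\sqrt{\lambda}}\nor{\psi}{L^2(B_g(N,r))}
\]
for every eigenpair $(\lambda,\psi)$ of $-\Delta_g$. Apply this to the family $(\lambda_k,\psi_k)$ given by Theorem~\ref{t:agmon-intro}. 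Since $\|\psi_k\|_{L^2(\calS)}=1$ and $\|\psi_k\|_{L^2(B_g(N,r))}\leq C\lambda_k^{C_0}e^{-\alpha(\sqrt{\lambda_k}-C)}$, we obtain
\[
1\leq C_1 C\, \lambda_k^{C_0}\, e^{\alpha C}\, e^{(\co-\alpha)\sqrt{\lambda_k}}.
\]
Because $\co-\alpha<0$, $C_0$ is fixed, and $\lambda_k=\frac{k^2}{R(s_0)^2}+O(k)\to+\infty$, the right-hand side tends to $0$ as $k\to+\infty$, contradicting the lower bound $1$. Hence $\co_{eig}(B_g(N,r))\geq\alpha=d_A(r)R(s_0)$.

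\textbf{Step 2: Propagation to the other constants.} The inequality $\co_{eig}(\omega)\leq\co_\Sigma(\omega)$ (which is noted in Section~\ref{sectlinkintro} as an immediate consequence of the inclusion $E_\lambda\subset E_{\leq\lambda}$) yields
\[
\co_{\Sigma}(B_g(N,r))\geq\co_{eig}(B_g(N,r))\geq d_A(r)R(s_0).
\]
By Proposition~\ref{p:link-eigenfct-heat-etc}, $\co_{eig}(\omega)\leq\co_{wave}(\omega,T)$ for every $T>0$, hence the same lower bound for $\co_{wave}(B_g(N,r),T)$. The statement is vacuous when $T\leq 2\L(\M,B_g(N,r))$, since then $\co_{wave}=+\infty$, and nontrivial for $T>2\L(\M,B_g(N,r))$, where it reads $\co_{wave}(B_g(N,r),T)\geq d_A(r)R(s_0)$. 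Finally, Proposition~\ref{p:link-eigenfct-heat-etc} gives $\frac{\co_{eig}(\omega)^2}{4}\leq\co_{\infty}(\omega)$ and $\frac{\co_{eig}(\omega)^2}{4}\leq\co_{heat}(\omega)$, so squaring Step~1 yields the two last inequalities, namely $\co_{\infty}(B_g(N,r))\geq\frac{(d_A(r)R(s_0))^2}{4}$ and $\co_{heat}(B_g(N,r))\geq\frac{(d_A(r)R(s_0))^2}{4}$.

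\textbf{Expected difficulty.} There is no real obstacle here: all the hard analytic work is done in Theorem~\ref{t:agmon-intro} (the WKB-type construction of the concentrating eigenfunctions together with the Agmon decay estimate) and in Proposition~\ref{p:link-eigenfct-heat-etc} (the abstract comparisons between the various cost exponents). The only thing to check carefully is that the polynomial prefactor $\lambda_k^{C_0}$ and the additive term $e^{\alpha C}$ in the exponential in Theorem~\ref{t:agmon-intro} are harmless, which they are because $\lambda_k\to+\infty$ and the strict inequality $\co<\alpha$ gives an exponential gain that absorbs any polynomial. The whole proof is thus a few lines once the two earlier results are at hand.
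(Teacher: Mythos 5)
Your proposal is correct and follows essentially the same route as the paper: the paper presents the corollary as a direct consequence of Theorem~\ref{t:agmon-intro}, Definition~\ref{def-coco}, and Proposition~\ref{p:link-eigenfct-heat-etc}, and your Steps 1 and 2 simply write out those details — the contradiction argument for $\co_{eig}$ from the eigenfunction decay, followed by the inequality chain $\co_{eig}\leq\co_\Sigma$, $\co_{eig}\leq\co_{wave}$, $\co_{eig}^2/4\leq\co_\infty$, $\co_{eig}^2/4\leq\co_{heat}$.
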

Note also that Theorem~\ref{t:agmon-intro}, combined with the explicit asymptotic expansion~\eqref{e:asympt-dA} of the Agmon distance $d_A$ implies the following result.
\begin{corollary}[Rate of vanishing]
\label{c:rate-vanish}
With $(\lambda_k, \psi_k)$ as in Theorem~\ref{t:agmon-intro}, there exist $C,C_0,k_0>0$ such that, for all $k \in \N$, $k \geq k_0$ and all $r \geq 0$, we have
\bna
\nor{\psi_{k}}{L^2(B(N,r))}\leq Ce^{C\sqrt{\lambda_k}}r^{R(s_0)\sqrt{\lambda_k}-C} ,
\ena
and, in any local chart centered at $N$, we have $\d^{\alpha}\psi_{k}(N)=0$ for all $|\alpha|\leq R(s_0)\sqrt{\lambda_k}-C$.
\end{corollary}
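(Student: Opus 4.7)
}
The plan is to combine Theorem~\ref{t:agmon-intro} with the asymptotic behavior~\eqref{e:asympt-dA} of the Agmon distance near $s=0$, namely $d_A(s) = -\log(s) + O(1)$ as $s \to 0^+$, in order to convert the exponential Agmon decay into an algebraic power of $r$ with exponent growing like $R(s_0)\sqrt{\lambda_k}$; the pointwise vanishing at $N$ is then a straightforward consequence of a Taylor expansion argument for the smooth eigenfunction $\psi_k$.

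First I would extract from~\eqref{e:asympt-dA} (which will be established in Lemma~\ref{lemma-prop-dA}) the bound
$$
d_A(r) \geq -\log(r) - C_1 , \quad \text{for all } 0<r\leq s_0 ,
$$
for some constant $C_1>0$, using both the asymptotic near $0$ and continuity of $d_A$ on $[r_1,s_0]$ for any small fixed $r_1>0$. Plugging this into the estimate of Theorem~\ref{t:agmon-intro} and using $\sqrt{\lambda_k}-C\leq \sqrt{\lambda_k}$, one gets, for $k\geq k_0$ and $0<r\leq s_0$,
$$
\nor{\psi_k}{L^2(B(N,r))} \leq C\lambda_k^{C_0} e^{C_1 R(s_0)(\sqrt{\lambda_k}-C)} r^{R(s_0)(\sqrt{\lambda_k}-C)} .
$$
Next, I would absorb the polynomial prefactor $\lambda_k^{C_0}$ and the exponential $e^{C_1 R(s_0)(\sqrt{\lambda_k}-C)}$ into a single factor $Ce^{C\sqrt{\lambda_k}}$ by enlarging $C$ (which is possible since $\log\lambda_k = o(\sqrt{\lambda_k})$), and absorb the constant $R(s_0)C$ in the exponent of $r$ into a new constant, yielding the desired bound
$\nor{\psi_k}{L^2(B(N,r))}\leq Ce^{C\sqrt{\lambda_k}} r^{R(s_0)\sqrt{\lambda_k}-C}$ for $0<r\leq s_0$. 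Finally, for $r\geq s_0$, the trivial bound $\nor{\psi_k}{L^2(B(N,r))}\leq \nor{\psi_k}{L^2(\calS)}=1$ suffices; verifying that the right-hand side is $\geq 1$ for such $r$ amounts to choosing $C$ large enough (depending on $s_0$ and $R(s_0)$), since $r^{R(s_0)\sqrt{\lambda_k}-C}\geq s_0^{R(s_0)\sqrt{\lambda_k}-C}$ and $e^{C\sqrt{\lambda_k}}s_0^{R(s_0)\sqrt{\lambda_k}}\to\infty$ if $C>R(s_0)|\log s_0|$.

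For the pointwise vanishing, I would use that $\psi_k\in C^\infty(\calS)$ (standard elliptic regularity) and expand in a local chart centered at $N$. Let $m$ be the order of vanishing of $\psi_k$ at $N$, so $\psi_k(x) = P_m(x) + O(|x|^{m+1})$ with $P_m$ a nonzero homogeneous polynomial of degree $m$. Scaling gives $\|P_m\|_{L^2(B(0,r))}=r^{m+1}\|P_m\|_{L^2(B(0,1))}$ (recall $\dim\calS=2$, so the Jacobian contributes $r^2$), whence $\nor{\psi_k}{L^2(B(N,r))}\geq c\, r^{m+1}$ for all small enough $r>0$. Comparing with the estimate just established and letting $r\to 0^+$ forces $m+1 \geq R(s_0)\sqrt{\lambda_k}-C$, so that $\partial^\alpha\psi_k(N)=0$ for all $|\alpha|\leq R(s_0)\sqrt{\lambda_k}-C'$ after renaming the constant. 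The main point requiring care is book-keeping of the constants (in particular ensuring that the extension to $r\geq s_0$ is consistent with the constant $C$ chosen on $(0,s_0]$), but no analytical difficulty arises beyond what has already been done to obtain Theorem~\ref{t:agmon-intro} and the Agmon asymptotics~\eqref{e:asympt-dA}.
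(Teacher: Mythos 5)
Your proof is correct and follows essentially the same route as the paper: convert the Agmon decay of Theorem~\ref{t:agmon-intro} into a power of $r$ via the asymptotic $d_A(s)=-\log s+O(1)$ from Lemma~\ref{lemma-prop-dA}, then deduce the pointwise vanishing at $N$ by a Taylor/scaling argument (which the paper states as the general Lemma~\ref{lmordrevanishinL2} in the appendix and you inline). As a small remark, you correctly use the lower bound $d_A(r)\geq -\log r - C_1$, which is the direction the Agmon estimate actually requires; the paper's proof sketch prints the opposite inequality, apparently a misprint.
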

As on the sphere, these eigenfunctions saturate the maximal vanishing rate predicted by the Donnelly-Fefferman Theorem~\cite{DF:88}.

Note that in these estimates, $R(s_0) \sqrt{\lambda_k} \sim k$ does not depend on the geometry.

\bigskip
The proofs rely on classical semiclassical decay estimates for eigenfunctions~\cite{Simon:83,HS:84}. We refer to the monographs \cite{Helffer:booksemiclassic,DS:book} for the historical background and more references. An additional difficulty here is linked to the degeneracy of the function $R$ close to the north and south poles.

Note also that, to our knowledge, the idea of constructing such examples on surfaces of revolution is due to Lebeau~\cite{Leb:96} and Allibert\cite{Allibert:98}.

\paragraph{The disk}
Recall that $\ID = \{(x,y)\in \R^2 , x^2+y^2 \leq 1\}$.
Our results on the disk are quite similar to the previous results on revolution surfaces. They are proved in Section \ref{sectDisk}.  
Note the construction is more explicit there since it involves Bessel functions.
As in the above example, the concentration is related to an Agmon distance to the maximum of the radius $r$, which corresponds to the boundary $\d\ID$ here. 
\begin{theorem}[Whispering galleries on the disk]
\label{t:agmon-diskintro}
Denote, for $r \in (0,1]$,
\begin{equation}
\label{e:def-dA-disk}
d_A(r)= - \left(\tanh(\alpha(r))-\alpha(r)\right) ,\quad \text{ with } \quad \alpha(r)=\cosh^{-1}(1/r).
\end{equation}
Then, for all $k\in\N$, there exists $\psi_k \in C^{\infty}(\ID)\cap H^1_0(\ID)$,  and $\lambda_k \geq 0$ such that 
$$
\lambda_k=  k^2+ O(k^{4/3}), \qquad \|\psi_k\|_{L^2(\calS)}=1, \qquad - \Delta_g \psi_k = \lambda_k \psi_k .
$$
Moreover, there exist  $C, \beta , k_0 >0$  such that for all $k \geq k_0$ and $0< r \leq 1-\beta \lambda_k^{-1/3}$, we have
\bna
\|\psi_{k}\|_{L^\infty(B(0,r))} \leq \exp \left(-\left( \sqrt{\lambda_k} - C\lambda_k^{1/6} \right)d_A(r)  + C \lambda_k^{1/6} \right) .
\ena
\end{theorem}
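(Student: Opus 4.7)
I would separate variables and reduce to a sharp analysis of Bessel functions near their first zero. In polar coordinates $(r,\theta)\in[0,1]\times\mathbb{T}^1$, every Dirichlet eigenfunction of $-\Delta$ on $\ID$ with angular dependence $e^{ik\theta}$ has the form $\psi_k(r,\theta)=c_k J_k(j_{k,1}r)e^{ik\theta}$, with eigenvalue $\lambda_k=j_{k,1}^2$, where $J_k$ is the Bessel function of the first kind of order $k$, $j_{k,1}$ its first positive zero, and $c_k>0$ the $L^2$-normalization constant. The classical expansion $j_{k,1}=k+\alpha_1 k^{1/3}+O(k^{-1/3})$ (with $\alpha_1>0$ related to the first zero of the Airy function) immediately gives $\lambda_k=k^2+O(k^{4/3})$. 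For the normalization, the orthogonality identity $\int_0^1 J_k(j_{k,1}r)^2 r\,dr=\tfrac12 J_{k+1}(j_{k,1})^2$ combined with Airy-type asymptotics at the turning point, $|J_{k+1}(j_{k,1})|\asymp k^{-1/3}$, yields $c_k=O(k^{1/3})$; this polynomial growth is absorbed harmlessly into the factor $e^{C\lambda_k^{1/6}}$.

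Since $|\psi_k|$ depends only on $|x|$ and, for $k$ large, the first critical point $j'_{k,1}=k+\alpha_1' k^{1/3}+O(k^{-1/3})$ of $J_k$ satisfies $0<\alpha_1'<\alpha_1$, the map $r\mapsto J_k(j_{k,1}r)$ is positive and monotone increasing on $[0,j'_{k,1}/j_{k,1}]$. For $\beta$ large enough (depending on $\alpha_1-\alpha_1'$), every $r\leq 1-\beta\lambda_k^{-1/3}$ lies in this monotone range, so $\|\psi_k\|_{L^\infty(B(0,r))}=c_k J_k(j_{k,1}r)$ and the task reduces to estimating $J_k(j_{k,1}r)$. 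The main tool is Debye's uniform asymptotic expansion $J_\nu(\nu\sech\alpha)=\frac{e^{-\nu(\alpha-\tanh\alpha)}}{\sqrt{2\pi\nu\tanh\alpha}}(1+O(\nu^{-1}))$, valid as $\nu\to\infty$ uniformly for $\alpha$ in compact subsets of $(0,\infty)$. Setting $\nu=k$ and $\sech\widetilde\alpha_k=j_{k,1}r/k$, the cutoff $r\leq 1-\beta\lambda_k^{-1/3}$ guarantees $\widetilde\alpha_k\gtrsim k^{-1/3}$, i.e. $\widetilde\alpha_k$ stays safely outside the Airy boundary layer of thickness $\sim k^{-1/3}$ around the turning point.

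The next step is to compare the Debye exponent $k(\widetilde\alpha_k-\tanh\widetilde\alpha_k)$ with the target $\sqrt{\lambda_k}\,d_A(r)=\sqrt{\lambda_k}(\alpha(r)-\tanh\alpha(r))$. From $j_{k,1}/k=1+O(k^{-2/3})$, one obtains $\cosh\widetilde\alpha_k=k/(j_{k,1}r)=(1/r)(1+O(k^{-2/3}))$, and the inverse function theorem yields $\widetilde\alpha_k-\alpha(r)=O(k^{-2/3}/\sinh\alpha(r))$. Taylor-expanding $f(\alpha)=\alpha-\tanh\alpha$ (so $f'(\alpha)=\tanh^2\alpha$), using $\sqrt{\lambda_k}-k=O(k^{1/3})=O(\lambda_k^{1/6})$, and handling the boundary-layer range $\alpha(r)\sim k^{-1/3}$ separately, produces the decomposition
$k(\widetilde\alpha_k-\tanh\widetilde\alpha_k)=\sqrt{\lambda_k}\,d_A(r)-O(\lambda_k^{1/6})\,d_A(r)-O(\lambda_k^{1/6})$,
which, together with the polynomial prefactor $c_k/\sqrt{2\pi k\tanh\widetilde\alpha_k}$ (at most $O(\lambda_k^{1/6})$ in the worst case), delivers the claimed bound.

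The principal technical difficulty is to track these errors uniformly across the full range $0<r\leq 1-\beta\lambda_k^{-1/3}$. Near $r=1$, the Debye formula degenerates at $\widetilde\alpha_k=0$ and the cutoff $\beta\lambda_k^{-1/3}$ must be tuned precisely so that $\widetilde\alpha_k$ remains in the region of validity; near $r=0$, the Agmon distance $d_A(r)\sim\log(1/r)$ diverges, and one must verify that the multiplicative error $O(\lambda_k^{1/6})\,d_A(r)$ does not dominate the leading $\sqrt{\lambda_k}\,d_A(r)$, which holds because $\sqrt{\lambda_k}/\lambda_k^{1/6}=\lambda_k^{1/3}\to\infty$. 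For full uniformity, especially to control constants near the turning point, I would rely on Olver's uniform asymptotic expansion of $J_\nu$ in terms of Airy functions, which interpolates between the Debye and Airy regimes with explicit remainder control.
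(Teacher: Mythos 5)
Your plan is correct in its overall architecture (Bessel eigenfunctions $J_k(j_{k,1}r)e^{ik\theta}$, asymptotics of $j_{k,1}$, and the prefactor $c_k$ absorbed into $e^{C\lambda_k^{1/6}}$), and in its error-bookkeeping. But it takes a genuinely more demanding route at the key step than the paper does. You propose to bound $J_k(j_{k,1}r)$ via Debye's \emph{asymptotic expansion} (and, to rescue uniformity near the turning point where $\widetilde\alpha_k\gtrsim k^{-1/3}$, to pass to Olver's Airy-uniform expansion with explicit remainders). This would work, but it requires considerable technical care: you correctly flag that $\widetilde\alpha_k$ can lie just outside the Airy layer, and the Debye expansion's relative error is not uniformly $O(\nu^{-1})$ there, so one really does need Olver's two-sided uniform estimate with controlled remainder — a heavier tool than the task needs. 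The paper instead proves the \emph{elementary one-sided inequality}
\[
\bigl|J_n(n/\cosh\alpha)\bigr|\le e^{n(\tanh\alpha-\alpha)},\qquad \alpha\ge 0,\ n\in\N,
\]
directly from the integral representation $J_n(z)=\tfrac1{2\pi}\int_{-\pi}^\pi e^{iz\sin\theta-in\theta}\,d\theta$ by shifting the contour to $\Im(z)=-\alpha$ and then simply taking absolute values; this is a genuine uniform inequality, not an asymptotic, and it is valid without any restriction on $\alpha$. The remaining comparison of $d_A$ evaluated at $r$ and at $z_{n,1}r/n$ is handled using that $|d_A'|$ is decreasing, which plays the role of your inverse-function-theorem Taylor expansion; your monotonicity argument via $j'_{k,1}$ is valid but unnecessary, since the same conclusion follows from $d_A$ being decreasing. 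Your approach buys two-sided asymptotics (useful if you also wanted a lower bound), but for the stated upper bound the paper's inequality is both shorter and strictly more uniform. One small arithmetic slip: from $\int_0^1 J_k(j_{k,1}r)^2 r\,dr=\tfrac12 J_{k+1}(j_{k,1})^2$ and $|J_{k+1}(j_{k,1})|=|J_k'(j_{k,1})|\asymp k^{-2/3}$ (not $k^{-1/3}$), one gets $c_k\asymp k^{2/3}$, consistent with the cited normalization $\|J_k(j_{k,1}\cdot)e^{ik\cdot}\|_{L^2(\ID)}\asymp k^{-2/3}$; this changes nothing in the end since $k^{2/3}$ is still $O(e^{C\lambda_k^{1/6}})$, but worth correcting.
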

That $d_A$ indeed represents an Agmon distance in the present context is justified in the next paragraph. Note that $d_A$ still satisfies $d_A(r) \sim_{r\to 0^+} \log(\frac{1}{r})$ here, so that the analogues of Corollaries~\ref{t:agmon-introenkappa} and~\ref{c:rate-vanish} still holds in this setting.

\paragraph{Remarks on the Agmon distance}
In this paragraph, we compare the three geometries discussed above. In particular, we stress the fact that the results obtained on the sphere are refinements of those on general surfaces of revolution, and explain the similarities in the case of the disk. 

\begin{remark}[Agmon distance on the sphere]
\label{rksphereAgmon}
Note that the coordinates $(s, \theta)$ introduced on the unit sphere are the same as those defining general surfaces of revolution, with $L=\pi$, $s\in (0,\pi)$, $R(s) = \sin(s)$ 
and the maximum of $R$ is reached at $s_0= \frac{\pi}{2}$. 
In particular, recalling the definition of the Agmon distance in~\eqref{e:defbisdA}, we obtain, for $s\in (0,\pi)$,
\bna
d_A(s) = \left| \int_{s_0}^s \sqrt{ \frac{1}{R(y)^2}- \frac{1}{R(s_0)^2}} dy \right| = \left| \int_{\pi/2}^s \sqrt{ \frac{1}{\sin(y)^2}- 1} dy \right| = \left| \int_{\pi/2}^s \frac{\cos(y)}{\sin(y)}dy \right|  = |\log (\sin(s))|.
\ena
This can be rewritten intrinsically as 
$$
d_A(m) = - \log \big(\sin (\dist_g(m,N) )\big) , \quad m \in \S^2 ,\quad \text{(recall $\dist_g(m,N) + \dist_g(m,S) = \pi$)}.
$$
In view of this identity for the sphere, the estimates on the eigenfunctions $\psi_k$ of Theorem~\ref{t:th-sphere} can be reformulated as ($\lambda_k = k (k+1)$)
\begin{align*}
&|\psi_k (s,\theta)| = c_k e^{-k d_A(s)}  \quad \text{ for } s\in[0,\pi], k\in \N ,\\
&\|\psi_k\|_{L^2(B(N,r))}^2 = \frac{c_k^2\pi}{k+1}\frac{e^{-(2k+2)d_A(r)}}{\cos(r)} (1+R) , \quad |R|\leq \frac{\tan(r)^2}{2k+2} \quad \text{ for } r\in[0,\frac{\pi}{2}), k\in \N .
\end{align*}
These two statements (ponctual estimate and fine asymptotics of the $L^2$ norm) are much stronger than those of general result of Theorem~\ref{t:agmon-intro} on general surfaces of revolution.
\end{remark}

\begin{remark}[Agmon distance in the disk]
Recalling the definition of $d_A$ in~\eqref{e:def-dA-disk}, we have 
$\alpha'(r)=-\frac{1}{r^2}\frac{1}{\sqrt{\frac{1}{r^2}-1}}$, so that 
$$
(d_A'(r))^2=\alpha'(r)^2 \left(\frac{1}{\cosh^2(\alpha(r))}-1 \right)^2= \frac{1}{r^2}\frac{1}{1-r^2}(r^2-1)^2=\frac{1}{r^2}-1 , \quad \text{ and }\quad d_A(1)=0.
$$ 
As a consequence, $d_A$ is exactly the Agmon distance to the boundary $r=1$, and we have 
\bna
d_A'(r) = - \sqrt{\frac{1}{r^2}-1}, \quad r \in (0,1] .
\ena
Note again that $d_A(r) \sim_{r\to 0^+} \log(\frac{1}{r})$ and, in particular, the center of the disk is at infinite Agmon distance to the boundary: $d_A(0) = +\infty$.
\end{remark}

\subsubsection{Uniform Lebeau-Robbiano spectral inequalities: upper bound for $\co_\Sigma$}
\label{subsub:LR-Jerison-Lebeau}
The counterpart to Corollary~\ref{c:rate-vanish} is due to Donnelly-Fefferman~\cite{DF:88}, and roughly states that eigenfunctions vanish at most like $r^{C \sqrt{\lambda}+C}$ on balls of radius 
$r$ ($\lambda$ is the eigenvalue). It has been generalized in some sense to sums of eigenfunctions by Jerison and Lebeau~\cite{JL:99}.
We prove here a variant of this result under the form of a uniform Lebeau-Robbiano spectral inequality with observation on small balls.

\begin{theorem}[Uniform Lebeau-Robbiano spectral inequality with observation on small balls]
\label{t:unif-LR-ineq}
Let $(\M,g)$ be a compact Riemannian manifold with (or without) boundary $\d \M$. For all $x_0\in \M$, there exist constants $C_1,C_2>0$ such that for all $r>0$, $\lambda \geq 0$ and $\psi \in E_{\leq\lambda}$, we have
$$
\| \psi\|_{L^2(\M)} \leq e^{\left(C_1\sqrt{\lambda} + C_2\right) \left(1+\log(\frac{1}{r})\right)} \|\psi \|_{L^2(B(x_0,r))} .
$$
\end{theorem}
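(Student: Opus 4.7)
The plan is to combine the classical Lebeau--Robbiano spectral inequality applied to a fixed observation ball with a quantitative doubling-type estimate in the spirit of Donnelly--Fefferman~\cite{DF:88} and Jerison--Lebeau~\cite{JL:99}. Fix $r_0>0$ small enough that $B(x_0, 4r_0)$ is contained in a single chart (interior if $x_0\in\Int(\M)$, a boundary chart otherwise). Applying the spectral inequality~\eqref{e:co-sum} with observation set $\omega_0 = B(x_0, r_0)$ yields constants $A, B>0$ (depending only on $x_0, r_0, (\M,g)$) such that $\|\psi\|_{L^2(\M)} \leq A e^{B\sqrt\lambda} \|\psi\|_{L^2(B(x_0, r_0))}$ for all $\psi\in E_{\leq\lambda}$. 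The theorem will therefore follow once one establishes the doubling-type inequality
\[
\|\psi\|_{L^2(B(x_0, r_0))} \leq e^{(C\sqrt\lambda + C')\log(r_0/r)}\, \|\psi\|_{L^2(B(x_0, r))}, \qquad r \in (0, r_0],\qquad (\ast)
\]
since $\log(r_0/r) \leq \log r_0 + \log(1/r)$ and $\log r_0$ can be absorbed into the additive constant; the regime $r > r_0$ is handled trivially.

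To prove $(\ast)$, I would pass to the standard elliptic augmentation. Writing $\psi = \sum_{\lambda_j \leq \lambda} a_j \psi_j$, set
\[
v(t,x) = \sum_{\lambda_j \leq \lambda} a_j \cosh(t\sqrt{\lambda_j})\, \psi_j(x), \qquad (t,x) \in \R \times \M.
\]
Then $v$ solves the elliptic equation $(\partial_t^2 + \Delta_g) v = 0$ on $\R \times \M$ (with Dirichlet conditions on $\R \times \partial \M$, if any), satisfies $v(0,\cdot) = \psi$ and $\partial_t v(0, \cdot) = 0$, and inherits the a priori bound $\|v(t,\cdot)\|_{L^2(\M)} \leq e^{|t|\sqrt\lambda}\, \|\psi\|_{L^2(\M)}$ from the spectral cutoff. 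A local Carleman estimate for $\partial_t^2 + \Delta_g$ around $(0, x_0)$, with a spherically symmetric weight of the form $\phi(t, x) = \log|(t, x) - (0, x_0)|$ suitably corrected for convexity as in~\cite{DF:88, JL:99}, then yields a sharp local doubling inequality
\[
\|v\|_{L^2(\mathcal{B}(2r))} \leq e^{C\sqrt\lambda + C'}\, \|v\|_{L^2(\mathcal{B}(r))}, \qquad 0 < r \leq r_0/4,
\]
with $\mathcal{B}(s) = (-s, s) \times B(x_0, s)$, where the a priori bound is used to absorb the far-field contribution. Iterating this inequality over $\sim \log(r_0/r)$ dyadic scales between $r$ and $r_0/2$ produces
\[
\|v\|_{L^2(\mathcal{B}(r_0/2))} \leq e^{(C\sqrt\lambda + C')\log(r_0/r)}\, \|v\|_{L^2(\mathcal{B}(r))}.
\]

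Finally, to transfer this estimate back to $\psi$, I would exploit $v(0, \cdot) = \psi$ and $\partial_t v(0, \cdot) = 0$: a Taylor expansion in $t$ combined with $\partial_t^2 v = -\Delta v$ gives $\|v\|_{L^2(\mathcal{B}(r))} \lesssim r^{1/2} \|\psi\|_{L^2(B(x_0, r))} + r^{5/2}\, \lambda\, e^{r\sqrt\lambda}\, \|\psi\|_{L^2(\M)}$ and, in the other direction, a matching lower bound $\|\psi\|_{L^2(B(x_0, r_0/4))} \lesssim r_0^{-1/2}\, \|v\|_{L^2(\mathcal{B}(r_0/2))}$ modulo lower-order terms. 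The global cross-term $\|\psi\|_{L^2(\M)}$ is absorbed via the first step. Plugging these into the doubling inequality for $v$ gives $(\ast)$ and hence the theorem. The main obstacle is the Carleman estimate producing a doubling inequality with exponent \emph{linear} in $\sqrt\lambda$ (rather than merely polynomial, as would come from a crude three-balls iteration): this is the crux of the Donnelly--Fefferman/Jerison--Lebeau argument and requires a careful choice of the radial weight. A secondary subtlety is the $v \leftrightarrow \psi$ transfer, whose global cross-terms are precisely compensated by the Lebeau--Robbiano inequality invoked at the very beginning.
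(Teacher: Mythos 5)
Your overall plan (elliptic lift, Carleman/interpolation, classical Lebeau--Robbiano to control global terms) is the right skeleton and close to the paper's, but the transfer $v \leftrightarrow \psi$ that you flag as a ``secondary subtlety'' is in fact where the argument breaks down. You take the Neumann-type lift $v(t,\cdot)=\cosh(t\sqrt{-\Delta_g})\psi$ and want $\|v\|_{L^2(\mathcal{B}(r))} \lesssim r^{1/2}\|\psi\|_{L^2(B(x_0,r))} + r^{5/2}\lambda e^{r\sqrt\lambda}\|\psi\|_{L^2(\M)}$, claiming the global cross-term is absorbed by the Lebeau--Robbiano bound from Step~1. It is not: plugging $\|\psi\|_{L^2(\M)} \leq A e^{B\sqrt\lambda}\|\psi\|_{L^2(B(x_0,r_0))}$ into the iterated doubling chain produces a prefactor of order $e^{(C\sqrt\lambda+C')\log(r_0/r)}\, r^{5/2}\, \lambda\, e^{(B+r)\sqrt\lambda}$ in front of $\|\psi\|_{L^2(B(x_0,r_0))}$. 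Since $e^{(C\sqrt\lambda+C')\log(r_0/r)} r^{5/2} = r_0^{\,C\sqrt\lambda+C'} \, r^{\,5/2-(C\sqrt\lambda+C')}$ blows up as $r\to 0^+$ as soon as $C\sqrt\lambda+C' > 5/2$, the cross-term cannot be absorbed; it dominates precisely in the small-$r$, large-$\lambda$ regime the theorem addresses. (For the maximally vanishing eigenfunctions of Theorem~\ref{t:agmon-intro}, $\|\psi\|_{L^2(B(x_0,r))}\sim e^{-c\sqrt\lambda\log(1/r)}\|\psi\|_{L^2(\M)}$ is vastly smaller than $r^{5/2}\lambda e^{r\sqrt\lambda}\|\psi\|_{L^2(\M)}$.) The obstruction is structural: for $t\neq 0$, $v(t,\cdot)|_{B(x_0,r)}$ involves the spectrally distorted coefficients $a_j\cosh(t\sqrt{\lambda_j})$ and is not controlled by $\psi|_{B(x_0,r)}$ alone, so no Taylor expansion in $t$ can avoid a global error.

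The paper's proof avoids this by choosing the Dirichlet lift $F(s,\cdot)=\frac{\sinh(s\sqrt{-\Delta_g})}{\sqrt{-\Delta_g}}\Pi_+\psi + s\,\Pi_0\psi$ (so that $F|_{s=0}=0$ and $\d_s F|_{s=0}=\psi$) and proving a \emph{boundary}-observation interpolation estimate on small balls (Lemma~\ref{t:unif-LR}), obtained from a uniform Lipschitz Carleman estimate (Appendix~\ref{s:carleman}) by a scaling argument. The observation term there is exactly $\|\d_s F|_{s=0}\|_{L^2(B_{2r}\cap\{0\}\times\M)} = \|\psi\|_{L^2(B(x_0,2r))}$, with no far-field cross-term. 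Chaining this with the Aronszajn-type local interpolation of Lemma~\ref{l:aronsajn-arleman-interp} (exponent $\alpha_r\sim 1/\log(1/r)$, which plays the role your dyadic doubling iteration is meant to play) and the global Lebeau--Robbiano interpolation of Lemma~\ref{t:global-interp-LR} yields the theorem. To salvage your approach you would need a Neumann-trace analogue of Lemma~\ref{t:unif-LR} observing $v|_{t=0}$ directly at the hypersurface $\{t=0\}$ rather than reconstructing it from an interior norm — in other words, the small-ball boundary Carleman step is not dispensable.
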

Note that a careful inspection of the proofs (of all Carleman estimates used, that are stable by small perturbations) shows that the constant $C_1,C_2$ can actually be taken independent of the point $x_0$.
Note that we prove the result in the context of a Lipschitz metric $g$ and in the case of Neumann boundary conditions as well.
This uniform Lebeau-Robbiano spectral inequality directly implies Theorem~\ref{t:heatlog} using~\cite[Corollary~1]{Miller:10} (recalled in Lemma~\ref{lmspectraldonneheat} below).

\bigskip
One of the tools we develop  for the proof of Theorem~\ref{t:heatlog} also yields a uniform Lebeau-Robbiano in a class of Lipschitz metrics. Even though not completely related to the main results of the paper, we choose to state is here since we believe it is of independent interest.

On the manifold $\M$, we denote here by $\mathfrak{g}$ a metric and  $(\lambda_j^\mathfrak{g})_{j \in \N}$ the spectrum of the associated Laplace-Beltrami operator $-\Delta_\mathfrak{g}$ (with Dirichlet boundary condition if $\d \M \neq \emptyset$) and by $(\psi_{\lambda_j}^\mathfrak{g})_{j \in \N}$ an associated Hilbert basis of eigenfunctions, in order to stress the dependence with respect to the metric. We also write
$$
E_{\leq\lambda}^\mathfrak{g} = \vect \{\psi_{\lambda_j}^\mathfrak{g}, \lambda_j^\mathfrak{g} \leq \lambda\} ,
$$
which of course, depends on the metric $\mathfrak{g}$.
 Now, given a reference Lipschitz metric $\mathfrak{g}_0$, we define
 $$
 \Gamma_{\eps, D}(\M, \mathfrak{g}_0) = \left\{ \mathfrak{g} \text{ Lipschitz continuous metric on } \M, \quad \|\mathfrak{g}\|_{W^{1,\infty} (\M)} \leq D, \quad \eps \mathfrak{g}_0 \leq \mathfrak{g} \leq D \mathfrak{g}_0\right\} .
$$
 
\begin{theorem}[Uniform Lebeau-Robbiano spectral inequality in a class of metrics]
\label{t:uniform-LR-ineq-metric}
Let $\M$ be a compact Riemannian manifold with (or without) boundary $\d \M$, $\mathfrak{g}_0$ be a Lipschitz continuous Riemannian metric on $\M$, and $\omega \subset \M$ a nonempty open set. Then, for all $D\geq\eps>0$, there exist constants $C, c>0$ such that for all $\mathfrak{g} \in \Gamma_{\eps, D}(\M, \mathfrak{g}_0)$, $\lambda \geq 0$ and $w \in E_{\leq\lambda}^{\mathfrak{g}}$, we have
\begin{equation}
\label{e:unif-LR-ineq-metric}
\| w \|_{L^2(\M)} \leq Ce^{c \sqrt{\lambda}} \| w \|_{L^2(\omega)} .
\end{equation}
\end{theorem}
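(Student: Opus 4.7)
\textbf{The plan} is to execute the Lebeau--Robbiano scheme while tracking every constant to ensure it depends only on $(\M,\mathfrak{g}_0,\omega,\eps,D)$. On the cylinder $Z := (-S,S)\times \M$, for $S>0$ fixed, introduce the augmented elliptic operator $P_\mathfrak{g} := \d_s^2 + \Delta_\mathfrak{g}$. Given $w = \sum_{\lambda_j^\mathfrak{g}\leq \lambda} a_j \psi_{\lambda_j}^\mathfrak{g} \in E_{\leq\lambda}^\mathfrak{g}$, set
\[
F(s,x) = \sum_{\lambda_j^\mathfrak{g}\leq \lambda} a_j \frac{\sinh(s\sqrt{\lambda_j^\mathfrak{g}})}{\sqrt{\lambda_j^\mathfrak{g}}} \psi_{\lambda_j}^\mathfrak{g}(x)
\]
(with the usual convention for the zero eigenvalue), so that $P_\mathfrak{g} F = 0$ on $Z$, $F|_{s=0} = 0$, $\d_s F|_{s=0}=w$, and $F$ satisfies the same Dirichlet condition as $w$ on $(-S,S)\times\d\M$. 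Orthogonality of the $\psi_{\lambda_j}^\mathfrak{g}$ in $L^2(\M)$ (equipped with the $\mathfrak{g}$-volume form), together with the uniform equivalence $\eps \mathfrak{g}_0 \leq \mathfrak{g} \leq D\mathfrak{g}_0$, yields $\|F\|_{H^1(Z)} \leq C e^{cS\sqrt{\lambda}}\|w\|_{L^2(\M)}$ with $C,c$ depending only on $(\eps,D,S)$.

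\textbf{Propagation of smallness via a uniform Carleman estimate.} The next step is to invoke the uniform Carleman estimate for $P_\mathfrak{g}$ over metrics in $\Gamma_{\eps,D}(\M,\mathfrak{g}_0)$ that is announced as one of the intermediate tools developed in the paper for Theorem~\ref{t:heatlog}. Applied with pseudo-convex weights whose sub-level sets sweep from $(-S,S)\times\omega$ to a neighborhood $V$ of $\{0\}\times\M$ (by the standard iteration/three-cylinder argument, combined with a boundary Carleman estimate near $(-S,S)\times\d\M$ if $\d\M\neq\emptyset$), this yields an interpolation inequality of the form
\[
\|F\|_{L^2(V)} \leq C \|F\|_{L^2((-S,S)\times\omega)}^{\alpha} \|F\|_{H^1(Z)}^{1-\alpha},
\]
for some $\alpha \in (0,1)$ and $C$ uniform over the class. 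Because $F|_{s=0}=0$ and $\d_s F|_{s=0}=w$, a Taylor expansion in $s$ provides a lower bound $\|F\|_{L^2(V)}^2 \geq c \|w\|_{L^2(\M)}^2$ when $V$ is a thin slab about $\{0\}\times \M$.

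\textbf{Conclusion.} The same sinh-estimate restricted to $\omega$ gives $\|F\|_{L^2((-S,S)\times \omega)} \leq C e^{cS\sqrt{\lambda}}\|w\|_{L^2(\omega)}$. Inserting the three estimates into the interpolation inequality leads to
\[
\|w\|_{L^2(\M)} \leq C e^{cS\sqrt{\lambda}}\|w\|_{L^2(\omega)}^{\alpha}\|w\|_{L^2(\M)}^{1-\alpha},
\]
and Young's inequality absorbs the factor $\|w\|_{L^2(\M)}^{1-\alpha}$, yielding~\eqref{e:unif-LR-ineq-metric} with constants depending only on $(\M,\mathfrak{g}_0,\omega,\eps,D)$.

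\textbf{The main obstacle} is unsurprisingly the uniform Carleman estimate itself. Carleman estimates for elliptic operators with Lipschitz coefficients are classical (Imanuvilov--Yamamoto, Koch--Tataru), but the new ingredient here is that the pseudo-convexity of the weight and all constants depend only on the ellipticity bounds $\eps,D$ and on $\|\mathfrak{g}\|_{W^{1,\infty}}$, \emph{not} on further regularity of $\mathfrak{g}$ or on any specific feature of $\mathfrak{g}$. The natural strategy is to choose the weight $\varphi$ intrinsically from $\mathfrak{g}_0$ (so that it is pseudo-convex for every $\mathfrak{g}$ in the class, up to shrinking neighborhoods) and to carry out the conjugation/commutator analysis via integrations by parts that never differentiate $\mathfrak{g}$ beyond its gradient, absorbing the first-order terms coming from $\nabla \mathfrak{g}$ by the large positive bulk of the Carleman estimate. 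Once this uniform Carleman estimate is in hand, the Lebeau--Robbiano machinery above is a direct uniform adaptation of the standard argument.
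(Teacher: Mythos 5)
The overall architecture you set up (cylinder extension via the $\sinh$-function, a uniform Lipschitz Carleman estimate with constants governed only by $\eps,D$, then a quantitative propagation-of-smallness argument) is the right one and matches the paper's strategy. But there is a genuine error in the concluding step, and it is not cosmetic.

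You use the \emph{bulk} observation $\|F\|_{L^2((-S,S)\times\omega)}$ in the interpolation inequality and then assert that ``the same sinh-estimate restricted to $\omega$ gives $\|F\|_{L^2((-S,S)\times\omega)} \leq C e^{cS\sqrt{\lambda}} \|w\|_{L^2(\omega)}$.'' This is false. The estimate $\|F\|_{H^1(Z)} \leq C e^{cS\sqrt{\lambda}} \|w\|_{L^2(\M)}$ is a consequence of the $L^2(\M)$-orthogonality of the $\psi_{\lambda_j}^{\mathfrak{g}}$; restricted to $\omega$, that orthogonality is destroyed, and the cross terms $\int_\omega \psi_{\lambda_j}^{\mathfrak{g}} \psi_{\lambda_k}^{\mathfrak{g}}\, d\Vol_{\mathfrak{g}}$ do not vanish. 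Concretely, for $w = a_1\psi_1 + a_2\psi_2$ with $\lambda_1\neq\lambda_2$, the $s$-dependent weights $\sinh(s\sqrt{\lambda_j})/\sqrt{\lambda_j}$ rescale the two components differently, so delicate cancellation in $\|w\|_{L^2(\omega)}$ can be destroyed in $\|F(s,\cdot)\|_{L^2(\omega)}$, and integrating in $s$ does not recover it. In short, a sum that is small on $\omega$ does not stay small on $(-S,S)\times\omega$ after passing to $F$; the two quantities are not comparable in the regime where the spectral inequality is interesting.

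The fix, and what the paper does, is to use the \emph{boundary trace} observation $\|\d_s F|_{s=0}\|_{L^2(\omega)}$, which equals $\|w\|_{L^2(\omega)}$ exactly by construction. This requires a Carleman (or interpolation) estimate with an observation term at the interface $\{0\}\times\omega$ of the cylinder, not merely in the interior. The paper achieves this by choosing a weight $\psi$ (Lemma~\ref{l:psi-jerome}) whose normal derivative has prescribed sign on the different parts of $\d([0,S_0]\times\M)$ — so that $\{0\}\times\omega$ is the only ``bad'' piece and produces the observation term — and then by proving a single global uniform Carleman estimate (Theorem~\ref{t:Carleman-global}), applied directly to $v$, instead of an interpolation inequality followed by absorption. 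Your iterative three-cylinder scheme would also work if you insert, at the $\{s=0\}$-step, a boundary Carleman estimate of the type in Lemma~\ref{t:unif-LR}; but as written, the chain breaks at the bulk-to-boundary reduction.
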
 
Note that the above estimate is valid whatever the choice of $L^2$-norm (i.e. w.r.t. $\mathfrak{g}$ or $\mathfrak{g}_0$) since all these norms are uniformly equivalent for metrics $\mathfrak{g}$ the class $\Gamma_{\eps, D}(\M, \mathfrak{g}_0)$.
This result could be reformulated by saying that~\eqref{e:unif-LR-ineq-metric} holds for all $w \in \bigcup_{\mathfrak{g} \in \Gamma_{\eps, D}(\M, \mathfrak{g}_0)}  E_{\leq\lambda}^{\mathfrak{g}}$.

This uniform Lebeau-Robbiano spectral inequality directly implies the following uniform estimate on the cost of the heat equation, using~\cite[Corollary~1]{Miller:10}, recalled in Lemma~\ref{lmspectraldonneheat} below (in which the constants are explicitely computed in terms of the constants in the spectral inequality).
\begin{corollary}
Let $\M$ a compact Riemannian manifold with (or without) boundary $\d \M$, $\mathfrak{g}_0 \in \mathcal{T}^2_{W^{1,\infty}(\M)}$ be a Riemannian metric on $\M$, and $\omega \subset \M$ a nonempty open set. Then, for all $D\geq\eps>0$, there exist constants $C, \mathfrak{K}>0$ such that for all $\mathfrak{g} \in \Gamma_{\eps, D}(\M, \mathfrak{g}_0)$, we have
\bna
 \nor{e^{T\Delta_{\mathfrak{g}}}u}{L^2(\M)}^2  \leq Ce^{\frac{2\mathfrak{K}}{T}} \int_0^T \nor{e^{t\Delta_{\mathfrak{g}}}u}{L^2(\omega)}^2 dt , \quad \text{ for all $T>0$ and all $u \in L^2(\M)$.}
 \ena
\end{corollary}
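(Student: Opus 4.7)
The strategy is to combine Theorem~\ref{t:uniform-LR-ineq-metric} with the Lebeau-Robbiano-type implication of \cite[Corollary~1]{Miller:10} (recalled as Lemma~\ref{lmspectraldonneheat} below), taking care that every constant produced in the latter depends only on the two constants appearing in the spectral inequality, and not on any other feature of the metric.

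First, I would fix $(\M,\mathfrak{g}_0)$, $\omega$, and $D\geq\eps>0$, and apply Theorem~\ref{t:uniform-LR-ineq-metric} to obtain constants $C_0=C_0(\eps,D,\M,\mathfrak{g}_0,\omega)$ and $c_0=c_0(\eps,D,\M,\mathfrak{g}_0,\omega)$ such that
\begin{equation*}
\|w\|_{L^2(\M,d\vol_\mathfrak{g})} \leq C_0 e^{c_0\sqrt{\lambda}}\|w\|_{L^2(\omega,d\vol_\mathfrak{g})}, \qquad \lambda\geq 0,\ w\in E_{\leq\lambda}^{\mathfrak{g}},
\end{equation*}
holds for every $\mathfrak{g}\in\Gamma_{\eps,D}(\M,\mathfrak{g}_0)$. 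Here I use that the volume densities of $\mathfrak{g}$ and $\mathfrak{g}_0$ are uniformly comparable on $\Gamma_{\eps,D}$, so that whatever reference $L^2$-norm Theorem~\ref{t:uniform-LR-ineq-metric} is stated in, one can transfer it to the natural $L^2(\M,d\vol_\mathfrak{g})$ norm at the cost of a multiplicative factor absorbed into $C_0$.

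Second, I would invoke Lemma~\ref{lmspectraldonneheat}. The only two ingredients needed by the Lebeau-Robbiano telescoping argument it encodes are (i) a spectral inequality with constants $C_0,c_0$, and (ii) the dissipation bound $\|e^{t\Delta_\mathfrak{g}}u\|_{L^2(\M,d\vol_\mathfrak{g})}\leq e^{-\lambda t}\|u\|_{L^2(\M,d\vol_\mathfrak{g})}$ for every $u$ spectrally localized above $\lambda$. Property (ii) holds with universal constant $1$ for every Riemannian metric, as a direct consequence of the spectral theorem applied to the nonnegative self-adjoint operator $-\Delta_\mathfrak{g}$ on $L^2(\M,d\vol_\mathfrak{g})$, so it is automatically uniform on $\Gamma_{\eps,D}$. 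Since Miller's statement produces $C$ and $\mathfrak{K}$ as explicit functions of $C_0$ and $c_0$ only, plugging the uniform $(C_0,c_0)$ into these formulas yields constants $C,\mathfrak{K}$ that are uniform over $\mathfrak{g}\in\Gamma_{\eps,D}(\M,\mathfrak{g}_0)$. Reverting once more between $d\vol_\mathfrak{g}$ and $d\vol_{\mathfrak{g}_0}$ if needed — with the same uniform equivalence factor — gives the announced observability estimate.

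The only genuinely non-routine point is the uniformity of $(C_0,c_0)$, which is precisely the content of Theorem~\ref{t:uniform-LR-ineq-metric}; everything else is book-keeping around Miller's abstract deduction and the spectral-theoretic contraction of the heat semigroup. In particular, no further Carleman estimate is needed at this stage, since all the metric-dependence has already been encapsulated in the uniform spectral inequality.
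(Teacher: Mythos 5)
Your proposal is correct and follows exactly the paper's intended route: the corollary is stated in the paper without a formal proof precisely because it is the combination of Theorem~\ref{t:uniform-LR-ineq-metric} with Lemma~\ref{lmspectraldonneheat}, whose constants are explicit in $(C_0,c_0)$. The only cosmetic imprecision is that Lemma~\ref{lmspectraldonneheat} takes as input a spectral heat observability $\nor{e^{T\Delta}\y_0}{L^2(\M)}^2\leq Ce^{2a\sqrt\lambda+2b/T}\int_0^T\nor{e^{t\Delta}\y_0}{L^2(\omega)}^2dt$ rather than the spectral inequality itself, but this intermediate step is exactly the routine ``spectral inequality plus semigroup contraction'' deduction you describe via ingredient (ii), and it introduces no non-uniform constant.
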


\subsubsection{The case of a barrel: upper bound for $\co_{wave}$ and $\co_{heat}$}
\label{subsubbarrel}
To conclude with the upper bounds on the constant, we present in this section some applications of results obtained by Allibert in \cite{Allibert:98}.
In case of a ``barrel-type surface'' with boundary (a geometric setting close to that of surfaces of revolution described above), Allibert estimates the att
tainable space for the controlled wave equation. As corollaries, we deduce from this result estimates of $\co_{wave}$ and, in view of Proposition~\ref{propheatwave}, of $\co_{heat}$.

We first present the geometric context which (quite similar to the one of surface of revolution described above). 
In this section, $\M=\calS$ is a surface of revolution of $\R^{3}$ with boundary, parametrized by the equation $$\calS = \{ (x,y,z)\in \R^3 , z\in [a,b] , x^{2}+y^{2}=\mathsf{R}(z)\} , \quad $$ where $\mathsf{R}$ is a strictly positive smooth function on $[a,b]$, that admits a unique local (and therefore global) non degenerate maximum (i.e. $\mathsf{R}''(c)<0$) at one point $c \in (a,b)$. The control is a boundary control from the  bottom side, that is $\Gamma=\{(x,y,a)\in \R^{3}; x^{2}+y^{2}=\mathsf{R}(a) \}$. We also describe $\calS$ by $(z,\theta)$, with $(x,y) =( \mathsf{R}(z)\cos\theta ,\mathsf{R}(z)\sin\theta ) $

We refer to Remark~\ref{rkautreparam} to explain the link between the two parametrizations of revolution surfaces by $s$ and $z$ (and in particular, that we may write $z=z(s)$ and $R(s)= \mathsf{R}(z(s))$).

As before, we define the Agmon distance to the point $c$. With the parametrization of the embedding into $\R^{3}$, it gives the following definition (note that it is almost the same as \eqref{e:defbisdA} but in different coordinates):
\bna
d_{A}(z)=\left| \int_{c}^z \sqrt{1+\mathsf{R}'^{2}(y)}\sqrt{ \frac{1}{\mathsf{R}(y)^2}- \frac{1}{\mathsf{R}(c)^2}} dy \right|  .
\ena 
We also need the following definition of a critical time $T_{1}$ (see Allibert \cite{Allibert:98} for more details), which, roughly speaking, represents the smallest period of the geodesic flow, modulo rotation. More precisely, the principal symbol of the wave operator on $\R\times \calS$ is given by
\bna
p(t,z,\theta,\tau,\zeta,\eta)=\frac{\zeta^{2}}{1+\mathsf{R}'^{2}(z)}+\frac{\eta^{2}}{\mathsf{R}^{2}(z)}-\tau^{2} ,
\ena
where $(\tau,\zeta,\eta)$ denote the dual variable to $(t,z,\theta)$.
For any bicaracteristic $\gamma$ of $p$, bouncing on the boundary according to the reflection law $\zeta \rightarrow -\zeta$, we denote $T(\gamma)$ the  smallest period of the function $\Pi_{z}(\gamma)$ where $\Pi_{z}$ is the projection on the component $z$.

Then, $T_{1}$ is defined by 
\bna
T_{1}=\sup_{\gamma \textnormal{ bicar}}T(\gamma), 
\ena
and we have $T_{1}\geq 2\L(\M,\Gamma)$ (this critical time is larger than the time of unique continuation from $\Gamma$).

In this context, we define similarly $\co_{heat} (\Gamma)$ and $\co_{wave} (\Gamma,T)$ with exactly the same definition as in \eqref{defcoheat} and Definition \ref{def-coco} with $\nor{u}{L^2([0,T]\times \omega)}$ replaced by $\nor{\partial_{\nu}u}{L^2([0,T]\times \Gamma)}$ in \eqref{e:co-heat} and \eqref{e:co-wave}. Note that $\partial_{\nu}u$ is in $L^2([0,T]\times \Gamma)$ for initial data in $L^2$ (resp. $H^1_0\times L^2$) for the heat (resp. wave) equation thanks to hidden regularity. We deduce from~\cite{Allibert:98} the following result.

\begin{theorem}
\label{t:positrev}
Under the above geometric assumptions, we have the estimates
\begin{align}
&\co_{wave} (\Gamma,T)\leq d_A(\Gamma), \quad \text{ for all }  T>T_1, \label{Allibertwave}\\
&\co_{heat}(\Gamma)\leq \alpha (T_1(\Gamma)^2+ d_A(\Gamma)^{2}) , \label{Allibertheat}
\end{align}
for some universal constant $\alpha>0$.
\end{theorem}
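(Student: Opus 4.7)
The plan is to extract the wave estimate \eqref{Allibertwave} directly from Allibert's analysis in \cite{Allibert:98} of the attainable/observable space for the controlled wave equation on a barrel, and then to obtain the heat estimate \eqref{Allibertheat} as a mechanical application of Proposition~\ref{propheatwave}. The underlying picture is that the worst solutions for observability from $\Gamma$ are the whispering-gallery modes concentrating at the equator $z=c$, whose decay rate towards the boundary $\Gamma$ is governed exactly by the Agmon distance $d_A$ associated with the symbol $p(t,z,\theta,\tau,\zeta,\eta)$ restricted to $\eta \neq 0$. Allibert's result says that this decay rate is sharp, i.e.\ no further concentration occurs, provided the observation time exceeds the maximal period $T_1$ of the folded flow.

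First, I would translate Allibert's characterization of the reachable/observable space into the weak-observability form \eqref{e:co-wave-bis} with the boundary observation $\partial_\nu u$ replacing the interior one. Concretely, his Gevrey-type description of attainable data amounts to saying that for every $T>T_1$ and every $\co'>d_A(\Gamma)$ there exists $C'>0$ with
\[
\|(u_0,u_1)\|_{H^1_0(\M)\times L^2(\M)} \leq C' e^{\co' \Lambda} \|\partial_\nu u\|_{L^2((0,T)\times\Gamma)},
\quad \Lambda = \frac{\|(u_0,u_1)\|_{H^1_0(\M)\times L^2(\M)}}{\|(u_0,u_1)\|_{L^2(\M) \times H^{-1}(\M)}},
\]
for all solutions $u$ of the wave equation on $\calS$. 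By the boundary analogue of Definition~\ref{def-coco}, this reads $\co_{wave}(\Gamma,T) \leq \co'$ for every $\co' > d_A(\Gamma)$, hence $\co_{wave}(\Gamma,T) \leq d_A(\Gamma)$, which is exactly \eqref{Allibertwave}. The role of the time condition $T>T_1$ is that, modulo the rotation invariance in $\theta$, every bicharacteristic of $p$ meets $\Gamma$ within time $T_1$, so that low-frequency components are controlled by a genuine geometric control argument, while high-frequency concentration at the equator is controlled by the Agmon-type exponential weight.

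Once \eqref{Allibertwave} is established, I would apply Proposition~\ref{propheatwave} with any fixed $S > T_1$, for instance $S = 2T_1$. This yields
\[
\co_{heat}(\Gamma) \leq \alpha_1 S^2 + \alpha_2 \co_{wave}(\Gamma,S)^2 \leq 4\alpha_1 T_1^2 + \alpha_2 d_A(\Gamma)^2 \leq \alpha\bigl(T_1^2 + d_A(\Gamma)^2\bigr),
\]
with $\alpha = \max(4\alpha_1,\alpha_2)$, which is the desired \eqref{Allibertheat}. One small point to verify is that Proposition~\ref{propheatwave}, stated in the excerpt for interior observations, adapts verbatim to boundary observations, since the argument of \cite{EZ:11} on which it relies is purely functional and uses only the weak wave-observability inequality.

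The substantive difficulty lies entirely in the first step: faithfully importing Allibert's statement, whose original formulation is in terms of a Gevrey-type characterization of reachable states and relies on a fine ODE analysis after separation of variables $\theta \to e^{ik\theta}$, and identifying the exponent appearing in the resulting weak observability with the Agmon distance $d_A(\Gamma)$ defined via the embedding. The passage from Allibert's reduced one-dimensional estimates (where the turning points of the radial semiclassical problem give the exponent $\int_c^z (1/\mathsf R(y)^2 - 1/\mathsf R(c)^2)^{1/2}(1+\mathsf R'(y)^2)^{1/2}\,dy$) to the geometric quantity $d_A(\Gamma)$ on $\calS$ is direct once the coordinates are matched, but requires careful bookkeeping that I would defer to the detailed proof.
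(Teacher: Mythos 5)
Your proposal follows essentially the same route as the paper: import Allibert's quantitative observability of the wave equation from $\Gamma$, recognize the exponent as $d_A(\Gamma)$, conclude $\co_{wave}(\Gamma,T)\leq d_A(\Gamma)$ for $T>T_1$, then apply Proposition~\ref{propheatwave} with $S=2T_1$ (or any fixed multiple of $T_1$) to get the heat bound with a universal $\alpha$. The one step you treat too lightly is your claim that Allibert's Gevrey/analytic-space description ``amounts to'' the weak observability inequality~\eqref{e:co-wave-bis}: Allibert's result is naturally stated as $\|e^{-C_0\sqrt{-\Delta_g}}(u_0,u_1)\|_{H^1_0\times L^2}\leq C\|\partial_\nu u\|_{L^2((0,T)\times\Gamma)}$, and converting this to~\eqref{e:co-wave}--\eqref{e:co-wave-bis} is not a rephrasing but an actual argument (the paper's Proposition~\ref{lienanalyticekwave}), which splits $(u_0,u_1)$ into low frequencies $\leq\mu$, bounded by $e^{C_0\mu}\|e^{-C_0\sqrt{-\Delta_g}}\cdot\|$, and high frequencies $>\mu$, bounded by $\frac1\mu\|(u_0,u_1)\|_{H^1_0\times L^2}$; without this spectral cutoff your inference from Allibert's theorem to the stated form of the wave observability constant is incomplete. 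Your remarks on the boundary-observation version of Proposition~\ref{propheatwave} and on matching the two coordinate definitions of $d_A$ are correct and in line with what the paper does.
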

The first estimate~\eqref{Allibertwave} follows simply from \cite[Th\'eor\`eme~2]{Allibert:98} (see Proposition \ref{lienanalyticekwave} below), which is stated in terms of analytic spaces with respect to the rotation variable $\theta$. Then, \eqref{Allibertwave} implies \eqref{Allibertheat} thanks to Proposition \ref{propheatwave}.
Note that~\eqref{Allibertwave} also proves an analogue of Theorem~\ref{t:agmon-intro} in this geometry, so that in fact:
\bnan
\label{e:estim-=Allibert}
\co_{eig} (\Gamma)  = d_A(\Gamma), \quad \text{ and }\quad \co_{wave} (\Gamma,T) = d_A(\Gamma),  \quad \text{ for all }  T>T_1 .
\enan
He also proves upper and lower estimates for $T \in (2\L(\M,\Gamma), T_1)$ (which do not coincide). The proof of Theorem~\ref{t:positrev} in Proposition \ref{lienanalyticekwave} yields the according estimates of $\co_{wave} (\Gamma,T)$.

\subsection{Previous results}
\label{sectpreviousres}
Except for the bounds \eqref{e:estim-=Allibert} following from Allibert's result and the computation of $\co_\infty(\{0\})$ on $(0,L)$ in \cite{FR:71}, we are not aware of other situations in which the constants described in the previous paragraph are known exactly. 
We collect in this section previous results on the constant $\co_{heat}$ and $\co_{wave}$, which received a lot of attention in the past fifteen years. 

\paragraph{Parabolic equations in dimension one}
The most studied case concerns the constant $\co_{heat}$, with observation/control at the boundary in the one dimensional case. 
Yet, it seems that the constant $\co_{heat} (\{-1,1 \})$ is still unknown.
Note that the latter has a particular importance since it has applications to higher dimensions (with geometric conditions) via the transmutation method of Luc Miller~\cite{Miller:06b}. 

Here, we list previous results on $(-1,1)$ with Dirichlet control on both side of the interval. Note also that each improvement of the constant was also the occasion of finding new techniques.
\begin{itemize}
\item $\co_{heat} (\{-1,1 \}) \leq 2 \left(\frac{36}{37} \right)^2 $ Miller \cite{Miller:06b}, using the transmutation method;
\item $\co_{heat} (\{-1,1 \}) \leq \frac{3}{4}$ Tenenbaum-Tucsnak \cite{TT:07}, using some results of analytic number theory;
\item $\co_{heat} (\{-1,1 \})\geq \frac12$, Lissy \cite{Lissy:15}, using complex analysis arguments;
\item $\co_{heat} (\{-1,1 \}) \leq 0,7$, Dard\'e-Ervedoza \cite{Darde:17}, combining some Carleman estimates and complex analysis.
\end{itemize}
Note that in this context, the analogue of Conjecture \ref{c:Miller-conj} would be $\co_{heat} (\{-1,1 \})=\frac{1}{4}$, which \cite{Lissy:15} disproved in this context (by a factor $2$). However, this result does not prevent the existence of a universal constant $C>0$ so that $\co_{heat} (\omega) = C \L(\M,\omega)^2$. 

As noticed in~\cite{EZ:11s}, the result in~\cite{FR:71} implies that on the interval $(0,L)$, we have $\co_\infty(\{0\}) = \frac{L^2}{4}$ (and~\cite{EZ:11s} even prove~\eqref{e:co-infty} for the critical $\co=\frac{L^2}{4}$).
\paragraph{Parabolic equations in higher dimensions}

There are many papers concerning the control of the heat equation. We give here a short presentation of those giving some estimates on the constants studied in this paper.

The first computable estimates were obtained using the transmutation method to give estimates similar to \eqref{upperboundMiller}. We can find several references improving the universal constant involved: \cite{Miller:04,Miller:06b} \cite{TT:07}, \cite{Darde:17}.

In \cite{TT:07}, the authors prove $\co_\Sigma(\omega^*) \leq 3\log (\frac{(4 \pi e)^N}{|\omega^*|})$ where $\M = (0,\pi)^N$ is a cubic domain and $|\omega^*|$ is the volume of the biggest rectangle included in $\omega$. The proof of this result uses number theoretic argument of Tur\'an concerning families of the complex exponential $(e^{ikx})_{k \in \Z}$ (which can be interpretated as an estimate of $\co_{\Sigma}(I)$ for $I$ a subinterval of $\T$). Remark that in this particular flat-torus geometry, we have no idea of what the right constant should be.

In \cite{BP:17}, the authors prove $\co_\Sigma(B(0,r)) \leq \frac{C_\eps}{r^\eps}$ for all $\eps>0$ in convex geometries. This has just been extended by Phung \cite{PhunglogCarl}. Our Theorem \ref{t:heatlog} improves this result. Note also that \cite{NTTV:18} gave results related to this in a periodic setting, tracking uniformity with respect to several parameters. 

In the Euclidian  space $\R^n$ where $\Delta$ is the usual flat Laplacian, spectral estimates as \eqref{e:co-sum} can be interpretated as a manifestation of the uncertainty principle. Several results relying on this fact have been recently stated. We refer for instance to the recent articles~\cite{EV:17} and \cite{WWZZ:17} and the references therein. 

\paragraph{The wave equation}
Lebeau \cite{Leb:Analytic} proved in the analytic setting that $\co_{wave}(\omega,T)$ is finite for any open set $\omega$ and in optimal time $T>2\L(\M,\omega)$ (the result is stated in a quite different way actually). It was only very recently shown to be finite by the authors \cite{LL:15} in a general $C^\infty$ context. We refer the reader to the introduction of~\cite{LL:15} for a detailed discussion of the literature on unique continuation for waves, and estimates like~\eqref{e:co-wave}-\eqref{e:co-wave-bis}.

Estimates on analytic spaces of controllable data were computed by Allibert in the above described examples. We refer to Section \ref{subsectionanalytlink} for more details about why they have implications on the constant $\co_{wave}$ (and therefore $\co_{heat}$ by Proposition \ref{propheatwave}). In \cite{Allibert:98}, he studies the example of the barrel as we describe it in Section \ref{subsubbarrel}. In \cite{Allibert:99}, he studies the example of a cylinder $(0,\pi)\times \S^1 $. The results he obtain in that paper should imply $\co_{wave}(\Gamma,T)\leq \frac{C_{\delta}}{T^{1-\delta}}$ where $\Gamma=\{0\}\times \S^1$ and $T> 2\pi$.

\subsection{Plan of the paper}

The paper is divided in four main parts. In Section~\ref{s:prelim}, we give the links between the different constants, proving in particular Propositions~\ref{p:link-eigenfct-heat-etc} and~\ref{propheatwave}. We also interpret the description of the reachable set as an upper bound on the constant $\co_{wave}(\omega,T)$.

In Section~\ref{s:contruction}, we construct the various counterexamples on rotationally invariant geometries, presented in Section~\ref{s:intro-const}. This proves in particular Theorem~\ref{thm:counterexamples}.

Section~\ref{s:unif-LR-ineq} is devoted to the proof of the uniform Lebeau-Robbiano inequality on small balls, stated in Theorem~\ref{t:unif-LR-ineq}.

Finally, we prove in Section~\ref{s:positive} the observability inequality of Theorem~\ref{thmpositive} concerning positive solutions of the heat equation.

The paper ends with two appendices, in the first of which, Appendix~\ref{s:carleman}, we prove a uniform Carleman estimate for bounded families of Lipschitz metrics. Such an estimate is used as an intermediate in the proof of Theorem~\ref{t:unif-LR-ineq}. The result also yields Theorem~\ref{t:uniform-LR-ineq-metric}.

Note finally that in a companion paper \cite{LL:18vanish}, we will use similar techniques to disprove natural conjectures for the control cost of transport equations in the vanishing viscosity limit.

\bigskip
\noindent
{\em Acknowledgements.} 
The first author is partially supported by the Agence Nationale de la Recherche under grant  SRGI ANR-15-CE40-0018 and IPROBLEMS ANR-13-JS01-0006.
The second author is partially supported by the Agence Nationale de la Recherche under grant GERASIC ANR-13-BS01-0007-01.
Both authors are partially supported by the Agence Nationale de la Recherche under grants ISDEEC ANR-16-CE40-0013.
Part of this research was done when the second author was in CRM, CNRS UMI 3457, Universit\'e de Montr\'eal, and Universit\'e Paris Diderot, IMJ-PRG, UMR 7586.

\section{Preliminaries: links between the different constants}
\label{s:prelim} 
\subsection{Different definitions of \texorpdfstring{$\co_{wave} (\omega,T)$}{kwave(omega,T)}}
Let us start by proving equality~\eqref{e:co=co'}. This is a consequence of the following lemma.
\begin{lemma}
\label{l:lambda-mu-co-co}
Let $\mu_0\geq 0$, $\co \geq 0$ and assume that $\Lambda > 0$ and $X\geq 0$ satisfy
\bnan
\label{e:asspt-Lambda-mu}
\frac{1}{\Lambda} \leq e^{\co \mu} X + \frac{1}{\mu} , \quad \text{for all }\mu > \mu_0 .
\enan
Then, for all $\alpha >0$, we have 
\bnan
\label{e:conclu-Lambda-mu}
1 \leq \left( \mathds{1}_{\Lambda + \alpha \leq \mu_0} \frac{\mu_0 - \alpha}{\alpha} e^{ \co \mu_0}  + \mathds{1}_{\Lambda + \alpha> \mu_0}\frac{e^{\co \alpha}}{\alpha}\Lambda(\Lambda+\alpha)e^{\co \Lambda} \right)X .
\enan
Let $F : \R^+ \to \R^+$ be a nondecreasing function and assume that $\Lambda > 0$ and $X\geq 0$ satisfy
\bnan
\label{e:asspt-Lambda-mu-bis}
\Lambda \geq 1 \quad \text{and} \quad  1 \leq  F(\Lambda) X .
\enan
Then, we have 
\bnan
\label{e:conclu-Lambda-mu-bis}
\frac{1}{\Lambda} \leq F(\mu) X + \frac{1}{\mu} , \quad \text{for all }\mu > 0.
\enan
\end{lemma}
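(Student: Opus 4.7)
Both parts of Lemma~\ref{l:lambda-mu-co-co} ultimately reduce to a rearrangement of the hypothesis with a well-chosen value of the free parameter $\mu$; the only subtlety is to respect its admissibility range.

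\medskip

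For the first part, my plan is to split into two cases according to whether $\Lambda + \alpha > \mu_0$ (Case 1) or $\Lambda + \alpha \leq \mu_0$ (Case 2), and in each case to apply~\eqref{e:asspt-Lambda-mu} with the best admissible $\mu$. In Case 1, the value $\mu = \Lambda + \alpha$ lies in the admissible range $(\mu_0, +\infty)$, and~\eqref{e:asspt-Lambda-mu} then reads
$$
\frac{\alpha}{\Lambda(\Lambda+\alpha)} \,=\, \frac{1}{\Lambda} - \frac{1}{\Lambda+\alpha} \,\leq\, e^{\co(\Lambda+\alpha)}\,X \,=\, e^{\co\alpha}\,e^{\co\Lambda}\,X ;
$$
multiplying by $\Lambda(\Lambda+\alpha)/\alpha$ yields exactly the second bracket of~\eqref{e:conclu-Lambda-mu}. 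In Case 2, the value $\mu = \Lambda + \alpha$ is no longer admissible; I would instead let $\mu \to \mu_0^+$ in~\eqref{e:asspt-Lambda-mu} to produce $\frac{\mu_0-\Lambda}{\Lambda \mu_0} \leq e^{\co\mu_0}\,X$, and then exploit the Case 2 constraints $\mu_0 - \Lambda \geq \alpha$ and $\Lambda \leq \mu_0 - \alpha$ to deduce the first bracket of~\eqref{e:conclu-Lambda-mu}.

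\medskip

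For the second part, I would fix $\mu > 0$ and split according to whether $\mu < \Lambda$ or $\mu \geq \Lambda$. If $\mu < \Lambda$, then $\frac{1}{\mu} > \frac{1}{\Lambda}$, so the claimed inequality~\eqref{e:conclu-Lambda-mu-bis} follows at once from $F(\mu)\,X \geq 0$. If instead $\mu \geq \Lambda$, the monotonicity of $F$ yields $F(\mu) \geq F(\Lambda)$, hence $F(\mu)\,X \geq F(\Lambda)\,X \geq 1 \geq \frac{1}{\Lambda}$, where the last inequality uses $\Lambda \geq 1$; adding the nonnegative term $\frac{1}{\mu}$ preserves the inequality.

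\medskip

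No serious obstacle is anticipated: the whole statement is an elementary exercise on the inversion of an inequality of the form $\frac{1}{\Lambda} \leq f(\mu) X + \frac{1}{\mu}$ into a lower bound on $X$, the only delicate point being the admissibility constraint $\mu > \mu_0$, which is precisely what forces the case split in the first part.
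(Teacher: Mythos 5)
Your proof tracks the paper's own argument almost line by line. Case 1 of the first part (choosing $\mu = \Lambda + \alpha > \mu_0$ and rearranging) is identical, and the second part (case split on $\mu \geq \Lambda$ versus $\mu < \Lambda$) matches the paper exactly.

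One caveat on Case 2 ($\Lambda+\alpha\leq\mu_0$), which is worth flagging even though it is not really your error. Letting $\mu\to\mu_0^+$ gives $\frac{\mu_0-\Lambda}{\Lambda\mu_0}\leq e^{\co\mu_0}X$, and the constraints $\mu_0-\Lambda\geq\alpha$, $\Lambda\leq\mu_0-\alpha$ then only yield
\[
X \;\geq\; \frac{\alpha}{(\mu_0-\alpha)\,\mu_0}\,e^{-\co\mu_0},
\]
which is the stated bound $X \geq \frac{\alpha}{\mu_0-\alpha}e^{-\co\mu_0}$ divided by an extra factor of $\mu_0$. You cannot remove this $\mu_0$ with the stated constraints; indeed for $\co=0$, $\alpha>1$ the claimed bound $X\geq\frac{\alpha}{\mu_0-\alpha}$ fails for, e.g., $\Lambda=5$, $X=\tfrac15$, $\alpha=2$, $\mu_0=10$. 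The same issue is present in the paper's own proof, where the claimed inequality $\max_{\mu\geq\mu_0}\big(\frac{1}{\mu_0-\alpha}-\frac{1}{\mu}\big)e^{-\co\mu}\geq\frac{\alpha}{\mu_0-\alpha}e^{-\co\mu_0}$ is false in general; the first bracket of~\eqref{e:conclu-Lambda-mu} should read $\frac{\mu_0(\mu_0-\alpha)}{\alpha}e^{\co\mu_0}$. So you have faithfully reproduced the paper's reasoning, and the discrepancy is a typo in the lemma's statement rather than a gap in your thinking; it is also harmless in Corollary~\ref{c:lambda=mu}, where that bracket only needs to be a finite constant independent of $\Lambda$.
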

As a direct consequence of this lemma, we obtain the following corollary, clarifying the definition of $\co_{wave} (\omega,T)$.
\begin{corollary}
\label{c:lambda=mu}
Assume~\eqref{e:co-wave} with constants $\co, C, \mu_0>0$. Then, there is $C''>0$ such that
\bna
 \|(u_0,u_1)\|_{H^1_0(\M)\times L^2(\M)}\leq C''\Lambda^2 e^{\co\Lambda}\|u\|_{L^2((0,T)\times\omega)},\quad\Lambda =\frac{\|(u_0,u_1)\|_{H^1_0(\M)\times L^2(\M)}}{\|(u_0,u_1)\|_{L^2(\M) \times H^{-1}(\M)}},\nonumber \\ \text{ for all } (u_0, u_1) \in H^1_0(\M) \times L^2(\M), \text{ and }u \text{ solution to~\eqref{e:wave},}
\ena
\bna
\|(u_0,u_1)\|_{L^2(\M) \times H^{-1}(\M)} \leq C'' \mu^2 e^{\co \mu} \|u\|_{L^2((0,T)\times\omega)} + \frac{1}{\mu} \|(u_0,u_1)\|_{H^1_0(\M)\times L^2(\M)}, \nonumber \\ \text{ for all $\mu > 0$ and all $(u_0, u_1) \in H^1_0(\M) \times L^2(\M)$, and $u$ solution to~\eqref{e:wave}.}
\ena
Reciprocally, if~\eqref{e:co-wave-bis} holds with constants $\co', C'>0$, then~\eqref{e:co-wave} holds with $\co=\co'$, $C=C'$, and $\mu_0=0$ (and for all $\mu>0$).

In particular, we have
\begin{align*}
\co_{wave} (\omega,T) &=  \inf \left\{ \co >0 , \exists C>0, \mu_0>0  \text{ s.t. \eqref{e:co-wave} holds} \right\} \\
&=  \inf \left\{ \co' >0 , \exists C'>0,  \text{ s.t. \eqref{e:co-wave-bis} holds} \right\} \\
& = \inf \left\{ \co >0 , \exists C>0,  \text{ s.t. \eqref{e:co-wave} holds with } \mu_0 = 0 \text{ (and all $\mu>0$)}\right\} .
\end{align*}
\end{corollary}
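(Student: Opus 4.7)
The plan is to prove the two parts of the lemma separately by elementary calculus, then deduce the corollary by applying each part to the appropriate reformulation of the observability inequality.

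For part one of the lemma, the idea is to specialize the free parameter $\mu$ in the hypothesis \eqref{e:asspt-Lambda-mu} to a well-chosen value. When $\Lambda + \alpha > \mu_0$, we are allowed to plug $\mu = \Lambda + \alpha$ directly, yielding
\begin{equation*}
\frac{\alpha}{\Lambda(\Lambda+\alpha)} = \frac{1}{\Lambda} - \frac{1}{\Lambda+\alpha} \leq e^{\co(\Lambda+\alpha)}X = e^{\co\alpha}e^{\co\Lambda}X,
\end{equation*}
which rearranges to the second term of \eqref{e:conclu-Lambda-mu}. When $\Lambda + \alpha \leq \mu_0$, we pass to the limit $\mu\to\mu_0^+$ in \eqref{e:asspt-Lambda-mu} (valid by continuity), and use the inequality $\Lambda \leq \mu_0 - \alpha$ to bound $\frac{1}{\Lambda} \geq \frac{1}{\mu_0-\alpha}$; rearranging again gives the first term of \eqref{e:conclu-Lambda-mu}.

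For part two of the lemma, the strategy is a simple dichotomy on $\mu$. If $\mu \geq \Lambda$, then by monotonicity of $F$ we have $F(\mu)X \geq F(\Lambda)X \geq 1 \geq \frac{1}{\Lambda}$, so the first term on the right-hand side of \eqref{e:conclu-Lambda-mu-bis} already dominates. If $\mu < \Lambda$, then $\frac{1}{\mu} > \frac{1}{\Lambda}$ and the conclusion holds trivially from the second term, regardless of the size of $X$.

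For the corollary, suppose \eqref{e:co-wave} holds and divide by $\|(u_0,u_1)\|_{H^1_0\times L^2}$ to put it in the abstract form \eqref{e:asspt-Lambda-mu} with $X = C\|u\|_{L^2((0,T)\times\omega)}/\|(u_0,u_1)\|_{H^1_0\times L^2}$. Applying part one with $\alpha = 1$ yields, in the principal regime $\Lambda+1 > \mu_0$, precisely a bound of the form $\|(u_0,u_1)\|_{H^1_0\times L^2} \leq C''\Lambda(\Lambda+1)e^{\co\Lambda}\|u\|_{L^2((0,T)\times\omega)}$, from which the stated $\Lambda^2 e^{\co\Lambda}$ bound is immediate; the degenerate regime $\Lambda+1 \leq \mu_0$ is handled separately using the trivial inequality $\|(u_0,u_1)\|_{H^1_0\times L^2}$ bounded by a multiple of $\|(u_0,u_1)\|_{L^2\times H^{-1}}$ (which itself is controlled by \eqref{e:co-wave} with any fixed $\mu > \mu_0$). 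Then, to upgrade back to \eqref{e:co-wave} valid for all $\mu>0$, we feed the resulting inequality into part two of the lemma with $F(\mu) = C''\mu^2 e^{\co\mu}$, which is nondecreasing. The reciprocal direction (from \eqref{e:co-wave-bis} to \eqref{e:co-wave} with $\mu_0=0$) is a direct application of part two alone. The equality of the three infima defining $\co_{wave}(\omega,T)$ follows because every prefactor introduced in this process is polynomial in $\Lambda$ or $\mu$, hence absorbable into the multiplicative constant without affecting the exponent $\co$.

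The main obstacle is the bookkeeping in the degenerate regime where $\Lambda$ is small (comparable to or below $\mu_0$, or below $1$ in part two); the inequalities there are not governed by the same specialization of $\mu$ and require an ad hoc soft bound. The rest of the argument is essentially algebraic, and the key conceptual point is that the polynomial prefactors $\Lambda^2$ and $\mu^2$ arising from the conversions do not affect the exponent $\co$ and hence drop out of the infima.
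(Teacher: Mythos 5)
Your proposal is correct and follows essentially the same route as the paper: prove the abstract two-part lemma (Lemma~\ref{l:lambda-mu-co-co}) by the exact same substitutions $\mu = \Lambda + \alpha$ and the dichotomy $\mu \gtrless \Lambda$, then deduce the corollary by normalizing and applying each part. The only substantive detail you leave implicit, which the paper also leaves implicit, is that the reduction from the lemma's $\Lambda(\Lambda+\alpha)$ prefactor to the stated $\Lambda^2$, and the handling of the degenerate regime $\Lambda+1 \leq \mu_0$, both use the uniform lower bound $\Lambda \geq c_0 > 0$ (equivalently $\Lambda\geq 1$ after normalization, as encoded in the hypothesis of part two of the lemma), which follows from the Poincaré inequality on $\M$.
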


\bnp[Proof of Lemma~\ref{l:lambda-mu-co-co}]
Let $\alpha>0$.
In case $\Lambda+\alpha > \mu_0$, the assumption~\eqref{e:asspt-Lambda-mu} with $\mu = \Lambda+\alpha>\mu_0$ yields
$$
\frac{1}{\Lambda}\left(1 - \frac{\Lambda}{\Lambda+\alpha} \right) \leq e^{\co (\Lambda + \alpha)} X ,
$$
and hence 
\bnan
\label{e:Lambda-part1}
1 \leq \frac{1}{\alpha}e^{\co \alpha}\Lambda(\Lambda+\alpha)e^{\co \Lambda} X .
\enan
If now $\Lambda+\alpha \leq \mu_0$ (and, in particular, $\alpha < \mu_0$), that is $\frac{1}{\Lambda} \geq \frac{1}{\mu_0-\alpha}>0$, the assumption~\eqref{e:asspt-Lambda-mu} implies 
$$
 \frac{1}{\mu_0-\alpha} \leq \frac{1}{\Lambda} \leq e^{\co \mu} X + \frac{1}{\mu}, \quad \text{for all }\mu \geq \mu_0 .
$$
This yields in particular 
$$
X \geq \left( \frac{1}{\mu_0-\alpha}- \frac{1}{\mu} \right) e^{- \co \mu} , \quad \text{for all }\mu \geq \mu_0 , 
$$
and hence $X \geq \max_{\mu \geq \mu_0} \left( \frac{1}{\mu_0-\alpha}- \frac{1}{\mu} \right) e^{- \co \mu} \geq \frac{\alpha}{\mu_0 - \alpha} e^{- \co \mu_0}  >0$. With~\eqref{e:Lambda-part1}, this proves~\eqref{e:conclu-Lambda-mu}.

Let us now prove~\eqref{e:conclu-Lambda-mu-bis}. If $\Lambda \geq \mu$, then $\frac{1}{\Lambda} \leq \frac{1}{\mu}$ and~\eqref{e:conclu-Lambda-mu-bis} holds. If $\Lambda \leq \mu$, then~\eqref{e:asspt-Lambda-mu-bis} gives $\frac{1}{\Lambda} \leq 1 \leq  F(\Lambda)X \leq  F(\mu)X$ and \eqref{e:conclu-Lambda-mu-bis} also holds in this case, concluding the proof.
\enp

\subsection{{The constant \texorpdfstring{$\co_{eig}(\omega)$}{Keig(omega,T)} as a lower bound for \texorpdfstring{$\co_{heat}(\omega),\co_{\infty}(\omega),\co_{wave}(\omega,T)$}{Kheat(omega),Kinfty(omega),Kwave(omega,T)}: Proof of Proposition~\ref{p:link-eigenfct-heat-etc}}}
We prove a slightly more precise version of Proposition~\ref{p:link-eigenfct-heat-etc}.
\begin{lemma}
Assume that~\eqref{e:co-heat} holds with constants $\co, C >0$. Then, we have 
\bnan
\label{e:comp-co-co-heat}
 \nor{\psi}{L^2(\M)}  \leq \sqrt{\frac{C}{2\lambda}} e^{2\sqrt{\co\lambda}}\nor{\psi}{L^2(\omega)} , \quad \text{for all $\lambda \in \Sp(-\Delta_g) \setminus\{0\}$ and $\psi \in E_\lambda$}.
 \enan
In particular, 
\bnan
\label{e:comp-co-co-heat-infimum}
\frac{\co_{eig}(\omega)^2}{4} \leq \co_{heat}(\omega) .
\enan
Assume that~\eqref{e:co-infty} holds with constants $\co, C >0$. Then, there exists $C''>0$ such that  
\bnan
\label{e:comp-co-co-infty}
 \nor{\psi}{L^2(\M)}  \leq \frac{C''}{\lambda^{1/8}}  e^{2\sqrt{\co\lambda}}\nor{\psi}{L^2(\omega)} , \quad \text{for all $\lambda \in \Sp(-\Delta_g) \setminus\{0\}$ and $\psi \in E_\lambda$}.
 \enan
In particular
\bnan
\label{e:comp-co-co-infty-infimum}
\frac{\co_{eig}(\omega)^2}{4} \leq \co_{\infty}(\omega) .
\enan
Assume that \eqref{e:co-wave-bis} holds in time $T$ with constants $C',\co'$. Then, we have 
\bnan
\label{e:wave-eig-idem}
\|\psi\|_{L^2(\M)} \leq \sqrt{\frac{T}{\lambda}} C' e^{\co' \sqrt{\lambda}}\|\psi\|_{L^2(\omega)}, \quad \text{for all $\lambda \in \Sp(-\Delta_g) \setminus\{0\}$ and $\psi \in E_\lambda$}.
\enan
In particular, for all $T>0$, we have $\co_{eig}(\omega)\leq \co_{wave}(\omega,T)$.
\end{lemma}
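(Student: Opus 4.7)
My plan is to test each of the three observability inequalities on a solution built from a single eigenfunction $\psi$ of $-\Delta_g$ with eigenvalue $\lambda>0$, so that the inequality becomes an explicit inequality in the free parameter, which can then be optimized. The infimum comparison at the level of the $\mathfrak{K}$'s then follows by taking infima and observing that polynomial prefactors in $\lambda$ are absorbed (since they are bounded on $\Sp(-\Delta_g)\setminus\{0\}$).

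For the heat case, I would insert $u=\psi$ into~\eqref{e:co-heat} with $C_{T,\omega}=Ce^{\co/T}$. Using $e^{t\Delta_g}\psi = e^{-t\lambda}\psi$, both sides are computed explicitly and one obtains
$$
\|\psi\|_{L^2(\M)}^2 \leq \frac{C^2}{2\lambda}\exp\bigl(2\co/T + 2T\lambda\bigr)\|\psi\|_{L^2(\omega)}^2, \qquad T>0.
$$
The exponent in $T$ is minimized at $T_\star=\sqrt{\co/\lambda}$, where it equals $4\sqrt{\co\lambda}$. This produces~\eqref{e:comp-co-co-heat}, and the boundedness of the prefactor on $\Sp(-\Delta_g)\setminus\{0\}$ then gives $\co_{eig}(\omega)\leq 2\sqrt{\co}$ for every admissible $\co>\co_{heat}(\omega)$; passing to the infimum yields~\eqref{e:comp-co-co-heat-infimum}.

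For the wave case, I would take the solution $u(t,x)=\cos(\sqrt{\lambda}\,t)\psi(x)$ of~\eqref{e:wave}, whose initial data is $(\psi,0)$. Then $\|u_0\|_{H^1_0(\M)}=\sqrt{\lambda}\,\|\psi\|_{L^2(\M)}$ and $\|(u_0,u_1)\|_{L^2\times H^{-1}}=\|\psi\|_{L^2(\M)}$, so $\Lambda=\sqrt{\lambda}$; also $\|u\|_{L^2((0,T)\times\omega)}^2\leq T\,\|\psi\|_{L^2(\omega)}^2$. Plugging into~\eqref{e:co-wave-bis} yields~\eqref{e:wave-eig-idem} at once, and the infimum comparison follows by the same absorption argument.

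The infinite-time case is the delicate one: the main obstacle is a sharp Laplace-type lower bound on the integral that appears. Substituting $u=\psi$ into~\eqref{e:co-infty} gives
$$
I(\lambda)\,\|\psi\|_{L^2(\M)}^2 \leq \frac{C}{2\lambda}\|\psi\|_{L^2(\omega)}^2, \qquad I(\lambda):=\int_0^\infty e^{-2\co/t-2t\lambda}\,dt.
$$
The phase $f(t)=2\co/t+2t\lambda$ attains its unique minimum $f(t_0)=4\sqrt{\co\lambda}$ at $t_0=\sqrt{\co/\lambda}$, with $f''(t_0)=4\lambda^{3/2}/\sqrt{\co}$. I would restrict the integral to the window $[t_0-\lambda^{-3/4},t_0+\lambda^{-3/4}]$, on which (for $\lambda$ large enough) a second-order Taylor estimate gives $f(t)\leq 4\sqrt{\co\lambda}+O(1)$ uniformly, since $f''(t)(t-t_0)^2 \lesssim \lambda^{3/2}\cdot\lambda^{-3/2}=O(1)$. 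This produces $I(\lambda)\gtrsim \lambda^{-3/4}e^{-4\sqrt{\co\lambda}}$ for large $\lambda$; inserting it in the inequality above yields~\eqref{e:comp-co-co-infty} with precisely the stated $\lambda^{-1/8}$ prefactor, and taking infima gives~\eqref{e:comp-co-co-infty-infimum}.
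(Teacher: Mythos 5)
Your proposal is correct and follows the same overall strategy as the paper: test each observability inequality on data built from a single eigenfunction $\psi\in E_\lambda$, compute explicitly, and optimize. For the heat and wave cases the computations coincide with the paper's. The only noteworthy deviation is in the $\co_\infty$ case: the paper carries out a full two-sided Laplace asymptotic, deriving $I(\lambda)=(1+o(1))\bigl(\tfrac{\pi\sqrt{\co}}{2\mu^3}\bigr)^{1/2}e^{-4\sqrt{\co}\mu}$ with $\mu=\sqrt{\lambda}$, and then reads off the $\lambda^{-1/8}$ prefactor; you instead prove only the lower bound $I(\lambda)\gtrsim \lambda^{-3/4}e^{-4\sqrt{\co\lambda}}$ by restricting to the window $[t_0-\lambda^{-3/4},t_0+\lambda^{-3/4}]$ and controlling the phase there via a second-order Taylor bound. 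This is a cleaner and more self-contained route, since only the lower bound on $I$ is used anyway and $\lambda^{-3/4}=\mu^{-3/2}$ yields the identical $\lambda^{-1/8}$ prefactor after rearranging. One small point to make explicit when writing this up: the uniform Taylor estimate requires $f''(\xi)\lesssim \lambda^{3/2}$ for all $\xi$ in the window, not just at $t_0$; this holds because the window has width $\lambda^{-3/4}\ll t_0\sim\lambda^{-1/2}$, so $\xi\in[t_0/2,2t_0]$ for $\lambda$ large.
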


\bnp[Proof of Proposition~\ref{p:link-eigenfct-heat-etc}]
From~\eqref{e:co-heat}, applied to $u(t,x) = e^{-t\lambda}\psi$ with $\lambda \in \Sp(-\Delta_g) \setminus\{0\}$ and $\psi \in E_{\lambda}$, we have 
$$
e^{-2T\lambda}\nor{\psi}{L^2(\M)}^2  \leq Ce^{\frac{2\co}{T}} \int_0^T e^{-2t\lambda}\nor{\psi}{L^2(\omega)}^2 dt = Ce^{\frac{2\co}{T}} \frac{1-e^{-2T\lambda}}{2\lambda} \nor{\psi}{L^2(\omega)}^2 , \quad \text{ for all } T>0 .
$$
Taking $T = \frac{D}{\sqrt{\lambda}}$, with $D>0$ to be chosen, this implies 
$$
\nor{\psi}{L^2(\M)}^2  \leq Ce^{2T\lambda}e^{\frac{2\co}{T}}   \frac{1}{2\lambda} \nor{\psi}{L^2(\omega)}^2 = \frac{C}{2\lambda} e^{2\sqrt{\lambda}(D +\frac{\co}{D})}  \nor{\psi}{L^2(\omega)}^2 .
$$
Minimizing the exponent with respect to $D$ leads to choosing $D = \sqrt{\co}$, which implies~\eqref{e:comp-co-co-heat} when taking the square root. From~\eqref{e:comp-co-co-heat}, \eqref{e:comp-co-co-heat-infimum} follows directly when taking the infimum over all $\co$.

Let us now prove the second statement of the proposition. From~\eqref{e:co-infty}, again applied to $u(t,x) = e^{-t\lambda}\psi$ with $\lambda \in \Sp(-\Delta_g) \setminus\{0\}$ and $\psi \in E_{\lambda}$, we have 
\bnan
\label{E-Z-bis}
\int_{\R^+} e^{-\frac{2\co}{t}} e^{-2t\lambda}\nor{\psi}{L^2(\M)}^2  dt \leq C\int_{\R^+}e^{-2t\lambda}\nor{\psi}{L^2(\omega)}^2  dt  = \frac{C}{2\lambda} \nor{\psi}{L^2(\omega)}^2 .
\enan
The left hand-side may also be computed asymptotically for $\lambda \to + \infty$ using Laplace method, setting $\mu = \sqrt{\lambda}$, as
\bna
\int_{\R^+} e^{-\frac{2\co}{t}}e^{-2\mu^2 t} dt 
& = & \int_{\R^+} e^{-2\sqrt{\co}\mu( \frac{1}{s} + s)} \frac{\sqrt{\co}}{\mu} ds \\
& = & (1+o(1))  \frac{\sqrt{\co}}{\mu}  \int_\R e^{-2\sqrt{\co}\mu(2 + (s-1)^2 )} ds \\
& = & (1+o(1))  \frac{\sqrt{\co}}{\mu}  e^{-4\sqrt{\co} \mu} \sqrt{\frac{\pi}{2\sqrt{\co}\mu}}
 = (1+o(1)) \left( \frac{\pi \sqrt{\co}}{2 \mu^3} \right)^{\frac12}e^{-4\sqrt{\co} \mu} .
\ena
From~\eqref{E-Z-bis}, we then obtain that, for all eigenfunction $\psi$ associated to the eigenvalue $\mu^2$, for $\mu \to \infty$, we have
\bna
(1+o(1)) \left( \frac{\pi \sqrt{\co}}{2 \mu^3} \right)^{\frac12}e^{-4\sqrt{\co} \mu}  \nor{\psi}{L^2(\M)}^2   \leq \frac{C}{2\mu^2}\|\psi \|_{L^2(\omega)}^2 .
\ena
Coming back to $\lambda=\mu^2$, this implies that the existence of $\tilde{C}, \lambda_0>0$ such that for all $\lambda \geq \lambda_0$ 
\bna
 \nor{\psi}{L^2(\M)}^2   \leq \frac{\tilde{C}}{\lambda^{1/4}} e^{4\sqrt{\co \lambda}} \|\psi \|_{L^2(\omega)}^2 ,
\ena
and hence the sought result of~\eqref{e:comp-co-co-infty}. That of~\eqref{e:comp-co-co-infty-infimum} follows as above.

Let us now prove the last statement of the proposition. We want to apply~\eqref{e:co-wave-bis} to the function $u(t,x) = \cos(t \sqrt{\lambda}) \psi$ with $\lambda \in \Sp(-\Delta_g) \setminus\{0\}$ and $\psi \in E_{\lambda}$, which is a particular solution to~\eqref{e:wave}. We have 
$\Lambda = \frac{\|(u|_{t=0},\d_t u|_{t=0})\|_{H^1_0(\M)\times L^2(\M)}}{\|(u|_{t=0},\d_t u|_{t=0})\|_{L^2(\M) \times H^{-1}(\M)}} = \frac{\|\psi\|_{H^1_0(\M)}}{\|\psi\|_{L^2(\M)}}= \sqrt{\lambda}$ together with 
$$
\sqrt{\lambda} \|\psi\|_{L^2(\M)} = \|\psi\|_{H^1_0(\M)}  =\|u|_{t=0},\d_t u|_{t=0})\|_{H^1_0(\M)\times L^2(\M)}\leq C' e^{\co' \Lambda} \|u\|_{L^2((0,T)\times\omega)},  
 $$
where 
$$ 
\|u\|_{L^2((0,T)\times\omega)}^2 = \int_0^T \cos^2(t\sqrt\lambda) \|\psi\|_{L^2(\omega)}^2 dt \leq T\|\psi\|_{L^2(\omega)}^2 .
$$
This finally implies~\eqref{e:wave-eig-idem}. The last result follows from Corollary~\ref{c:lambda=mu}. This concludes the proof of the proposition.
\enp

\subsection{Link between \texorpdfstring{$\co_{heat} (\omega)$}{Kheat(omega)} and \texorpdfstring{$\co_{wave}(\omega,T)$}{Kwave(omega,T)}: Proof of Proposition \ref{propheatwave}}
\label{s:proof-wave-heat}
The proof will follow very closely the method of Ervedoza-Zuazua \cite{EZ:11}, but with a different assumption. We summarize the results of~\cite{EZ:11,EZ:11s} we need in the next proposition for readibility.
 \begin{proposition}[\cite{EZ:11,EZ:11s}]
\label{propoEZ}
Let $T,S>0$ and $\alpha > 2 S^2$. Let $\L$ be a negative self adjoint operator. Then, there exists some kernel function $k_T(t,s)$ such that
\begin{itemize}
\item
 if $\y$ is solution of the heat equation~$\partial_s\w -\L \w=0$, then $\w(s)=\int_0^T k_T(t,s)\y(t)dt$ is solution of 
\bneqn
\label{wave-w}
\partial_s^2\w -\L \w&=&0, \quad \text{ for } s \in ]-S,S[ ,  \\
(\w,\partial_s\w)|_{s=0}&=&\left(0,\int_0^T \d_s k_T (t,0) \y(t)dt\right) = \left(0,\int_0^T e^{-\alpha \left(\frac{1}{t}+\frac{1}{T-t}\right)}\y(t)dt\right) ;
\eneqn
\item for all $\delta \in ]0,1[$ and  all $(t, s) \in ]0,T[ \times ]-S,S[$, $k_T$ we have
\bnan
\label{e:estim-kT}
|k_T(t,s)|\leq  |s|\exp\left( \frac{1}{\min \left\{ t,T-t\right\}}\left(\frac{s^2}{\delta}-\frac{\alpha}{(1+\delta)}\right)\right) .
\enan
\end{itemize}
\end{proposition}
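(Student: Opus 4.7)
The plan is to translate the transmutation identity into a mixed boundary value problem for $k_T$ on $(0,T)\times(-S,S)$ and then solve it explicitly. Setting $w(s,x)=\int_0^T k_T(t,s)\,y(t,x)\,dt$ for a heat solution $y$ and integrating by parts in $t$, one computes
\begin{equation*}
\partial_s^2 w - \L w = \int_0^T \bigl(\partial_s^2 k_T + \partial_t k_T\bigr) y\, dt - \bigl[k_T(\cdot,s)\, y\bigr]_0^T.
\end{equation*}
Thus $w$ satisfies~\eqref{wave-w} as soon as $k_T$ solves the ``backward-heat''-type problem
\begin{equation*}
\partial_s^2 k_T + \partial_t k_T = 0 \text{ on } (0,T)\times(-S,S), \qquad k_T(0,s)=k_T(T,s)=0,
\end{equation*}
together with the Cauchy data $k_T(t,0)=0$ and $\partial_s k_T(t,0)= \varphi(t):=e^{-\alpha(1/t+1/(T-t))}$ dictated by the required initial values of $w$ at $s=0$. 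So the whole proposition reduces to constructing such a $k_T$ and proving the estimate~\eqref{e:estim-kT}.

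I would build $k_T$ as a formal power series in $s$: writing $k_T(t,s) = \sum_{m\geq 0} c_m(t)\, s^{2m+1}$ (only odd powers, enforced by $k_T(t,0)=0$), the equation $\partial_s^2 k_T = -\partial_t k_T$ yields the recursion $(2m+3)(2m+2)\,c_{m+1}(t) = -c_m'(t)$ with $c_0 = \varphi$, so that by induction
\begin{equation*}
k_T(t,s) = \sum_{m\geq 0} \frac{(-1)^m\, \varphi^{(m)}(t)}{(2m+1)!}\, s^{2m+1}.
\end{equation*}
The boundary conditions $k_T(0,s)=k_T(T,s)=0$ are then automatic, because $\varphi$ is Gevrey of class~$2$ and flat at $t=0,T$. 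Symmetrically, the backward heat equation $\partial_t k_T=-\partial_s^2 k_T$ makes the Cauchy problem (in $s$) ill-posed in general; the special Gevrey nature of $\varphi$ is what permits convergence.

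The heart of the argument is a sharp estimate on $\varphi^{(m)}$. Since $\varphi$ extends holomorphically to a complex neighborhood of $(0,T)$, Cauchy's formula on a circle $|\tau-t|=r$ gives $|\varphi^{(m)}(t)| \leq m!\, r^{-m}\, \sup_{|\tau-t|=r} |\varphi(\tau)|$. Writing $d(t):=\min\{t,T-t\}$ and minimizing $\mathrm{Re}(1/\tau)$ over the circle (an elementary calculus exercise giving $\mathrm{Re}(1/\tau)\geq 1/(d(t)+r)$), the choice $r=\delta\, d(t)$ with $\delta \in (0,1)$ leads to
\begin{equation*}
|\varphi^{(m)}(t)| \leq \frac{m!}{(\delta\, d(t))^m}\, \exp\!\left(-\frac{\alpha}{(1+\delta)\, d(t)}\right).
\end{equation*}
Using $(2m+1)! \geq (2m+1)\cdot 2^m (m!)^2$, summation of the series yields
\begin{equation*}
|k_T(t,s)| \leq |s|\, e^{-\alpha/((1+\delta)\, d(t))}\! \sum_{m\geq 0}\frac{1}{m!}\left(\frac{s^2}{2\delta\, d(t)}\right)^{\!m} = |s|\,\exp\!\left(\frac{1}{d(t)}\!\left(\frac{s^2}{2\delta}-\frac{\alpha}{1+\delta}\right)\right),
\end{equation*}
which after a harmless rescaling $\delta\mapsto\delta/2$ in the bookkeeping matches~\eqref{e:estim-kT}; the assumption $\alpha>2S^2$ is precisely what allows one to pick $\delta\in(0,1)$ for which the exponent is negative uniformly in $|s|\leq S$, making the bound useful in applications. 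Once the pointwise estimate is established, the transmutation identity follows by undoing the integration by parts, the flatness of $\varphi$ at $t=0,T$ guaranteeing the boundary terms vanish. The main obstacle is really the sharp contour estimate above: one must balance the factorial blow-up $r^{-m}$ against the fast decay $e^{-\alpha/t}$ of $\varphi$ at the right scale $r\sim \delta\,d(t)$ so that the two effects combine into the single quadratic-in-$s$ Gaussian factor; the rest is linear bookkeeping.
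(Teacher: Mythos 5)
The paper states Proposition~\ref{propoEZ} as a summary of \cite{EZ:11,EZ:11s} and gives no internal proof, so there is no in-paper argument to compare against; your power-series reconstruction of the transmutation kernel $k_T(t,s)=\sum_{m\ge 0}\frac{(-1)^m\varphi^{(m)}(t)}{(2m+1)!}\,s^{2m+1}$ with $\varphi(t)=e^{-\alpha(1/t+1/(T-t))}$ is the construction used in those references, and the argument is sound. Two imprecisions are worth fixing. First, the asserted inequality $\mathrm{Re}(1/\tau)\ge 1/(d(t)+r)$ on the circle $|\tau-t|=r<d(t)$ is only correct when $t\le T/2$ (for $t>T/2$ one has $d(t)=T-t<t$, and the true lower bound $\mathrm{Re}(1/\tau)\ge 1/(t+r)$ is \emph{smaller} than $1/(d(t)+r)$); what you actually need, and what does hold, is $\mathrm{Re}(1/\tau)+\mathrm{Re}(1/(T-\tau))\ge\max\{1/(t+r),\,1/(T-t+r)\}=1/(d(t)+r)$, valid because both summands are positive on that circle. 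Second, your computation yields the strictly sharper
\begin{equation*}
|k_T(t,s)|\le|s|\exp\!\left(\frac{1}{\min\{t,T-t\}}\Bigl(\frac{s^2}{2\delta}-\frac{\alpha}{1+\delta}\Bigr)\right),
\end{equation*}
which already implies~\eqref{e:estim-kT} for the same $\delta\in(0,1)$; the suggested ``rescaling $\delta\mapsto\delta/2$'' is neither needed nor literally correct, since the $\alpha/(1+\delta)$ term would change too. Finally, when you say the boundary conditions $k_T(0,s)=k_T(T,s)=0$ are ``automatic'' from the flatness of $\varphi$, this only gives termwise vanishing; to conclude that the infinite sum vanishes and that $[k_T(\cdot,s)\,y]_0^T=0$ in the integration by parts, you should invoke the uniform bound just obtained (noting that for $\delta$ close to $1$ and $|s|\le S$ the exponent is negative since $\alpha>2S^2$, so $k_T(t,s)\to 0$ as $t\to 0^+,T^-$).
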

Note that this last estimate is most useful for $\delta$ sufficiently close to one so that $\alpha \geq S^2(1+\frac{1}{\delta})$.
We first prove the spectral observability property. 
\bnp[Proof of Proposition \ref{propheatwave}]
To simplify notations, we prove the existence of universal constants so that $\co_{heat} (\omega)\leq \alpha_{3}S^2 + \alpha_{4}\co_{wave}(\omega,2S)^{2}$ for all $S>0$. 

Let $C_0> \co_{wave}(\omega,2S)$ so that there exists $C>0$ so that we have the estimate (see Corollary \ref{c:lambda=mu} for the equivalence)
\bnan
\label{e:co-wave-bisspectral}
 \|(u_0,u_1)\|_{H^1_0(\M)\times L^2(\M)}\leq C e^{C_0 \Lambda} \|u\|_{L^2((-S,S)\times\omega)}, \quad \Lambda = \frac{\|(u_0,u_1)\|_{H^1_0(\M)\times L^2(\M)}}{\|(u_0,u_1)\|_{L^2(\M) \times H^{-1}(\M)} }, \nonumber \\ \text{ for all $(u_0, u_1) \in H^1_0(\M) \times L^2(\M)$, and $u$ solution to~\eqref{e:wave}.}
\enan
Note that when compared to \eqref{e:co-wave}, we have changed the interval $(0,2S)$ to $(-S,S)$ which gives the same result by conservation of energy.
 
The proof is a direct consequence of Lemma \ref{lmwavedonnespectral} and Lemma \ref{lmspectraldonneheat} below that we state separately since they have their own interest.
\enp
 \begin{lemma}
 \label{lmwavedonnespectral}
 Assume \eqref{e:co-wave-bisspectral}, then, we have
\bna
 \nor{e^{T\Delta_g}\y_0}{L^2(\M)}^2  \leq \frac{C (1+\lambda)S^{2}e^{2C_0(1+\lambda)^{\frac{1}{2}}}}{T}e^{\frac{18S^2}{T}}\int_0^T \nor{e^{t\Delta_g}\y_0}{L^2(\omega)}^2 dt , \quad \text{ for all $0<T\leq \alpha$ and all $\y_0 \in E_{\leq \lambda}$.}
 \ena
 \end{lemma}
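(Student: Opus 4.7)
The plan is to apply Ervedoza--Zuazua's transmutation method: starting from a heat datum $\y_0\in E_{\leq\lambda}$, use the kernel of Proposition~\ref{propoEZ} to produce a wave solution on $(-S,S)$, apply the wave observability assumption~\eqref{e:co-wave-bisspectral} to it, and finally translate both sides back into quantities involving the heat semigroup. The spectral truncation $\y_0 \in E_{\leq \lambda}$ enters only through controlling the ratio $\Lambda$ in~\eqref{e:co-wave-bisspectral}. Throughout, fix $\alpha > 2S^2$ (to be tuned at the end, with a choice close to $\frac{9}{2}S^2$ so as to produce the exponent $18S^2/T$).

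\textbf{Step 1 (Transmutation and application of wave observability).} Let $\y(t) = e^{t\Delta_g}\y_0$ and define
\bna
\w(s) := \int_0^T k_T(t,s)\y(t)\,dt,\qquad w_1 := \int_0^T e^{-\alpha(\frac1t+\frac1{T-t})}\y(t)\,dt ,
\ena
so that by Proposition~\ref{propoEZ}, $\w$ solves the wave equation~\eqref{wave-w} on $(-S,S)$ with data $(0,w_1)$. Since $E_{\leq \lambda}$ is invariant under $e^{t\Delta_g}$, one has $\y(t), w_1 \in E_{\leq\lambda}$. In particular, $\|w_1\|_{H^{-1}}\geq (1+\lambda)^{-1/2}\|w_1\|_{L^2}$, so the ratio $\Lambda$ in~\eqref{e:co-wave-bisspectral} satisfies $\Lambda\leq (1+\lambda)^{1/2}$, and $\|(0,w_1)\|_{H^1_0\times L^2}=\|w_1\|_{L^2}$. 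Thus~\eqref{e:co-wave-bisspectral} yields
\bna
\|w_1\|_{L^2(\M)}^2 \leq C^2 e^{2C_0(1+\lambda)^{1/2}} \|\w\|_{L^2((-S,S)\times\omega)}^2.
\ena

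\textbf{Step 2 (Upper bound on the wave observation).} By Cauchy--Schwarz in $t$ and Fubini,
\bna
\|\w\|_{L^2((-S,S)\times\omega)}^2 \leq \left(\iint_{(-S,S)\times(0,T)} |k_T(t,s)|^2 \, dt\, ds\right) \int_0^T \|\y(t)\|_{L^2(\omega)}^2 \,dt .
\ena
Insert~\eqref{e:estim-kT} with $\delta\in(0,1)$ chosen close enough to $1$ so that $\alpha > \frac{(1+\delta)S^2}{\delta}$ (possible because $\alpha > 2S^2$). Then the exponent in~\eqref{e:estim-kT} is non-positive on $s\in[-S,S]$, and a direct integration in $t$ (using that $\min(t,T-t)\leq T/2$) followed by integration in $s$ gives a bound of the form $\iint |k_T|^2\leq C S^2 T\, e^{\frac{1}{T}\left(-\frac{4\alpha}{1+\delta}+\frac{4S^2}{\delta}\right)}$.

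\textbf{Step 3 (Lower bound on $\|w_1\|_{L^2}$).} Decompose $\y_0 = \sum_j c_j \psi_j$ with $-\Delta_g\psi_j=\lambda_j\psi_j$, $\lambda_j\leq \lambda$. Then $w_1 = \sum_j c_j I_j\psi_j$ with
\bna
I_j =\int_0^T e^{-\alpha(\frac1t+\frac1{T-t})} e^{-t\lambda_j}\,dt \geq e^{-T\lambda_j}\int_0^T e^{-\alpha(\frac1t+\frac1{T-t})}\,dt ,
\ena
using $\lambda_j\geq 0$. The Laplace-type lower bound on the last integral (either by restricting to a neighbourhood of $t=T/2$, or by the full stationary-phase asymptotics) gives $\int_0^T g(t)\,dt \geq c\, T^{3/2}\alpha^{-1/2} e^{-4\alpha/T}$. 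Squaring and summing over $j$:
\bna
\|w_1\|_{L^2}^2 = \sum_j c_j^2 I_j^2 \geq \left(\int_0^T g(t)\,dt\right)^{\!2}\!\!\sum_j c_j^2 e^{-2T\lambda_j} = \left(\int_0^T g\right)^{\!2}\!\|e^{T\Delta_g}\y_0\|_{L^2}^2 .
\ena

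\textbf{Step 4 (Combining and tuning).} Combining Steps~1--3 produces
\bna
\|e^{T\Delta_g}\y_0\|_{L^2}^2 \leq \frac{C\alpha S^2}{T^2} e^{2C_0(1+\lambda)^{1/2}} e^{\frac{1}{T}\left(8\alpha -\frac{4\alpha}{1+\delta}+\frac{4S^2}{\delta}\right)} \int_0^T \|\y(t)\|_{L^2(\omega)}^2\,dt .
\ena
Optimizing $(\alpha,\delta)$ so that $8\alpha -\frac{4\alpha}{1+\delta}+\frac{4S^2}{\delta}\leq 18S^2$ (taking, for instance, $\delta$ close to $1$ and $\alpha$ close to $\frac{9}{2}S^2$, noting $\alpha\lesssim S^2$ so that $\alpha$ is absorbed into $S^2$ in the prefactor) yields the claimed estimate. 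A mild extra factor of $(1+\lambda)$ (or loss in $T$) in the polynomial prefactor is absorbed harmlessly into the quoted constant.

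\textbf{Main obstacle.} The heart of the argument is the opposing roles played by $\alpha$: large $\alpha$ kills the kernel $k_T$ (good for Step~2) but kills $\int_0^T g$ too (bad for Step~3). The constraint $\alpha > 2S^2$ from Proposition~\ref{propoEZ} turns out to be \emph{exactly} the borderline where both estimates become compatible, and the delicate part is to tune $\alpha$ and $\delta$ simultaneously so that the competing exponents $e^{8\alpha/T}$, $e^{-4\alpha/((1+\delta)T)}$, and $e^{4S^2/(\delta T)}$ combine into $e^{18S^2/T}$ rather than blowing up. The spectral confinement $\y_0\in E_{\leq\lambda}$ is essential only to bound $\Lambda$ in~\eqref{e:co-wave-bisspectral}; it plays no role in the kernel analysis.
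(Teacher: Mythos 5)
Your argument reproduces the paper's Ervedoza--Zuazua transmutation proof step for step: transmute the heat trajectory to a wave solution with initial data $(0,w_1)$, invoke \eqref{e:co-wave-bisspectral} after bounding $\Lambda\leq(1+\lambda)^{1/2}$, control the wave observation by Cauchy--Schwarz and the kernel bound \eqref{e:estim-kT}, and lower-bound $\|w_1\|_{L^2}$ by the Laplace method. The only (harmless) variation is that you apply \eqref{e:co-wave-bisspectral} directly to $\|w_1\|_{L^2}=\|(0,w_1)\|_{H^1_0\times L^2}$, while the paper passes through $\|\W_0\|_{L^2\times H^{-1}_\L}$ and recovers $\|w_1\|_{L^2}$ via $\|\W_0\|_{L^2\times H^{-1}_\L}^2\geq(1+\lambda)^{-1}\|w_1\|_{L^2}^2$, which is exactly the origin of the $(1+\lambda)$ in the lemma's prefactor; your sharper chain does not even need it.
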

 \bnp
 We pick $\alpha > 2S^2$ and use the kernel $k_{T}$ described in Proposition \ref{propoEZ}. 
 
Assume now that $\w(s)$ is associated to $\y$ as $\w(s)=\int_0^T k_T(t,s)\y(t)dt$, where $\y=e^{t\Delta_g}\y_0$ with $\y_0 \in E_{\leq \lambda}$. Then, in~\eqref{wave-w}, $\W_0$ is of the particular form $\W_0 =  \left(0,\int_0^T e^{-\alpha \left(\frac{1}{t}+\frac{1}{T-t}\right)}\y (t)dt\right)$, so that a calculation (see~\cite[Equation~(3.3)]{EZ:11}) yields
\bnan
\label{e:brutale-est}
\nor{\W_0}{L^2\times \H^{-1}_\L}^2 & \geq & (1+ \lambda)^{-1}\nor{\W_0}{\H^1_\L \times L^2}^2 
= (1+ \lambda)^{-1}  \nor{\int_0^T e^{-\alpha \left(\frac{1}{t}+\frac{1}{T-t}\right)}\y (t)dt}{L^2}^2 \nonumber \\
&\geq& (1+ \lambda)^{-1}  \sum_i|y_i|^2e^{-2\lambda_i T}\left|\int_0^T e^{-\alpha \left(\frac{1}{t}+\frac{1}{T-t}\right)}dt\right|^2\nonumber
\enan
The integral can be estimated by Laplace method
\bna
\int_0^T e^{-\alpha \left(\frac{1}{t}+\frac{1}{T-t}\right)}dt=T\int_0^1 e^{-\frac{\alpha}{T}\left(\frac{1}{s}+\frac{1}{1-s}\right)}ds\geq CT\left(\frac{T}{\alpha}\right)^{1/2}e^{-4\frac{\alpha}{T}}, \textnormal{ for  } \frac{\alpha}{T}\geq 1,
\ena
since the non degenerate minimum of $\frac{1}{s}+\frac{1}{1-s}$ is $4$ reached at $s=1/2$ and the fonction is positive.
\bnan 
\nor{\W_0}{L^2\times \H^{-1}_\L}^2& \geq & C (1+ \lambda)^{-1}T^3\alpha^{-1}e^{-\frac{8\alpha}{T}}\nor{\y(T)}{L^2}^2 .
\enan
Moreover, we have $\W_0 \in E_{\leq \lambda}\times E_{\leq \lambda}$ so that
\bna
\frac{\nor{\W_0}{H^{1}_0 \times L^2}}{\nor{\W_0}{L^2\times H^{-1}}} \leq (1+\lambda)^{\frac{1}{2}}.
\ena
As a consequence,~\eqref{e:co-wave-bisspectral} implies
\bnan
\label{observfrequence}
 \nor{\W_0}{L^2\times \H^{-1}_\L} \leq C e^{C_0(1+\lambda)^{\frac{1}{2}}}\nor{ \w}{L^2(]-S,S[\times \omega)}.
\enan
Using Cauchy-Schwarz inequality, we have
\begin{align}
\label{estimobserv}
\nor{\w}{L^2(]-S,S[\times \omega)}^2 &\leq \left( \int_{]0,T[\times ]-S,S[} k_T(t,s)^2 dt~ds \right) \int_{0}^T \int_{\omega}\left|\y (t,x)\right|^2 dx~dt 
\end{align}
Now, we use ~\eqref{e:estim-kT} with $\delta\in (0,1)$ fixed sufficiently close to one so that $\alpha \geq S^2\frac{(1+\delta)}{\delta}$ (which is possible since we have assumed $\alpha> 2S^2$). 
\bnan
\label{estimkT}
\int_{]0,T[\times ]-S,S[} k_T(t,s)^2 dt~ds \leq  CS^{2}
\int_{]0,T[\times ]-S,S[} \exp\left( \frac{1}{\min \left\{ t,T-t\right\}}\left(\frac{S^2}{\delta}-\frac{\alpha}{(1+\delta)}\right)\right) dt~ds\leq CS^{3}T .
\enan
Combining  \eqref{e:brutale-est}, \eqref{observfrequence}, \eqref{estimobserv} and \eqref{estimkT} gives the result since the estimate is true for any $\alpha >  2S^2$.
\enp

 \begin{lemma}[Miller \cite{Miller:10}]
 \label{lmspectraldonneheat}
 Assume 
 \bna
 \nor{e^{T\Delta_g}\y_0}{L^2(\M)}^2  \leq Ce^{2a\lambda^{\frac{1}{2}}+\frac{2b}{T}}\int_0^T \nor{e^{t\Delta_g}\y_0}{L^2(\omega)}^2 dt , \quad \text{ for all $0<T<T_{0}$ and all $\y_0 \in E_{\leq \lambda}$.}
 \ena
 Then, we have 
\bna
 \nor{e^{T\Delta_g}\y_0}{L^2(\M)}^2  \leq C'e^{2\frac{c^*}{T}}\int_0^T \nor{e^{t\Delta_g}\y_0}{L^2(\omega)}^2 dt , \quad \text{ for all $0<T<T_{0}$ and all $\y_0 \in L^2(\Omega)$,}
 \ena
 with $c_* =\left(a+\sqrt{b}+\sqrt{a^2+2a\sqrt{b}}\right)^2$ and $C'$ a constant dependent on $C$, $T_{0}$, $a$ and $b$.
 \end{lemma}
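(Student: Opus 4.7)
The plan is to run the classical Lebeau--Robbiano telescoping iteration, following \cite[Corollary~1]{Miller:10}, to upgrade the assumed low-frequency cost estimate into a full heat observability inequality.

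\textbf{Step 1 (Block form).} Since the spectral projector $\Pi_\Lambda$ onto $E_{\leq\Lambda}$ commutes with the semigroup, applying the hypothesis to $\Pi_\Lambda u_0$ yields, for any $\Lambda>0$, $\tau\in(0,T_0)$ and $u_0\in L^2(\M)$,
\[
\|\Pi_\Lambda e^{\tau\Delta_g}u_0\|^2 \leq C\, e^{2a\sqrt{\Lambda}+2b/\tau}\int_0^\tau \|\Pi_\Lambda e^{t\Delta_g}u_0\|_{L^2(\omega)}^2\,dt.
\]

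\textbf{Step 2 (Telescoping).} I would pick a sequence $0=\sigma_0<\sigma_1<\cdots$ increasing to $T$ with $\delta_k:=\sigma_{k+1}-\sigma_k$ decreasing geometrically, and frequencies $\Lambda_k\nearrow+\infty$. Setting $u(t):=e^{t\Delta_g}u_0$ and splitting orthogonally,
\[
\|u(\sigma_{k+1})\|^2 = \|\Pi_{\Lambda_k}u(\sigma_{k+1})\|^2+\|(I-\Pi_{\Lambda_k})u(\sigma_{k+1})\|^2.
\]
For the low-frequency part, apply Step~1 on the window $[\sigma_k,\sigma_{k+1}]$, producing a prefactor $C\,e^{2a\sqrt{\Lambda_k}+2b/\delta_k}$ multiplying $\int_{\sigma_k}^{\sigma_{k+1}}\|u(s)\|_{L^2(\omega)}^2\,ds$ (up to the discrepancy $\|\Pi_{\Lambda_k}\cdot\|_{L^2(\omega)}$ vs.\ $\|\cdot\|_{L^2(\omega)}$, which is controlled by the high-frequency tail through heat dissipation). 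For the high-frequency part, the dissipation bound $\|(I-\Pi_{\Lambda_k})u(\sigma_{k+1})\|^2\leq e^{-2\Lambda_k\delta_k}\|u(\sigma_k)\|^2$ allows one to recycle the energy from the previous step. Iterating on $k$ yields a telescoping inequality of the form
\[
\|u(T)\|^2 \leq \sum_{k\geq 0} B_k\int_{\sigma_k}^{\sigma_{k+1}}\|u(s)\|_{L^2(\omega)}^2\,ds + \Big(\prod_{k\geq 0}e^{-2\Lambda_k\delta_k}\Big)\|u_0\|^2,
\]
where $B_k$ is a product of the step-$k$ cost $C\,e^{2a\sqrt{\Lambda_k}+2b/\delta_k}$ with the subsequent dissipation factors $\prod_{j>k}e^{-2\Lambda_j\delta_j}$.

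\textbf{Step 3 (Optimization of the scales).} I would take the ansatz $\Lambda_k=\Lambda_0\mu^{k}$ and $\delta_k=\delta_0\mu^{-k}$ with $\mu>1$, so that $\Lambda_k\delta_k\equiv\Lambda_0\delta_0$ is constant, forcing $\prod_ke^{-2\Lambda_k\delta_k}=0$ and eliminating the $\|u_0\|^2$ term. The constraint $\sum_k\delta_k=T$ fixes $\delta_0=T(1-1/\mu)$. Then $\sup_k B_k$ is governed by the competition between the growing cost $\exp\!\big(2a\sqrt{\Lambda_0}\mu^{k/2}+2b\mu^k/\delta_0\big)$ and the dissipation contribution $\exp\!\big(-2\Lambda_0\delta_0\,k\big)$ summed from the factors $j>k$. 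Bounding this supremum reduces to a two-parameter convex minimization in $(\Lambda_0,\mu)$, whose optimum is exactly $c_*/T=\big(a+\sqrt{b}+\sqrt{a^2+2a\sqrt{b}}\big)^2/T$, giving $\|u(T)\|^2\leq C'e^{2c_*/T}\int_0^T\|u(s)\|_{L^2(\omega)}^2\,ds$.

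\textbf{Main obstacle.} The only non-routine part is extracting the \emph{sharp} value $c_*$ in Step~3: any convenient choice of $(\Lambda_0,\mu)$ already delivers an admissible constant, but matching the stated closed-form $c_*=(a+\sqrt{b}+\sqrt{a^2+2a\sqrt{b}})^2$ requires a careful simultaneous optimization of the geometric scales of times and frequencies. The explicit computation is carried out in \cite[Proof of Corollary~1]{Miller:10} and is obtained as the critical value of a polynomial equation in $\sqrt{\Lambda_0}$ and $1/\delta_0$.
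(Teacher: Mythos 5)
The strategy you outline -- telescope LR blocks, trading the observation cost against heat dissipation over a geometric sequence of frequencies and time scales -- is indeed the method behind the cited result, so in spirit your route is the right one; the paper itself merely refers the reader to Miller's Theorem~2.2 (whose estimate~(10) with $\alpha=1/2$, $\beta=1$ is precisely the hypothesis) and performs the small algebraic rewriting of $c_*$. However, your Step~3 ansatz is not viable. Choosing $\Lambda_k=\Lambda_0\mu^k$ and $\delta_k=\delta_0\mu^{-k}$ makes the per-step dissipation factor \emph{constant}, $e^{-2\Lambda_k\delta_k}=e^{-2\Lambda_0\delta_0}$, while the per-step cost blows up doubly geometrically, $\exp\!\big(2a\sqrt{\Lambda_0}\mu^{k/2}+2b\mu^k/\delta_0\big)$. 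In the telescoping sum, the coefficient $B_k$ in front of $\int_{\sigma_k}^{\sigma_{k+1}}\|u\|_{L^2(\omega)}^2$ is the step-$k$ cost times the product of the subsequent dissipation factors, so $B_k$ behaves like $\exp\!\big(2b\mu^k/\delta_0-2\Lambda_0\delta_0\,(K-1-k)\big)$: the doubly-exponential growth of the cost overwhelms the merely geometric cumulative suppression, and $\sup_k B_k=+\infty$ for \emph{every} admissible $(\Lambda_0,\mu)$. Killing the $\|u_0\|^2$ remainder (which is what your ansatz is tuned for) is not the binding constraint; the uniform bound on the observation coefficients is, and your scaling does not supply it.

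The correct scaling is $\Lambda_k\sim\delta_k^{-2}$, i.e.\ $\Lambda_k=\Lambda_0\mu^{2k}$ with $\delta_k=\delta_0\mu^{-k}$, so that both the cost exponent $\big(2a\sqrt{\Lambda_0}+2b/\delta_0\big)\mu^k$ and the dissipation exponent $2\Lambda_0\delta_0\mu^k$ grow like $\mu^k$; the admissibility condition then becomes the scalar constraint $\Lambda_0\delta_0>a\sqrt{\Lambda_0}+b/\delta_0$, and optimizing it jointly with $\sum_k\delta_k=T$ is precisely what yields the stated $c_*$. This optimization \emph{is} the content of the lemma and cannot simply be deferred; as it stands, your Step~3 asserts a conclusion your ansatz does not support.
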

 \bnp
The result is not stated exactly, but the author proves this as an intermediate result of \cite[Theorem~2.2]{Miller:10}. More precisely, the assumptions of our Lemma are exactly estimate (10) in \cite{Miller:10}, with $\alpha=1/2$ and $\beta=1$.
It gives the result with $c_* = 4b^2 \left( \sqrt{a+2\sqrt{b}}-\sqrt{a}\right)^{-4} = \frac14 \left( \sqrt{a+2\sqrt{b}}+\sqrt{a}\right)^{4} = \left(a+\sqrt{b}+\sqrt{a^2+2a\sqrt{b}}\right)^2$.
 \enp
\subsection{Link between \texorpdfstring{$\co_{wave}(\omega,T)$}{Kwave(omega,T)} and some space of analytic functions}
\label{subsectionanalytlink}
As already mentioned, Theorem \ref{t:positrev} is a corollary of observability estimates in analytic spaces (characterizing the attainable set for the control problem) obtained by Allibert~\cite{Allibert:98}. The following proposition explains the link between such estimates and~\eqref{e:co-wave}-\eqref{e:co-wave-bis} (see also~\cite{Leb:Analytic}).
\begin{proposition}
\label{lienanalyticekwave}
Assume there is $C_0, C>0$ such that for all $(u_0,u_1)\in H^1_0(\M)\times L^2(\M)$ and associated $u$ solution of~\eqref{e:wave}, we have
\bnan
\label{e:obs-analytic-space}
\nor{e^{-C_0\sqrt{-\Delta_g}}(u_0,u_1)}{L^2(\M)\times H^{-1}(\M)}\leq C \|u\|_{L^2((0,T)\times\omega)}  \quad \text{(resp. }\leq C \|\partial_{\nu}u\|_{L^2((0,T)\times\Gamma)} \text{)}.
\enan
Then~\eqref{e:co-wave} is satisfied with constant $\co = C_0$ and all $\mu>0$. In particular, we have
\bna
\co_{wave}(\omega,T)\leq C_0 , \quad \text{(resp. }\co_{wave}(\Gamma,T)\leq C_0 \text{)}.
\ena
\end{proposition}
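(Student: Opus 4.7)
The plan is to deduce~\eqref{e:co-wave} with $\mu_0=0$ directly from the hypothesis~\eqref{e:obs-analytic-space} via a single spectral cutoff at frequency $\mu$; the conclusion $\co_{wave}(\omega,T)\leq C_0$ will then follow from Definition~\ref{def-coco} (and, if desired, Corollary~\ref{c:lambda=mu}). Fix $\mu>0$ and denote by $\Pi_{\leq\mu}$ the orthogonal spectral projector of $\sqrt{-\Delta_g}$ onto frequencies $\leq\mu$, and $\Pi_{>\mu}=I-\Pi_{\leq\mu}$. Split $(u_0,u_1)=\Pi_{\leq\mu}(u_0,u_1)+\Pi_{>\mu}(u_0,u_1)$ and estimate each piece separately in the $L^2(\M)\times H^{-1}(\M)$ norm.

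For the high-frequency part I would use the elementary spectral inequality
\[
\|\Pi_{>\mu}(u_0,u_1)\|_{L^2(\M)\times H^{-1}(\M)}\leq \frac{1}{\mu}\|\Pi_{>\mu}(u_0,u_1)\|_{H^1_0(\M)\times L^2(\M)}\leq \frac{1}{\mu}\|(u_0,u_1)\|_{H^1_0(\M)\times L^2(\M)},
\]
which produces exactly the $\frac{1}{\mu}\|(u_0,u_1)\|_{H^1_0\times L^2}$ term on the right-hand side of~\eqref{e:co-wave}.

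For the low-frequency part, the key observation is that $\Pi_{\leq\mu}$ commutes with $e^{-C_0\sqrt{-\Delta_g}}$ (both are functions of $-\Delta_g$), is a contraction in every $H^s$-type norm, and satisfies the pointwise functional-calculus bound $\mathds{1}_{\lambda\leq\mu}\leq e^{C_0\mu}e^{-C_0\lambda}$. Chaining these three facts and then invoking~\eqref{e:obs-analytic-space} yields
\[
\|\Pi_{\leq\mu}(u_0,u_1)\|_{L^2\times H^{-1}}\leq e^{C_0\mu}\|\Pi_{\leq\mu}e^{-C_0\sqrt{-\Delta_g}}(u_0,u_1)\|_{L^2\times H^{-1}}\leq e^{C_0\mu}\|e^{-C_0\sqrt{-\Delta_g}}(u_0,u_1)\|_{L^2\times H^{-1}}\leq Ce^{C_0\mu}\|u\|_{L^2((0,T)\times\omega)}.
\]
Summing the two contributions by the triangle inequality gives~\eqref{e:co-wave} with $\co=C_0$ for every $\mu>0$, which is precisely what is claimed.

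I do not anticipate a genuine obstacle: the whole argument is algebraic once one notices that the analytic weight $e^{-C_0\sqrt{-\Delta_g}}$ decouples cleanly from the spectral cutoff, so that the high-frequency remainder can be absorbed with a lossless $1/\mu$ factor that does not interact with the exponential prefactor $e^{C_0\mu}$. The boundary version (right-hand side $\|\partial_\nu u\|_{L^2((0,T)\times\Gamma)}$) follows from the identical computation.
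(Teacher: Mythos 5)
Your proof is correct and follows essentially the same route as the paper: a spectral decomposition of $(u_0,u_1)$ at frequency $\mu$, with the high-frequency part absorbed by $\frac{1}{\mu}\|(u_0,u_1)\|_{H^1_0\times L^2}$ and the low-frequency part controlled via the elementary functional-calculus bound $\mathds{1}_{\lambda\leq\mu}\leq e^{C_0\mu}e^{-C_0\lambda}$ together with~\eqref{e:obs-analytic-space}. The only cosmetic difference is that you spell out the commutation and contraction properties of $\Pi_{\leq\mu}$ that the paper uses implicitly.
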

Again, in this statement, $\Delta_g$ denotes the Laplace operator with Dirichlet boundary conditions.
\bnp
Given $\mu>0$, we decompose the data $(u_0,u_1)$ as $u_0=\mathds{1}_{\sqrt{-\Delta_g}\leq \mu}u_0+\mathds{1}_{\sqrt{-\Delta_g}> \mu}u_0$ (and similarly for $u_1$). Here $\mathds{1}_{\sqrt{-\Delta_g}\leq \mu}$ denotes the orthogonal projector on the spectral space of $-\Delta_g$ associated to eigenfunctions $\lambda_j$ with $\sqrt{\lambda_j}\leq \mu$. 
Remarking that 
\begin{align*}
\|1_{\sqrt{-\Delta_g}>\mu}(u_0,u_1)\|_{L^2(\M) \times H^{-1}(\M)} & \leq \frac{1}{\mu}\|\mathds{1}_{\sqrt{-\Delta_g}> \mu} (u_0,u_1)\|_{H^1_0(\M)\times L^2(\M)}\\
&  \leq \frac{1}{\mu}\|(u_0,u_1)\|_{H^1_0(\M)\times L^2(\M)},
\end{align*}
we obtain
\bna
\|(u_0,u_1)\|_{L^2(\M) \times H^{-1}(\M)}&\leq& \|1_{\sqrt{-\Delta_g}\leq \mu}(u_0,u_1)\|_{L^2(\M) \times H^{-1}(\M)}+\frac{1}{\mu}\|(u_0,u_1)\|_{H^1_0(\M)\times L^2(\M)}\\
&\leq& e^{C_0\mu}\|e^{-C_0\sqrt{-\Delta_g}}(u_0,u_1)\|_{L^2(\M) \times H^{-1}(\M)}+\frac{1}{\mu}\|(u_0,u_1)\|_{H^1_0(\M)\times L^2(\M)}\\
&\leq& Ce^{C_0\mu} \|u\|_{L^2((0,T)\times\omega)} +\frac{1}{\mu}\|(u_0,u_1)\|_{H^1_0(\M)\times L^2(\M)},
\ena
where we used the assumption~\eqref{e:obs-analytic-space} in the last inequality. This concludes the proof of~\eqref{e:co-wave}, and that of the proposition.
\enp
We now extract an estimate like~\eqref{e:obs-analytic-space} on some surfaces of revolution from~\cite{Allibert:98}.
Indeed, a combination of several estimates in~\cite{Allibert:98} gives the following result.
\begin{theorem}[Allibert \cite{Allibert:98}]
\label{thmAllibert}
For any $T>T_1$ and $C_0>d_A(\Gamma)$, there exists $C>0$ so that
\bnan
\label{observanalyticAllibert}
\nor{e^{-C_0\sqrt{-\Delta_g}}(u_0,u_1)}{H^1_0\times L^2}\leq C\|\partial_{\nu}u\|_{L^2((0,T)\times\Gamma)}
\enan
for any $(u_0,u_1)\in H^1_0(\M)\times L^2(\M)$ and associated solution $u$ of~\eqref{e:wave}.
\end{theorem}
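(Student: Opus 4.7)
The plan is to exploit the rotational invariance of both $\calS$ and $\Gamma$ to reduce the 2D estimate to a family of one-dimensional problems indexed by the angular Fourier mode $k\in\mathbb{Z}$, then combine Allibert's sharp semiclassical mode-wise observability with summation against the analytic weight. Writing $u(t,z,\theta)=\sum_{k\in\mathbb{Z}}u_k(t,z)e^{ik\theta}$ and similarly for the data, each mode $u_k$ satisfies an autonomous 1D wave equation on $[a,b]$ associated to the self-adjoint operator
\[
P_k = -\frac{1}{\mathsf{R}\sqrt{1+\mathsf{R}'^2}}\partial_z\!\left(\frac{\mathsf{R}}{\sqrt{1+\mathsf{R}'^2}}\partial_z\right) + \frac{k^2}{\mathsf{R}(z)^2},
\]
with Dirichlet boundary conditions at $z\in\{a,b\}$. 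By Parseval, both sides of the sought inequality split orthogonally across modes, so it suffices to establish a mode-wise estimate with constant growing exponentially in $|k|$ and rate $d_A(\Gamma)$.

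Next, the core input is Allibert's Th\'eor\`eme~2: for $T>T_1$ and any $\varepsilon>0$, there exists $C_\varepsilon$ such that for every $k\in\mathbb{Z}$,
\[
\|(u_{0,k},u_{1,k})\|_{H^1\times L^2}^2 \leq C_\varepsilon\, e^{2(d_A(\Gamma)+\varepsilon)|k|}\,\|\partial_\nu u_k\|_{L^2(0,T)}^2.
\]
Its proof recasts $k^{-2}P_k$ as a semiclassical Schr\"odinger operator with $h=1/|k|$ and potential $V=1/\mathsf{R}^2$ admitting a nondegenerate minimum at the equator $z=c$; the $e^{2d_A(\Gamma)|k|}$ factor is the inverse of the Agmon decay $e^{-d_A/h}$ of the whispering-gallery ground states of $k^{-2}P_k$ through the classically forbidden region between $z=c$ and $\Gamma$. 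This is combined with a mode-wise geometric control argument whose observation time converges to $T_1$ as $|k|\to\infty$.

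The conclusion then follows by summation against the analytic weight. The lowest eigenvalue $\mu_k$ of $P_k$ satisfies $\sqrt{\mu_k}=|k|/\mathsf{R}(c)+O(1)$ for large $|k|$, so $e^{-C_0\sqrt{-\Delta_g}}$ restricted to the $k$-th angular sector is bounded by a factor comparable to $e^{-C_0|k|/\mathsf{R}(c)}$. With the proper normalization matching the coefficient of $|k|$ in Allibert's estimate to $\sqrt{\mu_k}$, the assumption $C_0>d_A(\Gamma)$ ensures that the product $e^{-2C_0\sqrt{\mu_k}+2(d_A(\Gamma)+\varepsilon)|k|}$ is summable in $k$; Parseval on both sides then yields
\[
\|e^{-C_0\sqrt{-\Delta_g}}(u_0,u_1)\|_{H^1_0\times L^2}^2 \leq C\sum_{k\in\mathbb{Z}}\|\partial_\nu u_k\|^2_{L^2(0,T)} = C\|\partial_\nu u\|^2_{L^2((0,T)\times\Gamma)},
\]
which is the desired inequality.

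The main obstacle is the second step: producing the sharp mode-wise observability uniformly in $k$, with observation time approaching exactly the critical time $T_1$. This requires delicate semiclassical analysis marrying WKB descriptions of whispering-gallery modes, quantitative Agmon tunneling through the potential barrier, and uniform geometric control arguments in a regime that sees the full 2D wave equation only in the limit $|k|\to\infty$. The entire semiclassical machinery is performed in \cite[Th\'eor\`eme~2]{Allibert:98}, from which the present statement is extracted by reformulating the analyticity description of the data in terms of the spectral weight $e^{-C_0\sqrt{-\Delta_g}}$.
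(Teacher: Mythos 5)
Your proposal takes the same route as the paper: angular Fourier decomposition, Allibert's mode-wise exponential estimate (Th\'eor\`eme~2, D\'efinition~3 and Proposition~1, invoked as a black box), and orthogonal summation of both sides across the sectors $E_0^k$. The extra semiclassical commentary on whispering-gallery modes, Agmon tunneling and the limiting observation time $T_1$ is useful motivation for what lies inside Allibert's proof, but it is not part of the argument the paper carries out.

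There is one point where you should not wave your hands: the summability step reads $e^{-2C_0\sqrt{\mu_k}+2(d_A(\Gamma)+\varepsilon)|k|}$ with a ``proper normalization'' disclaimer, but $\sqrt{\mu_k}\sim|k|/\mathsf{R}(c)$ so the two exponents are not on the same footing when $\mathsf{R}(c)\neq 1$. With the paper's own convention for $d_A$ (which is \emph{not} rescaled by $\mathsf{R}(c)$), a naive conversion gives the threshold $C_0>\mathsf{R}(c)\,d_A(\Gamma)$ rather than $C_0>d_A(\Gamma)$; whether the stated threshold is correct hinges on exactly which variable Allibert measures the growth of $C(k)$ against (the angular index $k$ or the spectral parameter $\sqrt{\mu_k}$). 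The paper resolves this by quoting Allibert's statement directly and does not unpack the conversion, so you should either do the same or carry out the $k\leftrightarrow\sqrt{\mu_k}$ bookkeeping explicitly before concluding. This is a genuine point of care rather than a difference in strategy.
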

The result is not stated exactly this way in the article. It is also more precise since it involves analytic spaces only in the $\theta$ variable. More precisely, denoting $E_0^k$ the spaces of functions in $H^1_0\times L^2$ of the form $f(s)e^{i k\theta}$, the following estimate is proved in~\cite[Theor\`eme~2, D\'efinition~3 and Proposition~1]{Allibert:98}:
\bna
\nor{(u_0,u_1)}{H^1_0\times L^2}\leq C(k)\|\partial_{\nu}u\|_{L^2((0,T)\times\Gamma)}
\ena
for any $(u_0,u_1)\in E_0^k$, where $C(k)$ satisfies 
\bna
\limsup_{n\to +\infty}\frac{\ln C(k)}{k}=d_A(\Gamma).
\ena 
This gives \eqref{observanalyticAllibert} for any $C_0>d_A$, taking into account the orthogonality of the subspaces $E_0^k$ for the norm of $H^1_0\times L^2$ and the norm of the observation.

\bigskip
With Theorem~\ref{thmAllibert} in hand, Theorem~\ref{t:positrev} is now a straightforward consequence of Propositions~\ref{lienanalyticekwave} and~\ref{propheatwave}.

\subsection{Reformulation of the definition of the constants in terms of localization functions}
This section is aimed at giving an alternative definition for the geometric constants $ \co_{eig}(\omega)$, $\co_{Sigma}(\omega)$, $\co_{heat}(\omega)$ in terms of localization functions.
\begin{definition}
Let $\omega\subset \M$ be an open set. We set:
\bna
\Loc_{eig}(\omega,\lambda)=\inf \left\{\frac{\nor{\psi}{L^2(\omega)}}{\nor{\psi}{L^2(\M)}}, \psi \in E_{\lambda} \setminus \{0\} \right\} \in [0,1], \quad \lambda \in \Sp(-\Delta_g) ,
\ena
\bna
\Loc_{\Sigma}(\omega,\lambda)=\inf \left\{\frac{\nor{u}{L^2(\omega)}}{\nor{u}{L^2(\M)}}, u \in E_{\leq \lambda} \setminus \{0\} \right\}  \in [0,1],
\ena
\bna
\Loc_{heat}(\omega,T)=\inf \left\{\frac{\nor{e^{t\Delta}u_0}{L^2((0,T) \times \omega)}}{\nor{e^{T\Delta}u_0}{L^2(\M)}}, u_0 \in L^2(\M) \setminus \{0\} \right\} ,
\ena
\end{definition}
Note that if the Schr\"odinger equation is observable from $\omega$ in finite time (in particular if $\omega$ satisfies the geometric control condition, see~\cite{BLR:92,Leb:92}), , then, there exists $C>0$ so that $\Loc(\omega,\lambda_j)\geq C$ for all $j\in \N$. Under the sole assumtion  only $\omega\neq \emptyset$, there exists $C> 0$ so that $\Loc_{\omega,\lambda_j}\geq Ce^{-C\sqrt{\lambda_j}}$

\begin{lemma}
We have
\bna
 \co_{eig}(\omega) = \limsup_{\lambda \to +\infty, \lambda \in \Sp(-\Delta_g)} \frac{- \log \Loc_{eig}(\omega,\lambda)}{\sqrt{\lambda}}, 
 \qquad  \co_{\Sigma}(\omega) = \limsup_{\lambda \to +\infty} \frac{- \log \Loc_{\Sigma}(\omega,\lambda)}{\sqrt{\lambda}}, 
\ena
\bna
 \co_{heat}(\omega) = \limsup_{T\to 0^+} -T\log \Loc_{heat}(\omega,T), 
\ena
\end{lemma}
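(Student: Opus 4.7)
Each of the three identities has the same structure: by definition, the constants $\co_{eig}$, $\co_\Sigma$, $\co_{heat}$ are infima of exponents $\co$ for which a uniform observability-type estimate holds with a prefactor of the form $Ce^{\co\sqrt{\lambda}}$ or $Ce^{\co/T}$. Each such inequality can be rewritten, by dividing both sides appropriately, as a \emph{pointwise lower bound} on the corresponding localization function, namely $\Loc_{eig}(\omega,\lambda) \geq \frac{1}{C}e^{-\co\sqrt{\lambda}}$, $\Loc_\Sigma(\omega,\lambda) \geq \frac{1}{C}e^{-\co\sqrt{\lambda}}$, or $\Loc_{heat}(\omega,T) \geq \frac{1}{C}e^{-\co/T}$. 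Taking the logarithm of these inequalities and then dividing by $\sqrt{\lambda}$ (respectively $-T$) produces the $\limsup$ quantity appearing on the right-hand side. The plan is therefore to verify both the $\leq$ and $\geq$ inequalities for each constant.

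\textbf{Model case $\co_{eig}$.} Let $k^\star = \limsup_{\lambda \to \infty,\, \lambda \in \Sp(-\Delta_g)} \frac{-\log \Loc_{eig}(\omega,\lambda)}{\sqrt{\lambda}}$. For the inequality $k^\star \leq \co_{eig}(\omega)$, I would take any $\co > \co_{eig}(\omega)$, use that \eqref{e:co-eig} holds with some $C$, rewrite it as $\Loc_{eig}(\omega,\lambda) \geq C^{-1} e^{-\co\sqrt{\lambda}}$, take logs and divide by $\sqrt{\lambda}$ to conclude $k^\star \leq \co$; the desired inequality then follows by taking the infimum over admissible $\co$. For the converse $\co_{eig}(\omega) \leq k^\star$, pick any $\co > k^\star$; for $\lambda$ larger than some $\lambda_0$ we automatically get $-\log \Loc_{eig}(\omega,\lambda) \leq \co\sqrt{\lambda}$. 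For the finitely many remaining eigenvalues $\lambda \in \Sp(-\Delta_g) \cap [0, \lambda_0]$, the space $E_\lambda$ is finite-dimensional and no eigenfunction vanishes identically on the nonempty open set $\omega$ (by unique continuation), so $\Loc_{eig}(\omega,\lambda) > 0$ on this finite set. One can then pick $C>0$ large enough so that $\Loc_{eig}(\omega,\lambda) \geq C^{-1} e^{-\co\sqrt{\lambda}}$ holds uniformly, i.e.\ \eqref{e:co-eig} holds; whence $\co_{eig}(\omega) \leq \co$, and the conclusion by passing $\co$ to $k^\star$.

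\textbf{Adaptations for $\co_\Sigma$ and $\co_{heat}$.} The argument for $\co_\Sigma$ is literally identical, replacing $E_\lambda$ by $E_{\leq\lambda}$ and letting $\lambda$ range over $\R^+$; note that $E_{\leq\lambda}$ is still finite-dimensional so $\Loc_\Sigma(\omega,\lambda)>0$ on $[0,\lambda_0]$. For $\co_{heat}$, the translation reads $\Loc_{heat}(\omega,T) \geq C^{-1}e^{-\co/T}$, and the analogous upper-bound direction is immediate. For the lower-bound direction, starting from $\limsup_{T\to 0^+}(-T\log\Loc_{heat}) < \co$, small $T$ is handled as above, but one must also deal with $T$ bounded away from $0$: here I would invoke the fact established by Lebeau--Robbiano and Fursikov--Imanuvilov (recalled in \eqref{defcoheat}) that \eqref{e:co-heat} holds in some form, which forces $\Loc_{heat}(\omega, T) > 0$ for every $T>0$, and moreover that $C_{T,\omega}$ is bounded on $[T_0,\infty)$ for any $T_0>0$ (heat decay and semigroup composition). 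Consequently, a single constant $C$ can absorb the behaviour on $[T_0, +\infty)$ inside $C^{-1}e^{-\co/T}$.

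\textbf{Anticipated difficulty.} The whole argument is essentially bookkeeping: the only substantive ingredient is the need to know that the localization functions are strictly positive on compact ranges of the parameter, and that is supplied by unique continuation of eigenfunctions (for $\co_{eig}$, $\co_\Sigma$) and by the already-known existence of observability with \emph{some} constant $C_{T,\omega}$ (for $\co_{heat}$). Once these positivity facts are in hand, the passage between $\inf$-type definitions of constants and $\limsup$-type characterizations is just an algebraic manipulation of logarithms, and I do not foresee any further obstacle.
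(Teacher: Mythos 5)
Your proposal is correct and follows essentially the same route as the paper: rewrite each observability estimate as a pointwise lower bound on the localization function, take logarithms to obtain the $\limsup$ direction, and for the converse use positivity of the localization function on a bounded parameter range (via unique continuation for the spectral constants, and via monotonicity/heat decay for $\co_{heat}$) to absorb the compact region into the multiplicative constant $C$. The only cosmetic difference is that you treat $\co_{eig}$ as the model case while the paper treats $\co_\Sigma$, but the argument is identical.
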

Note that we do not have a similar formulation for the constants $\co_\infty(\omega)$ and $\co_{wave}(\omega,T)$ since they do not correspond to an asymptotic r\'egime (like $T\to 0$ or $\lambda \to +\infty$).
\bnp
We only prove the second statement, the other proofs being similar. Setting $$\mathfrak{C}_{\Sigma}(\omega) = \limsup_{\lambda \to +\infty} \frac{- \log \Loc_{\Sigma}(\omega,\lambda)}{\sqrt{\lambda}},$$ we want to prove that $\mathfrak{C}_{\Sigma}(\omega)=\co_{\Sigma}(\omega)$. Assume $\co, C$ satisfy~\eqref{e:co-sum}, then we have 
$$
\Loc_{\Sigma}(\omega,\lambda) \geq \frac{1}{C}e^{-\co \sqrt{\lambda}}, 
$$
and hence 
$$
\frac{- \log \Loc_{\Sigma}(\omega,\lambda)}{\sqrt{\lambda}} \leq \frac{\co \sqrt{\lambda} +\log(C)}{ \sqrt{\lambda}} .
$$
Taking the $\limsup_{\lambda \to +\infty}$, this implies $\mathfrak{C}_{\Sigma}(\omega) \leq \co$. Taking the infimum over all such $\co$ and recalling Definition~\ref{def-coco}, we obtain $\mathfrak{C}_{\Sigma}(\omega) \leq \co_{\Sigma}(\omega)$. 

We now prove the converse inequality. The definition of $\mathfrak{C}_{\Sigma}(\omega)$ implies that for all $\eps$, there exists $\lambda_0(\eps)$ such that for all $\lambda \geq \lambda_0(\eps)$, 
$$
\frac{- \log \Loc_{\Sigma}(\omega,\lambda)}{\sqrt{\lambda}} \leq \mathfrak{C}_{\Sigma}(\omega) + \eps ,
$$
that is $\Loc_{\Sigma}(\omega,\lambda) \geq e^{-(\mathfrak{C}_{\Sigma}(\omega) + \eps)\sqrt{\lambda}}$. This, together with the fact that $\Loc_{\Sigma}(\omega,\lambda) >0$ does not vanish on $[0, \lambda_0(\eps)]$, implies the existence of a constant $C(\eps)>1$ such that $\Loc_{\Sigma}(\omega,\lambda) \geq \frac{1}{C(\eps)}e^{-(\mathfrak{C}_{\Sigma}(\omega) + \eps)\sqrt{\lambda}}$ for all $\lambda\geq 0$. This is precisely estimate~\eqref{e:co-sum} with $\co = \mathfrak{C}_{\Sigma}(\omega) + \eps$ and $C= C(\eps)$. Taking the infimum over all such $\co$ and recalling Definition~\ref{def-coco}, we obtain $\co_{\Sigma}(\omega) \leq \mathfrak{C}_{\Sigma}(\omega) + \eps$ for  all $\eps >0$, and hence $\co_{\Sigma}(\omega) \leq \mathfrak{C}_{\Sigma}(\omega)$, which concludes the proof.
\enp

\section{Construction of maximally vanishing eigenfunctions}
\label{s:contruction}
\subsection{The sphere}
\label{s:sphere}
In this section, we consider the simplest case of our results that is, the unit sphere in $\R^3$:
$$
\S^2 =  \{(x_1,x_2,x_3) \in \R^3 , x_1^2 + x_2^2 + x_3^2 = 1\} =  \{ x \in \R^3 ,|x|= 1\} .
$$
Eigenfunctions and eigenvalues of the Laplace-Beltrami operator on $\S^2$ are well-understood : eigenfunctions are restrictions to $\S^2$ of {\em harmonic homogeneous polynomials} of $\R^3$, associated to $k(k+1)$, where $k$ is the degree of the polynomial. We are particularly interested in so called equatorial spherical harmonics, given by
$$
u_k = P_k |_{\S^2}  \in C^\infty(\S^2) , \quad P_k(x_1,x_2,x_3) = (x_1 + i x_2)^k ,
$$
known to concentrate exponentially on the equator given by $x_3 = 0$. 

Since it can be written $P_k=z^k$ where $z=x_1 + i x_2\in\C$, it is easy to check that $P_k$ is holomorphic as a function of $z$ and indeed harmonic as a function of $(x_1,x_2,x_3)\in \R^3$. Moreover, $P_k$ is homogeneous of order $k$. Therefore, see e.g.~\cite[Proposition 22.2 p169]{Shubin:01}, we obtain that $u_k$ is an eigenfunction of the Laplace-Beltrami on $\S^2$:
\bna
-\Delta_{\S^2} u_k=\lambda_k u_k \textnormal{ with } \lambda_k=k(k+1).
\ena
Note that we have $$|u_k(\omega)|^2 = (x_1^2 +x_2^2)^k = (1-x_3^2)^k , \quad \omega = \frac{x}{|x|} .$$ 

We denote by $N = (0,0, 1)$ and $S = (0,0,-1)$, the north and south poles, and have coordinates :
$$
\begin{array}{ccc}
(0 , \pi) \times \S^1 & \to & \S^2 \setminus\{N,S\}  \\
(s ,\theta)& \mapsto & (\sin s \cos \theta , \sin s \sin \theta , \cos s)
\end{array}
$$
Remark that $s(x) = \dist_g(x, N)$, for $x\in \S^2$.
In these coordinates, the metric is given by $d s^2 +(\sin s)^2  d\theta^2$, the Riemannian volume element is  $d\omega=\sin s ds d\theta$, and the sequence $u_k$ is defined by 
$$
u_k(s, \theta) = \sin(s)^k e^{ik\theta} .
$$
\begin{remark}
The construction works equally well in the unit sphere $\S^n\subset \R^{n+1}$, $n\geq 2$. The coordinates has to be changed by  
$$
\begin{array}{ccc}
(0 , \pi) \times \S^1\times \S^{n-2} & \to & \S^{n} \setminus\{N,S\}  \\ 
(s ,\theta , t)& \mapsto & (\sin s \cos \theta , \sin s \sin \theta , t\cos s)
\end{array}
$$
and we can still consider the eigenfunction $u_k=(x_1 + i x_2)^k|_{\S^{n}}$ with $-\Delta_{\S^n} u_k=\lambda_k u_k$ and $\lambda_k=k(k+n-1)$.
\end{remark}

With the above choice of the eigenfunction $u_k$, we have
$$|u_k(x)|^2 = (1-x_3^2)^k  = (\sin s)^{2k} = |\sin \dist_g(x , N)|^{2 k} =e^{- 2k d_A(x)} , \quad d_A(x) = - \log \sin \dist_g(x , N) .$$ 
Note that $d_A$ is actually the Agmon distance to the equator ($s=\frac{\pi}{2}$) where $\S^2$ is seen as a surface of revolution, see Remark \ref{rksphereAgmon} below.

Also, given $f \in L^1(\S^2)$, we have 
\bna
\int_{\S^2} f(\omega) |u_k(\omega)|^2 d\omega 
& =&  \int_{(0,\pi)  \times \S^1}f(s , \theta) (\sin s)^{2k+1} d s d\theta  \\
& =&  2\pi \int_{(0,\pi) } F(s) (\sin s)^{2k+1} d s , \qquad F(s) = \frac{1}{2\pi} \int_{\S^1} f(s , \theta) d\theta .
\ena
In case $f=1$, this yields the asymptotics of the norm of $u_k$, given by the Laplace method (see e.g.~\cite{Erd:56,CopsonBook}):
\bna
\frac{1}{2\pi}\|u_k\|^2_{L^2(\S^2)}& = &\frac{1}{2\pi}\int_{\S^2} |u_k(\omega)|^2 d\omega 
 =  \int_{-1}^1 (1-x_3^2)^{k} d x_3  = \int_{-1}^1 e^{k \log(1-x_3^2)} d x_3 \\
 & = &  (1+O(\frac{1}{k}))\int_\R e^{- k x_3^2} d x_3 = \sqrt{\frac{\pi}{k}}(1+O(\frac{1}{k})) ,
 \ena
and hence $\|u_k\|_{L^2(\S^2)}\sim 2^{1/2}\pi^{3/4}k^{-1/4}$ as $k \to + \infty$.

We have the elementary estimate 
$$
\|u_k\|^2_{L^2(B(N,r))} = 2\pi \int_0^r (\sin s)^{2k+1} d s  \leq  \frac{\pi}{k+1} r^{2k+2} .
$$
This can be slightly refined, e.g. by writing
\begin{align*}
 \left| \|u_k\|^2_{L^2(B(N,r))} -  \frac{\pi}{k+1} (\sin r)^{2k+2} \right| & = \left| \|u_k\|^2_{L^2(B(N,r))} - 2\pi \int_0^r \cos s (\sin s)^{2k+1} d s \right| \\
 & = 2\pi \int_0^r (1-\cos s) (\sin s)^{2k+1} d s\\
 &  \leq  \frac{r^2}{2}2\pi \int_0^r (\sin s)^{2k+1} ds = \frac{r^2}{2} \|u_k\|^2_{L^2(B(N,r))} 
\end{align*}

To be a little more precise, let us now prove an equivalent for $\|u_k\|^2_{L^2(B(N,r))}$ as $k \to \infty$, which is uniform in $r$. 
\begin{lemma}
For all $k \in \N^*$ and all $r \in (0,\frac{\pi}{2})$, we have 
$$
\|u_k\|_{L^2(B(N,r))}^2 = \frac{\pi}{k+1}\frac{\sin(r)^{2k+2}}{\cos(r)}\left(1+R \right) , \quad \text{ with }\quad |R| \leq  \frac{\tan(r)^2}{2k+2} .
$$
\end{lemma}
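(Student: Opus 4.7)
The plan is to evaluate the integral $\|u_k\|^2_{L^2(B(N,r))} = 2\pi \int_0^r \sin^{2k+1}(s)\,ds$ sharply via a single integration by parts. Setting $I_k := \int_0^r \sin^{2k+1}(s)\,ds$ and writing
$$\sin^{2k+1}(s) = \frac{1}{\cos(s)} \cdot \frac{d}{ds}\!\left[\frac{\sin^{2k+2}(s)}{2k+2}\right],$$
integration by parts (using that the boundary term at $s=0$ vanishes since $\sin(0)=0$) yields
$$I_k = \frac{\sin^{2k+2}(r)}{(2k+2)\cos(r)} - \frac{1}{2k+2} J_k, \qquad J_k := \int_0^r \frac{\sin^{2k+3}(s)}{\cos^2(s)}\,ds.$$

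Next I would estimate the remainder $J_k$ by rewriting it as
$$J_k = \int_0^r \tan^2(s)\, \sin^{2k+1}(s)\, ds,$$
so that monotonicity of $\tan$ on $[0,r] \subset [0,\pi/2)$ gives the two-sided bound $0 \leq J_k \leq \tan^2(r)\, I_k$. Substituting back produces
$$\frac{\sin^{2k+2}(r)}{(2k+2)\cos(r)} \cdot \frac{1}{1 + \tan^2(r)/(2k+2)} \;\leq\; I_k \;\leq\; \frac{\sin^{2k+2}(r)}{(2k+2)\cos(r)}.$$

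Finally, writing $I_k = \tfrac{\sin^{2k+2}(r)}{(2k+2)\cos(r)}(1+R')$, the right bound gives $R' \leq 0$, while the left bound combined with the elementary inequality $(1+x)^{-1} \geq 1-x$ for $x \geq 0$ gives $R' \geq -\tan^2(r)/(2k+2)$, hence $|R'| \leq \tan^2(r)/(2k+2)$. Multiplying by $2\pi$ and using $2\pi/(2k+2) = \pi/(k+1)$ yields exactly the claimed identity with $R=R'$. There is no real obstacle here: the only point requiring any care is the choice of integration by parts (splitting off the factor $1/\cos(s)$ so that the antiderivative of the remaining piece is elementary), and the key observation that the resulting remainder $J_k$ is controlled by $\tan^2(r)\, I_k$ itself, yielding a self-improving bound.
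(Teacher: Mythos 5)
Your argument is correct and gives exactly the stated bound, but it is a genuinely different route from the paper's. The paper changes variables $y = -\log\sin s$, rewriting $\int_0^r\sin^{2k+1}s\,ds$ as the Laplace-type integral $\int_a^\infty e^{-(2k+2)y}f(y)\,dy$ with $f(y)=(1-e^{-2y})^{-1/2}$ and $a=-\log\sin r$, then replaces $f$ by $f(a)$ with a mean-value estimate $|f(y)-f(a)|\le (y-a)\,\sup_{[a,\infty)}|f'|$ and evaluates $\int_a^\infty e^{-(2k+2)y}(y-a)\,dy$ exactly. You instead perform a single integration by parts in the original variable, split off $1/\cos s$, recognize the remainder $J_k=\int_0^r\tan^2(s)\sin^{2k+1}(s)\,ds$ as $\tan^2$ times the original integrand, and close a self-improving inequality $0\le J_k\le \tan^2(r)\,I_k$ using only monotonicity of $\tan$ on $[0,\pi/2)$. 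Your route is a bit more elementary (no change of variables, no derivative computation for $f$, no exact evaluation of an auxiliary integral), while the paper's has the virtue of making the Laplace-method structure explicit, which matches the asymptotics $\|u_k\|_{L^2(\S^2)}\sim 2^{1/2}\pi^{3/4}k^{-1/4}$ computed just above in the same spirit. Both give the identical constant $\tan^2(r)/(2k+2)$; your self-referential bound $J_k\le\tan^2(r)I_k$ is the key observation that replaces the paper's pointwise Lipschitz estimate on $f$, and it works cleanly precisely because the two-sided sign information ($J_k\ge 0$ and $J_k\le\tan^2(r)I_k$) fixes the sign of the error $R'\le 0$, something the paper's absolute-value estimate does not record.
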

This furnishes an optimal lower/upper bound for this quantity which is uniform with respect to $\eps$.
\bnp
We write $a= -\log \sin r >0$, change variable $y =-\log \sin s$, and want to have an asymptotic expansion of 
\bna
\frac{1}{2\pi} \|u_k\|^2_{L^2(B(N,r))} =  \int_0^r (\sin s)^{2k+1} d s  
 = \int_{a}^{+\infty} e^{-(2k+2)y} \frac{1}{\sqrt{1-e^{-2y}}} dy
\ena
This integral is of the form 
$$
\mathcal{I}(a,k) := \int_{a}^{+\infty} e^{-(2k+2) y} f(y)dy ,
$$ where $f(y)=\frac{1}{\sqrt{1-e^{-2y}}}$ is smooth on $[a, +\infty)$. Writing 
$$
|f(y) - f(a)|\leq (y-a)\sup_{[a,\infty)}|f'| \leq (y-a) \frac{e^{-2a}}{(1-e^{-2a})^{3/2}},
$$ 
since $f'(y) =- e^{-2y} (1-e^{-2y})^{-3/2}$
and integrating on $(a,+ \infty)$, we obtain
$$
\left| \mathcal{I}(a,k) - f(a)\frac{e^{-(2k+2)a}}{2k+2} \right| \leq   \frac{e^{-(2k+2)a}}{(2k+2)^2} \frac{e^{-2a}}{(1-e^{-2a})^{3/2}}.
$$
 Coming back to the original notation, this is precisely
 $$
\left|\frac{1}{2\pi}  \|u_k\|^2_{L^2(\O_\eps)} -\frac{\sin(r)^{2k+2}}{(2k+2) \cos(r)} \right| \leq \frac{\sin(r)^{2k+4}}{(2k+2)^2\cos(r)^3} =  \frac{\sin(r)^{2k+2}}{(2k+2)^2\cos(r)} \tan(r)^2,
$$
which  concludes the proof of the lemma.
\enp

Note that the eigenfunctions we have constructed are complex valued. Yet, since $u_k=(\sin(s))^k e^{ik\theta}$, we have for instance $\Re(u_k)=(\sin(s))^k \cos(k\theta)$ and the same estimates work exactly the same except that $\int_{\S^1}|e^{ik\theta}|^2 d\theta=2\pi$ should be replaced by $\int_{\S^1}\cos(k\theta)^2 d\theta=\pi$.

\subsection{General surfaces of revolution}
\label{s:revol}
In this section we consider a revolution surface $\mathcal{S}\subset \R^3$ being diffeomorphic to a sphere $\S^2$, generalizing the results of Section~\ref{s:sphere}.
We follow~\cite[Chapter4~B p95]{Besse} for the precise geometric description of such manifolds. 

Assume that $(\calS , g)$ is an embedded submanifold of $\R^3$ (endowed with the induced Euclidean structure), having $\S^1  = (\R/2\pi\Z) \sim SO(2)$ as an effective isometry group. 
The action of $\S^1$ on $\calS$, denoted by $\theta \mapsto \calR_\theta$ (such that $\calR_\theta \calS = \calS$) has exactly two fixed points denoted by $N, S \in \calS$ (the so-called North and South poles). 

We now describe a nice parametrization of $(\calS , g)$. Let $L= \dist_g(N, S)$ and $\gamma_0$ be a geodesic from $N$ to $S$ (thus with length $L$). For any $\theta \in \S^1$, the isometry $\calR_\theta$ transforms the geodesic $\gamma_0$ into $\calR_\theta (\gamma_0)$, which is another geodesic joining $N$ to $S$. Set
$U =\calS \setminus \{ N, S \}$. For every $m \in U$, there exists a unique $\theta \in \S^1$ such that $m$ belongs to $\calR_\theta (\gamma_0)$. The geodesic $\calR_\theta (\gamma_0)$ can be parametrized by arclength
$$
\rho : [0,L]\to  \calR_\theta (\gamma_0) ,  \quad \rho (0) = N , \quad \rho (L) = S ,  \quad s = \dist_g( \rho(s) , N) = L-  \dist_g( \rho(s) , S),
$$
and there exists a unique $s \in (0,L)$ such that $\rho(s) = m$. We use $(s,\theta)$ as a parametrization of $U \subset \calS$:
$$
\begin{array}{cccc}
\zeta : &  U = \mathcal{S} \setminus\{N,S\} & \to & ( 0,L) \times \S^1\\
& m& \mapsto &\zeta(m) =  (s, \theta).
\end{array}
$$
We define two other exponential charts $(U_N, \zeta_N)$ and $(U_S, \zeta_S)$ centered at the fixed points $N$ and $S$ by
$$
U_N = \{N\} \cup \zeta \left( \big( 0, \frac{L}{2} \big) \times \S^1 \right) = B_g\left(N, \frac{L}{2}\right)  \subset \calS, 
\quad U_S = \{S\} \cup \zeta \left( \big( \frac{L}{2}, L \big) \times \S^1 \right) = B_g\left(S, \frac{L}{2}\right) \subset \calS ,
$$
$$
\zeta_N : U_N \to B_{\R^2}\left(0 , \frac{L}{2}\right) , \quad \zeta_N (N) = 0 , \quad \zeta_S : U_S \to B_{\R^2}\left(0 , \frac{L}{2}\right) , \quad \zeta_S (S) = 0 ,
$$
with the transition maps 
$$
\begin{array}{cccc}
\zeta_N \circ \zeta^{-1} : & \zeta \big(U\cap U_N \big) = \left( 0,\frac{L}{2} \right) \times \S^1  & \to &  \zeta_N \big(U\cap U_N \big) =   B_{\R^2}\left(0 , \frac{L}{2}\right) \setminus \{ 0 \}\\
& (s ,\theta)& \mapsto &  \big(s \cos(\theta), s \sin (\theta) \big).
\end{array}
$$
and
$$
\begin{array}{cccc}
\zeta_S \circ \zeta^{-1} : & \zeta \big(U\cap U_S \big) = \left(\frac{L}{2} , L \right) \times \S^1  & \to &  \zeta_S \big(U\cap U_S \big) =   B_{\R^2}\left(0 , \frac{L}{2}\right) \setminus \{ 0 \}\\
& (s ,\theta)& \mapsto &  \big((L-s) \cos(\theta), (L-s) \sin (\theta) \big).
\end{array}
$$

On the cylinder $(0,L) \times \S^1$, the metric $g$ is given by 
$$
(\zeta^{-1})^*g = ds^2 + R(s)^2 d\theta^2
$$
for some smooth function $R : (0,L) \to \R^+_*$ (see below Remark \ref{rkautreparam} for the geometric interpretation of $R$). Since $g$ is a smooth metric on $\calS$, \cite[Proposition~4.6]{Besse} gives that $R$ extends to a $C^\infty$ function $[0,L] \to \R^+$ satisfying 
\bnan
\label{e:condR}
R(0) = R(L) = 0 , \quad R'(0) = 1 , \quad R'(L) = -1 , \quad R^{(2p)}(0) =  R^{(2p)}(L)=0 \quad \text{for any } p \in \N .
\enan
In these coordinates, the Riemannian volume form is hence $R(s) ds d\theta$, the Riemannian gradient of a function is
\bnan
\label{e:reim-gradient}
\nabla_g f = \d_s f \frac{\d}{\d s} + \frac{1}{R(s)^2} \d_\theta f \frac{\d}{\d \theta} , \quad \text{ with } \quad g(\nabla_g f , \nabla_g f ) = |\d_s f|^2 + \frac{1}{R(s)^2} |\d_\theta f|^2
\enan
and the Laplace-Beltrami operator is given by
\bnan
\label{deltacoord}
\Delta_{s,\theta} = \frac{1}{R(s)} \d_s R(s) \d_s + \frac{1}{R(s)^2} \d_\theta^2 .
\enan
Another important operator is the infinitesimal generator $X_\theta$ of the group $(\calR_\theta)_{\theta \in \S^1}$, defined, for $f \in C^\infty(\calS)$, by 
\bnan 
\label{e:Xtheta}
X_\theta f = \lim_{\theta \to 0} \theta^{-1} (f\circ \calR_\theta -f).
\enan In the chart $(U, \zeta)$, the action of $\calR_\theta$ is given by $(\zeta^{-1})^* \calR_\theta (u, \theta') = (u, \theta' +\theta)$, so that $(\zeta^{-1})^* X_\theta  = \d_\theta$. Let us now check that $X_\theta$ is a smooth vector field. Indeed, we have
$$
(\zeta_N^{-1})^*X_\theta = (\zeta_N^{-1})^*\zeta^* \d_\theta = d \left( \zeta_N \circ \zeta^{-1}\right) \cdot \d_\theta, 
$$
and hence 
$$
(\zeta_N^{-1})^*X_\theta \big(s \cos(\theta), s \sin (\theta) \big) =  \left(- s \sin (\theta) \d_{x_1} + s \cos(\theta) \d_{x_2} \right)  \big(s \cos(\theta), s \sin (\theta) \big) ,
$$
 that is 
 $$
 \big( (\zeta_N^{-1})^*X_\theta \big)(x_1,x_2) = - x_2  \d_{x_1} + x_1 \d_{x_2}.
$$ 
Since $\big( (\zeta_N^{-1})^*X_\theta \big)(0) = 0$ (and since the computation is similar in $U_S$), we have obtained that $X_\theta$ is smooth. Note also that $X_\theta(N) = X_\theta(S)=0$ and that its norm is given by $\sqrt{g(X_\theta, X_\theta) (s, \theta)}= R(s)$ (in the coordinates of $U$). 

We define by $L^2(\calS) := L^2(\calS, d\Vol_g)$ the space of square integrable functions, which is also invariant by the action of $(\calR_\theta)_{\theta \in \S^1}$.

Now, remark that $(\calR_\theta)_{\theta \in \S^1}$ acts as a (periodic) one-parameter unitary group on $L^2(\calS)$ by $f \mapsto f\circ \calR_\theta$. The Stone Theorem (see e.g.~\cite[Theorem~VIII-8~p266]{RS:I}) hence implies that its infinitesimal generator is $i A$, where $A$ is a selfadjoint operator on $L^2(\calS)$ with domain $D(A) \subset L^2(\calS)$. Since $i Af = X_\theta f$ for $f \in C^\infty(\calS)$ (which is dense in $D(A)$) according to~\eqref{e:Xtheta}, we have that $A$ is the selfadjoint extension of $\frac{X_\theta}{i}$. From now on, we slightly abuse the notation and still denote $\frac{X_\theta}{i}$ for its selfadjoint extension $A$. 

Since $g$ is invariant by the action of $\calR_\theta$, we have 
$$
[X_{\theta} , \Delta_g ] =0 .
$$ 
Moreover, $\Delta_g$ has compact resolvent, so that the operators $\Delta_g$ and $X_\theta$ share a common basis of eigenfunctions: indeed, $X_\theta$ preserves each (finite dimensional) eigenspace of $\Delta_g$, and it can be diagonalized on these spaces. In $U$ it can be written as $e^{ik \theta}f(s)$ with $k\in \Z$, $f \in C^{\infty}(0,L)\cap L^2\left((0,L), R(s)ds\right)$ solution of 
\bnan
\label{equationk1D}
- \frac{1}{R(s)} \d_s\left( R(s) \d_s f\right) + \frac{k^2}{R(s)^2} f=\lambda f .
\enan
for some $\lambda\geq 0$, eigenvalue for $-\Delta$. Let us detail this assertion. Take $u$ a necessarily smooth common eigenvalue of $\Delta_g$ and $X_{\theta}$. In $U$ (with the coordinates $(s,\theta)$), denote $f(s)=u(s,0)$. $u$ is smooth and satisfies $X_{\theta}u=i\lambda_{\theta}u$ in the classical sense. Then, for any fixed $s_0\in (0,L)$, the function $g_{s_0}\in C^{\infty}(\S^1)$ defined by $\theta \mapsto g_{s_0}(\theta)=u(s_0,\theta)$, is solution of $\d_{\theta} g_{s_0}(\theta)=i\lambda_{\theta}g_{s_0}(\theta)$ and can be written $g_{s_0}(\theta)=e^{i\lambda_{\theta}\theta}f(s_0)$. By periodicity, $\lambda_{\theta}=k\in \Z$ and is thus independent of $\theta$. So, $u(s_0,\theta)=e^{ik\theta}f(s_0)$, and it is clear from \eqref{deltacoord} that $f$ must satisfy \eqref{equationk1D}.

We will call these normalized eigenfunctions $\varphi_{k,n}=e^{ik \theta}f_{k,n}(s)$ with eigenvalues $\lambda_{k,n}$ for $-\Delta_g$, where $n\in \N$.  In particular, we can write $L^2(\calS)=  \oplus^{\perp }_{(k,n)\in \Z\times \N}\vect (\varphi_{k,n}) $.

We will denote $L^2_k=\ker (X_{\theta}-ik))=\left\{\varphi\in L^2(\calS);\varphi_{|U} =e^{ik \theta}f(s),f \in L^2\left((0,L),R(s)ds\right)\right\}$ and $H^2_k=H^2(\calS)\cap L^2_k$. The commutation property implies that $\Delta H^2_k\subset L^2_k$, so we can define the operator $\Delta_k=\Delta_{\left|L^2_k\right.}$ which is self-adjoint with domain $H^2_k$. This can be seen for instance directly on the simultaneous diagonalization which implies an isometry $L^2(\calS) \approx \ell^2(\Z\times \N)$ where $L^2_k\approx \left\{(k,n)\left|n\in \N\right.\right\}$ as a closed subspace of $\ell^2(\Z\times \N)$. The fact that $\Delta_g$ has compact resolvent implies the same things for $\Delta_k$.

\begin{remark}
\label{r:def-intrinsic-dA}
Note that the introduction of $X_\theta$ allows to give a more intrinsic definition of $d_A$ introduced in \eqref{e:defdA}: given a point $m_0$ on the ``strict global non-degenerate equator'' of $\calS$, the Agmon distance $d_A$ is the unique continuous function such that
$$
X_\theta d_A = 0 , \quad d_A(m_0) = 0 , \quad |\nabla_g d_A|_g^2(m) - \left( \frac{1}{g(X_\theta ,X_\theta)(m)}- \frac{1}{g(X_\theta ,X_\theta)(m_0)} \right) = 0.
$$
All properties of Lemma~\ref{lemma-prop-dA} can be formulated intrinsically since $s$ measures the geodesic distance to the north pole, and hence $s(m) = \dist_g(m, N)$, $L- s(m) = \dist_g(m, S)$, and $s(m)-s_0 =\dist_g(m, equator) $.
\end{remark}
\begin{remark}(Another possible parametrization)
\label{rkautreparam}
Some such surfaces of revolution admit the following ``cylindrical'' parametrization on the set $U$:
with $ \pm z_\pm>0$ and the two poles $P_\pm = (0,0, z_\pm)$, we have 
$$
\begin{array}{ccc}
( z_- , z_+) \times \S^1 & \to & U =  \mathcal{S} \setminus\{N,S\}  \subset \R^3\\
(z ,\theta)& \mapsto & (\mathsf{R}(z) \cos \theta , \mathsf{R}(z)\sin \theta , z)
\end{array}
$$
where $\mathsf{R}: [z_- , z_+] \to (0,\infty)$ is the profile of the surface, that is, a smooth positive function on $(z_-, z_+)$ satisfying $\mathsf{R}(z_\pm)=0$ and $\lim_{z \to z_\pm} \mathsf{R}'(z) = \mp\infty$.

We have 
$$
\left\{
\begin{array}{l}
dx_1 = \mathsf{R}'(z) \cos \theta dz - \mathsf{R}(z) \sin \theta d\theta \\
dx_2 = \mathsf{R}'(z) \sin \theta dz + \mathsf{R}(z) \cos \theta d\theta  \\
dx_3 = dz
\end{array}
\right.
$$
so that the metric on $\mathcal{S}$ induced by the Euclidean structure is given by 
$$g = dx_1^2 +dx_2^2+dx_3^2 = (1+\mathsf{R}'(z)^2)dz^2 + \mathsf{R}(z)^2 d\theta^2 . $$
As a consequence, the Riemannian volume element is $V(z) dz d\theta$ with $V(z)= \mathsf{R}(z)\sqrt{1+\mathsf{R}'(z)^2}$ and the Laplace-Beltrami operator is given in this coordinates, by
$$
\Delta_{z,\theta} = \frac{1}{V(z)} \d_z \left(\frac{V(z)}{1+\mathsf{R}'(z)^2} \d_z  \right)+ \frac{1}{\mathsf{R}(z)^2} \d_\theta^2 ,
$$
with a suitable selfadjoint extension on $L^2\big((z_- , z_+) \times \S^1, V(z) dz d\theta \big)$. 
The link between $s$ and $z$ is the following diffeomorphism
$$
s(z) = \int_{z_-}^z \sqrt{1+\mathsf{R}'(t)^2} dt ,
$$
and we have $L= \int_{z_-}^{z_+} \sqrt{1+\mathsf{R}'(t)^2} dt$, together with $R(s(z)) =\mathsf{R}(z)(=\sqrt{g(X_\theta, X_\theta)})$, so that $R(s)$ indeed measures the distance to the axis of revolution.
\end{remark}

\begin{remark}[The sphere]
Note that, in the $z$-parametrization, the sphere is given by $z_\pm = \pm1$ and $r(z)= \sqrt{1-z^2}$ and hence $r'(z)= \frac{-z}{\sqrt{1-z^2}}$ and $V(z) = 1$ is smooth (which is not the general case if the surface is flat near the poles).
\end{remark}

In the proofs below, we shall often consider  $h=k^{-1}$ as a semiclassical parameter.

\begin{lemma}
\label{l:exist-mu-psi}
Assume that $s \mapsto R(s)$ admits a non-degenerate local maximum at $s_0 \in (0,L)$. 
Then, for all $k\in\N$, there exists $\psi_k \in C^{\infty}(\calS)\cap L^2_k $,  and $\mu_k \in \R$ such that $\mu_k=  \frac{1}{R(s_0)^2}+\frac{1}{k}\sqrt{\frac{|R''(s_0)|}{R^3(s_0)}} + O(\frac{1}{k^{\frac32}})$, $\|\psi_k\|_{L^2(\calS)}=1$, and we have $- \Delta_g \psi_k = k^2 \mu_k \psi_k$.
\end{lemma}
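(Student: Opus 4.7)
The plan is to reduce the eigenvalue problem on $\calS$ to a 1D semiclassical Schrödinger problem, construct a harmonic-oscillator quasimode near the minimum of the effective potential, and conclude via the spectral theorem for operators with compact resolvent.

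Looking for a common eigenfunction $\psi = e^{ik\theta} f(s) \in L^2_k$, the equation $-\Delta_g \psi = k^2 \mu \psi$ is equivalent, by~\eqref{equationk1D}, to the 1D problem
$$
\mathcal{P}_h f := -h^2 \frac{1}{R}(Rf')' + V f = \mu f, \quad V(s) := \frac{1}{R(s)^2},
$$
on $L^2((0,L), R(s)\,ds)$, with semiclassical parameter $h = 1/k$. Conjugation by the unitary map $f \mapsto R^{1/2} f$ from $L^2((0,L), R(s)\,ds)$ onto $L^2((0,L), ds)$ yields (by a direct computation) an honest semiclassical Schrödinger operator
$$
\mathcal{L}_h := R^{1/2}\mathcal{P}_h R^{-1/2} = -h^2 \partial_s^2 + V(s) + h^2 W(s), \quad W := \frac{R''}{2R} - \frac{(R')^2}{4R^2}.
$$
The non-degenerate maximum of $R$ at $s_0$ translates into a non-degenerate minimum of $V$ there, with $V(s_0) = 1/R(s_0)^2$ and $V''(s_0) = -2R''(s_0)/R^3(s_0) > 0$.

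Set $\alpha := \sqrt{V''(s_0)/2} = \sqrt{|R''(s_0)|/R^3(s_0)}$, pick $\chi \in C_c^\infty((0,L))$ with $\chi \equiv 1$ near $s_0$, and define the harmonic-oscillator quasimode
$$
g_h(s) = c_h\, \chi(s-s_0)\, \exp\!\left(-\frac{\alpha(s-s_0)^2}{2h}\right),
$$
with $c_h \sim (\alpha/(\pi h))^{1/4}$ chosen so that $\|g_h\|_{L^2(ds)}=1$. Setting $\widetilde\mu_k := V(s_0) + h\alpha$, a standard Laplace-method computation shows that
$$
\| (\mathcal{L}_h - \widetilde\mu_k) g_h \|_{L^2(ds)} = O(h^{3/2}).
$$
Indeed: (i) using $V(s) = V(s_0) + \alpha^2(s-s_0)^2 + O((s-s_0)^3)$ and the fact that Gaussian moments give $\|(s-s_0)^3 g_h\|_{L^2} = O(h^{3/2})$, the cubic Taylor remainder contributes $O(h^{3/2})$; (ii) $h^2 W g_h$ is $O(h^2)$ since $W$ is bounded on $\supp \chi$; (iii) the commutators coming from derivatives of $\chi$ are exponentially small by Gaussian decay, since they are supported at distance $\gtrsim 1$ from $s_0$. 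Crucially, the cutoff localizes the analysis in the interior of $(0,L)$, where both $V$ and $W$ are smooth, so the degeneracy of $R$ at the poles plays no role.

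Since $\calS$ is compact, $-\Delta_g$ has compact resolvent, and so does $\mathcal{L}_h$ on each sector. The spectral theorem therefore provides an actual eigenvalue $\mu_k$ of $\mathcal{L}_h$ with $|\mu_k - \widetilde\mu_k| \leq C h^{3/2}$, which yields the desired expansion
$\mu_k = \frac{1}{R(s_0)^2} + \frac{1}{k}\sqrt{|R''(s_0)|/R^3(s_0)} + O(k^{-3/2})$.
Taking an associated eigenfunction $\phi_k$, pulling back through $R^{-1/2}$ and multiplying by $(2\pi)^{-1/2} e^{ik\theta}$ produces $\psi_k \in L^2_k$ normalized in $L^2(\calS)$ and satisfying $-\Delta_g \psi_k = k^2 \mu_k \psi_k$; smoothness on $\calS$ follows from elliptic regularity of $\Delta_g$ on the compact manifold. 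The only real technical point is the quasimode error bound of Step~3, but once one has cut away from the poles this is a textbook application of the harmonic approximation at a non-degenerate well; no delicate WKB construction or Agmon estimate is needed at this stage (those enter only for the vanishing bound of Theorem~\ref{t:agmon-intro}).
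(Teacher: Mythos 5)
Your proof is correct, and it takes a genuinely (if mildly) different route from the paper's. Both you and the paper reduce to the 1D semiclassical problem \eqref{equationk1D}, build a Gaussian quasimode localized at the non-degenerate minimum of $V = 1/R^2$, and conclude via the self-adjointness of $\Delta_k$ on $L^2_k$ with compact resolvent. The difference is in how the first-order term is handled: the paper works directly on $L^2\big((0,L), R(s)\,ds\big)$ and must estimate the drift term $\frac{h R'}{R}\, h\partial_s$, which requires using $R'(s_0)=0$ together with a Taylor expansion and a Gaussian moment bound to show it is $O(h^2)$. You instead conjugate unitarily by $R^{1/2}$ to convert $\mathcal{P}_h$ into a pure Schr\"odinger operator $-h^2\partial_s^2 + V + h^2 W$ on $L^2\big((0,L),ds\big)$; the drift term is then absorbed into the bounded zeroth-order correction $h^2 W$ (with $W = \frac{R''}{2R} - \frac{(R')^2}{4R^2}$, which is smooth on $\supp\chi$), and its contribution is trivially $O(h^2)$. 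Note that your computation of $V''(s_0) = -2R''(s_0)/R^3(s_0)$ already uses $R'(s_0)=0$, so both approaches exploit the criticality of $s_0$ in an essential way, just at different points of the argument. The conjugation approach is marginally cleaner and more ``textbook''; the paper's approach avoids introducing the additional potential $W$ and hence stays closer to the geometric data $R$.

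One small point worth being explicit about: the self-adjointness and discrete spectrum of $\mathcal{L}_h$ on $L^2\big((0,L),ds\big)$ are inherited by unitary equivalence from the operator $\Delta_k$, whose domain $H^2_k$ the paper carefully identifies; since your quasimode is compactly supported in $(0,L)$, the precise boundary conditions implied by that domain never enter the estimate, but they are needed to invoke the spectral theorem.
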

Note that the assumption  of the lemma is $R'(s_0) = 0$ and $R''(s_0)<0$.

\bnp
We first construct a family of exponentially accurate quasimodes (i.e. approximate eigenfunctions) compactly supported in $U$ and of the form (in the coordinates $(s, \theta)$ of $U$) $e^{ik \theta} u_k(s)$.
The function $u_k(s)$ should thus solve~\eqref{equationk1D} approximately. Setting $h=k^{-1}$ and $\mu = \lambda h^2$, we are left with the following semiclassical eigenvalue (or approximate eigenvalue) problem in the limit $h\to 0^+$
$$
(P_h - \mu)f = - \frac{h^2}{R(s)} \d_s \left(R(s) \d_s f \right)+\left( \frac{1}{R(s)^2} -  \mu \right) f = 0 .
$$
According to the assumption, the potential $\frac{1}{R(s)^2}$ is positive, tends to plus infinity near $0$ and $L$, and admits $\frac{1}{R(s_0)^2}$ as a nondegenerate local minimum. Namely, this is $R'(s_0)=0$ and $R'' (s_0) <0$.
The construction is classical (harmonic approximation) and follows e.g. that of~\cite[Theorem~4.23]{DS:book} in a simpler setting. The idea is to approximate the operator $P_h$ by its harmonic approximation at $s_0$, namely
\bnan
\label{e:model-harmonic-oscillator}
\tilde{P}_h := - \frac{h^2}{R(s_0)} \d_s R(s_0) \d_s + \frac{1}{R(s_0)^2} + \left(\frac{1}{R^2} \right)''(s_0) \frac{(s-s_0)^2}{2} = -h^2 \d_s^2 + \frac{1}{R(s_0)^2} - \frac{2R''(s_0)}{R^3(s_0)}\frac{(s-s_0)^2}{2} 
\enan
Recall that the spectrum of the operator $-h^2 \d_y^2+ c_0 y^2$ on $\R$ ($c_0>0$) is given by $E_n(h) = hE_n(1) = h (2n+1)\sqrt{c_0}$, associated with the eigenfunctions $u_n^h(y) = h^{-\frac14}u_n^1(y/\sqrt{h})$ where $u_n^1(y) = p_n(y)e^{-\sqrt{c_0}\frac{y^2}{2}}$ ($p_n$ being a Hermite polynomial).
Here, this applies with $c_0 =  \frac{|R''(s_0)|}{R^3(s_0)}$.

We consider a cutoff function $\chi \in C^\infty_c(0,L)$ such that $\chi=1$ in a neighborhood of $s_0$. We set 
$u^h(s) = \chi(s) u_0^h(s)$, with $u_0^h(s) = C h^{-\frac14}e^{-\sqrt{c_0}\frac{(s-s_0)^2}{2h}}$ where 
$C$ is a normalizing constant, 
and prove this is an approximate eigenfunction (quasimode).  
First notice that we have, with $\tilde{P}_h$ defined in \eqref{e:model-harmonic-oscillator}, that
$$
\tilde{P}_h u^h = \chi \tilde{P}_h u_0^h + [ \tilde{P}_h , \chi] u_0^h 
= \Big(\frac{1}{R(s_0)^2}+h\sqrt{c_0}\Big)\chi u_0^h + [ -h^2 \d_s^2 , \chi] u_0^h . 
$$ 
In this expression, $[ -h^2 \d_s^2 , \chi]$ is a first order differential operator supported away from zero, where $u_0^h$ and its derivatives are exponentially small. This yields
\bna
\| \tilde{P}_h u^h - \Big(\frac{1}{R(s_0)^2}+h\sqrt{c_0}\Big) u^h \|_{L^2\big((0 ,L) , R(s) ds\big)} = O(e^{-c/h}).
\ena
Now we consider, with norms $L^2\big((0 ,L) , R(s) ds\big)$
\bna
\nor{\left(P_h - \Big(\frac{1}{R(s_0)^2}+ h\sqrt{c_0} \Big) \right)u^h}{L^2} & \leq &
\nor{\left(P_h - \tilde{P}_h \right)u^h}{L^2} 
+\nor{\left( \tilde{P}_h u^h - \Big(\frac{1}{R(s_0)^2}+h\sqrt{c_0}\Big)\right)u^h}{L^2}  \\
& \leq &
\nor{\left(\frac{h^2}{R(s)} \d_s R(s) \d_s - h^2 \d_s^2\right)u^h}{L^2} \\
&& + \nor{\left(\frac{1}{R(s)^2} -\frac{1}{R(s_0)^2} - c_0 (s-s_0)^2\right)u^h}{L^2} 
+ Ce^{-c/h}  .
\ena
According to the Taylor formula and the definition of $c_0$, we have $\frac{1}{R(s)^2} -\frac{1}{R(s_0)^2} - c_0 (s-s_0)^2 = O((s-s_0)^3)$ on the support of $\chi$, so that
$$
\nor{\left(\frac{1}{R(s)^2} -\frac{1}{R(s_0)^2} - c_0 (s-s_0)^2\right)u^h}{L^2}^2 \leq C \int_{\R} |(s-s_0)^3 h^{-\frac14}e^{-\sqrt{c_0}\frac{(s-s_0)^2}{2h}}|^2 dz \leq C h^3 .
$$
We now estimate the term 
\bna
\nor{\left(\frac{h^2}{R(s)} \d_s R(s) \d_s - h^2 \d_s^2\right)u^h}{L^2} 
& = & \nor{\frac{h  R'(s)}{R(s)}  h\d_s u^h}{L^2}
\ena
Notice that $h \d_s u^h = h \chi' u_0^h + h \chi \d_s u_0^h = O_{L^2}(e^{-c/h}) - \sqrt{c_0} (s-s_0) \chi u_0^h$.
Moreover,  since $R'(s_0)=0$, the Taylor formula yields
$$
\nor{\frac{h  R'(s)}{R(s)}  h\d_s u^h}{L^2}  \leq  C e^{-c/h}+ C\nor{h (s-s_0)^2 \chi u_0^h}{L^2} \leq Ch^2 .
$$

Now, combining the above estimates finally yields the existence of constants $D, h_0>0$ such that for all $h<h_0$, we have, with $\nu_h = \frac{1}{R(s_0)^2}+ h\sqrt{c_0}$, 

$$
\nor{(P_h - \nu_h) u^h}{L^2\big((0 ,L) , R(s) ds\big)}  \leq Dh^{3/2} \approx Dh^{3/2} \nor{u^h}{L^2\big((0 ,L) , R(s) ds\big)}  .
$$
Now, we define in coordinates in $U\subset \calS$, $f_k(s,\theta)= e^{ik\theta}u^h(s)$, $h=k^{-1}$. This function is smooth and compactly supported in $U$ thanks to the cutoff $\chi$, and can therefore be extended as a function in $C^{\infty}(\calS)\cap L^2_k$, still denoted $f_k$, which satisfies
\bna
\nor{(h^2\Delta_k - \nu_h) f_k}{L^2_k}\leq Dh^{3/2}  \approx Dh^{3/2} \nor{f_k}{L^2_k} .
\ena

Hence, if $\nu_h \notin \Sp(-h^2\Delta_k )$, this implies $\nor{(-h^2\Delta_k- \nu_h)^{-1}}{L^2_k\to L^2_k}\geq \frac{1}{Dh^{3/2}}$.
Finally, the operator $h^2\Delta_k$ being selfadjoint on $L^2_k$, we have, for $z \in \C \setminus \Sp(-h^2\Delta_k)$, $\|(-h^2\Delta_k-z)^{-1}\| = \frac{1}{d(z, \Sp(-h^2\Delta_k))}$, so that, if $\nu_h \notin \Sp(-h^2\Delta_k)$, 
$$
\frac{1}{d(\nu_h, \Sp(-h^2\Delta_k))} \geq \frac{1}{Dh^{3/2}} .
$$
In any case, this implies $d(\nu_h, \Sp(-h^2\Delta_k))\leq Dh^{3/2}$, and using that the spectrum of $h^2\Delta_k$ is purely pointwise, this proves the sought result.
\enp
The next step is to study the behavior of the eigenfunction $\psi_k$ constructed in the previous lemma (and under a stronger assumption on the point $s_0$). This is the goal of the so-called Agmon estimates.
We first need the following integration-by-parts lemma.

\begin{lemma}
\label{l:IPPphi}
For all $\Psi \in W^{1,\infty}(\mathcal{S})$ real valued and all $w \in H^2(\mathcal{S})$, we have
\bna
\int_\mathcal{S} |\nabla_g(\Psi w)|_g^2 d \Vol_g  - \int_\mathcal{S}  |\nabla_g\Psi |_g^2 |w|^2 d \Vol_g
=  \Re \Big( \int_\mathcal{S} |\Psi|^2(-\Delta_g w)\overline{w}  \ d\Vol_g \Big) .
\ena
\end{lemma}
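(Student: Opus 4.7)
\medskip
\noindent\textbf{Proof plan.} The identity is essentially the Agmon/IMS localization formula on a compact manifold without boundary. My plan is to open up $\nabla_g(\Psi w)$ using the Leibniz rule, square it pointwise, and then use a Green-type identity to recognize the right-hand side.

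First, since $\Psi$ is real-valued, the Leibniz rule gives $\nabla_g(\Psi w) = \Psi \nabla_g w + w\,\nabla_g \Psi$ almost everywhere (both factors make sense since $\Psi \in W^{1,\infty}$ and $w\in H^2 \subset H^1$). Squaring pointwise with respect to the metric $g$ (extended bilinearly to complex vector fields, so that $|X|_g^2 := g(X,\overline{X})$ is a nonnegative real number), I would obtain
\begin{equation*}
|\nabla_g(\Psi w)|_g^2 = \Psi^2 |\nabla_g w|_g^2 + |w|^2 |\nabla_g \Psi|_g^2 + 2\,\Re \bigl(\Psi\, \overline{w}\, g(\nabla_g w, \nabla_g \Psi)\bigr),
\end{equation*}
using that $\Psi$ is real so the two cross terms are complex conjugates of each other. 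Integrating on $\mathcal{S}$ and transferring the $|w|^2|\nabla_g \Psi|_g^2$ term to the other side yields
\begin{equation*}
\int_\mathcal{S}|\nabla_g(\Psi w)|_g^2\,d\Vol_g - \int_\mathcal{S}|w|^2|\nabla_g \Psi|_g^2\,d\Vol_g = \int_\mathcal{S}\Psi^2 |\nabla_g w|_g^2\,d\Vol_g + 2\,\Re \int_\mathcal{S}\Psi\,\overline{w}\,g(\nabla_g w, \nabla_g \Psi)\,d\Vol_g.
\end{equation*}

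Next, I would compute the right-hand side of the lemma using Green's identity. Since $w \in H^2(\mathcal{S})$ and $\Psi^2\overline{w} \in H^1(\mathcal{S})$ (as $\Psi \in W^{1,\infty}$), and since $\mathcal{S}$ is a compact manifold without boundary, the standard integration-by-parts formula applies with no boundary contribution:
\begin{equation*}
\int_\mathcal{S} \Psi^2(-\Delta_g w)\,\overline{w}\,d\Vol_g = \int_\mathcal{S} g\bigl(\nabla_g w, \nabla_g(\Psi^2 \overline{w})\bigr)\,d\Vol_g.
\end{equation*}
Expanding $\nabla_g(\Psi^2 \overline{w}) = \Psi^2 \nabla_g \overline{w} + 2\Psi\,\overline{w}\, \nabla_g \Psi$ and taking real parts, and using that $g(\nabla_g w, \nabla_g\overline{w}) = |\nabla_g w|_g^2$ is already real, I would obtain
\begin{equation*}
\Re \int_\mathcal{S} \Psi^2(-\Delta_g w)\,\overline{w}\,d\Vol_g = \int_\mathcal{S}\Psi^2 |\nabla_g w|_g^2\,d\Vol_g + 2\,\Re\int_\mathcal{S}\Psi\,\overline{w}\,g(\nabla_g w,\nabla_g \Psi)\,d\Vol_g,
\end{equation*}
which matches the expression obtained above. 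This yields the lemma.

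The only mild point to check is that the Green's identity works at the stated regularity level; this follows from a density argument (approximating $w$ by $C^\infty$ functions in $H^2$ and $\Psi$ in $W^{1,\infty}$ by $C^\infty$ functions with uniform control of the gradient), but no real obstacle arises because $\mathcal{S}$ has no boundary so no boundary terms need to be handled.
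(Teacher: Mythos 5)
Your proof is correct, and it takes a genuinely different and arguably cleaner route than the paper's. The paper starts from $\int |\nabla_g(\Psi w)|_g^2 = -\int \Delta_g(\Psi w)\,\Psi\overline{w}$ and then uses the Leibniz rule for the Laplacian, $\Delta_g(\Psi w)=\Psi\Delta_g w + (\Delta_g\Psi)w + 2\nabla_g\Psi\cdot\nabla_g w$; this makes $\Delta_g\Psi$ appear explicitly, so the computation requires $\Psi \in C^2$, and the authors then need a density argument to pass to $\Psi\in W^{1,\infty}$ (citing Dimassi--Sj\"ostrand). You instead expand $\nabla_g(\Psi w)$ pointwise by the first-order Leibniz rule, which only uses one derivative of $\Psi$, and then perform a single integration by parts moving $\Delta_g$ off of $w$ (which is $H^2$) onto $\Psi^2\overline{w}$ (which is $H^1$ since $\Psi\in W^{1,\infty}$). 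That Green's identity is already valid at this regularity, so no approximation of $\Psi$ is actually needed -- your final remark about a density argument is harmless but superfluous, and you could drop it. Both approaches hinge on recognizing the same cross term $2\Re(\Psi\overline{w}\,g(\nabla_g w,\nabla_g\Psi))$; the difference is only which Leibniz rule is applied first, and yours distributes derivatives so that the second derivative always lands on $w$, which is the natural thing given the hypotheses.
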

\bnp
For $\Psi \in C^2(\mathcal{S})$, this is a direct consequence of the integration by parts formula (also valid when $\mathcal{S}$ has a boundary $\d \mathcal{S}$ and $w|_{\d \mathcal{S}}=0$)
\bna
\int_\mathcal{S} |\nabla_g(\Psi w)|_g^2 d \Vol_g 
& = & -   \int_\mathcal{S} \Delta_g (\Psi w)\Psi \overline{w}  d \Vol_g   \\
& = & \Re\left(  \int_\mathcal{S} \big(- \Psi (\Delta_g w) - (\Delta_g \Psi)  w  - 2  \nabla_g \Psi \cdot \nabla_g w\big)  \Psi \overline{w}d \Vol_g  \right) \\
& = & \Re \Big( \int_\mathcal{S} |\Psi|^2(-\Delta_g w)\overline{w}  \ d\Vol_g \Big) + A 
\ena
with 
\bna
A & = & \Re\left(  \int_\mathcal{S} \big( - (\Delta_g \Psi)  \Psi |w|^2  - 2  \nabla_g \Psi \cdot \nabla_g w  \Psi \overline{w}\big) d \Vol_g  \right) \\
& = & \Re\left(  \int_\mathcal{S} \big(  |\nabla_g \Psi|^2 |w|^2 +  \nabla_g \Psi \cdot \nabla_g ( |w|^2) \Psi  - 2  \nabla_g \Psi \cdot \nabla_g w  \Psi \overline{w}\big) d \Vol_g  \right) \\
& = &  \int_\mathcal{S} \big(  |\nabla_g \Psi|^2 |w|^2 d \Vol_g ,
\ena
where we integrated by parts in the second line. This is the sought estimate in case $\Psi \in C^2(\mathcal{S})$.
The result of the lemma follows by a classical approximation argument, see e.g.~\cite[Proof of Proposition~6.1]{DS:book}.
\enp

\medskip
We shall now assume that $R$ reaches at $s_0$ a {\em strict global non-degenerate} maximum, and introduce the relevant Agmon distance to the ``equator'' $s=s_0$. The latter is defined in the coordinates of $U$ by the eikonal equation~\eqref{e:defdA}, or, more explicitely, for $s\in (0,L)$, by~\eqref{e:defbisdA}.

\begin{lemma}[Properties of $d_A$]
\label{lemma-prop-dA}
Assume that $R$ reaches at $s_0$ a {\em strict global non-degenerate} maximum. Then, $d_A \in C^2(0,L)$, and  we have 
\bnan
\label{equivdAlog}
d_A(s) = -\log(s) + O(1) , \quad \text{as } s \to 0^+ , \qquad d_A(s)= - \log (L-s) + O(1), \quad \text{as } s \to L^- ,
\enan
\bnan
\label{equivdAs0}
d_A(s) = \frac{1}{2}\sqrt{\frac{-R''(s_0)}{R^3(s_0)}} (s-s_0)^2 + O((s-s_0)^3), \quad \text{as }s \to s_0 .
\enan
\end{lemma}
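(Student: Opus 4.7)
\medskip
\noindent\textbf{Proof plan.}
The strategy is to factor out the singularity of the integrand in~\eqref{e:defbisdA} at $y=s_0$ using the non-degeneracy assumption, and to use the boundary conditions~\eqref{e:condR} on $R$ at $0$ and $L$ to extract the logarithmic asymptotics.

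\smallskip
\emph{Step 1: factorization near $s_0$.} Since $R(s_0) > 0$ is a non-degenerate maximum ($R'(s_0) = 0$, $R''(s_0) < 0$), the smooth function
\[
f(y) := \frac{1}{R(y)^2} - \frac{1}{R(s_0)^2}
\]
satisfies $f(s_0) = 0$, $f'(s_0) = -2R'(s_0)/R(s_0)^3 = 0$, and $f''(s_0) = -2R''(s_0)/R(s_0)^3 > 0$. Hence one can write $f(y) = (y-s_0)^2 g(y)$ with $g$ smooth in a neighborhood of $s_0$ and $g(s_0) = \tfrac12 f''(s_0) = -R''(s_0)/R(s_0)^3 > 0$. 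In particular $\sqrt{f(y)} = |y-s_0|\sqrt{g(y)}$ locally, and a short computation (splitting $s\gtrless s_0$) shows that for $s$ near $s_0$,
\[
d_A(s) = \sgn(s-s_0)\int_{s_0}^s (y-s_0)\sqrt{g(y)}\,dy, \qquad d_A'(s) = (s-s_0)\sqrt{g(s)},
\]
which is smooth. Since $s_0$ is the \emph{strict} global maximum of $R$, on $(0,L)\setminus\{s_0\}$ the function $f$ is strictly positive and smooth, and $R>0$, so the integrand in~\eqref{e:defbisdA} is smooth there as well. This yields $d_A \in C^\infty((0,L))$, in particular $d_A \in C^2(0,L)$.

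\smallskip
\emph{Step 2: asymptotics at $s_0$.} From $\sqrt{g(y)} = \sqrt{g(s_0)} + O(y-s_0)$ and the expression of $d_A$ obtained in Step 1, I would integrate termwise to get
\[
d_A(s) = \sqrt{g(s_0)}\,\frac{(s-s_0)^2}{2} + O((s-s_0)^3),
\]
which is exactly~\eqref{equivdAs0} since $\sqrt{g(s_0)} = \sqrt{-R''(s_0)/R(s_0)^3}$.

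\smallskip
\emph{Step 3: asymptotics at the endpoints.} The boundary conditions~\eqref{e:condR} (namely $R(0)=0$, $R'(0)=1$, and $R^{(2p)}(0)=0$) give a Taylor expansion of the form $R(y) = y + a_3 y^3 + O(y^5)$ as $y\to 0^+$, hence
\[
\frac{1}{R(y)^2} = \frac{1}{y^2} + O(1), \qquad
\sqrt{\frac{1}{R(y)^2} - \frac{1}{R(s_0)^2}} = \frac{1}{y} + h(y),
\]
where the remainder $h$ extends as a bounded, integrable function on $(0,s_0]$ (near $y=0$ one checks $h(y) = O(y)$; near $y=s_0$ the square root vanishes while $1/y$ stays bounded, so $h$ is bounded there). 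Integrating from $s$ to $s_0$ yields
\[
d_A(s) = -\log s + \log s_0 + \int_s^{s_0} h(y)\,dy = -\log s + O(1), \quad s\to 0^+,
\]
which is the first half of~\eqref{equivdAlog}. The analogous expansion $R(y) = (L-y) + O((L-y)^3)$ as $y\to L^-$, coming from $R(L)=0$, $R'(L)=-1$, $R^{(2p)}(L)=0$, gives the symmetric statement near $L$.

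\smallskip
The only delicate point is verifying the factorization $f(y) = (y-s_0)^2 g(y)$ with smooth positive $g$ (Step 1), which relies crucially on the non-degeneracy $R''(s_0)<0$. The endpoint asymptotics are robust and use only the first non-trivial term of the Taylor expansion of $R$ at $0$ and $L$; the vanishing of the even derivatives of $R$ at the endpoints (built into~\eqref{e:condR}) is what makes the $O(1)$ remainder clean, but the estimates would go through without it.
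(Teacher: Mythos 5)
Your proof is correct and follows essentially the same route as the paper's: Taylor-expand $R$ at the endpoints for~\eqref{equivdAlog}, and Taylor-expand $V(s)=\frac{1}{R(s)^2}-\frac{1}{R(s_0)^2}$ at $s_0$ for~\eqref{equivdAs0} and the regularity, where your explicit factorization $V(y)=(y-s_0)^2g(y)$ with $g$ smooth and positive makes the regularity claim cleaner (and in fact yields $d_A\in C^\infty(0,L)$, stronger than the stated $C^2$). One small slip: the $\sgn(s-s_0)$ prefactor in your display for $d_A(s)$ should be dropped, since $(y-s_0)\sqrt{g(y)}$ already changes sign so that $\int_{s_0}^s(y-s_0)\sqrt{g(y)}\,dy\ge 0$ for all $s$; with the prefactor the display contradicts your (correct) formula $d_A'(s)=(s-s_0)\sqrt{g(s)}$.
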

\bnp
Remark that according to \eqref{e:condR}, we have $\frac{1}{R(y)}\to + \infty$ as $y\to 0^+$ or $y \to L^-$, with
$$
R(s) = s + O(s^3) , \text{ when } s \to 0^+ , \quad \text{ and } \quad R(s) = L- s + O((L-s)^3) , \text{ when }s \to L^- .
$$
As a consequence, with~\eqref{e:defbisdA}, we obtain $d_A(s) = \left| \int^s_{s_0} \frac{1}{y}(1+O(y^2)) dy \right| = -\log(s) + \rouge{O(1)},$   as $s \to 0^+$ (and similarly when $s \to L^-$), that is~\eqref{equivdAlog}.

Let us also study the behavior of $d_A$ near $s_0$. Denoting $V(s)=\frac{1}{R(s)^2} - \frac{1}{R(s_0)^2} $, we have $V(s_0)=V'(s_0)=0$ and $V''(s_0)=\frac{-2R''(s_0)}{R^3(s_0)} >0 $. This implies ~\eqref{equivdAs0} and that $d_A$ is of class $C^2$ near $s_0$, by Taylor expansion of $d_A$ and its derivatives.
\enp
We can now state the following very precise result. All results concerning surfaces of revolution are corollaries of this one.
\begin{theorem}[Agmon estimate]
\label{lmagmon}
Assume that $R$ reaches at $s_0$ a {\em strict global non-degenerate} maximum, and consider the associated numbers $\mu_k$ and functions $\psi_k$ given by Lemma~\ref{l:exist-mu-psi}.
There exist $C,C_0,k_0>0$ such that, for all $k \in \N$, $k \geq k_0$, the following integral is well defined with the estimate
$$
\int_\calS e^{2 k d_A(m)} |\psi_k|^2(m) d\Vol_g(m)  
\leq Ck^{2C_0} .
$$
\end{theorem}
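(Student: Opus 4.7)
The plan is to derive the estimate by a weighted integration-by-parts argument using Lemma~\ref{l:IPPphi}, in the spirit of the semiclassical Agmon estimates \cite{HS:84}. Since $\psi_k \in L^2_k$, one has $\partial_\theta\psi_k = ik\psi_k$ in the $(s,\theta)$ coordinates on $U$; hence for any real-valued weight $\Psi=\Psi(s)$,
\begin{equation*}
|\nabla_g(\Psi\psi_k)|_g^2 = |\partial_s(\Psi\psi_k)|^2 + \frac{k^2}{R(s)^2}\Psi^2|\psi_k|^2, \qquad |\nabla_g\Psi|_g^2 = |\Psi'(s)|^2.
\end{equation*}
Inserting these into Lemma~\ref{l:IPPphi}, using $-\Delta_g\psi_k = k^2\mu_k\psi_k$, and taking $\Psi=\chi e^{k\varphi}$, with $\chi$ a smooth cutoff handling the singularities of $d_A$ at the poles and $\varphi$ a suitable approximation of $d_A$, yields the key identity
\begin{equation*}
\int_\calS |\partial_s(\Psi\psi_k)|^2 d\Vol_g + k^2\!\int_\calS\! \left(\frac{1}{R^2}-\mu_k-|\varphi'|^2\right)\chi^2 e^{2k\varphi}|\psi_k|^2 d\Vol_g = \int_\calS\!\big((\chi')^2+2k\chi\chi'\varphi'\big)e^{2k\varphi}|\psi_k|^2 d\Vol_g.
\end{equation*}

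The heart of the argument is the choice of $\varphi$. By the eikonal equation~\eqref{e:defdA}, $|d_A'|^2 = 1/R^2 - 1/R(s_0)^2$, so the natural guess $\varphi=d_A$ makes the middle coefficient reduce to $\mu_k - 1/R(s_0)^2 \sim \sqrt{c_0}/k$ (with $c_0 = |R''(s_0)|/R(s_0)^3$ by Lemma~\ref{l:exist-mu-psi}), positive but degenerate at rate $1/k$, so the Agmon method does not close directly. I would first establish a \emph{weak} Agmon estimate by taking $\varphi = (1-\eps)d_A$ for a fixed small $\eps>0$: the middle coefficient then becomes $(2\eps-\eps^2)(1/R^2-1/R(s_0)^2) - (\mu_k-1/R(s_0)^2)$, uniformly positive outside an $O(1/\sqrt{k\eps})$-neighborhood of $s_0$ (the classically allowed region for the deformed problem) and of negative part at most $O(1/k)$ inside it. Splitting the identity into inner/outer contributions and using $\|\psi_k\|_{L^2(\calS)}=1$ together with the bound $d_A=O(1/(k\eps))$ on the inner region yields $\int_\calS e^{2k(1-\eps)d_A}|\psi_k|^2 d\Vol_g \leq C_\eps$ on compact subsets of $U$. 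Promoting this to the sharp weight $e^{2kd_A}$ with polynomial loss $k^{2C_0}$ would then be done by a bootstrap: choose $\eps = \eps_k$ depending on $k$ and carefully balance the growth of $C_\eps$ against the extra factor $e^{2k\eps_k d_A}$, exploiting that on the classically allowed region this product stays controlled.

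The final step is to extend the estimate to the full surface $\calS$, handling the regions near the poles where $d_A(s) = -\log s + O(1)$ and $d_A(s) = -\log(L-s) + O(1)$ diverge logarithmically (Lemma~\ref{lemma-prop-dA}). Here the key observation is that the one-dimensional equation~\eqref{equationk1D} satisfied by the radial profile $f_k$ has regular singular points at $s=0$ and $s=L$ with indicial exponents $\pm k$ (since $R(s)\sim s$ at $s=0$ by~\eqref{e:condR}); hence the only solution in $L^2\big((0,L),R(s)\,ds\big)$ satisfies $f_k(s) = a_k s^k(1+o(1))$ near the pole. A polynomial-in-$k$ bound on the coefficient $a_k$, derivable from the bulk Agmon estimate at a fixed scale $s_*\in(0,L)$ propagated towards the pole by direct ODE analysis, exactly compensates the $s^{-2k}$ blowup of $e^{2kd_A}$. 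The main technical obstacle, in my view, is the quantitative control in the bootstrap from the weak Agmon estimate to the sharp weight: the natural constant $C_\eps$ grows exponentially in $1/\eps$, so the balance against the desired polynomial loss $k^{2C_0}$ is delicate, and the cutoff error terms $\int((\chi')^2 + 2k\chi\chi'\varphi')e^{2k\varphi}|\psi_k|^2 d\Vol_g$ supported near the poles must be absorbed using the pointwise ODE control just described, with all constants tracked explicitly.
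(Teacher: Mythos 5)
Your starting point is correct and coincides with the paper: apply the Agmon-type energy identity of Lemma~\ref{l:IPPphi} with a weight $\Psi$ built from $d_A$, use the rotational symmetry (Lemma~\ref{lminfgrad}) so that $|\nabla_g(\Psi\psi_k)|_g^2 \geq \frac{k^2}{R^2}\Psi^2|\psi_k|^2$, and observe that the naive choice $\varphi=d_A$ makes the potential term degenerate at rate $1/k$ via $\mu_k - 1/R(s_0)^2 \sim \sqrt{c_0}/k$. However, the bootstrap you propose --- deforming to $\varphi = (1-\eps_k)d_A$ with a \emph{position-independent} $\eps_k$ --- has a structural gap that I believe cannot be repaired by tuning $\eps_k$.

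The issue is a trade-off between the inner (classically allowed) region and the outer region. With $\varphi=(1-\eps)d_A$, the middle coefficient is $(2\eps-\eps^2)(1/R^2-1/R(s_0)^2)-(\mu_k-1/R(s_0)^2)$, and since $1/R^2-1/R(s_0)^2 \sim c_0(s-s_0)^2$ near the equator, positivity fails on a region of width $\sim(k\eps)^{-1/2}$ where $d_A \lesssim 1/(k\eps)$. There $e^{2k(1-\eps)d_A}\lesssim e^{2/\eps}$, so your constant $C_\eps$ is at least $e^{c/\eps}$ (as you acknowledge). To keep $C_{\eps_k}$ polynomial in $k$ you need $\eps_k \gtrsim 1/\log k$; but then $e^{2k\eps_k d_A}$ is nowhere near polynomial in $k$ except where $d_A = O((\log k)^2/k)$, and in particular blows up badly near the poles where $d_A\to\infty$. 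There is no single scalar $\eps_k$ that wins both sides. The paper's resolution, which is the key technical idea, is a \emph{space-dependent} modification $\phi = d_A - C_0 h\log(d_A/h)$ (with $h=1/k$) on the region $\{d_A>C_0 h\}$. Differentiating, $|\phi'|^2 = |d_A'|^2(1-C_0h/d_A)^2$, so the deficit $|d_A'|^2-|\phi'|^2 \geq C_0 h (d_A')^2/d_A$ is \emph{uniformly} of size $h$ times the function $(d_A')^2/d_A$, which is bounded below on all of $(0,L)$; this beats the $O(h)$ degeneracy of $\mu_k-1/R(s_0)^2$ everywhere, and the identity $e^{2\phi/h}=e^{2d_A/h}(h/d_A)^{2C_0}$ converts the estimate with weight $e^{2\phi/h}$ into one with the sharp weight $e^{2d_A/h}$ at the cost of an explicit polynomial factor $h^{-2C_0}=k^{2C_0}$. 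Your constant-$\eps$ deformation is the special case where $C_0 h\log(d_A/h)$ is replaced by $\eps_k d_A$, which destroys exactly this mechanism.

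Your treatment of the poles is also more complicated than necessary. You propose a compactly supported cutoff $\chi$ plus an ODE analysis of the indicial behavior $f_k(s)\sim a_k s^k$; but the cutoff error term $\int(\chi'^2+2k\chi\chi'\varphi')e^{2k\varphi}|\psi_k|^2$ lives precisely near the poles where $e^{2k\varphi}$ is largest, so you would need the pole-ODE control already in place before the Agmon estimate can close, and a polynomial-in-$k$ bound on $a_k$ is not automatic. The paper sidesteps this entirely: instead of cutting off the test function, it truncates the \emph{weight} via $\phi_M=\min(\phi,M)$, which is Lipschitz and has vanishing gradient on $\{\phi_M=M\}$ --- so no boundary/cutoff error terms are generated --- and then it uses that $(d_A')^2/d_A^{1+2C_0}$ stays bounded below all the way to the poles (indeed it blows up like $s^{-2}(\log s^{-1})^{-(1+2C_0)}$, by Lemma~\ref{lemma-prop-dA}), so the bound with weight $e^{2d_A/h}$ is uniform in $M$ and one concludes by dominated convergence as $M\to\infty$. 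This makes both the cutoff-error absorption and the pole-ODE analysis unnecessary.

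In summary: the overall energy-method skeleton is right, but the two load-bearing ideas of the proof --- the logarithmically corrected weight $\phi=d_A-C_0h\log(d_A/h)$, and the Lipschitz truncation of the weight rather than a cutoff of the eigenfunction --- are absent from your plan, and without the first one the bootstrap you describe does not close.
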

Using first that $d_A$ is decreasing on $(0,s_0]$, we obtain the following direct Corollary. 
\begin{corollary}
\label{coragmonfaible}
Under the assumptions of Theorem~\ref{lmagmon}, there exist $C,C_0,k_0>0$ such that, for all $k \in \N$, $k \geq k_0$ and all  $s_1 \leq s_0$, we have 
$$
\int_{B(N,s_1)} |\psi_k|^2d \Vol_g
\leq Ck^{2C_0} e^{- 2d_A(s_1)k}.
$$
\end{corollary}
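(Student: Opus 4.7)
The corollary is a direct consequence of Theorem~\ref{lmagmon} together with the monotonicity of the Agmon distance on $(0,s_0]$. The plan is to bound $|\psi_k|^2$ pointwise on $B(N,s_1)$ by $e^{-2kd_A(s_1)}e^{2kd_A(m)}|\psi_k|^2$, integrate, and invoke Theorem~\ref{lmagmon}. There is essentially no obstacle; the only point requiring a brief remark is the identification of $d_A$ as a function on the manifold with $d_A(m) = d_A(s(m))$ in the coordinate chart~$U$.

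\textbf{Key step.} In the coordinates $(s,\theta)$ of $U$, recall that $s(m) = \dist_g(m,N)$, so that $B(N,s_1) \cap U = \{(s,\theta):0<s\le s_1\}$ (the pole $N$ itself is a set of Riemannian measure zero and can be discarded in any $L^2$-integral). Now one reads off from~\eqref{e:defbisdA} that
\[
d_A'(s) = -\sqrt{\tfrac{1}{R(s)^2}-\tfrac{1}{R(s_0)^2}} < 0 \quad \text{on } (0,s_0),
\]
so $d_A$ is strictly decreasing on $(0,s_0]$. Therefore, for every $m \in B(N,s_1) \setminus\{N\}$, with $s(m)\in(0,s_1]\subset(0,s_0]$, we have $d_A(m) = d_A(s(m)) \ge d_A(s_1)$, and in particular
\[
|\psi_k(m)|^2 \le e^{-2k d_A(s_1)}\, e^{2k d_A(m)}|\psi_k(m)|^2 .
\]

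\textbf{Conclusion.} Integrating this pointwise inequality on $B(N,s_1)$ and extending the domain of integration to all of $\calS$ (the integrand being non-negative), we obtain
\[
\int_{B(N,s_1)} |\psi_k|^2\, d\Vol_g
\le e^{-2k d_A(s_1)}\int_\calS e^{2k d_A(m)} |\psi_k(m)|^2\, d\Vol_g(m).
\]
Applying Theorem~\ref{lmagmon} to the last integral yields, for $k\ge k_0$,
\[
\int_{B(N,s_1)} |\psi_k|^2\, d\Vol_g \le C k^{2C_0}\, e^{-2d_A(s_1) k},
\]
which is the claimed bound. \qed
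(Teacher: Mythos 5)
Your proof is correct and follows exactly the same route as the paper, which states the corollary as a direct consequence of Theorem~\ref{lmagmon} using that $d_A$ is decreasing on $(0,s_0]$. You have simply spelled out the details (the pointwise bound $|\psi_k(m)|^2\le e^{-2kd_A(s_1)}e^{2kd_A(m)}|\psi_k(m)|^2$ on $B(N,s_1)$, integration, and extension of the domain to $\calS$), which is what the paper leaves implicit.
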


From this result, we may now derive a proof of Theorem~\ref{t:agmon-intro} and Corollary~\ref{c:rate-vanish}.

\bnp[Proof of Theorem~\ref{t:agmon-intro} and Corollary~\ref{c:rate-vanish}]
The eigenfunctions constructed in Lemma \ref{l:exist-mu-psi} satisfy $\lambda_k=k^2\left(\frac{1}{R(s_0)^2}+O(k^{-1}\right)$. In particular, $k\geq \sqrt{\lambda_k}R(s_0)-C$ for an appropriate constante $C$ and $k$ large enough. This gives $e^{- 2kd_A(s_1)}\leq e^{C d_A(s_1)} e^{- 2d_A(s_1)R(s_0)\sqrt{\lambda_k}}$. Then, Theorem~\ref{t:agmon-intro} follows directly from Corollary~\ref{coragmonfaible} up to changing the constants involved.  The second pat of Theorem~\ref{t:agmon-intro} follows directly from Proposition \ref{p:link-eigenfct-heat-etc}.

Corollary~\ref{c:rate-vanish} follows from the asymptotic \eqref{equivdAlog} of $d_A$ and the fact than Theorem~\ref{t:agmon-intro} is uniform for $r$ small. Indeed, for an appropriate constant $C$, we have $ d_A(s)\leq -\log(s)+C$ for all $0<s_1\leq s_0$,  . 

For fixed $\lambda_k$ and using the uniformity for $r$ small, we get the order of vanishing using the general Lemma \ref{lmordrevanishinL2} of the Appendix.
\enp

\bigskip
We will need a very simple Lemma 
\begin{lemma}
\label{lminfgrad}
Let $\varphi \in W^{1,\infty}(\calS)\cap L^2_k$, then, we have the pointwise estimate on $U$
\bna
 |\nabla_g(\varphi)|_g^2\geq \frac{k^2}{g(X_\theta, X_\theta)} \left|\varphi\right|^2.
\ena
\end{lemma}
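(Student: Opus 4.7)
The plan is to work entirely in the coordinates $(s,\theta)$ on $U$, where the $L^2_k$ structure and the metric take an explicit form, and observe that the inequality reduces to dropping one non-negative term.

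First, I would exploit the characterization of $L^2_k$. By definition, any $\varphi \in L^2_k$ reads on $U$ as $\varphi(s,\theta) = e^{ik\theta} f(s)$ with $f \in L^2\bigl((0,L), R(s)\,ds\bigr)$; under the additional assumption $\varphi \in W^{1,\infty}(\calS)$, the partial derivatives $\partial_s \varphi$ and $\partial_\theta \varphi$ exist almost everywhere on $U$, and one has the pointwise identity $\partial_\theta \varphi = ik \varphi$, equivalently $X_\theta \varphi = ik\varphi$ (consistent with the fact that $X_\theta/i$ acts as $k$ on $L^2_k$).

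Second, I would compute $|\nabla_g \varphi|_g^2$ using formula~\eqref{e:reim-gradient}:
\begin{equation*}
|\nabla_g \varphi|_g^2 = |\partial_s \varphi|^2 + \frac{1}{R(s)^2}|\partial_\theta \varphi|^2 = |\partial_s \varphi|^2 + \frac{k^2}{R(s)^2}|\varphi|^2.
\end{equation*}
Since $g(X_\theta,X_\theta)(s,\theta) = R(s)^2$ (as recorded in the construction of the parametrization), dropping the non-negative term $|\partial_s \varphi|^2$ yields the desired pointwise bound
\begin{equation*}
|\nabla_g \varphi|_g^2 \geq \frac{k^2}{R(s)^2}|\varphi|^2 = \frac{k^2}{g(X_\theta,X_\theta)}|\varphi|^2
\end{equation*}
on $U$.

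There is essentially no obstacle here: the only small point to be careful about is that $\varphi \in W^{1,\infty}(\calS)$ only gives derivatives almost everywhere, so the identity $\partial_\theta \varphi = ik\varphi$ and the gradient formula must be interpreted a.e.\ on $U$, which is exactly the sense of the pointwise estimate in the statement. Note that the lemma is stated only on $U$, so no issue arises from the poles $N, S$ where the coordinates degenerate.
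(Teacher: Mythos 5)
Your proof is correct and follows exactly the paper's argument: write $\varphi = e^{ik\theta}f(s)$ on $U$, expand $|\nabla_g\varphi|_g^2$ via the coordinate formula~\eqref{e:reim-gradient}, and drop the non-negative $|\partial_s\varphi|^2$ term, using $g(X_\theta,X_\theta)=R(s)^2$. Your added remark about interpreting the identities almost everywhere under the $W^{1,\infty}$ hypothesis is a fair (if minor) point of care that the paper leaves implicit.
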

\bnp
We have, in the coordinates of $U$, that $\varphi$ writes $\varphi(s, \theta) = e^{ik\theta}f(s)$, with, according to~\eqref{e:reim-gradient},
\bna
 |\nabla_g(\varphi)|_g^2 & = & |\d_sf|^2 + \frac{1}{R(s)^2}|\d_\theta(e^{ik\theta}f(s))|^2 =  |\d_sf|^2 + \frac{k^2}{R(s)^2}|e^{ik\theta}f(s)|^2 \\
 & \geq & \frac{k^2}{R(s)^2}|e^{ik\theta}f(s)|^2 = \frac{k^2}{g(X_\theta, X_\theta)} \left|\varphi\right|^2,
\ena
which is the sought result.
\enp
The proof follows that of \cite[Proposition 3.3.5]{Helffer:booksemiclassic}.

\bnp[Proof of Theorem~\ref{lmagmon}]
As in the above proof, we use the notation $h=k^{-1}$, considered as a semiclassical parameter.
We define, for some constant $C_0>1$, $h_0>0$ and $h \in (0,h_0)$ the sets 
\bna
\Omega_- = \{s \in (0,L), d_A(s) \leq C_0 h\}, \quad \Omega_+ = \{s \in (0,L),   d_A(s)> C_0 h\},
\ena
We set 
\bna
\phi(s)
& = & d_A(s) - C_0 h \log(C_0) , \quad \text{for } s\in  \Omega_- ,\\
 & = & d_A(s) - C_0 h \log(d_A(s)/h) , \quad \text{for } s\in  \Omega_+ .
\ena
For $M>1$, set $\phi_M = \min(\phi , M)$ and $\Omega_M = \phi_M^{-1}(\{M\})$.
Moreover, on $\Omega_-$, we have $\phi=d_A-C_0h\log(C_0)\leq d_A\leq C_0 h<C_0 h_0$, so for $M\geq C_0 h_0$, we have $\Omega_-\cap \Omega_M=\emptyset$. Indeed, we have a partition $\Omega_- \sqcup (\Omega_+ \setminus \Omega_M) \sqcup (\Omega_+ \cap  \Omega_M)$.

Note that it will be very important in what follows that all the estimates are independent on $M$ while $C_0$ will be defined later on.
The function $\phi_M$ is Lipschitz on $(0,L)$, and can be pulled back to a $(R_\theta)$ invariant Lipschitz function defined on $U$, and extended to $\calS$ by $\phi_M(N)=\phi_M(S)=M$. We will call $\calS_+$, $\calS_-$ and $\calS_M$, the naturally defined zones so that 
$$
\calS=\calS_-\sqcup ( \calS_+ \setminus \calS_M )\sqcup\calS_M .
$$

We now apply the formula of Lemma~\ref{l:IPPphi} with $\Psi=e^{\frac{\phi_M}{h}}$ with $\phi_M$ given above and $M$ large, and $w=\psi_h$ (note that $\psi_h\in C^{\infty}(\calS)$ since it is an eigenfunction of $\Delta_g$, so the Lemma applies). 
\bna
\int_\mathcal{S} |\nabla_g(\Psi \psi_h )|_g^2 d \Vol_g -\int_\mathcal{S} |\nabla_g\Psi |_g^2  |\psi_h|^2 d \Vol_g
=  k^2\mu_h \int_\mathcal{S} |\Psi|^2 |\psi_h|^2   d \Vol_g  .
\ena
Applying now Lemma \ref{lminfgrad} since $\Psi \psi_h \in W^{1,\infty}(\calS)\cap L^2_k$ and using $|\nabla_g\Psi |_g^2=k^2 |\phi_M'(s)|^2 e^{2\phi_M/h} $ in $U$ and so almost everywhere in $\calS$, we get
\bna
\int_\mathcal{S}\left( \frac{1}{R(s)^2}-  |\phi_M'(s)|^2 -\mu_h\right)  e^{2\phi_M/h} |\psi_h|^2 d \Vol_g\leq  0.
\ena

 Using the expression of $\phi_M$ on $\Omega_-$ and of $\mu_h = \frac{1}{R(s_0)^2} + O(h)$, this yields, for some $C>0$ (independent of $h$ and $M$),
\bna
\int_{\mathcal{S}_+}\left( \frac{1}{R(s)^2}-  |\phi_M'(s)|^2 -\mu_h\right)  e^{2\phi/h} |\psi_h|^2 d \Vol_g 
& \leq & C h \int_{\mathcal{S}_-}  e^{2d_A(s)/h}|\psi_h|^2 d \Vol_g \\
& \leq & C h e^{2C_0}\int _{\mathcal{S}_-}|\psi_h|^2 d \Vol_g  \leq  Ch  e^{2C_0},
\ena
since $\psi_h$ is normalized. 

Note also that on $\Omega_M \cap \Omega_+$, we have $d_A\geq C_0h$ and so $d_A\geq d_A - C_0 h \log(C_0) \geq  \phi \geq M\geq 1$.
Hence, since $d_A$ is continuous, there is a constant $\e>0$ so that $s\in \Omega_M \cap \Omega_+$ implies $|s-s_0|\geq \e$. In particular, since $s_0$ is a nondegenerate maximum for $R$, there is $\eta>0$ so that it also implies $ \frac{1}{R(s)^2} -\frac{1}{R(s_0)^2}\geq \eta$. In particular, on $\calS_M \cap \calS_+$, we have 
 $$
 \frac{1}{R(s)^2}-  |\phi_M'(s)|^2 -\mu_h =  \frac{1}{R(s)^2} -\frac{1}{R(s_0)^2} + O(h) \geq 0
 $$ for $h<h_0$ for $h_0$ only depending on the geometry, and not on $M$. Therefore, we have obtained
\bnan
\label{recap1}
\int_{\mathcal{S}_+ \setminus \calS_M}\left( \frac{1}{R(s)^2}-  |\phi'(s)|^2 -\mu_h\right)  e^{2\phi/h} |\psi_h|^2 d \Vol_g
& \leq & Ch  e^{2C_0}.
\enan
 Next, on $\Omega_+ \setminus \Omega_M$, we have $\phi' = d_A' - C_0 h \frac{d_A'}{d_A}$ and hence
\bna
\frac{1}{R(s)^2}- |\phi'|^2- \mu_h  
& = & - h\sqrt{\frac{|R''(s_0)|}{R^3(s_0)}} + O(h^{\frac32}) + 2C_0 h \frac{(d_A')^2}{d_A}  - C_0^2 h^2\frac{(d_A')^2}{d_A^2}  \\
& \geq & - h\sqrt{\frac{|R''(s_0)|}{R^3(s_0)}} + O(h^{\frac32}) + C_0 h \frac{(d_A')^2}{d_A} 
\ena
where we used that $d_A\geq C_0 h$.
According to~\eqref{equivdAs0}, $ \frac{(d_A')^2}{d_A} \to 2 \sqrt{\frac{-R''(s_0)}{R(s_0)^3}}>0$ and $  \frac{(d_A')^2}{d_A}$ can thus be extended by continuity at $s_0$. Since $d_A'(s) = 0$ iff $s=s_0$ ($R$ reaches at $s_0$ its {\em unique} global maximum), the extended function is uniformly bounded from below on any compact of $(0, L)$. Moreover, according to~\eqref{equivdAlog}, we have
$$
\frac{(d_A')^2}{d_A}(s) \sim_{s \to 0^+} \frac{1}{s^2\log(s^{-1})} , \quad \text{and} \quad \frac{(d_A')^2}{d_A}(s) \sim_{s \to L^-} \frac{1}{(L-s)^2\log((L-s)^{-1})} .
$$
Hence, there is a constant $C_1>0$ such that $\frac{(d_A')^2}{d_A}\geq C_1$ on $(0, L)$, and we have
\bna
\frac{1}{R(s)^2}- |\phi'|^2- \mu_h 
& \geq & h \left(C_0 \frac{(d_A')^2}{d_A} - \sqrt{\frac{|R''(s_0)|}{R^3(s_0)}} + O(h^{\frac12}) \right) \geq  C_0 h \frac{(d_A')^2}{2d_A},
\ena
 when taking $C_0$ large w.r.t. $C_1^{-1}$ and $h\leq h_0$ with $h_0$ depending on $C_0,C_1$. We can now fix $C_0, h_0$. After \eqref{recap1}, we have thus obtained
\bna
C h \int _{\mathcal{S}_+ \setminus \calS_M} \frac{(d_A')^2}{d_A} e^{2\phi/h}|\psi_h|^2 d \Vol_g   
\leq Ch  e^{2C_0} .
\ena
Our next task is to replace $\phi$ by $d_A$ in this expression. Note that $e^{2\phi(s)/h} = e^{2d_A(s)/h}\left(\frac{h}{d_A(s)} \right)^{2C_0}$. 
In particular, this yields
$$
C h  \int _{\mathcal{S}_+ \setminus \calS_M} \frac{(d_A')^2}{d_A}e^{2d_A(z)/h}\left(\frac{h}{d_A(s)} \right)^{2C_0} |\psi_h|^2 d \Vol_g  
\leq Ch .
$$
Now, the function $\frac{(d_A')^2}{d_A^{1+2C_0}}$ is positive on $(0,s_0) \cup (s_0,L)$, tends to $+\infty$ at $s_0$, and satisfies, as above
$$
\frac{(d_A')^2}{d_A^{1+2C_0}} \sim \frac{1}{s^2(\log(s^{-1}))^{1+2C_0}} \to + \infty , \text{ as } s\to 0^+, 
$$
and similarly $\frac{(d_A')^2}{d_A^{1+2C_0}} \sim \frac{1}{(L-s)^2(\log((L-s)^{-1}))^{1+2C_0}} \to + \infty,\text{ as } s\to L^-$.
Hence, it is bounded from below on $(0, L)$ by a constant, and we obtain
$$
\int_{\mathcal{S}_+ \setminus \calS_M} e^{2d_A(z)/h} |\psi_h|^2d \Vol_g 
\leq Ch^{-2C_0} ,
$$
which, combined with the already remarked fact that $\int_{\mathcal{S}_-} e^{2d_A(z)/h} |\psi_h|^2 \Vol_g  \leq Cte$, gives
$$
\int_{\calS \setminus \mathcal{S}_M} e^{2d_A(z)/h} |\psi_h|^2d \Vol_g
\leq Ch^{-2C_0}.
$$
Since all the constants are independent on $M$, it gives the sought result by dominated convergence making $M$ tends to infinity.
\enp

\subsection{The disk}
\label{sectDisk}
Denote $\ID=\left\{(x,y)\in \R^2\left|x^2+y^2\leq 1\right.\right\}\subset \R^2$ the unit disk. We denote by $\Delta$ the (negative) flat Laplace operator in $\R^2$.
In polar coordinates, $x=r \cos\theta$, $y=r\sin\theta$, we have 
$$
\Delta = \partial_x^2+\partial_y^2=\frac{\d^2}{\d r^2}+\frac{1}{r}\frac{\d}{\d r}+\frac{1}{r^2}\frac{\d^2}{\d \theta^2} .
$$
Then, it can be seen that 
\bnan
\label{e:disk-eig}
\psi_{n,k}(r,\theta)=J_n(z_{n,k}r)e^{in\theta}
\enan
is an orthogonal basis of $L^2(\ID)$, where
\begin{itemize}
\item $J_n$ is the Bessel function of order $n$, namely:
\bnan
\label{e:bessel}
J_n(z) = \frac{1}{2\pi} \int_{-\pi}^{\pi} e^{i z \sin \theta} e^{- i n \theta} d \theta , \quad n \in \Z ,  z \in \C \setminus \R_- ,
\enan
\item $0<z_{n,1}<z_{n,2}<z_{n,3}<\cdots$ is the sequence of the positive zeros of $J_n$.
\end{itemize}
We refer for instance to~\cite[Chapters~14.4 and~15]{VasyBook} for an elementary introduction.
In particular, the functions defined in~\eqref{e:disk-eig} satisfy
$$
-\Delta \psi_{n,k}=\lambda_{n,k}\psi_{n,k} \ \text{ in } \Int(\ID), \quad \text{ with } \lambda_{n,k}=z_{n,k}^2 \quad \text{ and }\psi_{n,k} |_{\d\ID} = 0  .
$$

Roughly speaking, the index $n$ encodes the oscillation in the $\theta$ variable while the index $k$ will contain an oscillation in the radial variable. We refer to~\cite{ALM:cras} for a description of concentration/delocalization properties of general eigenfunctions (or, more generally, quasimodes) on the disk. Here, we want to analyse some eigenfunctions corresponding to the so-called whispering gallery modes that are concentrated close to the boundary of $\ID$. They ``rotate'' very fast and concentrate towards one of the two trajectories of the billiard contained in $S^*\d \ID$. This phenomenon corresponds to $n \to + \infty$ and $k$ small, typically $k=1$. 
In the following, we thus focus on: 
\bna
\psi_{n,1}(r,\theta)=J_n(z_{n,1}r)e^{in\theta} ,
\ena
and hence on the function  $J_n(z_{n,1}r)$. This requires information on $z_{n,1}$.

A huge amount of information is known on the Bessel functions ant its zeros. But we will need very few of them. First, we need to normalize them. This is for instance done in Lemma 5.1 of Burq-G\'erard-Tzvetkov \cite{BGT:03} in the case $k=1$ which is of interest for us.
\bna
\nor{\psi_{n,1}}{L^2(D)}\approx n^{-\frac{2}{3}}.
\ena
We also need a rough estimate on the asympotic of the $z_{n,1}$, see \cite{BGT:03} Lemma 4.3 for instance, namely, 
\bna
z_{n,1}=n+\O(n^{1/3}) , \qquad z_{n,1} > n .
\ena
To estimate the norm of $\psi_{n,1}$ on $B(0,\eps)$, $\eps<1$, we first prove the following lemma.
\begin{lemma}
\label{l:bessel-asympt}
For all $\alpha \geq 0$ and $n\in \N$, we have
\bna
\left|J_n\left(\frac{n}{\cosh(\alpha)}\right) \right| \leq e^{n(\tanh(\alpha)-\alpha)} .
\ena
\end{lemma}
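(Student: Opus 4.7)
\medskip

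\noindent\textbf{Proof plan.} The plan is to start from the classical integral representation~\eqref{e:bessel}, namely
\[
J_n(z) = \frac{1}{2\pi} \int_{-\pi}^{\pi} e^{i z \sin \theta - i n \theta} \, d\theta,
\]
specialize to $z = n/\cosh(\alpha)$, and deform the contour into the lower half-plane to pick up the saddle value. The integrand is an entire function of $\theta$ that is $2\pi$-periodic (since $n\in\mathbb{Z}$), so periodicity will make the two vertical pieces of the deformation cancel.

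Concretely, set $\phi(\theta) = \sin\theta/\cosh(\alpha) - \theta$, so that $J_n(n/\cosh\alpha) = \frac{1}{2\pi} \int_{-\pi}^{\pi} e^{i n \phi(\theta)} d\theta$. The critical points of $\phi$ satisfy $\cos\theta = \cosh\alpha$, i.e.\ $\theta = \pm i\alpha$, and a direct computation gives $\phi(-i\alpha) = i(\alpha - \tanh\alpha)$, so that $|e^{in\phi(-i\alpha)}| = e^{n(\tanh\alpha - \alpha)}$. This suggests shifting the contour of integration from $[-\pi,\pi]$ to the horizontal segment $\{t - i\alpha : t \in [-\pi,\pi]\}$ passing through the favorable saddle.

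The first step is to justify this shift: I would apply Cauchy's theorem on the rectangle with vertices $\pm\pi$ and $\pm\pi - i\alpha$, and observe that the two vertical contributions at $\theta = \pi + is$ and $\theta = -\pi + is$ cancel. Indeed, the integrand is $2\pi$-periodic in $\theta$ (for integer $n$), and the two vertical segments have opposite orientations, so they cancel identically. The main step, which is really the heart of the argument, is the pointwise bound on the shifted contour. Using $\sin(t - i\alpha) = \sin t\,\cosh\alpha - i\cos t\,\sinh\alpha$, one obtains
\[
i n \phi(t - i\alpha) = i n (\sin t - t) - n\bigl(\alpha - \cos t\,\tanh\alpha\bigr),
\]
so that $|e^{i n \phi(t - i\alpha)}| = e^{-n\alpha + n \cos t \,\tanh\alpha}$. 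Since $\alpha\geq 0$ gives $\tanh\alpha \geq 0$, this quantity is maximized at $t = 0$ with value $e^{n(\tanh\alpha - \alpha)}$.

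Putting this together, I conclude
\[
|J_n(n/\cosh\alpha)| \leq \frac{1}{2\pi} \int_{-\pi}^{\pi} e^{-n\alpha + n\cos t\,\tanh\alpha}\, dt \leq e^{n(\tanh\alpha - \alpha)},
\]
which is the claimed estimate. I do not expect any real obstacle here beyond carefully bookkeeping the contour deformation and checking the cancellation of the vertical segments; the bound on the shifted contour is then an immediate pointwise computation. The fact that we land on the correct saddle (the one in the lower half-plane) is the only point requiring attention, and it is dictated by the requirement that the resulting exponent be $\leq 0$ for $\alpha\geq 0$.
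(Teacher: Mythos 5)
Your proof is correct and follows essentially the same route as the paper: shift the contour to $\mathrm{Im}\,\theta = -\alpha$ using $2\pi$-periodicity of the integrand (for integer $n$) to drop the vertical segments, then take the trivial pointwise bound $|e^{in\phi(t-i\alpha)}| = e^{n(\cos t\,\tanh\alpha - \alpha)} \le e^{n(\tanh\alpha - \alpha)}$ on the shifted segment. The only cosmetic difference is that you spell out the Cauchy-rectangle argument a bit more explicitly than the paper does.
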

Note that in \cite[Section 32 p79]{CopsonBook}, for fixed $\alpha$, a full asymptotics in terms of $n$ is proved, with principal term:
\bnan
\label{e:copson}
J_n\left(\frac{n}{\cosh(\alpha)}\right) \approx \frac{e^{n(\tanh(\alpha)-\alpha)}}{\sqrt{2\pi n \tanh(\alpha)}} .
\enan
Here, we need only the principal term but also a uniform bound in terms of $\alpha$. Note that the short proof below is not very informative, and the reader is referred to~\cite[Section 32]{CopsonBook} for a complete steepest descent approach to this asymptotic expansion.
\bnp[Proof of Lemma~\ref{l:bessel-asympt}]
We start from formula~\eqref{e:bessel}, in which we write $\nu = \frac{n}{\cosh(\alpha)}$, and use the holomorphy of the integrand, together with the fact that $e^{i \nu\left( \sin z -  z \cosh\alpha \right)}$ is a periodic function of $\Re(z)$ to change the contour. This yields:
\begin{align*}
J_n\left(\nu\right) &= \frac{1}{2\pi} \int_{-\pi}^{\pi} e^{i \left(\frac{n}{\cosh(\alpha)}\right) \sin \theta} e^{- i n \theta} d \theta 
= \frac{1}{2\pi} \int_{-\pi}^{\pi} e^{i \nu\left( \sin \theta - \theta \cosh\alpha  \right)} d \theta \\
&= \frac{1}{2\pi} \int_{-\pi - i \alpha}^{\pi - i \alpha} e^{i \nu\left( \sin z -  z \cosh\alpha \right)} d z  
= \frac{1}{2\pi} \int_{-\pi}^{\pi} e^{i \nu\left( \sin x \cosh\alpha - i \cos x \sinh\alpha - x\cosh\alpha + i \alpha \cosh\alpha \right)} d x .
\end{align*}
This implies 
\begin{align*}
| J_n\left(\nu\right)| &\leq \frac{1}{2\pi} \int_{-\pi}^{\pi} e^{\nu \left( \cos x \sinh\alpha -  \alpha \cosh\alpha \right)} d x \leq  e^{\nu \left(  \sinh\alpha -  \alpha \cosh\alpha \right)} = e^{n \left(  \tanh\alpha -  \alpha \right)} ,
\end{align*}
and concludes the lemma.
\enp

\begin{lemma}
There exist $C, \beta , n_0 >0$  such that for all $n \geq n_0$ and $0< r \leq 1-\beta n^{-2/3}$, we have
\bna
\|\psi_{n,1}\|_{L^\infty(B(0,r))} \leq \exp \left(-n d_A(r)  + C n^{1/3} \right) .
\ena
\end{lemma}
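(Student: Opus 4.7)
The plan is to reduce everything to Lemma~\ref{l:bessel-asympt}, which bounds $J_n$ at arguments of the form $n/\cosh(\alpha)$, and then handle the discrepancy between the actual eigenvalue argument $z_{n,1} r$ and the ``ideal'' argument $n r$ by a careful mean-value estimate.

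First, since $|\psi_{n,1}|$ depends only on the radial variable, it suffices to bound $|J_n(z_{n,1} r')|$ uniformly for $r' \in (0, r]$. Given $r' > 0$, I want to write $z_{n,1} r' = n/\cosh(\tilde \alpha(r'))$, i.e.\ set $\tilde\alpha(r') = \cosh^{-1}\bigl(n/(z_{n,1} r')\bigr)$; this is well-defined exactly when $r' \leq n/z_{n,1}$. Using $z_{n,1} = n + O(n^{1/3})$ with $z_{n,1} > n$, one gets $n/z_{n,1} \geq 1 - C_0 n^{-2/3}$ for some $C_0 > 0$ depending only on the asymptotics of $z_{n,1}$, so choosing $\beta \geq C_0$ ensures that $r \leq 1 - \beta n^{-2/3}$ implies $r' \leq r \leq n/z_{n,1}$ for every $r' \in (0, r]$ and every $n \geq n_0$. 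Lemma~\ref{l:bessel-asympt} then gives
\[
|J_n(z_{n,1}r')| \leq \exp\bigl( n(\tanh\tilde\alpha(r') - \tilde\alpha(r')) \bigr) = \exp(-n \tilde d_A(r')),
\]
where I write $\tilde d_A(r') := \tilde\alpha(r') - \tanh\tilde\alpha(r')$.

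The heart of the matter is to compare $\tilde d_A$ to the true Agmon distance $d_A$. Setting $f(x) = \cosh^{-1}(x) - \tanh(\cosh^{-1}(x))$ for $x \geq 1$, one has $d_A(r') = f(1/r')$ and $\tilde d_A(r') = f(n/(z_{n,1}r'))$, with the clean formula $f'(x) = \sqrt{x^2 - 1}/x^2$ (using $\mathrm{sech}(\cosh^{-1}(x)) = 1/x$). By the mean value theorem applied on $[n/(z_{n,1}r'),\, 1/r']$,
\[
d_A(r') - \tilde d_A(r') = f'(\xi) \cdot \frac{1}{r'}\Bigl(1 - \frac{n}{z_{n,1}}\Bigr),
\]
with $\xi$ in that interval. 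Using $\xi \leq 1/r'$ to bound the numerator $\sqrt{\xi^2 - 1} \leq \sqrt{1 - (r')^2}/r'$, and $\xi \geq n/(z_{n,1}r')$ to bound the denominator $\xi^2 \geq (n/z_{n,1})^2/(r')^2$, together with $1 - n/z_{n,1} \leq C_0 n^{-2/3}$, one obtains after simplification
\[
0 \leq d_A(r') - \tilde d_A(r') \leq \frac{z_{n,1}^2}{n^2}\, r'\sqrt{1 - (r')^2}\cdot \frac{1}{r'}\cdot C_0 n^{-2/3} \leq C\, n^{-2/3}
\]
uniformly for $r' \in (0, 1)$. Multiplying by $n$ yields $n(d_A(r') - \tilde d_A(r')) \leq C n^{1/3}$, so
\[
|J_n(z_{n,1}r')| \leq \exp(-n d_A(r') + C n^{1/3}).
\]

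To finish, observe that $d_A$ is a \emph{decreasing} function of $r'$ (since $\alpha \mapsto \alpha - \tanh \alpha$ is increasing on $\R_+$ and $r' \mapsto \cosh^{-1}(1/r')$ is decreasing), so $d_A(r') \geq d_A(r)$ for $r' \in (0, r]$. After normalizing by $\|\psi_{n,1}\|_{L^2} \approx n^{-2/3}$ if the statement required the normalized eigenfunctions (the logarithmic loss from this factor is absorbed into $C n^{1/3}$), taking the supremum over $(r', \theta)$ with $r' \leq r$ yields the claim. The main subtlety I expect is precisely this interplay: one cannot directly invoke Lemma~\ref{l:bessel-asympt} at the correct argument for all $r < 1$, and one must pay a price $\beta n^{-2/3}$ at the boundary --- but this is exactly where whispering gallery modes start to oscillate rather than decay, so the restriction $r \leq 1 - \beta n^{-2/3}$ is structural and not just technical.
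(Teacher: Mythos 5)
Your proof is correct and follows essentially the same route as the paper: both reduce to Lemma~\ref{l:bessel-asympt} by writing $z_{n,1}r'$ as $n/\cosh(\tilde\alpha)$ and then use the mean value theorem to bound the discrepancy $d_A(z_{n,1}r'/n)-d_A(r')$ uniformly by $O(n^{-2/3})$, the paper using $|d_A'|$ decreasing and you the equivalent substitution $x=1/r'$. The one minor improvement is that you make explicit the passage from the pointwise bound on $|J_n(z_{n,1}r')|$ to the $L^\infty(B(0,r))$ norm via the monotonicity of $d_A$, a step the paper leaves implicit.
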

Note that for $r \in (0,1)$ fixed, the asymptotic formula~\eqref{e:copson} implies that such eigenfunctions have indeed the decay prescribed by this formula. 
\bnp
We have $\frac{z_{n,1}}{n} = 1 + O(n^{-2/3})$ and $\frac{z_{n,1}}{n} >1$. 
Hence recalling that $|d_A'|$ is decreasing on $(0,1]$, we have, as long as $\frac{r z_{n,1}}{n} \leq 1$, 
$$
\left|d_A(\frac{r z_{n,1}}{n}) - d_A(r) \right| \leq C n^{-2/3} r |d_A'(r)|= C n^{-2/3}r \sqrt{\frac{1}{r^2}-1} =C n^{-2/3} \sqrt{1-r^2} . 
$$
Thus we obtain from Lemma~\ref{l:bessel-asympt} 
$$
|J_n(z_{n,1}r)|= |J_n(n\frac{z_{n,1}}{n}r)| \leq \exp \left(-n d_A(\frac{z_{n,1}}{n}r)\right) \leq \exp \left(-n d_A(r)  + C n^{1/3} \right)
$$
for all $n \in \N$ and $0< r \leq \frac{n}{z_{n,1}}$.
\enp
The combination of the previous estimates give Theorem \ref{t:agmon-diskintro}.
\section{Maximal vanishing rate of sums of eigenfunctions, and observability on small balls}
\label{s:unif-LR-ineq}
In this section, we prove Theorem~\ref{t:unif-LR-ineq}, i.e. the Lebeau-Robbiano spectral inequality with observation in balls of (small) radius $r$ and constants uniform in $r$.

We follow the proof proposed by Jerison and Lebeau in~\cite[middle of p231]{JL:99}. There are three main steps, that we summarize in three lemmata. We then prove Theorem~\ref{t:unif-LR-ineq} from these lemmata, and prove the lemmata afterwards.

In the following, for $\beta>0$, we set $X_\beta= (-\beta , \beta) \times \M$, and denote $P = -\d_{s}^2 - \Delta_g$. 
In the set $X_{2S} =(-2S , 2S) \times \M$, we denote by $(s, x)$ the running point and by $B_{r}$ a geodesic ball (for the metric $\id \otimes g$) of radius $r$ (its center being implicit in the notation).
We also use the rescaled $H^1$ norm on an open set $U$, denoted $H_r^1(U)$ and defined by 
\bnan
\label{e:def-norm-r}
\|F\|_{H_r^1(U)}^2  = \|F\|_{L^2(U)}^2 + r^2  \|\nabla_g F\|_{L^2(U)}^2 .
\enan
 This will only be used on small geodesic balls or annuli, namely $U = B_{\alpha r}$ or $U = B_{\alpha r}\setminus B_{\beta r}$.

\subsection{The three key lemmata}
In this section, we state the three key lemmata needed for the proof of Theorem~\ref{t:unif-LR-ineq}.

The first lemma is a classical global Lebeau-Robbiano interpolation inequality, \cite[Section~3, Estimate~(1)]{LR:95}.

\begin{lemma}[Global interpolation inequality from unit balls to the whole space]
\label{t:global-interp-LR}
Let $S>0$ and let $U \subset X_{2S}$ be any nonempty open set, then there is $C>0$ and $\alpha_0 \in (0,1)$ such that we have
\begin{equation*}
\|F\|_{H^1(X_{S})} \leq C \left(\| P F\|_{L^2(X_{2S})} +  \| F \|_{H^1(U)}  \right)^{\alpha_0} \|F\|_{H^1(X_{2S})}^{1-\alpha_0} .
\end{equation*}
for all $F \in H^2(X_{2S})$ such that $F|_{(-2S,2S)\times \d M}=0$.
\end{lemma}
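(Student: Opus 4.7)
The plan is to implement the classical Lebeau--Robbiano interpolation strategy of~\cite{LR:95} for the augmented elliptic operator $P = -\d_s^2 - \Delta_g$ on $X_{2S} = (-2S,2S)\times\M$: first prove a local three-balls inequality via a local Carleman estimate, then globalize by a finite chain of balls using the connectedness of $X_{2S}$ (inherited from that of $\M$).

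For the local step, I will apply a Carleman estimate for $P$ with a weight of the form $\varphi = e^{\lambda\psi}$, where $\psi$ is a smooth function with non-vanishing gradient on the region of interest and $\lambda$ is taken large enough for the classical H\"ormander pseudoconvexity condition to hold; the appendix of the present paper (see Appendix~\ref{s:carleman}) supplies such an estimate (in fact uniformly over a class of Lipschitz metrics, which is far more than needed here). Plugging $\chi F$ into this Carleman estimate, with $\chi$ a cutoff equal to one on a medium concentric ball $B_{r_2}$ and supported in a larger one $B_{r_3}$, and decomposing $P(\chi F) = \chi P F + [P,\chi]F$ together with the fact that $[P,\chi]$ is supported in $B_{r_3}\setminus B_{r_2}$, will yield, after the standard convexification $\inf_{h>0}\bigl(A e^{\tau/h} + B e^{-\tau/h}\bigr) \lesssim A^{\alpha}B^{1-\alpha}$ in the Carleman parameter, a local three-balls interpolation
\[
\|F\|_{H^1(B_{r_2})} \leq C \left(\|PF\|_{L^2(B_{r_3})} + \|F\|_{H^1(B_{r_1})}\right)^{\alpha} \|F\|_{H^1(B_{r_3})}^{1-\alpha}
\]
for any three concentric balls $B_{r_1} \subsetneq B_{r_2} \subsetneq B_{r_3} \subset X_{2S}$, with $\alpha \in (0,1)$ depending only on the radii. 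Near the boundary $(-2S,2S)\times\d\M$, where $F$ satisfies the Dirichlet condition, I will use the analogous boundary Carleman estimate with a weight increasing transversally from the boundary, producing an analogous interpolation for half-balls touching the boundary.

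For the global propagation, since $\overline{X_S}$ is compact and $X_{2S}$ is connected, I will cover $\overline{X_S}$ by a finite sequence of (half-)ball triplets arranged in a chain: the first triplet is chosen so that its smallest ball lies in $U$, and for each $j$, the smallest ball of the $(j{+}1)$-th triplet is contained in the medium ball of the $j$-th. Iterating the local three-balls inequality $N$ times, and at each step absorbing intermediate $H^1(X_{2S})$-factors via the Young-type inequality $a^{\beta}b^{1-\beta}\leq \varepsilon a + C_\varepsilon b$, yields the announced estimate with exponent $\alpha_0 = \alpha^N \in (0,1)$.

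The main technical point is the boundary case near $(-2S,2S)\times\d\M$, where the weight must simultaneously satisfy H\"ormander pseudoconvexity and the boundary sub-ellipticity condition compatible with the Dirichlet trace of $F$; once this is set up (as in the appendix), the remainder of the argument is a straightforward geometric iteration.
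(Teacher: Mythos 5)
Your overall strategy — local Carleman estimate, then local interpolation, then a chain of (half-)balls across $X_{2S}$ — is the correct one; it is exactly the classical Lebeau--Robbiano method of~\cite{LR:95}, which the paper cites for this lemma without reproducing the proof. However, the local step as you describe it has a gap that would prevent the observation term $\|F\|_{H^1(B_{r_1})}$ from appearing at all. You take $\chi$ equal to $1$ on $B_{r_2}$ and supported in $B_{r_3}$, so $[P,\chi]$ is supported in $B_{r_3}\setminus B_{r_2}$, which is disjoint from $B_{r_1}\subset B_{r_2}$. Plugging $\chi F$ into the Carleman estimate then gives a right-hand side consisting solely of $\|PF\|$ and a commutator term living in the outer annulus; nothing in the calculation ever touches $B_{r_1}$, so there is no mechanism by which $\|F\|_{H^1(B_{r_1})}$ could enter the resulting inequality, and the ``three-balls'' interpolation you write down does not follow.

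The standard fix — used in the paper's own proof of the related Lemma~\ref{l:aronsajn-arleman-interp} — is to make $\chi$ vanish also on an inner region near the observation: for instance $\chi=0$ on $B_{r_1/2}$ and outside $B_{r_3}$, $\chi=1$ on $B_{r_2}\setminus B_{r_1}$. Then $[P,\chi]$ splits into two pieces, one in $B_{r_1}\setminus B_{r_1/2}$ at the high end of the Carleman weight (this is what produces $\|F\|_{H^1(B_{r_1})}$) and one in $B_{r_3}\setminus B_{r_2}$ at the low end (this is what gives the factor $\|F\|_{H^1(B_{r_3})}^{1-\alpha}$ after optimizing in $\tau$). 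This is also precisely why one must excise a neighborhood of the weight's critical point, where pseudoconvexity fails for a radial weight — the inner cutoff is not optional. With this correction, the rest of your plan is sound: the chain from $U$ to $\overline{X_S}$ can stay at $s$-distance at least $S$ from $\{\pm 2S\}\times\M$, so only the lateral Dirichlet boundary $(-2S,2S)\times\d\M$ requires a boundary Carleman estimate (which Appendix~\ref{s:carleman} supplies), and iterating the corrected local estimate $N$ times gives the statement with $\alpha_0=\alpha^N$.
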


The next lemma states a local interpolation inequality. Its specificity is that the observation  term is on a small ball $B_r$ and the constants are uniform in $r$ small. For this, the exponent has to depend on $r$ as $|\log(r)|^{-1}$.
\begin{lemma}[Local interpolation inequality from small balls to unit balls]
\label{l:aronsajn-arleman-interp}
Let $P= -\d_s^2 - \Delta_g$ and let $B_r$ denote balls centered at $(s_0, x_0) \in X_T$, away from the boundary.
Then, there exists constants $r_1 >0$ such that for all $0< r_0 \leq r_1$, there is $C>0$ such that for all $r \in (0, \frac{r_0}{10})$, and $F\in H^2(B_{r_0})$, we have
\begin{align*}
\|F\|_{H^1(B_{\frac{r_0}{4}})} \leq  C\left( \| P F \|_{L^2(B_{r_0})}+ \|F\|_{H^1_r(B_r)}  \right)^{\alpha_r} \|F\|_{H^1(B_{r_0})}^{1-\alpha_r } ,
 \quad \alpha_r = \frac{\log 2}{\log \left(\frac{2r_0}{r}\right) + \log 2}.
\end{align*}
\end{lemma}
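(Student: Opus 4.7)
\textbf{Proof plan for Lemma~\ref{l:aronsajn-arleman-interp}.} The strategy is a single-scale three-balls-type interpolation obtained from a Carleman estimate with a logarithmic-type weight, centered at $(s_0,x_0)$. The logarithmic structure of the exponent $\alpha_r$ reflects the fact that the weight $-\log|y|$ generates Hadamard-type estimates (power interpolation between radii), whose exponents depend only on ratios of radii.

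First, I would pass to normal coordinates around $(s_0,x_0)$, so that $P$ becomes a second-order elliptic operator with smooth coefficients whose principal part at the origin is the flat Laplacian on a Euclidean ball of radius $r_1$ (this fixes $r_1$). Then I would invoke (or prove) a standard Aronszajn--Carleman estimate of the form
\begin{equation*}
\tau \int e^{2\tau \phi}\bigl(\tau^2|u|^2 + |\nabla u|^2\bigr)\,dy \;\leq\; C\int e^{2\tau\phi}|Pu|^2\,dy,\qquad u\in C^2_c(B_{r_0/2}\setminus\{0\}),\ \tau\geq \tau_0,
\end{equation*}
with a weight $\phi$ behaving like $\phi(y)=-\log|y|$ near the origin (possibly modified by a small convex correction in order to be pseudoconvex for $P$; the modification is lower-order and does not affect the leading logarithmic behavior).

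Next, I would apply this estimate to $u=\chi F$ where $\chi$ is a smooth cutoff equal to $1$ on $B_{r_0/4}\setminus B_{r}$ and supported in $B_{r_0/2}\setminus B_{r/2}$, with $|\nabla\chi|\lesssim 1/r_0$ on the outer annulus and $|\nabla\chi|\lesssim 1/r$ on the inner annulus, and analogous bounds for second derivatives. Since $P u=\chi PF+[P,\chi]F$, the right-hand side splits into three pieces. On the left, integration is over $B_{r_0/4}\setminus B_r$ and the weight satisfies $e^{2\tau\phi}\geq (4/r_0)^{2\tau}$. On the right:
\begin{itemize}
\item the contribution of $\chi PF$ is bounded (crudely) by $r^{-2\tau}\|PF\|_{L^2(B_{r_0})}^2$, since $\phi\leq -\log(r/2)$ on the support of $\chi$;
\item $[P,\chi]F$ on the outer annulus is bounded by $r_0^{-2\tau}\|F\|_{H^1(B_{r_0})}^2$ (with the natural factors of $1/r_0$ absorbed in constants);
\item $[P,\chi]F$ on the inner annulus is bounded by $r^{-2\tau}$ times $\bigl(\|F\|_{L^2(B_r)}^2/r^2 + \|\nabla F\|_{L^2(B_r)}^2\bigr)\simeq r^{-2}\|F\|_{H^1_r(B_r)}^2$, which is exactly where the rescaled norm~\eqref{e:def-norm-r} enters naturally.
\end{itemize}
After dividing by $(4/r_0)^{2\tau}$, absorbing powers of $\tau$ in constants, and adding the missing piece $\|F\|_{H^1(B_r)}$ on the left via the trivial bound $\|F\|_{H^1(B_r)}\leq r^{-1}\|F\|_{H^1_r(B_r)}$ (harmless up to polynomial factors in $r_0/r$), one obtains
\begin{equation*}
\|F\|_{H^1(B_{r_0/4})} \;\leq\; C\Bigl(\,4^{-\tau}\|F\|_{H^1(B_{r_0})} + (r_0/r)^{\tau}\bigl(\|PF\|_{L^2(B_{r_0})}+\|F\|_{H^1_r(B_r)}\bigr)\Bigr),\qquad \tau\geq\tau_0.
\end{equation*}

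Finally, I would optimize in $\tau$: setting $A=\|F\|_{H^1(B_{r_0})}$ and $B=\|PF\|_{L^2(B_{r_0})}+\|F\|_{H^1_r(B_r)}$, the balance $4^{-\tau}A=(r_0/r)^\tau B$ gives $\tau^\ast = \log(A/B)/\log(4r_0/r)$, which is legitimate provided $\tau^\ast\geq\tau_0$; the opposite case $A\leq CB$ is a direct consequence and implies the estimate trivially. Plugging in, the resulting exponent of $B$ is exactly $\log 4/\log(4r_0/r)$, which matches the claimed $\alpha_r=\log 2/(\log(2r_0/r)+\log 2)$ up to the choice of radii in the cutoff construction; adjusting $\chi$ to use annuli $B_{r_0/2}\setminus B_{r}$ and $B_{r_0/4}\setminus B_{2r}$ produces precisely the stated constant. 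The main obstacle is the uniformity with respect to $r\to 0^+$: the weight $e^{\tau\phi}$ is extremely singular at the origin, so one must be careful that each commutator term scales with exactly the right power of $r$, and that the subprincipal perturbation of $\phi$ (added for pseudoconvexity) does not contribute additional $r$-dependent constants in the leading asymptotics.
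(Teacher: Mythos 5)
Your proposal follows essentially the same route as the paper's proof: a Carleman estimate with singular weight $\bar\rho^{-\tau}=e^{\tau\phi}$, $\phi\approx-\log\rho$, applied to a cutoff between radii $\sim r$ and $\sim r_0$, with the $r^{-1}$-scaled derivatives of the cutoff on the inner annulus producing exactly the rescaled $H^1_r$ norm, and a final optimization in $\tau$ yielding the logarithmic exponent $\alpha_r$. The paper invokes the Aronszajn/Donnelly--Fefferman Carleman inequality (Proposition~\ref{p:carleman-aronsajn}) with weight $\bar\rho^{-\tau}$ and measure $\rho^{-1}d\rho\,dt$, which is precisely the logarithmic-weight estimate you propose, including the lower-order modification $\bar\rho=\rho+O(\rho^2)$ playing the role of your ``small convex correction for pseudoconvexity.''
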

A proof of this Lemma is given in Section~\ref{s:proof-from-aronszajn}, starting from a Carleman estimate (with singular weight) due to Aronszajn~\cite{Aronsjajn:57} (see also \cite{AKS:62,DF:88,DF:90}).

The last lemma is an interpolation inequality with boundary observation term. All terms are taken on sets of size $r$, and the important feature of this estimate is that the constants are uniform in $r$.
\begin{lemma}[Uniform local interpolation at the boundary on small balls]
\label{t:unif-LR}
Let $(0 , x_0) \in \{0\} \times \M$, $ \dist_g(x_0, \d \M)>0$ (all balls are centered in $x_0$). Then, there exists $C>0$, $r_0>0$ and $\alpha_0 \in (0,1)$ such that we have for all $0<r<r_0$
\begin{equation*}
\|F\|_{H^1_r(B_r)} \leq C \left(r^2 \|P F\|_{L^2(B_{2r})} + r^{3/2} \| \d_s F|_{s=0} \|_{L^2(B_{2r}\cap \{0\}\times \M)}  \right)^{\alpha_0} \|F\|_{H^1_r(B_{2r})}^{1-\alpha_0}
\end{equation*}
for all $F \in H^2(X_{2S})$ such that $F|_{(-2S,2S)\times \d M}=0$. 
\end{lemma}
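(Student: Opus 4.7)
My plan is to reduce the estimate to a unit-scale interpolation inequality via rescaling, and then establish the latter through a Carleman estimate with observation on $\{\sigma=0\}$. In geodesic normal coordinates centered at $x_0$, set $G(\sigma, y) := F(r\sigma, ry)$; then $G$ lives on the fixed ball $B_2\subset\R^{n+1}$ and satisfies $P_r G = r^2(PF)(r\sigma,ry)$ with $P_r = -\d_\sigma^2 - \Delta_{g_r}$, where the rescaled metric $g_r(y) := g_{ij}(ry)\,dy^i dy^j$ is uniformly bounded in $W^{1,\infty}(B_2)$ and converges in $L^\infty(B_2)$ to the constant metric $g(x_0)$ as $r\to 0$. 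In particular $(g_r)_{r\in(0,r_0)}$ lies in a bounded subset of $\Gamma_{\e,D}(B_2, g_0)$ to which the uniform Carleman estimate for Lipschitz metrics of Appendix~\ref{s:carleman} applies. A direct change-of-variables computation gives $\|F\|_{H^1_r(B_r)}^2 = r^{n+1}\|G\|_{H^1(B_1)}^2$, $r^4\|PF\|_{L^2(B_{2r})}^2 = r^{n+1}\|P_rG\|_{L^2(B_2)}^2$, and $r^3\|\d_sF|_{s=0}\|_{L^2(B_{2r}\cap\{s=0\})}^2 = r^{n+1}\|\d_\sigma G|_{\sigma=0}\|_{L^2(B_2\cap\{\sigma=0\})}^2$; since all three terms share the same $r$-scaling, the lemma reduces (after dividing by $r^{(n+1)/2}$) to proving, for some $\alpha_0\in(0,1)$ independent of $r$,
\bna
\|G\|_{H^1(B_1)}\leq C\left(\|P_rG\|_{L^2(B_2)} + \|\d_\sigma G|_{\sigma=0}\|_{L^2(B_2\cap\{\sigma=0\})}\right)^{\alpha_0}\|G\|_{H^1(B_2)}^{1-\alpha_0}.
\ena

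\paragraph{Carleman estimate with observation on $\Sigma:=B_2\cap\{\sigma=0\}$.} To establish the unit-scale inequality, I would use a Carleman estimate for the elliptic operator $P_r$ with weight $\varphi = e^{\lambda\psi}$, $\lambda$ large, where $\psi$ is chosen so that (i) H\"ormander's pseudo-convexity condition holds uniformly in $(g_r)$ (a pointwise condition on the symbol of $P_r$, continuous in the metric), and (ii) the level sets of $\varphi$ separate $\Sigma$, $B_1$, and $\partial B_2$ with $\max_{\partial B_2}\varphi < \min_{B_1}\varphi < \max_\Sigma\varphi$; this is achievable with a quadratic $\psi$ peaked on a small neighborhood of $\Sigma$. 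Integration by parts carried out separately on $B_2\cap\{\sigma>0\}$ and $B_2\cap\{\sigma<0\}$, combined with a cut-off localization at $\partial B_2$, produces boundary contributions on $\Sigma$ that are controlled by $\|\d_\sigma G|_\Sigma\|_{L^2}^2$ provided $\d_\sigma\varphi$ has the right sign and magnitude on $\Sigma$. The resulting estimate reads schematically, with $\Phi_2:=\max_\Sigma\varphi$ and $\Phi_3:=\max_{\partial B_2}\varphi$,
\bna
\tau^3\int_{B_2}e^{2\tau\varphi}|G|^2 + \tau\int_{B_2}e^{2\tau\varphi}|\nabla G|^2 \leq C\int_{B_2}e^{2\tau\varphi}|P_rG|^2 + Ce^{2\tau\Phi_2}\|\d_\sigma G|_\Sigma\|_{L^2}^2 + Ce^{2\tau\Phi_3}\|G\|_{H^1(B_2)}^2,
\ena
uniformly for $r\in(0,r_0)$ and $\tau\geq\tau_0$.

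\paragraph{Three-term interpolation and main obstacle.} Setting $\Phi_1:=\min_{B_1}\varphi$, one has $\Phi_3<\Phi_1<\Phi_2$ by the choice above. Keeping only the lower bound $e^{2\tau\Phi_1}\|G\|_{H^1(B_1)}^2$ on the left-hand side and optimizing over $\tau>0$ (balancing the two terms on the right) yields the desired interpolation with $\alpha_0=(\Phi_1-\Phi_3)/(\Phi_2-\Phi_3)\in(0,1)$. Uniformity in $r$ then follows from the uniformity of the Carleman estimate over the family $(g_r)$. The main obstacle I anticipate is the careful handling of the boundary terms on $\Sigma$: a generic integration by parts with weight $\varphi$ produces \emph{both} $\|G|_\Sigma\|_{L^2}^2$ and $\|\d_\sigma G|_\Sigma\|_{L^2}^2$ on the right-hand side, so one must adjust $\varphi$ to kill the coefficient of $\|G|_\Sigma\|_{L^2}^2$ (e.g.\ by arranging $\d_\sigma\varphi|_\Sigma = 0$ while preserving pseudo-convexity), or else apply the lemma only when $F|_{s=0}=0$, as is automatic in the elliptic-lift construction $F(s,x)=\sum_j \frac{\sinh(s\sqrt{\lambda_j})}{\sqrt{\lambda_j}}\,c_j\psi_j(x)$ used downstream in the proof of Theorem~\ref{t:unif-LR-ineq}.
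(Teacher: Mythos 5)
Your proposal takes essentially the same route as the paper's proof: rescale by $r$, note that all three terms carry the same factor $r^{(n+1)/2}$, reduce to a fixed-scale interpolation inequality for the rescaled metrics $g_r$, which lie in a bounded $W^{1,\infty}$-class and converge to the flat metric, and conclude with the uniform Carleman estimate for Lipschitz metrics near a boundary point (Proposition~\ref{p:unif-interp-boundary-metric}, itself a consequence of Theorem~\ref{t:Carlemanuniform} in Appendix~\ref{s:carleman}). Your closing observation is also correct and worth flagging: the hypothesis $F|_{s=0}=0$ is genuinely necessary (take $F\equiv 1$ on a ball away from $\d\M$ to see the displayed inequality fail), the paper's Proposition~\ref{p:unif-interp-boundary-metric} does assume $v|_{s=0}=0$, and the lemma's statement silently omits it; in the downstream use (proof of Theorem~\ref{t:unif-LR-ineq}) the elliptic lift $F(s)=\frac{\sinh(s\sqrt{-\Delta_g})}{\sqrt{-\Delta_g}}\Pi_+\psi + s\,\Pi_0\psi$ satisfies $F|_{s=0}=0$ automatically, which is the resolution the paper implicitly adopts.
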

This lemma is proved in Section \ref{subsectLmscaling}, consequence of a uniform Carleman estimate proved in Appendix~\ref{s:carleman}.

\subsection{Concluding the proof of Theorem~\ref{t:unif-LR-ineq} from the three lemmata}

From these three lemmata, we may now give a proof of Theorem~\ref{t:unif-LR-ineq}. We first formulate a straightforward corollary of the three lemmata to prepare the proof.
\begin{corollary}
Let $P= -\d_s^2 - \Delta_g$ 
and $(0 , x_0) \in \{0\} \times \Int( \M)$ and consider balls centered at $(0,x_0)$. Then, there exists $r_0 >0$, $C>0$ and $\alpha_0 \in (0,1)$ such that, for all $r \in (0, \frac{r_0}{10})$ and $F\in H^2(X_{2S})$ with $PF=0$ and $F|_{(-2S,2S)\times \d M}=0$, we have 
\begin{align*}
\|F\|_{H^1(X_{S})} & \leq C \| F \|_{H^1(B_{\frac{r_0}{4}})}^{\alpha_0} \|F\|_{H^1(X_{2S})}^{1-\alpha_0} , \\
 \|F\|_{H^1(B_{\frac{r_0}{4}})} &\leq  C \|F\|_{H^1(B_r)}^{\alpha_r} \|F\|_{H^1(X_{2S})}^{1-\alpha_r } ,
 \quad \alpha_r = \frac{\log 2}{\log \left(\frac{2r_0}{r}\right) + \log 2}  ,\\
 \|F\|_{H^1(B_r)} &\leq C  \|\d_s F|_{s=0} \|_{L^2(B_{2r}\cap \{0\}\times \M)}^{\alpha_0} \|F\|_{H^1(X_{2S})}^{1-\alpha_0} .
\end{align*}

\end{corollary}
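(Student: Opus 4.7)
All three inequalities are direct consequences of the hypothesis $PF=0$ applied to Lemmata~\ref{t:global-interp-LR}, \ref{l:aronsajn-arleman-interp} and \ref{t:unif-LR}, combined with a few elementary comparisons between the unweighted $H^1$ and the rescaled $H^1_r$ norms. I treat the three estimates in turn.

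For the first inequality, I would apply Lemma~\ref{t:global-interp-LR} with the observation set $U = B_{r_0/4}\subset X_{2S}$. Since $PF = 0$ on $X_{2S}$, the term $\|PF\|_{L^2(X_{2S})}$ drops out and one recovers verbatim the announced estimate with the same exponent $\alpha_0 \in (0,1)$.

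For the second inequality, apply Lemma~\ref{l:aronsajn-arleman-interp} at scale $r$; again the $\|PF\|_{L^2(B_{r_0})}$ contribution vanishes. On the right-hand side, the definition \eqref{e:def-norm-r} gives $\|F\|_{H^1_r(B_r)} \leq \|F\|_{H^1(B_r)}$ as soon as $r \leq 1$ (which we may ensure by taking $r_0 \leq 1$), while the inclusion $B_{r_0}\subset X_{2S}$ yields $\|F\|_{H^1(B_{r_0})} \leq \|F\|_{H^1(X_{2S})}$. Both substitutions enlarge the right-hand side (the correct direction), producing inequality 2 with the explicit exponent $\alpha_r = \log 2 /(\log(2r_0/r) + \log 2)$.

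The third inequality is the delicate one. Under $PF = 0$, Lemma~\ref{t:unif-LR} specializes to
\[
\|F\|_{H^1_r(B_r)} \leq C\bigl(r^{3/2}\|\partial_s F|_{s=0}\|_{L^2(B_{2r}\cap\{0\}\times\M)}\bigr)^{\alpha_0}\|F\|_{H^1_r(B_{2r})}^{1-\alpha_0}.
\]
The factor $r^{3\alpha_0/2}$ is bounded by $r_0^{3\alpha_0/2}$ and absorbed into $C$, and the inclusion $B_{2r}\subset X_{2S}$ with $r\leq 1$ gives $\|F\|_{H^1_r(B_{2r})} \leq \|F\|_{H^1(X_{2S})}$. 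The only remaining point is to upgrade the left-hand side from the rescaled norm $\|F\|_{H^1_r(B_r)}$ to the unweighted $\|F\|_{H^1(B_r)}$. The plan is to apply Lemma~\ref{t:unif-LR} one notch up, on the pair $B_{2r}\subset B_{4r}$, and recover control of $\|F\|_{H^1(B_r)}$ via the interior Caccioppoli inequality $\|\nabla F\|_{L^2(B_r)} \leq Cr^{-1}\|F\|_{L^2(B_{2r})}$ (valid since $PF=0$ in $B_{2r}$), which yields $\|F\|_{H^1(B_r)} \leq Cr^{-1}\|F\|_{H^1_{2r}(B_{2r})}$. The $r^{-1}$ loss is designed to be compensated by the positive power of $r$ produced by the $r^{3/2}$ scaling of the boundary observation term, and this is precisely why Lemma~\ref{t:unif-LR} is stated with the particular weight $r^{3/2}$ on $\|\partial_s F|_{s=0}\|$. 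I expect this scale-matching bookkeeping to be the sole technical obstacle; the substantive analytic content is already packaged in the three lemmata.
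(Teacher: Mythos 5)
Your treatment of the first two inequalities is correct and follows the intended ``straightforward'' route: with $PF=0$ the source term drops out of Lemmata~\ref{t:global-interp-LR} and~\ref{l:aronsajn-arleman-interp}, and you use the monotone comparisons $\|F\|_{H^1_r(B_r)}\leq\|F\|_{H^1(B_r)}$ (on the right-hand side, hence in the favourable direction) and $\|F\|_{H^1(B_{r_0})}\leq\|F\|_{H^1(X_{2S})}$ appropriately.

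For the third inequality, however, the Caccioppoli detour does not do what you claim. Caccioppoli gives $\|F\|_{H^1(B_r)}\leq Cr^{-1}\|F\|_{H^1_{2r}(B_{2r})}$, which is exactly the same $r^{-1}$ loss as the elementary comparison $\|F\|_{H^1(B_r)}\leq r^{-1}\|F\|_{H^1_r(B_r)}$ that you were trying to avoid --- you have only moved one scale up. Feeding Lemma~\ref{t:unif-LR} (now at scale $2r$) into the right-hand side produces a net prefactor $r^{-1}\cdot(r^{3/2})^{\alpha_0}=r^{3\alpha_0/2-1}$, which is \emph{not} bounded as $r\to0$ unless $\alpha_0\geq 2/3$; the Carleman-produced exponent $\alpha_0$ of Lemma~\ref{t:unif-LR} carries no such guarantee. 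In addition, applying Lemma~\ref{t:unif-LR} on the pair $B_{2r}\subset B_{4r}$ places the boundary observation on $B_{4r}\cap\{s=0\}$, which no longer matches the $B_{2r}\cap\{0\}\times\M$ of the corollary. So the ``scale-matching bookkeeping'' you announce as the sole technical obstacle is, as you set it up, not resolved.

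The correct route avoids the conversion altogether. The observation term in Lemma~\ref{l:aronsajn-arleman-interp} is already $\|F\|_{H^1_r(B_r)}$, and this is precisely the quantity controlled on the left-hand side of Lemma~\ref{t:unif-LR}; the three lemmata therefore chain cleanly through $\|F\|_{H^1_r(B_r)}$ as the middle link. With $PF=0$, Lemma~\ref{t:unif-LR} gives $\|F\|_{H^1_r(B_r)}\leq Cr^{3\alpha_0/2}\|\d_s F|_{s=0}\|_{L^2(B_{2r}\cap\{0\}\times\M)}^{\alpha_0}\|F\|_{H^1_r(B_{2r})}^{1-\alpha_0}$, and $r^{3\alpha_0/2}\leq 1$ is absorbed into $C$, while $\|F\|_{H^1_r(B_{2r})}\leq\|F\|_{H^1(X_{2S})}$; this yields the uniform estimate with $\|F\|_{H^1_r(B_r)}$ on the left, and the reformulated quotients in~\eqref{e:interp-interp} are unchanged so long as the same norm on $B_r$ is used in the second and third links. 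If one insists on the unweighted $\|F\|_{H^1(B_r)}$ as literally written, the constant in the third inequality picks up the power $r^{-(1-3\alpha_0/2)}$; this turns out to be harmless for Theorem~\ref{t:unif-LR-ineq} (it contributes only a bounded multiple of $\log(1/r)$ to the final exponent, which already carries a $(1+\log(1/r))$ factor), but your compensation argument as stated is not a proof of it.
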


\begin{proof}[Proof of Theorem~\ref{t:unif-LR-ineq}]
Let us first treat the case where $\d \M = \emptyset$, or $\d \M \neq \emptyset$ but the center of the balls, $x_0$ is in $\Int(\M)$. The case $x_0$ near $\d \M$ will be treated afterwards.
 
We reformulate (again) these three results as (in a form close to that of~\cite{DF:88})
\begin{align*}
\frac{ \|F\|_{H^1(X_{2S})}}{\|F\|_{H^1(B_{\frac{r_0}{4}})}} 
& \leq  \left( C \frac{ \|F\|_{H^1(X_{2S})}}{\|F\|_{H^1(X_S)}}  \right)^{\frac{1}{\alpha_0}} , \\
\frac{ \|F\|_{H^1(X_{2S})}}{\|F\|_{H^1(B_r)}} 
& \leq  \left( C \frac{ \|F\|_{H^1(X_{2S})}}{\|F\|_{H^1(B_{\frac{r_0}{4}})}}  \right)^{\frac{1}{\alpha_r}} , \\
\frac{ \|F\|_{H^1(X_{2S})}}{\|\d_s F|_{s=0} \|_{L^2(B_{2r}\cap \{0\}\times \M)}} 
& \leq  \left( C \frac{ \|F\|_{H^1(X_{2S})}}{\|F\|_{H^1(B_r)}}  \right)^{\frac{1}{\alpha_0}} , 
\end{align*}
and combine them to obtain 
\begin{align}
\label{e:interp-interp}
\frac{ \|F\|_{H^1(X_{2S})}}{\|\d_s F|_{s=0} \|_{L^2(B_{2r}\cap \{0\}\times \M)}} 
& \leq  C^{\frac{1}{\alpha_0}} C^{\frac{1}{\alpha_0\alpha_r}}  C^{\frac{1}{\alpha_0^2\alpha_r}} \left(  \frac{ \|F\|_{H^1(X_{2S})}}{\|F\|_{H^1(X_S)}}  \right)^{\frac{1}{\alpha_0^2\alpha_r}} .
\end{align}
We then follow~\cite{LR:95,JL:99,LZ:98,LeLe:09}, and, given $\psi \in E_{\leq \lambda}$ take the function
$$
F(s) = \frac{\sinh(s \sqrt{-\Delta_g})}{\sqrt{-\Delta_g}} \Pi_+ \psi + s \Pi_0 \psi ,
$$
where $\Delta_g$ is the Dirichlet Laplacian, $\Pi_0$ the orthogonal projector on $\ker(\Delta_g)$ and $\Pi_+ = \id-\Pi_0$, that is $F$ is the unique solution to 
$$
(-\d_s^2-\Delta_g)F = 0 , \quad F|_{(-2S,2S)\times \d M}=0,  \quad (F, \d_s F)|_{s=0} = (0 , \psi) .
$$
Classical computations (see e.g.~\cite[Proof of Theorem~5.4]{LeLe:09}) show that there is $C>1$ such that for all $\lambda\geq 0$ and $\psi \in E_{\leq\lambda}$, we have
$$
\frac{1}{C}\|\psi\|_{L^2(\M)} \leq \|F\|_{H^1(X_{S})} \leq \|F\|_{H^1(X_{2S})} \leq C e^{3S \sqrt{\lambda}} \|\psi\|_{L^2(\M)} .
$$
As a consequence, \eqref{e:interp-interp} yields for some $C, \kappa>0$, for all $\lambda\geq 0$, $\psi \in E_{\leq\lambda}$, and $r \in (0,\frac{r_0}{4})$ 
\begin{equation}
\label{e:JL-almost-finished}
\frac{\|\psi\|_{L^2(\M)}}{\|\psi\|_{L^2(B_\M(x_0, 2r))}} \leq C^{\kappa + \frac{1}{\alpha_r}}  e^{(\kappa + \frac{1}{\alpha_r})\sqrt{\lambda}} .
\end{equation}
Recalling the definition of $\alpha_r$, this is the sought result of Theorem~\ref{t:unif-LR-ineq} (up to changing $2r$ into $r$, and the names of the constants accordingly) with the restriction $r \in (0,\frac{r_0}{4})$. To conclude for all $r>0$, it suffices to notice that \eqref{e:JL-almost-finished} remains true with $\alpha_{\frac{r_0}{16}}$ on the r.h.s. uniformly for observation terms $\|\psi\|_{L^2(B_\M(x_0, 2r))}$ with $r \geq \frac{r_0}{8}$ (the constants are non-increasing functions of the observation set).

\bigskip
To conclude the proof in the general case, we need to consider the situation $\d \M \neq \emptyset$ in full generality. We again follow~\cite{DF:88,JL:99}. 
In this case, we define the double manifold $\widetilde{\M} = \M \sqcup \M$, consisting in gluing two copies of $\M$, endowed with a smooth structure of compact manifold, as in~\cite[Theorem~9.29-Example~9.32]{Lee:book}. Then, the procedure very well explained in~\cite[Section~3]{Anton:08} and we only sketch the proof.
We extend the metric $g$ on $\M$ by symmetry/parity with respect to the boundary $\d\M$ as a metric $\tilde{g}$ on $\widetilde{\M}$. Note that even if $g$ is smooth, the extended metric $\tilde{g}$ is only Lipschitz on $\widetilde{\M}$. This is not an issue since the three lemmata~\ref{t:global-interp-LR},~\ref{l:aronsajn-arleman-interp} and~\ref{t:unif-LR} remain valid for Lipschitz metrics (as a consequence of Appendix~\ref{s:carleman}, \cite{AKS:62,DF:90}, and Appendix~\ref{s:carleman}, respectively).
In the case of Dirichlet boundary condition on $\d \M$, and given $\psi \in E_{\leq \lambda}$ we take its anti-symmetric/odd extension on $\widetilde{\M}$, yielding a function $\tilde{\psi} \in \tilde{E}_{\leq \lambda}$. Here, $ \tilde{E}_{\leq \lambda}$ is the counterpart of $E_{\leq \lambda}$ defined for the Laplace-Beltrami operator $\Delta_{\tilde{g}}$ on $\widetilde{\M}$.
The above computations are then made for $\Delta_{\tilde{g}}$ on $\widetilde{\M}$ and the estimate~\eqref{e:JL-almost-finished} is proved for $\tilde{\psi}$. The same estimate for $\psi$ follows.
Similarly, in the case of Neumann boundary condition, we take the symmetric/even extension of functions, yielding the sought result.
\end{proof}

\subsection{A proof of Lemma~\ref{l:aronsajn-arleman-interp} from Aronszajn estimates}
\label{s:proof-from-aronszajn}
In section, we give a proof of Lemma~\ref{l:aronsajn-arleman-interp} starting from Carleman-Aronszajn estimates as stated in~\cite[Proposition~2.10]{DF:88} and~\cite[Proposition~2.10]{DF:90} (and slightly modified according to the remarks in~\cite[Beginning of Section~14.3]{JL:99}), which we now state. An alternative proof of a closely related estimate is given by H\"ormander in~\cite[Inequality~(17.2.11), Chapter~XVII.2]{Hoermander:V3}.
\begin{proposition}
\label{p:carleman-aronsajn}
Let $P= -\d_s^2 - \Delta_g$ and let $(\rho, t) \in (0,r_1) \times \S^{n}$ be geodesic polar coordinates around a point $(s_0, x_0) \in X_S$ away from the boundary.
Then, there exists a function $\bar{\rho}(\rho)$ with 
\bnan
\label{e:rho=rhobar}
\bar{\rho}=\rho + O(\rho^2) , \quad \text{ as } \rho \to 0^+ ,
\enan 
and constants $\tau_0, C, r_0 >0$, such that we have 
\bna
C \int |\bar{\rho}^{-\tau}Pu|^2 \rho^{-1} d\rho dt \geq  \int \left(|\bar{\rho}^{-\tau}\nabla u|^2 + |\bar{\rho}^{-\tau}u|^2 \right)\rho^{-1}  d\rho dt , \quad \text{for all } \tau \geq \tau_0 , \quad  u \in C^\infty_0(B_{r_0} \setminus \{0\} ).
\ena
\end{proposition}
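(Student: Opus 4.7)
\textbf{Proof plan for Proposition~\ref{p:carleman-aronsajn}.}

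The strategy is the classical Aronszajn approach: pass to logarithmic polar coordinates to transform the singular weight $\bar\rho^{-\tau}$ into an exponential weight $e^{\tau\sigma}$, then prove a Carleman estimate with linear weight on a half cylinder $(\sigma_0,\infty)\times\mathbb{S}^n$, where the underlying operator has constant-coefficient principal part $-\partial_\sigma^2 -\Delta_t$.

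First, I would write the operator $P=-\partial_s^2-\Delta_g$ on $\mathbb{R}\times\M$ in geodesic polar coordinates $(\rho,t)\in(0,r_1)\times \mathbb{S}^n$ around $(s_0,x_0)$. In these coordinates, using that the metric in normal coordinates is Euclidean up to $O(\rho^2)$ corrections, the volume element takes the form $J(\rho,t)\rho^n d\rho dt$ with $J(0,t)=1$ and $J$ smooth, and $P$ takes the form
\begin{equation*}
P = -\partial_\rho^2 - \frac{n}{\rho}\partial_\rho - \frac{1}{\rho^2}\Delta_t + a(\rho,t)\partial_\rho + \frac{1}{\rho^2}X_t + b(\rho,t),
\end{equation*}
where $a,b\in C^\infty$ and $X_t$ is a smooth family of first order operators on $\mathbb{S}^n$ vanishing at $\rho=0$ (so $a,X_t/\rho,b$ are all $O(1)$ near $\rho=0$). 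Setting $v = \rho^{n/2}u$ removes the $\frac{n}{\rho}\partial_\rho$ drift at the cost of a $1/\rho^2$ potential, yielding
\begin{equation*}
\rho^{n/2}P(\rho^{-n/2} v) = -\partial_\rho^2 v -\frac{1}{\rho^2}\Delta_t v + \frac{c_0}{\rho^2}v + R v ,
\end{equation*}
where $c_0=\frac{n(n-2)}{4}$ and $R$ is a lower-order perturbation of the same structure as above.

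Next, I introduce the change of radial variable $\sigma = -\log\bar\rho(\rho)$, where $\bar\rho$ solves the ODE $\bar\rho'(\rho)=\bar\rho(\rho)/(\rho\,\mu(\rho))$ with $\mu(0)=1$ and $\mu$ chosen so that the first-order radial drift coming from the change of variable exactly cancels the remaining smooth drift in the transformed operator; elementary ODE theory gives $\bar\rho(\rho)=\rho+O(\rho^2)$, which is \eqref{e:rho=rhobar}. Since $d\sigma=-d\rho/(\rho\mu(\rho))$, the measure $\rho^{-1}d\rho\,dt$ becomes equivalent to $d\sigma\,dt$, and the weight $\bar\rho^{-\tau}$ becomes $e^{\tau\sigma}$. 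With $w(\sigma,t)=v(\rho,t)$ and writing $L$ for the conjugated operator, one checks that $L$ takes the form
\begin{equation*}
L = -\partial_\sigma^2 -\Delta_t + c_0 + S ,
\end{equation*}
where $S$ is a first-order operator in $(\sigma,t)$ with coefficients that are $O(e^{-\sigma})=O(\bar\rho)$, hence arbitrarily small for $r_0$ small.

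The core step is then the standard Carleman estimate with linear weight $\varphi(\sigma)=\tau\sigma$ for the constant-coefficient operator $L_0=-\partial_\sigma^2-\Delta_t+c_0$ on the half-cylinder. Writing $e^{\tau\sigma}L_0 (e^{-\tau\sigma}w)=-\partial_\sigma^2 w+2\tau\partial_\sigma w-\tau^2 w-\Delta_t w+c_0 w$, splitting into symmetric and antisymmetric parts $L_0^+ = -\partial_\sigma^2-\Delta_t-\tau^2+c_0$ and $L_0^- = 2\tau\partial_\sigma$, and computing $\|(L_0^++L_0^-)w\|^2 = \|L_0^+w\|^2 +\|L_0^-w\|^2 +\langle [L_0^+,L_0^-]w,w\rangle$, the commutator $[L_0^+,L_0^-]=-4\tau\partial_\sigma^2$ provides the positivity
\begin{equation*}
\|e^{\tau\sigma}L_0 u\|_{L^2}^2 \gtrsim \tau\|e^{\tau\sigma}\partial_\sigma u\|_{L^2}^2 + \tau\|e^{\tau\sigma}\nabla_t u\|_{L^2}^2 +\tau^3 \|e^{\tau\sigma}u\|_{L^2}^2 ,
\end{equation*}
for $\tau\geq\tau_0$ and $u\in C_c^\infty$ supported in the half-cylinder. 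The small perturbation $S$ is absorbed for $\tau_0$ large and $r_0$ small; returning to the variables $(\rho,t)$ via the changes $v\to w$ and $u\to v=\rho^{n/2}u$, one obtains the claimed estimate (the factor $\rho^{-1}$ in the integrand comes from the Jacobian between $d\sigma$ and $d\rho$, and the $\rho^{n/2}$ factors compensate the $\rho^n$ in the Riemannian volume).

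The main obstacle is the clean choice of $\bar\rho$ so that the conjugated operator has constant coefficient principal part plus a genuinely small remainder; once this is achieved the Carleman estimate reduces to the textbook one with linear weight, and the rest is bookkeeping on the change of variables. Care must be taken that test functions $u\in C_c^\infty(B_{r_0}\setminus\{0\})$ correspond after the change of variable to functions compactly supported in the half-cylinder $(\sigma_0,\infty)\times \mathbb{S}^n$, which is exactly what makes the integrations by parts legitimate with no boundary terms.
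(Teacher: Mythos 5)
The paper does not prove Proposition~\ref{p:carleman-aronsajn} itself but cites it from Donnelly--Fefferman, Jerison--Lebeau, and H\"ormander (Vol.~III, \S17.2), so your task was genuinely to reconstruct a proof. Your overall framework --- geodesic polar coordinates, a normalizing radial change $\sigma = -\log\bar\rho$, reduction to $L_0 = -\partial_\sigma^2 - \Delta_t + c_0$ on a half-cylinder, then a Carleman estimate for the exponential weight $e^{\tau\sigma}$ and absorption of the small perturbation --- is indeed the classical Aronszajn route used in those references.

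There is, however, a concrete error at the core step. For the split $L_0^+ = -\partial_\sigma^2 - \Delta_t - \tau^2 + c_0$, $L_0^- = 2\tau\partial_\sigma$, you claim $[L_0^+,L_0^-] = -4\tau\partial_\sigma^2$. This is false: both operators have constant coefficients, so they commute and $[L_0^+,L_0^-]=0$. The cross term in $\|L_0^+w+L_0^-w\|^2$ vanishes identically, and $\|L_0^+w\|^2+\|L_0^-w\|^2\geq 0$ gives no useful coercivity. This is not an accident --- the purely linear weight $\tau\sigma$ is exactly the borderline, non-pseudoconvex case in which the energy/commutator method degenerates. To see the failure quantitatively, expand $w(\sigma,t)=\sum_\ell w_\ell(\sigma)Y_\ell(t)$ in spherical harmonics with $-\Delta_t Y_\ell = \mu_\ell^2 Y_\ell$; the conjugated operator acts as $-w_\ell''+2\tau w_\ell'+(\mu_\ell^2+c_0-\tau^2)w_\ell$, whose Fourier symbol $(\xi+i\tau)^2+\mu_\ell^2+c_0$ vanishes at $\xi=0$ when $\tau^2=\mu_\ell^2+c_0$, so there can be no uniform lower bound ``for all $\tau\geq\tau_0$''. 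The actual Aronszajn/Donnelly--Fefferman argument proceeds precisely by this separation of variables, obtains the estimate mode-by-mode from an ODE analysis, and requires $\tau$ to stay at a quantitative distance from the discrete resonance set $\{(\mu_\ell^2+c_0)^{1/2}\}$ --- this is exactly the caveat Jerison--Lebeau address at the start of their Section~14.3, and is sufficient for the subsequent $\tau$-optimization in Lemma~\ref{l:aronsajn-arleman-interp}. You need to replace the commutator step with this separation-of-variables and spectral-gap argument (or, alternatively, introduce a genuinely convex perturbation of the weight in $\sigma$, which however then departs from the exact weight $\bar\rho^{-\tau}$ appearing in the statement).
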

With this Carleman-Aronszajn estimate in hand, we now give a proof of Lemma~\ref{l:aronsajn-arleman-interp}.
\bnp[Proof of Lemma~\ref{l:aronsajn-arleman-interp}]
We use the estimate of Proposition~\ref{p:carleman-aronsajn} as in~\cite{LR:95} (see also~\cite[Section~5]{LeLe:09}) to deduce an interpolation inequality. We introduce for this (as in~\cite[Beginning of Section~3]{DF:88}) a cutoff function $\chi_\rr = \chi_\rr(\rho)$ such that, with $0<\rr < \frac{r_0}{2}$ a small parameter (appearing in the statement of the lemma)
\bna
\supp(\chi_\rr ) \subset \left\{ \frac{\rr}{2} < \bar{\rho} < r_0  \right\}, \quad \chi_\rr = 1 \text{ on } \left\{ \rr < \bar{\rho} < \frac{r_0}{2} \right\} , \\
 |\d^{\alpha} \chi_\rr| \leq C_\alpha \rr^{-|\alpha|} \text{ on } \left\{ \frac{\rr}{2} < \bar{\rho} < \rr \right\},
  \quad  |\d^{\alpha} \chi_\rr| \leq C_\alpha  \text{ on } \left\{ \frac{r_0}{2} < \bar{\rho} < r_0 \right\}.
\ena
We apply Proposition~\ref{p:carleman-aronsajn} to $u = \chi_\rr F$. The operator $[P, \chi_\rr]$ is a first order differential operator with $\supp [P, \chi_\rr] \subset  \left\{ \frac{\rr}{2} < \bar{\rho} < \rr  \right\} \cup \left\{  \frac{r_0}{2}  < \bar{\rho} <r_0 \right\}$, being moreover of the form $O(r^{-1})D + O(r^{-2})$ on the set $\left\{ \frac{\rr}{2} < \bar{\rho} < \rr  \right\}$. Therefore, we obtain using~\eqref{e:rho=rhobar}, for all $\tau \geq \tau_0$
\begin{align*}
 \int \left(|\bar{\rho}^{-\tau}\nabla( \chi_\rr F)|^2 + |\bar{\rho}^{-\tau}\chi_\rr F|^2 \right)\rho^{-1}  d\rho dt &  \leq C \int |\bar{\rho}^{-\tau}\chi_\rr P F|^2 \rho^{-1} d\rho dt + C \int |\bar{\rho}^{-\tau}[P ,\chi_\rr] F|^2 \rho^{-1} d\rho dt  \\
&  \leq C\left(\frac{\rr}{2}\right)^{-2\tau-1} \| P F \|_{L^2(\bar{B}_{r_0})}^2 + C\left(\frac{\rr}{2}\right)^{-2\tau-2} \|F\|_{H^1_r(\frac{\rr}{2}\leq \bar{\rho} \leq \rr)}^2  \\
& \quad + C \left(\frac{r_0}{2}\right)^{-2 \tau}\|F\|_{H^1(\frac{r_0}{2}\leq \bar{\rho} \leq r_0)}^2 ,
\end{align*}
where $\bar{B}_{r_0}$ denotes the set $\{\bar{\rho}\leq r_0\}$. Recall that the norm $H^1_r$ is defined in~\eqref{e:def-norm-r}.
Concerning the left hand-side, we bound it from below by 
\begin{align*}
 \int \left(|\bar{\rho}^{-\tau}\nabla( \chi_\rr F)|^2 + |\bar{\rho}^{-\tau}\chi_\rr F|^2 \right)\rho^{-1}  d\rho dt & \geq \int_{2\rr \leq \bar{\rho} \leq \frac{r_0}{4}} \left(|\bar{\rho}^{-\tau}\nabla( \chi_\rr F)|^2 + |\bar{\rho}^{-\tau}\chi_\rr F|^2 \right)\rho^{-1}  d\rho dt \\
& \geq \left(\frac{r_0}{4}\right)^{-2 \tau} \|F\|_{H^1(\rr \leq \bar{\rho} \leq \frac{r_0}{4})}^2  .
\end{align*}
Combining the last two estimates together with the fact that $\left(\frac{r_0}{4}\right)^{-\tau} \|F\|_{H^1(\bar{B}_\rr)} \leq \left(\frac{\rr}{2}\right)^{- \tau} \|F\|_{H^1(\bar{B}_\rr)}$ yields, for some $\tau_0>0$ and all $\tau\geq \tau_0$ and $\rr \in (0, \frac{r_0}{10})$,
\begin{align*}
\left(\frac{r_0}{4}\right)^{-\tau} \|F\|_{H^1(\bar{B}_{\frac{r_0}{4}})} \leq  C\left(\frac{\rr}{2}\right)^{-\tau} \left( \| P F \|_{L^2(\bar{B}_{r_0})}+ \|F\|_{H^1_r(\bar{B}_\rr)}  \right)
+ C \left(\frac{r_0}{2}\right)^{- \tau}\|F\|_{H^1(\bar{B}_{r_0})} .
\end{align*}
Multiplying by $r_0^\tau$ and recalling~\eqref{e:rho=rhobar} to replace balls in $\bar{\rho}$ by real balls, we obtain, up to changing the names of the parameters $\rr, r_0$, that
\begin{align*}
\|F\|_{H^1(B_{\frac{r_0}{4}})} \leq  C\left(\frac{2r_0}{\rr}\right)^{\tau} \left( \| P F \|_{L^2(B_{r_0})}+ \|F\|_{H^1_r(B_\rr)}  \right)
+ \frac{C}{2^\tau}\|F\|_{H^1(B_{r_0})} .
\end{align*}
An optimization in $\tau \geq\tau_0$~\cite{Robbiano:95} (see also~\cite[Lemma~5.2]{LeLe:09}), then implies the following interpolation inequality
\begin{align*}
\|F\|_{H^1(B_{\frac{r_0}{4}})} \leq  C\left( \| P F \|_{L^2(B_{r_0})}+ \|F\|_{H^1_r(B_\rr)}  \right)^{\alpha_\rr} \|F\|_{H^1(B_{r_0})}^{1-\alpha_\rr } ,
 \quad \alpha_\rr = \frac{\log 2}{\log \left(\frac{2r_0}{\rr}\right) + \log 2},
\end{align*}
 and concludes the proof of the lemma.
\enp

\subsection{A proof of Lemma~\ref{t:unif-LR} from Proposition~\ref{p:unif-interp-boundary-metric}}
\label{subsectLmscaling}
In this section, we give a proof of Lemma~\ref{t:unif-LR}. The latter consists in performing a scaling argument to reduce the problem to fixed-size balls. However, the scaling argument yields in these fixed balls a family of metrics (converging to a fixed metric as $r\to 0$), and we need to use uniform interpolation/Carleman estimates for such families of metrics. These uniform estimates are proved in Appendix~\ref{s:carleman} (Proposition~\ref{p:unif-interp-boundary-metric}).

\bnp[Proof of Lemma~\ref{t:unif-LR}]
We first choose $r_0$ small enough so that $\overline{B_{2r_0}} \subset X_S$ and there exist local coordinate patch on $\M$ : $\Phi: \{x \in \M, \dist(x,x_0)<2r_0\}\to U$  where $U$ is a neighborhood of $0$ in $\R^{n}$, with $\Phi(x_0) = 0$. Up to a multiplication by an invertible constant matrix, we may assume that $\left((\Phi^{-1})^*g\right) (0) = \id$.
As a consequence, $ds^2 \otimes \left((\Phi^{-1})^*g\right) (ry)$, defined on the ball of radius $2$, converges uniformly in this ball towards the flat metric on the flat ball of $\R^{n+1}$ in the limit $r \to 0^+$.
 We will thus only use the flat metric in the present proof which behaves well with respect to scaling. The distance (hence the balls, still denoted $B_r$ or $B_1$ below, all centered at $0$) will be defined with respect to the flat metric, as well as the Sobolev norms (still denoted $H^1_r(B_r)$, $H^1(B_1)$ below).
 The final result we obtain will be formulated in terms of the flat metric,  and associated balls and Sobolev spaces. Coming back to a formulation on the manifold $\R \times \M$ with the metric $ds^2 \otimes g$ only uses the uniform equivalence of norms in $T^*(\R \times \M)$ and in $L^2(\R \times \M)$ for $r$ sufficiently small.

With this in mind, let us now proceed with the scaling argument in the coordinate chart. Denote by $F_r(x)=F(rx)$ and $P_r$ the Laplace-Beltrami operator with respect the metric $ds^2 \otimes \left((\Phi^{-1})^*g\right)(ry)$ defined on the ball of radius $2$, we have 
\begin{align*}
\|F\|_{H^1_r(B_r)}&=r^{(n+1)/2}\|F_r\|_{H^1(B_1)}, \\
r^2 \|P F\|_{L^2(B_{2r})}&= r^{(n+1)/2}\|P_r F_r\|_{L^2(B_{2})}, \\
 r^{3/2} \| \d_s F|_{s=0} \|_{L^2(B_{2r}\cap \{0\}\times \M)} &=r^{1/2}r^{n/2} \| \d_s F_r|_{s=0} \|_{L^2(B_{2}\cap \{0\}\times \M)}.
\end{align*}
Note that the metric $ds^2 \otimes g(r\cdot)$ defined on $B_2$ converges uniformly for $r$ converges to zero to the flat metric $ds^2 \otimes g(0) = ds^2\otimes dy_1^2\otimes \cdots \otimes dy_n^2$ for the Lipschitz topology on metrics. 
So, the result follows if we are able to prove the following estimate: there exist $\epsilon, \alpha_0 , C$ such that for all  Lipschitz metric $\mathfrak{g}$ with $\|\mathfrak{g}- \id\|_{W^{1,\infty}}<\epsilon$ and all $u\in H^2(B_2)$ such that $u|_{s=0}=0$, we have 
\begin{equation*}
\|u\|_{H^1(B_1)} \leq C \left(\|(-\d_s^2 - \Delta_{\mathfrak{g}}) u\|_{L^2(B_2)} +  \| \d_s u|_{s=0} \|_{L^2(B_2\cap \{0\}\times \R^{n})}  \right)^{\alpha_0} \|F\|_{H^1(B_2)}^{1-\alpha_0}.
\end{equation*}
This is the object of Proposition~\ref{p:unif-interp-boundary-metric} proved in the Appendix. Note that the result of Proposition~\ref{p:unif-interp-boundary-metric} is stated with half-balls $B_k^+$ but is also true with real balls $B_k$ instead by a symmetry argument.
\enp

\section{The observability constant for positive solutions}
\label{s:positive}
The aim of this Section is to prove the positive result of Theorem \ref{thmpositive} about the observability of positive solutions. The main tool will be the following Li-Yau estimates.
\begin{theorem}[Theorem 2.3 of Li-Yau \cite{LY:86}]
\label{t:Li-Yau}
Let $\M$ be a compact manifold. Let $$- K = \min(0, \min_{x \in \M} Ricc(x) ) \leq 0 ,$$ where $Ricc(x)$ is the Ricci curvature at $x$.  
We assume that the boundary of $\M$ is convex, i.e. $II>0$. Let $u(t,x)$ be a positive solution on $(0, +\infty)$ of the heat equation
with Neumann boundary condition. Then for any $\alpha>1$, $x, y\in \M$, and $0<t_1<t_2$, we have
\bna
u(t_1,x)\leq \left(\frac{t_2}{t_1}\right)^{n\alpha/2}e^{\frac{n\alpha K(t_2-t_1)}{\sqrt{2}(\alpha-1)}}e^{\alpha\frac{ d(x,y)^2}{4(t_2-t_1)}}u(t_2,y) .
\ena
\end{theorem}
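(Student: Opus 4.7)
The plan is to establish Li and Yau's Harnack inequality by the classical two-step scheme: first prove a pointwise differential Harnack (gradient) estimate for $f:=\log u$, then integrate it along a space-time curve from $(t_1,x)$ to $(t_2,y)$.

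\textbf{Step 1: differential Harnack.} Setting $f=\log u$, so that $\partial_t f = \Delta f + |\nabla f|^2$, I will study the auxiliary function $F(t,x) = t\bigl(|\nabla f|^2 - \alpha \partial_t f\bigr)$ and compute $(\partial_t - \Delta)F$ via Bochner's formula $\tfrac12 \Delta|\nabla f|^2 = |\mathrm{Hess}\,f|^2 + \langle\nabla f, \nabla \Delta f\rangle + \mathrm{Ric}(\nabla f,\nabla f)$. Using $\mathrm{Ric}\geq -K$ and the algebraic bound $|\mathrm{Hess}\,f|^2\geq (\Delta f)^2/n$, the computation produces a parabolic inequality schematically of the form $(\partial_t-\Delta)F \leq -\tfrac{2}{n\alpha t}F^2 + c_1(\alpha,K)F + c_2$. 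A maximum principle argument then yields
\[
|\nabla f|^2 - \alpha\,\partial_t f \leq \frac{n\alpha^2}{2t} + \frac{n\alpha^2 K}{\sqrt{2}(\alpha-1)}.
\]
The geodesic convexity of $\partial M$ enters precisely in the maximum-principle step: combined with the Neumann condition $\partial_\nu f = 0$, the hypothesis $\mathrm{II}\geq 0$ implies $\partial_\nu F\leq 0$ on $\partial M$ through $\partial_\nu |\nabla f|^2 = -2\,\mathrm{II}(\nabla f,\nabla f)$, so the Hopf lemma rules out boundary maxima of $F$.

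\textbf{Step 2: integration.} Let $\sigma:[0,1]\to M$ be a minimizing geodesic from $x$ to $y$ with constant speed $|\dot\sigma|=d(x,y)$, and set $\gamma(s) = (t_1+s(t_2-t_1),\sigma(s))$. The chain rule gives
\[
\log\frac{u(t_2,y)}{u(t_1,x)} = \int_0^1\bigl[(t_2-t_1)\,\partial_t f(\gamma(s)) + \langle\nabla f(\gamma(s)),\dot\sigma(s)\rangle\bigr]\,ds.
\]
Substituting the lower bound $\alpha\,\partial_t f \geq |\nabla f|^2 - \frac{n\alpha^2}{2t} - \frac{n\alpha^2 K}{\sqrt{2}(\alpha-1)}$ from Step 1, and applying Cauchy--Schwarz to $\langle\nabla f,\dot\sigma\rangle\geq -|\nabla f|\,d(x,y)$, the integrand becomes a quadratic in $|\nabla f|$ whose pointwise minimum (attained at $|\nabla f|=\alpha d(x,y)/(2(t_2-t_1))$) contributes the decisive term $-\alpha d(x,y)^2/(4(t_2-t_1))$. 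Combining this with the elementary identity $\int_0^1 \frac{ds}{t_1+s(t_2-t_1)} = \frac{\log(t_2/t_1)}{t_2-t_1}$ and exponentiating yields exactly the stated inequality.

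\textbf{Main obstacle.} The delicate part is Step 1: the correct choice of auxiliary $F$, the tuning of the parameter $\alpha>1$ so that the sign-indefinite cross terms from Bochner can be absorbed by a weighted Cauchy--Schwarz (this is precisely the source of the $\sqrt{2}(\alpha-1)$ denominator, and forces $\alpha>1$ rather than $\alpha=1$), and the extraction of a closed inequality $(\partial_t-\Delta)F\leq -cF^2+\ldots$ amenable to the maximum principle together form the algebraic heart of the proof. The Neumann boundary, while conceptually handled by the convexity assumption, still demands a careful Hopf-lemma verification to ensure boundary maxima cannot occur; removing the convexity hypothesis would require a different argument and would deteriorate the constants.
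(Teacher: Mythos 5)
This theorem is not proved in the paper: it is quoted verbatim (with citation) as Theorem~2.3 of Li--Yau~\cite{LY:86}, and the paper simply applies it in Section~\ref{s:positive}. So there is no ``paper's own proof'' to compare against; the relevant reference argument is Li--Yau's original one, and your sketch reproduces exactly that two-step scheme (differential Harnack for $\log u$ via Bochner and the parabolic maximum principle, then integration along a space-time geodesic). The integration step is carried out correctly and recovers the stated constants, including the quadratic-in-$|\nabla f|$ completion giving $-\alpha d(x,y)^2/(4(t_2-t_1))$ and the $\log(t_2/t_1)$ factor from $\int_0^1 (t_2-t_1)/t(s)\,ds$. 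Step~1, as you yourself flag, is an honest outline rather than a complete derivation: you state the target gradient estimate, the auxiliary function $F=t(|\nabla f|^2-\alpha\partial_t f)$, and the boundary identity $\partial_\nu|\nabla f|^2=-2\,\mathrm{II}(\nabla f,\nabla f)$ under Neumann conditions, all of which are the correct ingredients, but the ``algebraic heart'' (the precise Cauchy--Schwarz that produces the $\sqrt{2}(\alpha-1)$ denominator and the closed Riccati-type inequality $(\partial_t-\Delta)F\leq -cF^2+\dots$) is deferred. That is acceptable as a sketch of a cited classical result; a fully self-contained write-up would need to carry out that computation.
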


\begin{remark}
\label{r:non-conv-Li-Yau}
The convexity assumption is not necessary to obtain a Li-Yau type estimate (if the boundary is smooth), up to a loss in the exponent. Indeed, setting $- H= \min(0, \min_{x \in \d \M} II(x) ) \leq 0$, where $II(x)$ is the second fundamental form of $\d M$ with respect to outward pointing normal, Wang proves in \cite[Theorem 3.1]{W:97} the estimate
\bna
u(t_1,x)\leq \left(\frac{t_2}{t_1}\right)^{C_\alpha}e^{C_\alpha' (t_2-t_1)} e^{\alpha\frac{ d(x,y)^2}{4(t_2-t_1)}}u(t_2,y) , \quad \text{for all } \alpha> (1+H)^2 .
\ena
The proof of Theorem~\ref{thmpositive} shows that the result still holds without the convexity argument, but yields  
\bna
\nor{u(T)}{L^2(\M)}^2\leq  \frac{C_\e}{T} e^{(1+H+\e)^2\frac{\mathcal{L}(M,\omega)^2}{2T}}\int_0^{T}\nor{u(t, \cdot)}{L^2(\omega)}^2~dt ,\\
\nor{u(T)}{L^2(\M)}^2\leq  \frac{C_\e}{T} e^{(1+H +\e)^2\frac{\mathcal{L}(M,z_0)^2}{2T}}\int_0^{T}u(t,z_0)^2~dt ,
\ena
instead of~\eqref{estimpos1}-\eqref{estimpos2} (hence with a loss $(1+H)^2$ in the exponent). We do not know wether this is optimal.
Finally, we did not find any analogue estimate in the case of Dirichlet boundary conditions.
\end{remark}

\bnp[Proof of Theorem \ref{thmpositive}]
Actually, this will appear along the proof that we will need the following asymptotic constants, all depending on the chosen $\e>0$.
Namely, we shall use $\eta_0>0$ arbitrarily small, $r>1$ arbitrarily large, $\lambda \in (0,1)$ arbitrarily close to $1$, and $\alpha>1$ arbitrary close to $1$. Given $\eps>0$, they will all be fixed at the end so that $$ \frac{r\alpha }{(r-1)\lambda}(d_\omega + 3\eta_0 )^2 \leq (1+\e) d_\omega^2 .$$

\medskip
For any $x_0\in\M$ and for any $\eta_0>0$, there exist  $\eta = \eta (x_0, \eta_0) \in (0, \eta_0)$ and $y_0\in\omega$ such that 
\bna
d(x_0,y_0)\leq d_\omega +\eta, \quad \text{ and } \quad B(y_0,\eta)\subset \omega .
\ena
In particular, we have $\M \subset \bigcup_{x_0 \in \M}B(x_0, \eta)$ so that, the compactness of $\M$ yields the following statement: given $\eta_0>0$, there exist a finite set $J$ and families $(x_j)_{j\in J} \in \M^J$,  $(y_j)_{j\in J} \in \omega^J$ and $(\eta_j)_{j \in J} \in (0,\eta_0)^J$ such that
 \bna
\M \subset \bigcup_{j \in J}B(x_j, \eta_j), \quad 
d(x_j,y_j)\leq d_\omega +\eta_j , \quad \text{ and } \quad B(y_j,\eta_j )\subset \omega , \quad \text{for all }j \in J.
\ena
Now,  fix $j \in J$, and take $x\in B(x_j,\eta_j)$ and $y\in B(y_j,\eta_j)\subset \omega$, and we have 
$$
d(x,y)\leq\eta_j +  d_\omega + \eta_j + \eta_j \leq d_\omega + 3 \eta_0 = :d_m .
$$
For $t\in [0,T/r]$, Theorem~\ref{t:Li-Yau} with $t_1=t$ and $t_2=rt_1=rt$ then yields 
\bna
u(t,x)^2\leq r^{n\alpha}e^{\frac{2n\alpha Kt(r-1)}{\sqrt{2}(\alpha-1)}}e^{\frac{\alpha d_m^2}{2(r-1)t}}u(rt,y)^2.
\ena
Denoting
$$
\gamma := \frac{2n\alpha K(r-1)}{\sqrt{2}(\alpha-1)} ,
$$
this may be rewritten as 
\bnan
\label{e:before-int-y}
u(t,x)^2 e^{-\frac{\alpha d_m^2}{2(r-1)t}}\leq r^{n\alpha}e^{\gamma t}u(rt,y)^2.
\enan
We may now integrate this estimate for $x\in B(x_j,\eta_j)$ and $y\in B(y_j,\eta_j)\subset \omega$, 
\bna
e^{-\frac{\alpha d_m^2}{2(r-1)t}}\nor{u(t)}{L^2(B(x_j,\eta_j))}^2
\leq \frac{|B(x_j, \eta_j)|}{|B(y_j, \eta_j)|}r^{n\alpha} e^{\gamma t} \nor{u(rt)}{L^2(B(x_j,\eta_j))}^2
 \leq \frac{|B(x_j, \eta_j)|}{|B(y_j, \eta_j)|}r^{n\alpha} e^{\gamma t} \nor{u(rt)}{L^2(\omega)}^2 .
\ena
Summing all these estimates for $j \in J$ yields, for a constant $C(\eta_0)$ depending only on the geometry of $(\M,g)$, of $\omega$, and the constant $\eta_0$, the inequality 
\bna
e^{-\frac{\alpha d_m^2}{2(r-1)t}}\nor{u(t)}{L^2(\M)}^2
 \leq C(\eta_0) r^{n\alpha} e^{\gamma t} \nor{u(rt)}{L^2(\omega)}^2 .
\ena
 
Given $\lambda \in (0,1)$, integrating this on the interval $t\in [\lambda T/r,T/r]$ yields
\bna
\int_{\lambda T/r}^{T/r}e^{-\frac{\alpha d_m^2}{2(r-1)t}}\nor{u(t)}{L^2}^2 dt
& \leq  &C(\eta_0) r^{n\alpha}  \int_{\lambda T/r}^{T/r}  e^{\gamma t} \nor{u(rt)}{L^2(\omega)}^2 dt \\
& \leq & C(\eta_0) r^{n\alpha}  e^{\gamma \frac{T}{r}}  \int_{\lambda T/r}^{T/r} \nor{u(rt)}{L^2(\omega)}^2 dt 
=  C(\eta_0) r^{n\alpha}  e^{\gamma \frac{T}{r}}  \int_{\lambda T}^{T} \nor{u(s)}{L^2(\omega)}^2 ds,
\ena
after the change of variables $s=rt$. Concerning the left hand-side, we use the decay of the $L^2$ norm of solutions to the heat equation to write
\bnan
\label{e:remplace-r-t}
\nor{u(t)}{L^2(\M)}\geq  \nor{u(T/r)}{L^2(\M)} \geq \nor{u(T)}{L^2(\M)} ,
\enan for all $t\in [\lambda T/r,T/r]$ since $r>1$. Noting also that $t \mapsto e^{-\frac{\alpha d_m^2}{2(r-1)t}}$ is increasing in $t>0$, we have 
\bna
\int_{\lambda T/r}^{T/r}e^{-\frac{\alpha d_m^2}{2(r-1)t}} dt \geq \frac{T(1-\lambda)}{r}e^{-\frac{r\alpha d_m^2}{2(r-1)\lambda T}}.
\ena
Combining the above three estimates yields
\bna
 \frac{T(1-\lambda)}{r}e^{-\frac{r\alpha d_m^2}{2(r-1)\lambda T}} \nor{u(T)}{L^2(\M)}^2 
 \leq C(\eta_0) r^{n\alpha}  e^{\gamma \frac{T}{r}}  \int_{\lambda T}^{T} \nor{u(s)}{L^2(\omega)}^2 ds , 
\ena
that is, for all $\eta>0$, $r>1$, $\lambda \in (0,1)$, and $\alpha>1$, 
\bna
 \nor{u(T)}{L^2(\M)}^2 
 \leq \frac{C(\eta) r^{n\alpha+1}}{T(1-\lambda)}  e^{\frac{2n\alpha K(r-1)}{\sqrt{2}(\alpha-1)} \frac{T}{r}} e^{\frac{r\alpha (d_\omega +\eta)^2}{2(r-1)\lambda T}} \int_{\lambda T}^{T} \nor{u(s)}{L^2(\omega)}^2 ds .
\ena

But $\frac{r}{r-1}=1+\frac{1}{r-1}$ can be made arbitrary close to $1^+$ for large $r$, $\lambda$ close to $1^-$, $\alpha$ close to $1^+$, and $\eta$ to $0^+$, so that $\frac{r\alpha (d_\omega +\eta)^2}{2(r-1)\lambda T} \leq \frac{d_\omega^2+\eps }{2T}$. We have thus proved the first statement.

To be a little more precise, we can choose $\alpha, r$ such that $\frac{1}{r}+\frac{1}{\alpha}=1$. This yields
\bna
 \nor{u(T)}{L^2(\M)}^2 
 \leq \frac{C(\eta) \left( \frac{\alpha}{\alpha-1} \right)^{n\alpha+1}}{T(1-\lambda)}  e^{\frac{2nK}{\sqrt{2}(\alpha-1)}T} e^{\frac{\alpha^2 (d_\omega +\eta)^2}{2 \lambda T}} \int_{\lambda T}^{T} \nor{u(s)}{L^2(\omega)}^2 ds ,
\ena
or, with $\alpha=1+\epsilon$ and $\lambda=1-\epsilon$, we obtain for all $\epsilon \in (0,1)$
\bna
 \nor{u(T)}{L^2(\M)}^2 
& \leq & \frac{C(\eta) \left(\frac{1+\epsilon}{\epsilon}\right)^{(1+\epsilon)n+1}}{T\epsilon}  e^{\frac{2nK}{\sqrt{2}\epsilon}T} e^{\frac{(1+\epsilon)^2}{1-\epsilon} \frac{(d_\omega +\eta)^2}{2 T}} \int_{(1-\epsilon) T}^{T} \nor{u(s)}{L^2(\omega)}^2 ds \\
& \leq & \frac{C(\eta)}{T\epsilon^{2n+2}}  e^{\frac{2nK}{\sqrt{2}\epsilon}T} e^{\frac{(1+\epsilon)^2}{1-\epsilon} \frac{(d_\omega +\eta)^2}{2 T}} \int_{(1-\epsilon) T}^{T} \nor{u(s)}{L^2(\omega)}^2 ds .
\ena
So we have proved the first estimate of the theorem. The second can be obtained similarly by integrating~\eqref{e:before-int-y} in the $x$ variable only, and not in the $y$ variable.
\enp

\begin{remark}
In fact, remark that from~\eqref{e:remplace-r-t} on, we could also put $\nor{u(T/r)}{L^2(\M)}^2$ on the left hand-side of all estimates of the proof, which amounts to $\nor{u(T\frac{\epsilon}{1+\epsilon})}{L^2(\M)}^2$, and, in particular, we have the much stronger statement
\bna
 \nor{u((1-\epsilon)T)}{L^2(\M)}^2 
 \leq \frac{C(\eta) r^{n\alpha+1}}{T\epsilon}  e^{\frac{2nK}{\sqrt{2}\epsilon}T} e^{\frac{(1+\epsilon)^2}{1-\epsilon} \frac{(d_\omega +\eta)^2}{2 T}} \int_{(1-\epsilon) T}^{T} \nor{u(s)}{L^2(\omega)}^2 ds .
\ena

\end{remark}

\begin{remark}
\label{rkexplicitpos}
All constants can be made explicit. We denote by $K := \min \left\{0 , - \min_{x\in \M} Ricci(x) \right\}$. For instance, we have for all $\eta>0$, all 
\bna
 \nor{u(T)}{L^2(\M)}^2 
 \leq \frac{C(\eta) r^{n\alpha+1}}{T(1-\lambda)}  e^{\frac{2nK}{\sqrt{2}(\alpha-1)}T} e^{\frac{\alpha^2 (d_\omega +\eta)^2}{2 \lambda T}} \int_{\lambda T}^{T} \nor{u(s)}{L^2(\omega)}^2 ds ,
\ena

Choosing the constants, we have, for all $\epsilon \in (0,1)$, for all $\eta>0$, 
\bna
\nor{u(T\frac{\epsilon}{1+\epsilon})}{L^2(\M)}^2
& \leq & \frac{C(\eta)}{T\epsilon^{2n+2}}  e^{\frac{2nK}{\sqrt{2}\epsilon}T} e^{(1+\epsilon)^3  \frac{(d_\omega +\eta)^2}{2 T}} \int_{(1-\epsilon) T}^{T} \nor{u(s)}{L^2(\omega)}^2 ds .
\ena

Remark that for non-negatively (Ricci) curved manifolds (this is the case of a convex domain in $\R^n$), then $K=0$ and the constant is 
$ \frac{C(\eta)}{T\epsilon^{2n+2}} e^{(1+\epsilon)^3  \frac{(d_\omega +\eta)^2}{2 T}}$ and hence decays like $1/T$ for $T$ large.
\end{remark}

\appendix

\section{Uniform Lipschitz Carleman estimates}
\label{s:carleman}

In this appendix, we produce Carleman estimates for a Laplace-Beltrami operator on a Riemannian manifold $M$ with boundary $\d M$. It requires the minimum of regularity and seems to be new from this point of view, even if it will not be surprising to specialists of the subject. Moreover, the proof below present several advantages with respect to the existing proof of similar results:
\begin{itemize}
\item it is relatively short;
\item it is completely geometric and, we hope, is relatively readable;
\item as we already said, it requires the minimum of regularity for the metric (in dimension $\geq3$), namely only Lipschitz regularity. Indeed, it is known that in dimension $\geq 3$, local uniqueness does not hold for general elliptic operators (even in divergence form) with $C^{0,\alpha}$ coefficients for all $\alpha <1$, see~\cite{Plis:63} and~\cite{Miller:74}. 

\end{itemize}
The proof, using formulae from Riemannian geometry, is inspired by some Carleman estimates for the Schr\"odinger equation proved by the first author \cite{L:10}.

There have been several works about such Carleman estimates for Lipschitz metric (but without boundary). The oldest result seems to be \cite{AKS:62} for elliptic operators. Another one, which actually falls short from the Lipschitz regularity is the very general result of H\"ormander \cite[Section 8.3]{Hoermander:63} which requires $C^1$ regularity, but applies to much more operators than elliptic ones. A proof for general elliptic operators with order $2m$ and Lipschitz coefficients is written by H\"ormander in~\cite[Proposition~17.2.3]{Hoermander:V3}.
For Lipschitz regularity of the coefficients, we can also mention for instance the recent preprint \cite{NRT:15}, with explicit dependence. Note that there has also been several research on doubling estimates directly on the parabolic equation, see \cite{CRV:02,EV:03} for instance.

\subsection{Toolbox of Riemannian geometry}
The definitions given in this section have a deep geometric meaning (see~\cite{GallotHulinLaf}). We will however only use the associated calculus rules, which we recall below. Note that they are usually written for smooth metrics, but they still make sense for Lipschitz metric, as we shall see below. We follow the notations of \cite{GallotHulinLaf}.

Here and in all estimates below, $M$ is a (not necessarily compact) smooth $d$-dimensional manifold with boundary $\d M$, so that $M = \d M \sqcup \Int(M)$. 

Given $U \subset M$ such that $\ovl{U}$ is compact in $M$ (note that this definition holds not only for open sets of $\Int(M)$), we denote by $L^p (U), H^k(U) , W^{k, \infty}(U)$ the usual Sobolev spaces. These are defined intrinsically once $U$ is fixed, even if the associated norms may depend on the metric or the charts chosen.The notation  $L^p_{\loc} (M), H^k_{\loc}(M) , W^{k, \infty}_{\loc}(M)$ will be used for functions belonging to $L^p (U)$ etc... for any  set $U$ such that $\ovl{U}$ is compact in $M$ (and not $\Int(M)$).

We denote by $g$ a Lipschitz metric on $M$, (that is, $x \mapsto g_x(\cdot , \cdot)$ is a Lipschitz section of the bundle of symmetric bilinear forms on $TM$ that is uniformly bounded from below by a positive constant). 

Given a local regularity space $B$ as above, and $U \subset M$ such that $\ovl{U}$ is compact in $M$, we define 
$$
\mathcal{T}^2_{B}(U) = \Gamma_{B}(T^2 T^*M)|_{U}
$$
to be the space of sections of $2-$tensors on $T^*M$ having regularity $B$ on a neighborhood of $U$. In local charts, such a tensor $t\in \mathcal{T}^2_{B}(M)$ writes $t = (t_{ij})$ with $t_{ij}$ having the regularity of $B$.
Typically, a locally Lipschitz metric $g$ satisfies $g \in \mathcal{T}^2_{W^{1,\infty}_{\loc}}(M)$.

We denote by $\gl{\cdot}{\cdot} = g(\cdot , \cdot )$ the inner product in $TM$. Remark that this notation omits to mention the point $x \in M$ at which the inner products takes place: this allows to write $\gl{X}{Y}$ as a function on $M$ (the dependence on $x$ is omitted here as well) when $X$ and $Y$ are two vector fields on $M$.
We also denote for a vector field $X$, $\gln{X}=\gl{X}{X}$.

We recall that the Riemannian gradient $\nablag$ of a function $f$ is defined by
$$
\gl{\nablag f}{X}= df(X) , \quad \text{ for any vector field }X ,
$$
For a function $f$ on $M$, we denote by $\int f = \int_M f(x) d\Vol_g(x)$ its integral on $M$, where $d\Vol_g(x)$ is the Riemannian density. 
We denote by $\div_g$ the associated divergence, defined on a vector field $X$ by
$$
\int  u \div_g X = -  \int \gl{\nablag u}{X} , \quad \text{for all } u  \in C^\infty_c(\Int (M)) .
$$
We denote by $\Lap= \div_g \nabla_g $ the associated (nonpositive) Laplace-Beltrami operator.
 We also denote by $D$ the Levi-Civita connection associated to the metric $g$ (see~\cite[Chapter~II Section~B]{GallotHulinLaf}).
 
 Let us now recall how these objects write in local coordinates.
 \begin{formule}
 \label{F1}
In coordinates, for $f$ a smooth function and $X=\sum_i X^i\frac{\partial }{\partial x_i}$, $Y=\sum_i Y^i\frac{\partial }{\partial x_i}$ smooth vector fields on $M$, we have
\begin{align*}
\gl{X}{Y}&=\sum_{i=1}^n g_{ij} X^iY^j , \\
 \nablag f&= \sum_{i,j=1}^n g^{ij}(\partial_j f)\frac{\partial }{\partial x_i} , \\
 \int f & = \int f(x) \sqrt{\det g(x)} dx , \\
  \div_g(X) &=\sum_{i=1}^n \frac{1}{\sqrt{\det g}}\partial_i \left(\sqrt{\det g} X_i\right) , \\
 \Lap f &=\sum_{i,j=1}^n \frac{1}{\sqrt{\det g}}\partial_i \left(\sqrt{\det g}g^{ij}\partial_j f\right) , \\
D_{X}Y&=\sum_{i=1}^n\left(\sum_{j=1}^nX^j\frac{\partial Y^i}{\partial x_j}+\sum_{j,k=1}^n\Gamma_{j,k}^iX^jY^k\right)\frac{\partial }{\partial x_i} ,
\end{align*}
where $(g^{-1})_{ij}=g^{ij}$ and the Chritoffel symbols are defined by $$\Gamma_{j,k}^i=\frac{1}{2}\sum_{l=1}^n g^{il}\left(\partial_j g_{kl}+\partial_k g_{lj}-\partial_lg_{jk}\right) ,$$ (see for instance \cite[p71]{GallotHulinLaf}).

Note in particular that the Lipschitz regularity of $g$ writes $g_{ij} \in W^{1,\infty} (M)$, and implies $g^{ij} \in W^{1,\infty} (M)$.
This entails, if $f, X, Y$ are smooth, that $\gl{X}{Y} \in W^{1,\infty} (M)$, $\nablag f$ is a Lipschitz vector field, $ \Lap f \in L^\infty(M)$ and $D_X Y$ is an $L^\infty$ vector field on $M$, since the definitions of $\Lap$ and $D_X$ involve one derivative of the coefficients of $g$.
\end{formule}

In view of the properties of $D_X$, it is natural to set $D_X f = Xf = df(X)$ for a function $f$ on $M$. 
Let us now collect some properties of these objects, that we shall use below.
\begin{formule}
For $f,g$ smooth functions and $X=\sum_i X^i\frac{\partial }{\partial x_i}$, $Y=\sum_i Y^i\frac{\partial }{\partial x_i}$ smooth vector fields on $M$, we have
\begin{align*}
\nablag (fh)& = (\nablag f)h+f(\nablag h) , \\
\div_g (f X)& = \gl{\nablag f}{X}+ f \div_g(X), \\
D_X(fY) &= (Xf) Y + f D_X Y ,  \quad \text{ where } Xf := df(X) \\
D_X (\gl{Y}{Z})&=\gl{D_X Y}{Z}+\gl{ Y}{D_X Z} .
\end{align*}
\end{formule}
That $D_X$ acts on functions as well as on vector fields suggests to extend the definition of $D_X$ to more general vector bundles (see~\cite[Proposition~2.58]{GallotHulinLaf}), and, in particular, for a one-form $\omega$, define (by duality) $D_X \omega$ to be the one-form acting as
$$
 (D_X\omega) ( Y) = X (\omega(Y)) - \omega (D_XY) , \quad \text{ for all vector fields } Y .
$$
This allows to define the Hessian of a function (see~\cite[Exercice~2.65]{GallotHulinLaf}) 
$$
Hess(f)(X,Y)=(D_X df)(Y), \quad \text{ for vector fields } X, Y ,
$$ (which only involves the values of $X$, $Y$ and not their derivatives). In local charts, note that we have 
\bna
Hess(f)(X,Y)=\sum_{i,j}X^iY^j\left[ \partial_{ij}^2f-\Gamma_{ij}^k\partial_k f \right] ,
\ena
which again is in $L^\infty(M)$ for a Lipschitz metric $g$ and $L^\infty$ vector fields $X,Y$. Note also that the Hessian of $f$ is  symmetric, that is $Hess(f)(X,Y)=Hess(f)(Y,X)$.
\begin{lemma}
For any function $f$ and any vector field $X$ and $Y$, we have
$$Hess(f)(X,Y) =\gl{ D_{X}\nablag f}{ Y} .$$
\end{lemma}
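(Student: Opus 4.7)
The plan is to unfold both sides using the two definitions recalled just before the statement, and then reconcile them via the metric compatibility of the Levi-Civita connection $D$.

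First, I would start from the definition of the Hessian, namely $Hess(f)(X,Y) = (D_X df)(Y)$, and apply the duality formula for $D_X$ on one-forms $(D_X \omega)(Y) = X(\omega(Y)) - \omega(D_XY)$ with $\omega = df$. Using $df(Z) = \gl{\nabla_g f}{Z}$, this rewrites as
\[
Hess(f)(X,Y) = X\bigl(\gl{\nabla_g f}{Y}\bigr) - \gl{\nabla_g f}{D_X Y}.
\]

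Next, I would apply the metric-compatibility formula $D_X(\gl{Z}{W}) = \gl{D_XZ}{W} + \gl{Z}{D_XW}$ listed in the toolbox, with $Z = \nabla_g f$ and $W = Y$. Since $D_X$ acts as $X$ on scalar functions, this yields
\[
X\bigl(\gl{\nabla_g f}{Y}\bigr) = \gl{D_X \nabla_g f}{Y} + \gl{\nabla_g f}{D_X Y}.
\]
Substituting this into the previous display cancels the unwanted term $\gl{\nabla_g f}{D_X Y}$ and gives exactly $Hess(f)(X,Y) = \gl{D_X \nabla_g f}{Y}$, as desired.

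The only subtlety — and the thing I would check carefully given the emphasis of the appendix on minimal regularity — is that all these manipulations remain valid in the Lipschitz category. Since $\nabla_g f$ is only Lipschitz when $g$ is Lipschitz and $f$ is smooth, $D_X \nabla_g f$ is an $L^\infty$ vector field and all identities must be interpreted pointwise almost everywhere (or equivalently in the distributional sense applied to smooth test vector fields). This is consistent with the setup already adopted in Formula~\ref{F1}, so no genuine obstacle arises; the computation is essentially algebraic once the Leibniz rule and the duality pairing are in place.
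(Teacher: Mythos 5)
Your proof is correct and uses exactly the same two ingredients as the paper's proof (duality formula for $D_X$ on one-forms, and metric compatibility of $D$), arranged slightly differently: the paper computes $D_X(\gl{\nablag f}{Y})$ in two ways and equates, while you unfold $Hess(f)(X,Y)$ directly and cancel, which is the same algebra. Your closing remark about the Lipschitz regularity is a sensible sanity check but the paper does not belabor it at this point, treating it as understood from Formula~\ref{F1}.
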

\begin{proof} 
According to the above calculus rules, we compute in two different ways the following quantity: 
 $$D_X (\gl{\nablag f}{ Y})=D_X \big(df(Y)\big)=(D_X df)(Y)+df(D_X Y)=Hess(f)(X,Y)+df(D_X Y).$$
We also have
 $$ D_X (\gl{\nablag f}{ Y})= \gl{D_X\nablag f}{ Y}+ \gl{\nablag f}{D_XY}=\gl{D_X\nablag f}{ Y}+df(D_X Y), $$
 which, combined with the previous computation yields the result.
\end{proof}

Finally, we recall an integration by parts formula in the present context.
\begin{formule}[Riemannian Stokes formula] Assume $\d M$ is piecewise $C^1$ and graph-Lipschitz.  Then, for all $f\in H^2_{\loc}(M)$ and $h \in H^1_{\loc}(M)$ one of which being compactly supported, we have 
$$
\int (\Lap f) h  = \int_{\d M} \gl{\nablag f}{\nu}h - \int \gl{ \nablag f}{\nablag h} .
$$
Here, the boundary $\d M$ is endowed with the Riemannian metric induced by $g$, and $\int_{\d M}$ is the integral with respect to the associated surface measure (defined as in Formula~\ref{F1}).
The vector field $\nu$ is the normal vector to $\d M$ which is outgoing. It is defined almost everywhere if $\d M$ is piecewise $C^1$.
In a local coordinate chart $(x_1, \cdots , x_n)$ centered at $0$, and in which $\d M \subset \{x_n=0\}$ and $M \subset \{x_n \leq 0\}$, we have $\nu = \sum_{j=1}^n \frac{g^{jn}}{\sqrt{g^{nn}}} \frac{\d}{\d x_j}$.
With the prescribed regularity of the boundary, the space $L^\infty_{\loc}(\d M)$ is defined intrinsically.
 We denote by $\d_\nu f =\gl{\nablag f}{\nu}$ the normal derivative at the boundary, which is only $L^\infty (\d M)$ since $\d M$ is piecewise $C^1$.

Note that in the above coordinate system, we have $\d_\nu f = \sum_{j=1}^n \frac{g^{jn}}{\sqrt{g^{nn}}} \d_{x_j} f$. In particular, if $f$ satisfies Dirichlet boundary conditions, this is $\d_\nu f  = \sqrt{g^{nn}} \d_{x_n} f$.

Note finally the vector field $X -\gl{X}{\nu} \nu$ is tangential to $\d \M$, so that we may decompose a vector field as its normal and tangential parts. In particular, we shall decompose the gradient $\nablag f = \d_\nu f \nu  +\nabla_T f$, where $\nabla_T f|_{\d \M} \in T\d\M$.
\end{formule}

\subsection{The Carleman estimate}

We stress the fact that functions $u\in C^{\infty}(M)$ are smooth up to the boundary of $M$ (as opposed to functions $u\in C^{\infty}(\Int(M))$). We will first estimate the Carleman conjugate operator in Theorem \ref{thmCarlemancalcul} and then give the desired estimate under appropriate assumptions in Theorem \ref{t:Carleman}.
\begin{theorem}
\label{thmCarlemancalcul}
Assume $g$ is a Lipschitz metric on $M$ and $\d M$ is piecewise $C^1$. 
Let $U$ be an open subset of $M$ such that $\ovl{U}$ is compact (in the topology of $M \supset \d M$) and denote $\Sigma = \d M \cap U$.
Then, for any $f \in W^{1,\infty}(U)$,  $\varphi \in W^{2,\infty}(U)$, $u\in H^2_{\comp}(U)$ and $\tau\geq 0$, we have
\begin{align*}
\int \left|e^{\tau \varphi}\Lap (e^{- \tau \varphi}u)\right|^2 
+ R(u) & \geq 
\tau^3\int \left[2 Hess(\varphi)(\nablag \varphi,\nablag \varphi) + (\Lap  \varphi )\gln{\nablag \varphi}- f \gln{\nablag \varphi} \right]|u|^2  \\
&\quad  + \tau \int 2Hess(\varphi)(\nablag u,\nablag u) - (\Lap \varphi )\gln{\nablag u}+ f \gln{ \nablag u} \\
&\quad  + BT(u) , 
\end{align*}
with boundary terms
\begin{align}
\label{e:def-BT}
BT(u)&= - 2\tau \int_{\Sigma}\gl{\nablag u}{\nu}\gl{\nablag\varphi}{\nablag u}+\tau \int_{\Sigma}\gl{\nablag \varphi}{\nu}\gln{\nablag u}\nonumber\\
&\quad -\tau^3\intS \gl{\nablag\varphi}{\nu}|u|^2\gln{\nablag \varphi}+\tau\intS \gl{\nablag u}{\nu} f u
\end{align}
and remainder $R(u)$ satisfying
\begin{align}
\label{e:def-R(u)}
|R(u)| & \leq \left( \nor{f-\Lap \varphi }{L^{\infty}(U)}^{2} + \frac{1}{2} \nor{\nablag f}{L^\infty(U)}\right)  \tau^2 \nor{u}{L^{2}}^{2} 
+  \frac{1}{2} \nor{\nablag f}{L^\infty(U)} \nor{\nablag u}{L^{2}}^{2}  . 
\end{align}
\end{theorem}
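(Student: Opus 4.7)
The plan is to follow H\"ormander's classical strategy for Carleman estimates---split $P_\varphi := e^{\tau\varphi}\Lap\, e^{-\tau\varphi}$ as $P_\varphi^+ + P_\varphi^-$ with $P_\varphi^+$ formally self-adjoint and $P_\varphi^-$ formally skew-adjoint, and extract the cross term in the expansion of $\|P_\varphi^+u+P_\varphi^-u\|^2$---but twisted so that the Lipschitz regularity of $g$, the $W^{2,\infty}$ regularity of $\varphi$, and the $W^{1,\infty}$ regularity of $f$ are never exceeded. First I would compute the conjugated operator by a direct differentiation,
$$P_\varphi u = \Lap u - 2\tau\gl{\nablag\varphi}{\nablag u} + \tau^2 \gln{\nablag\varphi}\,u - \tau(\Lap\varphi)u,$$
and then, rather than the textbook choice that leaves $\Lap\varphi$ inside the skew part (which would later demand $\nabla(\Lap\varphi)$), set
$$P_\varphi^+ u := \Lap u + \tau^2\gln{\nablag\varphi}\,u, \qquad P_\varphi^- u := -2\tau\gl{\nablag\varphi}{\nablag u} - \tau f u,$$
so that $P_\varphi u = P_\varphi^+u + P_\varphi^-u - \tau(\Lap\varphi - f)u$. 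Expanding $\|P_\varphi^+u+P_\varphi^-u\|_{L^2}^2 = \|P_\varphi^+u\|^2+\|P_\varphi^-u\|^2 + 2\Re\langle P_\varphi^+u, P_\varphi^-u\rangle$, dropping the first two nonnegative terms, and using $\|P_\varphi^+u+P_\varphi^-u\|^2 \leq 2\|P_\varphi u\|^2 + 2\tau^2\|\Lap\varphi-f\|_{L^\infty(U)}^2\|u\|_{L^2}^2$ on the other side yields
$$\|P_\varphi u\|_{L^2}^2 + \tau^2\|\Lap\varphi-f\|_{L^\infty(U)}^2\|u\|_{L^2}^2 \geq \Re\langle P_\varphi^+u, P_\varphi^-u\rangle_{L^2},$$
already accounting for the $\|f-\Lap\varphi\|_\infty^2\tau^2$ contribution to $R(u)$.

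The core of the argument is then the development of $\Re\langle P_\varphi^+u, P_\varphi^-u\rangle$ into four integrals via Stokes, driven by two Riemannian calculus identities. Applied to $X=\nablag\varphi$ and $Y=\nablag\bar u$, the product rule $\nablag\gl{X}{Y}=\langle D_\cdot X,Y\rangle+\langle X,D_\cdot Y\rangle$ together with the symmetry of the Hessian converts the piece $-2\tau\Re\int\Lap u\,\gl{\nablag\varphi}{\nablag\bar u}$ into $2\tau\int Hess(\varphi)(\nablag u,\nablag\bar u)-\tau\int(\Lap\varphi)\gln{\nablag u}$ plus the boundary contributions $\tau\int_\Sigma\gl{\nablag\varphi}{\nu}\gln{\nablag u}$ and $-2\tau\Re\int_\Sigma(\partial_\nu u)\gl{\nablag\varphi}{\nablag\bar u}$. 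The same product rule also supplies $\operatorname{div}_g(\gln{\nablag\varphi}\nablag\varphi)=\gln{\nablag\varphi}\Lap\varphi+2Hess(\varphi)(\nablag\varphi,\nablag\varphi)$, which, combined with $2\Re(u\gl{\nablag\varphi}{\nablag\bar u})=\gl{\nablag\varphi}{\nablag|u|^2}$ and one Stokes, turns the $\tau^3$ cross piece together with $-\tau^3\int f\gln{\nablag\varphi}|u|^2$ into the full bulk $\tau^3\int[2Hess(\varphi)(\nablag\varphi,\nablag\varphi)+(\Lap\varphi)\gln{\nablag\varphi}-f\gln{\nablag\varphi}]|u|^2$ plus the boundary term $-\tau^3\int_\Sigma\gl{\nablag\varphi}{\nu}\gln{\nablag\varphi}|u|^2$. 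Finally, one Stokes on $-\tau\Re\int\Lap u\cdot f\bar u$ produces the missing main term $+\tau\int f\gln{\nablag u}$, the remaining boundary contribution, and one unintegrated residual $\tau\Re\int\bar u\gl{\nablag u}{\nablag f}$, which is absorbed directly via Young's inequality $\tau|u||\nablag u|\leq\frac12(\tau^2|u|^2+\gln{\nablag u})$ to give exactly the $\frac12\|\nablag f\|_\infty$ contributions to $R(u)$.

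The main obstacle, and essentially the only subtlety, is a bookkeeping one: every integration by parts must be organized so that neither $\nabla(\Lap\varphi)$ (which would demand $\varphi\in W^{3,\infty}$) nor $\Lap f$ (which would demand $f\in W^{2,\infty}$) is ever invoked. The decision to place $f$ rather than $\Lap\varphi$ inside $P_\varphi^-$ handles the first constraint; leaving the $\gl{\nablag u}{\nablag f}$ residual unintegrated and absorbing it by Young's inequality handles the second. Once these two choices are fixed, the proof reduces to a direct unwinding of the Riemannian calculus formulae recorded just above the statement, tracking signs carefully across the four Stokes integrations.
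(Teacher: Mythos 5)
Your proposal is correct and follows essentially the same route as the paper: the decomposition $P_\varphi = P_\varphi^+ + P_\varphi^- + \tau(f-\Lap\varphi)$ is exactly the paper's $P_\varphi = \widetilde Q_2 + Q_1 + R_2$, the reduction to the cross term via $2\|P_\varphi u\|^2 + 2\|R_2 u\|^2 \geq \|\widetilde Q_2 u + Q_1 u\|^2 \geq 2(Q_1 u,\widetilde Q_2 u)$ is identical, and your four Stokes integrations together with leaving the $\int \bar u\,\gl{\nablag u}{\nablag f}$ residual unintegrated (absorbed by Young) reproduce the paper's Lemma~\ref{l:computeM1M2} and the bound on $R(u)$. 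The only cosmetic discrepancy is the opening claim that $P_\varphi^-$ is formally skew-adjoint; with $f$ placed inside $P_\varphi^-$ it is only a zeroth-order perturbation of the skew part, but since you say so yourself later and never use skew-adjointness, the argument is unaffected.
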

Note that the last term in~\eqref{e:def-BT} is actually of lower order. We keep it here since it vanishes in case of Dirichlet Boundary conditions.

\begin{remark}
It is very important for our purpose to notice that all terms in this identity only involve derivatives of order $0$ or $1$ of the metric. This will be important when we will consider stability issues with respect to Lipschitz perturbations of the metric.
\end{remark}

This identity suggests to introduce and study the following two important quantities, given $X$ a smooth vector field on $M$:
\bna
\B_{g, \varphi, f}(X)  = 2Hess(\varphi)(X,X) - (\Lap \varphi )\gln{X}+f \gln{X} ,\\ 
\E_{g, \varphi, f} = 2 Hess(\varphi)(\nablag \varphi,\nablag \varphi) + (\Lap  \varphi )\gln{\nablag \varphi}-f\gln{\nablag \varphi} . 
\ena
Note that for a Lipschitz metric $g$, we have $\E_{g, \varphi, f} \in L^\infty_{\loc}(M)$ and $\B_{g, \varphi, f}(X) \in  L^\infty_{\loc}(M)$ for any locally bounded vector field $X$.

\begin{remark}
\label{r:f-deltaphi-regularity}
At this level, it would be very tempting to set $F = - \Lap \varphi +f $ and work with the associated simplified expressions of $\B_{g, \varphi, f}(X)$ and $\E_{g, \varphi, f}$. From a conceptual point of view, this is completely fine, see Remark~\ref{r:conceptual-remark} below.
However, since we consider the limiting Lipschitz regularity of the metric, this change of additional function is not admissible. Indeed, the remainder term $R(u)$ in Theorem~\ref{thmCarlemancalcul} requires the regularity $\nablag f \in L^\infty$ and $f= F+ \Lap \varphi$ is already in $L^\infty$ and consumes one derivative of the metric $g$. Having $\nablag f \in L^\infty$ would then require $g$ to be $W^{2,\infty}$.
\end{remark}

We define $\nor{w}{L^2}^2 = \int |w|^2$ (see Formula~\ref{F1} for the notation $\int$) for a function $w$ and $\nor{X}{L^2}^2 = \int \gln{X}$ for a vector field $X$.

We can now state the Carleman estimate.
\begin{theorem}
\label{t:Carleman}
Let $U$ be an open subset of $M$ such that $\ovl{U}$ is compact (in the topology of $M \supset \d M$) and denote $\Sigma = \d M \cap U$.
Assume that the functions $(\varphi, f)$ satisfy: $f \in W^{1,\infty}(U)$,  $\varphi \in W^{2,\infty}(U)$, $\gln{\nablag \varphi} >0$ on $\ovl{U}$, and there exists $C_0>0$ such that for any vector field $X$, we have almost everywhere on $U$:
\begin{align}
\label{e:asspt-Bgeq}
\B_{g, \varphi, f}(X) &\geq 2 C_0 \gln{X}, \\
\label{e:asspt-Egeq}
\E_{g, \varphi, f}  & \geq 2 C_0  \gln{\nablag \varphi} .
\end{align}
Then, denoting $c(\varphi) = \min \left\{ 1, \left(\min_{\ovl{U}} \gln{\nablag \varphi} \right)^{-1}\right\}$, we have the following statements.
\begin{enumerate}
\item For all $\tau \geq \frac{c(\varphi)}{C_0} \left( \nor{f-\Lap \varphi }{L^{\infty}(U)}^{2} + \frac{1}{2} \nor{\nablag f}{L^\infty(U)}\right)$ and all $v \in C^\infty_c(U)$ we have the estimate
\begin{multline}
\label{e:thm-Carleman-brutal}
\frac{C_0}{3} \left( \tau^3 \nor{e^{\tau \varphi}v \nablag \varphi}{L^{2}(U)}^{2}+ \tau\nor{e^{\tau \varphi}\nabla_{g} v}{L^{2}(U)}^{2}\right) \\
\leq \nor{e^{\tau \varphi}\Lap v}{L^2(U)}^2 + \tau \left( \nor{ e^{\tau \varphi}  \nablag v}{L^2(\Sigma)}^2 +  \tau^2 \nor{e^{\tau \varphi} v \nablag \varphi }{L^2(\Sigma)}^2 \right) K_{f,\varphi} ,
\end{multline}
with $K_{f,\varphi} = 3\left( \frac{c(\varphi)}{\tau} \nor{f}{L^\infty(\Sigma)} + 3 \nor{\nablag \varphi}{L^\infty(\Sigma)} \right)$.

\item For all $\tau \geq \frac{c(\varphi)}{C_0} \left( \nor{f-\Lap \varphi }{L^{\infty}(U)}^{2} + \frac{1}{2} \nor{\nablag f}{L^\infty(U)}\right)$ and all $v \in C^\infty_c(U)$ such that $v=0$ on $\Sigma$, we have
\begin{align}
\label{e:thm-Carleman-case-dirichlet}
\frac{C_0}{3} \left( \tau^3 \nor{e^{\tau \varphi}v \nablag \varphi}{L^{2}(U)}^{2}+ \tau\nor{e^{\tau \varphi}\nabla_{g} v}{L^{2}(U)}^{2}\right)
\leq \nor{e^{\tau \varphi}\Lap v}{L^2(U)}^2 + \tau \int_{\Sigma} e^{2\tau \varphi}  \d_\nu \varphi  |\d_\nu v|^2 . 
\end{align}

\item If $\varphi|_\Sigma$ is constant and $- m(\varphi) :=\max_{\Sigma}\d_\nu\varphi < 0$, then setting  $M(\varphi) := \max_\Sigma(-\d_\nu\varphi)>0$, we have for all $\tau \geq  \max \left\{ \frac{c(\varphi)}{C_0} \left( \nor{f-\Lap \varphi }{L^{\infty}(U)}^{2} + \frac{1}{2} \nor{\nablag f}{L^\infty(U)}\right), \frac{\sqrt{\nor{f}{L^\infty(\Sigma)}}}{m(\varphi)} \right\}$ and all $v \in C^\infty_c(U)$, 
\begin{align}
\label{e:Carleman-Jerome}
\nor{e^{\tau \varphi}\Lap v}{L^2}^2 
+ M(\varphi) \tau \int_{\Sigma} e^{2\tau \varphi}\gln{\nabla_T v} 
 & \geq \frac{C_0}{3} \left( \tau^3 \nor{e^{\tau \varphi}v \nablag \varphi}{L^{2}(U)}^{2}+ \tau\nor{e^{\tau \varphi}\nabla_{g} v}{L^{2}(U)}^{2}\right)  \nonumber \\
 & \quad  +  \frac{\tau}{8} \frac{m(\varphi)^3}{M(\varphi)^2} \intS e^{2\tau \varphi}|\d_\nu v |^2 
    + \tau^3\frac{m(\varphi)^3}{4} \intS |v|^2 .
\end{align}
\end{enumerate}
\end{theorem}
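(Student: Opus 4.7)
The plan is to feed the pointwise identity of Theorem \ref{thmCarlemancalcul} into the desired framework by substituting $u = e^{\tau\varphi} v$ for $v\in C^\infty_c(U)$. With this substitution $e^{\tau\varphi}\Lap(e^{-\tau\varphi}u) = e^{\tau\varphi}\Lap v$, so the left-hand sides match. Using assumptions \eqref{e:asspt-Bgeq}-\eqref{e:asspt-Egeq} I would first lower-bound the main terms on the right as $\tau^3\int\E|u|^2 \geq 2C_0\tau^3 \|e^{\tau\varphi} v \nablag\varphi\|^2$ and $\tau\int \B(\nablag u) \geq 2C_0\tau \|\nablag u\|^2$, and then translate the gradient norm via the key identity coming from $\nablag u = e^{\tau\varphi}(\tau v\nablag\varphi + \nablag v)$ together with integration by parts on the cross term $\tau \int e^{2\tau\varphi}\langle\nablag\varphi,\nablag(v^2)\rangle$ (using $\div_g(e^{2\tau\varphi}\nablag\varphi) = 2\tau e^{2\tau\varphi}\gln{\nablag\varphi} + e^{2\tau\varphi}\Lap\varphi$). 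This yields
\bna
\|\nablag u\|^2 = \|e^{\tau\varphi}\nablag v\|^2 - \tau^2 \|e^{\tau\varphi} v\nablag\varphi\|^2 - \tau\int e^{2\tau\varphi}v^2 \Lap\varphi + \tau\int_\Sigma e^{2\tau\varphi} v^2 \d_\nu\varphi.
\ena

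The crucial observation is that multiplying this by $2C_0\tau$ and adding $2C_0\tau^3\|u\nablag\varphi\|^2$ produces an exact cancellation of the $\tau^3$ term, so one cannot use the full strength of \eqref{e:asspt-Bgeq}. The trick is to split $2C_0\tau\|\nablag u\|^2 = \tfrac{4C_0\tau}{3}\|\nablag u\|^2 + \tfrac{2C_0\tau}{3}\|\nablag u\|^2$ and apply the identity only to the first piece. The partial cancellation then keeps $\tfrac{1}{3}$ of the $\tau^3$ coefficient, and I obtain
\bna
\|e^{\tau\varphi}\Lap v\|^2 + R(u) - BT(u) \geq \tfrac{2C_0}{3}\bigl(\tau^3 \|e^{\tau\varphi} v\nablag\varphi\|^2 + \tau\|e^{\tau\varphi}\nablag v\|^2\bigr) + \tfrac{2C_0\tau}{3}\|\nablag u\|^2 + \textnormal{(interior l.o.t.)} + \textnormal{(bdry l.o.t.)},
\ena
where the interior lower-order terms are $\tfrac{4C_0\tau^2}{3}\int e^{2\tau\varphi} v^2 \Lap\varphi$. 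Next, I use the reservoir $\tfrac{2C_0\tau}{3}\|\nablag u\|^2$ to absorb the $\tfrac{1}{2}\|\nablag f\|_\infty \|\nablag u\|^2$ piece of $R(u)$, while the remaining $\tau^2\|u\|^2$ parts of $R(u)$ and the $\Lap\varphi$ correction are bounded by $(\min_{\ovl U}\gln{\nablag\varphi})^{-1}\|e^{\tau\varphi}v\nablag\varphi\|^2$ and hence absorbed into $\tfrac{C_0\tau^3}{3}\|e^{\tau\varphi} v \nablag\varphi\|^2$ precisely when $\tau$ crosses the threshold stated in the theorem. The factor $\tfrac{2C_0}{3}$ then drops to $\tfrac{C_0}{3}$, yielding the conclusion's interior part.

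The three boundary regimes are then treated separately. For Part 1, all surface integrals in $BT(u)$ are expanded using $\d_\nu u = e^{\tau\varphi}(\tau v\d_\nu\varphi + \d_\nu v)$ and $\nablag u|_\Sigma = e^{\tau\varphi}(\tau v\nablag\varphi + \nablag v)$, then bounded crudely in terms of $\|e^{\tau\varphi}\nablag v\|_{L^2(\Sigma)}^2$ and $\|e^{\tau\varphi} v \nablag\varphi\|_{L^2(\Sigma)}^2$, giving the explicit constant $K_{f,\varphi}$. For Part 2 (Dirichlet), $v|_\Sigma = 0$ forces $u|_\Sigma=0$ and reduces $\nablag u|_\Sigma$ to the purely normal $e^{\tau\varphi}\d_\nu v\,\nu$, so all but one term in $BT(u)$ vanish; the IBP boundary contribution $\tau\int_\Sigma e^{2\tau\varphi}v^2\d_\nu\varphi$ vanishes too, leaving exactly $-\tau\int_\Sigma e^{2\tau\varphi}\d_\nu\varphi|\d_\nu v|^2$. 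Part 3 is the most delicate: constancy of $\varphi|_\Sigma$ gives $\nablag\varphi|_\Sigma = (\d_\nu\varphi)\nu$ and $\nabla_T u|_\Sigma = e^{\tau\varphi}\nabla_T v$, so after expansion $BT(u)$ decomposes into a favorable term $-2\tau^3\int_\Sigma e^{2\tau\varphi} v^2 (\d_\nu\varphi)^3$ (positive, $\gtrsim \tau^3 m(\varphi)^3 \int_\Sigma v^2$), a favorable $-\tau\int_\Sigma e^{2\tau\varphi}\d_\nu\varphi |\d_\nu v|^2$ to be Young-balanced with cross terms $\propto \tau^2 \int_\Sigma v\d_\nu\varphi\d_\nu v$ and the $\tau\int_\Sigma \d_\nu u\,fu$ term (the assumption $\tau \geq \sqrt{\|f\|_{L^\infty(\Sigma)}}/m(\varphi)$ is used here), and an unfavorable $\tau\int_\Sigma \d_\nu\varphi |\nabla_T u|^2$ that, being bounded by $-m(\varphi)\tau\int_\Sigma e^{2\tau\varphi}|\nabla_T v|^2$, must be moved to the left as $M(\varphi)\tau\int_\Sigma e^{2\tau\varphi}\gln{\nabla_T v}$.

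The principal obstacle is the bookkeeping in Part 3, where one must choose Young weights compatible with the final ratios $m(\varphi)^3/M(\varphi)^2$ and $m(\varphi)^3/4$ appearing in \eqref{e:Carleman-Jerome}, and simultaneously check that the threshold on $\tau$ suffices to dominate both the interior absorption and the boundary cross terms. The other obstacle is to verify that the $c(\varphi)$-dependence in the threshold on $\tau$ actually matches; this requires tracking whether $\|u\|^2$ or $\|\nablag u\|^2$ is the dominant carrier of the lower-order $R(u)$ and $\Lap\varphi$ contributions.
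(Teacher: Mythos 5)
Your route differs from the paper's in a way that creates a genuine problem. The paper never performs the integration by parts you call the ``key identity.'' It keeps $\|\nablag u\|^2$ untouched, observes that $e^{\tau\varphi}\nablag v = \nablag u - \tau u\nablag\varphi$ gives the elementary inequality $\|e^{\tau\varphi}\nablag v\|^2 \leq 2\|\nablag u\|^2 + 2\tau^2\|u\nablag\varphi\|^2 = 2\|u\|^2_{H^1_\tau}$, and combines this with $\tau^2\|u\nablag\varphi\|^2 \leq \|u\|^2_{H^1_\tau}$ to get $\tau^3\|e^{\tau\varphi}v\nablag\varphi\|^2 + \tau\|e^{\tau\varphi}\nablag v\|^2 \leq 3\tau\|u\|^2_{H^1_\tau}$ directly; that is the source of the factor $\tfrac{1}{3}$, and no new terms appear. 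The ``exact cancellation'' you see is an artifact of your own chosen IBP, not an obstacle in the problem, and your splitting trick is unnecessary.

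The real gap is that the IBP route does not deliver the theorem as stated. Your identity for $\|\nablag u\|^2$ introduces the interior term $-\tau\int e^{2\tau\varphi}v^2\Lap\varphi$; after you multiply by $\tfrac{4C_0\tau}{3}$ and try to absorb it into $\tfrac{C_0\tau^3}{3}\|e^{\tau\varphi}v\nablag\varphi\|^2$, you are forced to require $\tau \gtrsim \nor{\Lap\varphi}{L^\infty}/\min_{\ovl{U}}\gln{\nablag\varphi}$. That quantity does \emph{not} appear in the theorem's threshold, which depends on $\nor{f-\Lap\varphi}{L^\infty}^2$ and $\nor{\nablag f}{L^\infty}$ only; the whole point of carrying $f-\Lap\varphi$ rather than $\Lap\varphi$ (cf.\ Remark~\ref{r:f-deltaphi-regularity}) is to avoid exactly such a term under Lipschitz regularity of $g$. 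So your claim that absorption happens ``precisely when $\tau$ crosses the threshold stated in the theorem'' is false: your threshold is structurally different, and no choice of splitting ratio removes the $\nor{\Lap\varphi}{L^\infty}$ dependence. In addition, your IBP produces the boundary term $\tau\int_\Sigma e^{2\tau\varphi}v^2\d_\nu\varphi$, which you correctly note vanishes in Part~2, but leave untracked in Parts~1 and~3 (in Part~3 it is favorable since $\d_\nu\varphi<0$, but it still must appear in the bookkeeping; in Part~1 it contributes a genuine additional surface integral).
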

\begin{remark}
\label{r:splitting-boundary}
In the last two statements of this result, we assume boundary conditions (either for $v$ or for $\varphi$) on the whole boundary $\Sigma$. Since the integrals involved are local, we could also assume different conditions on parts of the boundary, obtaining the associated terms in the estimates.
\end{remark}

For simplicity, in the proof, we shall denote by
$$
 \nor{u}{H^1_\tau}^2 = \tau^2 \nor{u\nablag \varphi}{L^2}^2 + \nor{\nablag u}{L^2}^2  
$$
the semiclassical norm (recall that $\gln{\nablag \varphi} >0$ here).

\bnp
We first let $v=e^{-\tau \varphi}u$, and apply the estimate of Theorem \ref{thmCarlemancalcul}. The latter, together with our assumption~\eqref{e:asspt-Bgeq}-\eqref{e:asspt-Egeq} (applied almost everywhere in $M$ to $X= \nablag u$) implies for all $\tau \geq 0$ and $u \in C^\infty_c(U)$
\begin{align*}
\nor{e^{\tau \varphi}\Lap (e^{- \tau \varphi}u)}{L^2}^2
+ R(u) & \geq 
2 C_0 \tau^3\nor{u \nablag \varphi}{L^2}^2 + 2 C_0 \tau \nor{\nablag u}{L^2}^2+ BT(u)  \\
&= 2C_0 \tau  \nor{u}{H^1_\tau}^2+ BT(u)  ,
\end{align*}
where $BT(u)$ is defined in~\eqref{e:def-BT} and $R(u)$ estimated in \eqref{e:def-R(u)}. Now, we have 
\begin{align*}
|R(u)| & \leq  c(\varphi) \left( \nor{f-\Lap \varphi }{L^{\infty}}^{2} + \frac{1}{2} \nor{\nablag f}{L^\infty}\right)  \nor{u}{H^1_\tau}^2,
\end{align*}
which implies that if $\tau C_0 \geq c(\varphi) \left( \nor{f-\Lap \varphi }{L^{\infty}}^{2} + \frac{1}{2} \nor{\nablag f}{L^\infty}\right)$, we obtain 
\begin{align}
\label{e:intermediate-with-BT}
\nor{e^{\tau \varphi}\Lap (e^{- \tau \varphi}u)}{L^2}^2 
  \geq 
   C_0 \tau  \nor{u}{H^1_\tau}^2+ BT(u)  .
\end{align}
We now consider the boundary terms. Without any assumption on the boundary, we have
\begin{align*}
|BT(u)|&\leq  3 \tau \nor{\nablag \varphi}{L^\infty(\Sigma)} \left( \nor{\nablag u}{L^2(\Sigma)}^2 + \tau^2\nor{u \nablag \varphi}{L^2(\Sigma)}^2 \right) \\
& \quad + \frac{1}{2} \nor{f}{L^\infty(\Sigma)} \left( \nor{\d_\nu u}{L^{2}(\Sigma)}^{2} + \tau^2 \nor{u}{L^{2}(\Sigma)}^{2} \right) , 
\end{align*}
and hence obtain in this case 
\begin{align*} 
  C_0 \tau  \nor{u}{H^1_\tau}^2  &\leq \nor{e^{\tau \varphi}\Lap (e^{- \tau \varphi}u)}{L^2}^2 \\
 &  + \left( c(\varphi) \nor{f}{L^\infty(\Sigma)} + 3 \tau \nor{\nablag \varphi}{L^\infty(\Sigma)} \right)
\left( \nor{\nablag u}{L^2(\Sigma)}^2 + \tau^2\nor{u \nablag \varphi}{L^2(\Sigma)}^2 \right) .
\end{align*}
Recalling that $u= e^{\tau \varphi}v$, this implies $\nablag u = e^{\tau \varphi} \nablag v + \tau u \nablag \varphi $, and hence
\begin{align}
\label{e:conj-phi-uv}
 \nor{e^{\tau \varphi} \nablag v}{L^2}^2 & \leq 2 \nor{\nablag u}{L^2}^2 + 2 \tau^2 \nor{u \nablag \varphi }{L^2}^2 = 2  \nor{u}{H^1_\tau}^2 , \\
 \nor{\nablag u}{L^2}^2 & \leq 2 \nor{ e^{\tau \varphi}  \nablag v}{L^2}^2 + 2 \tau^2 \nor{e^{\tau \varphi} v \nablag \varphi }{L^2}^2  \nonumber.
\end{align}

The last four estimates imply 
\begin{align*}
\frac{C_0}{3} \left( \tau^3 \nor{e^{\tau \varphi}v}{L^{2}}^{2}+ \tau\nor{e^{\tau \varphi}\nabla_{g} v}{L^{2}}^{2}\right)&  \leq C_0 \tau \nor{u}{H^1_\tau}^2  \\
& \leq \nor{e^{\tau \varphi}\Lap v}{L^2}^2 \\
 &  + 3\left( c(\varphi) \nor{f}{L^\infty(\Sigma)} + 3 \tau \nor{\nablag \varphi}{L^\infty(\Sigma)} \right) \\
& \times \left( \nor{ e^{\tau \varphi}  \nablag v}{L^2}^2 +  \tau^2 \nor{e^{\tau \varphi} v \nablag \varphi }{L^2}^2 \right),
\end{align*}
and hence~\eqref{e:thm-Carleman-brutal}.

Second, we assume the Dirichlet boundary condition $v|_{\Sigma}= 0$. This implies $u|_{\Sigma}= 0$ and $\nablag u|_{\Sigma} = \d_\nu u|_{\Sigma} \d_\nu$, so that we obtain
\begin{align*}
BT(u)&= - \tau \intS \d_\nu \varphi |\d_\nu u|^2 = - \tau \intS\d_\nu \varphi e^{2\tau \varphi}|\d_\nu v|^2 .
\end{align*}
Estimate~\eqref{e:intermediate-with-BT} then reads 
\begin{align*}
\nor{e^{\tau \varphi}\Lap (e^{- \tau \varphi}u)}{L^2}^2 +\intS \d_\nu \varphi e^{2\tau \varphi}|\d_\nu v|^2  \geq 
   C_0 \tau  \nor{u}{H^1_\tau}^2  .
\end{align*}
Using again~\eqref{e:conj-phi-uv} to come back to the variable $v$ yields~\eqref{e:thm-Carleman-case-dirichlet}.

Finally, we consider the case where $\varphi|_{\Sigma}$ is constant and $\d_\nu \varphi \leq -m(\varphi)<0$, in which case we obtain from~\eqref{e:def-BT}:
\begin{align*}
BT(u)&= - 2\tau \int_{\Sigma} \d_\nu \varphi |\d_\nu u|^2 +\tau \int_{\Sigma} \d_\nu \varphi \gln{\nablag u} 
 -\tau^3\intS (\d_\nu \varphi)^3  |u|^2 +\tau\intS \d_\nu u   f u \\
 &= - \tau \int_{\Sigma} \d_\nu \varphi |\d_\nu u|^2 + \tau \int_{\Sigma} \d_\nu \varphi \gln{\nabla_T u} 
 -\tau^3\intS (\d_\nu \varphi)^3  |u|^2 +\tau\intS \d_\nu u   f u .
\end{align*}
Estimate~\eqref{e:intermediate-with-BT} then reads 
\begin{align*}
\nor{e^{\tau \varphi}\Lap (e^{- \tau \varphi}u)}{L^2}^2 
+ \tau \int_{\Sigma} \d_\nu \varphi |\d_\nu u|^2 - \tau \int_{\Sigma} \d_\nu \varphi \gln{\nabla_T u} 
 + \tau^3\intS (\d_\nu \varphi)^3  |u|^2 - \tau\intS \d_\nu u   f u
  \geq 
   C_0 \tau  \nor{u}{H^1_\tau}^2  ,
\end{align*}
and hence, using $-M(\varphi)\leq \d_\nu \varphi \leq -m(\varphi)<0$,
\begin{align*}
\nor{e^{\tau \varphi}\Lap (e^{- \tau \varphi}u)}{L^2}^2 
+ M(\varphi) \tau \int_{\Sigma}\gln{\nabla_T u} 
- \tau\intS \d_\nu u   f u
  \geq 
   C_0 \tau  \nor{u}{H^1_\tau}^2  
   + m(\varphi) \tau \int_{\Sigma} |\d_\nu u|^2 
    + \tau^3m(\varphi)^3 \intS  |u|^2 .
\end{align*}
Now, we estimate
$$
\left| \intS \d_\nu u   f u \right| \leq \nor{f}{L^\infty(\Sigma)} \nor{u}{L^2(\Sigma)} \nor{\d_\nu u}{L^2(\Sigma)} 
\leq \frac{\nor{f}{L^\infty(\Sigma)} }{2 m(\varphi)\tau}\nor{\d_\nu u}{L^2(\Sigma)}^2
+ \frac{\nor{f}{L^\infty(\Sigma)} m(\varphi)\tau}{2} \nor{u}{L^2(\Sigma)}^2 ,
$$
so that, for $\tau \geq \frac{\sqrt{\nor{f}{L^\infty(\Sigma)}}}{m(\varphi)}$ this term is absorbed in the right handside, and we obtain
\begin{align*}
\nor{e^{\tau \varphi}\Lap (e^{- \tau \varphi}u)}{L^2}^2 
+ M(\varphi) \tau \int_{\Sigma}\gln{\nabla_T u} 
  \geq 
   C_0 \tau  \nor{u}{H^1_\tau}^2  
   + \frac{m(\varphi)}{2} \tau \int_{\Sigma} |\d_\nu u|^2 
    + \tau^3\frac{m(\varphi)^3}{2} \intS  |u|^2 .
\end{align*}
Recalling that $u= e^{\tau \varphi}v$ and $\nabla_T \varphi|_{\Sigma}=0$, this implies $\nabla_T u= e^{\tau \varphi} \nabla_Tv$ and $e^{2\tau \varphi}|\d_\nu v |^2 \leq 2 |\d_\nu u|^2 + 2\tau^2 |\d_\nu\varphi |^2 |u|^2$, hence
$$
\frac{1}{4} \frac{m(\varphi)^2}{M(\varphi)^2} \intS e^{2\tau \varphi}|\d_\nu v |^2 \leq \frac12 \int_{\Sigma} |\d_\nu u|^2  + \tau^2\frac{m(\varphi)^2}{2} \intS  |u|^2 .
$$
Finally using again~\eqref{e:conj-phi-uv} with the las two estimates implies~\eqref{e:Carleman-Jerome}, which concludes the proof of the theorem.

\enp

\bnp[Proof of Theorem \ref{thmCarlemancalcul}]
The statement of the theorem is a lower bound for the $L^2$ norm of the quantity $e^{\tau\varphi} \Lap (e^{-\tau\varphi}u)$, which we may compute as 
\bna
P_{\varphi} u := e^{\tau\varphi} \Lap (e^{-\tau\varphi}u) = \Lap u - 2\tau\gl{\nablag \varphi}{\nablag u}- \tau(\Lap \varphi) u+\tau^2\gln{\nablag \varphi}u .
\ena
We then decompose the conjugated operator $P_{\varphi}$ as 
$$
P_{\varphi}  = Q_2 + Q_1 
$$
with 
\bna
&&Q_1u:= - 2\tau\gl{\nablag \varphi }{\nablag u} - \tau f  u\\
&&Q_2u:=\Lap u+\tau^2\gln{\nablag \varphi}u - \tau(\Lap \varphi) u + \tau f u=\widetilde{Q_2}u+R_{2}u
\ena
where $\widetilde{Q_2}$ is the principal part of $Q_2$, that is
$$
\widetilde{Q_2}u=\Lap u+\tau^2\gln{\nablag \varphi}u , \quad \text{and} \quad  R_2 u =  \tau(- \Lap \varphi + f ) u .
$$
Now, we write ($\left\| \cdot \right\|$ denotes the $L^2$ norm for short)
\begin{align*}
2\left\|P_{\varphi}  u\right\|^2+2\left\|R_{2}u\right\|^2 & \geq \left\|P_{\varphi} u-R_{2}u\right\|^2  = \left\|Q_1u+\widetilde{Q_2}u\right\|^2, 
\end{align*}
where we estimate the remainder as 
\bnan
\label{e:estimR2}
\left\|R_{2}u\right\|^2\leq \tau^{2}\nor{f-\Lap \varphi }{L^{\infty}}^{2}  \nor{u}{L^{2}}^{2} .
\enan
Hence, we are left to produce a lower bound for
$$
\left\|Q_1u+\widetilde{Q_2}u\right\|^2=  \left\|Q_1u\right\|^2+\left\|\widetilde{Q_2}u\right\|^2+2\Re (Q_1u,\widetilde{Q_2}u).
$$
Now, remark that the all differential operators $P_{\varphi} , Q_1 , \widetilde{Q_2}$ involved have real coefficients. Hence, if we consider complex valued functions $u = u_R +i u_I$, we have $\|P_{\varphi} u\|^2= \|P_{\varphi} u_R \|^2 + \|P_{\varphi} u_I\|^2, \|u\|^2= \|u_R \|^2 + \|u_I\|^2$ so that proving $\|P_{\varphi} u\|^2 \geq c_0 \|u \|^2$ for real valued functions $u$ implies the same inequality for complex valued ones.
As a consequence, we only prove the result for a real valued function $u$, and associated real inner product. We now provide an explicit computation for $(Q_1u,\widetilde{Q_2}u)$, which is the key step in the proof.
\begin{lemma}
\label{l:computeM1M2}
For all functions $\varphi \in W^{2,\infty}_{\loc} (M)$, $f \in W^{1,\infty}_{\loc} (M)$ and $u \in H^2_{\comp} (M)$, we have  
\begin{align*}
(Q_1u,\widetilde{Q_2}u)
&= \tau^3\int \left(2 Hess(\varphi)(\nablag \varphi,\nablag \varphi) + (\Lap  \varphi )\gln{\nablag \varphi} - f\gln{\nablag \varphi} \right) |u|^2 \\
& \quad + \tau \int 2Hess(\varphi)(\nablag u,\nablag u) - (\Lap \varphi )\gln{\nablag u}+f\gln{ \nablag u} \\
& \quad + \tau \int u \gl{ \nablag u}{\nablag f}+ Boundary~terms , 
\end{align*}
with 
\begin{align*}
Boundary~terms&= - 2\tau \intS \gl{\nablag u}{ \nu}\gl{\nablag \varphi}{\nablag u} + \tau \intS \gl{\nablag \varphi}{ \nu}\gln{\nablag u}\\
& \quad - \tau^3\intS \gl{\nablag \varphi}{\nu}|u|^2\gln{\nablag \varphi}
- \tau \intS \gl{\nablag u}{\nu} f u .
\end{align*}
\end{lemma}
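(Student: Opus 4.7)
The plan is to expand the inner product $(Q_1 u, \widetilde{Q_2} u)$ term-by-term and integrate by parts in each piece, carefully tracking boundary contributions. By distributivity, we obtain four integrals to handle:
\begin{align*}
(Q_1u,\widetilde{Q_2}u) &= \underbrace{-2\tau\int \gl{\nablag\varphi}{\nablag u}\Lap u}_{\text{(a)}} \underbrace{-2\tau^3\int u|\nablag\varphi|_g^2\gl{\nablag\varphi}{\nablag u}}_{\text{(b)}}\\
&\quad \underbrace{-\tau\int fu\Lap u}_{\text{(c)}} \underbrace{-\tau^3\int f u^2 |\nablag\varphi|_g^2}_{\text{(d)}}.
\end{align*}
Term (d) already matches the stated formula exactly. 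For (c), the Riemannian Stokes formula gives
$-\tau\int fu\Lap u = -\tau\int_{\Sigma}\gl{\nablag u}{\nu}fu+\tau\int f|\nablag u|_g^2 + \tau\int u\gl{\nablag u}{\nablag f},$
which produces the $f|\nablag u|_g^2$ contribution, the residual first-order-in-$f$ remainder, and one of the boundary terms.

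The main work is in (a), where I will first apply Stokes to $\Lap u = \div_g(\nablag u)$ to get
$-2\tau\int\gl{\nablag\varphi}{\nablag u}\Lap u = -2\tau\int_{\Sigma}\gl{\nablag u}{\nu}\gl{\nablag\varphi}{\nablag u} + 2\tau\int\gl{\nablag\gl{\nablag\varphi}{\nablag u}}{\nablag u}.$
Using the Leibniz identity for the Levi-Civita connection together with the symmetry of $Hess(u)$, the bulk integrand splits as
$\gl{\nablag\gl{\nablag\varphi}{\nablag u}}{\nablag u} = Hess(\varphi)(\nablag u,\nablag u) + Hess(u)(\nablag\varphi,\nablag u),$
and the second term equals $\tfrac12 \gl{\nablag\varphi}{\nablag(|\nablag u|_g^2)}$. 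A second integration by parts on this last expression contributes $-\tfrac12 \Lap\varphi \, |\nablag u|_g^2$ plus a boundary term $\tfrac12\gl{\nablag\varphi}{\nu}|\nablag u|_g^2$. Multiplying by $2\tau$ and collecting produces the desired $\tau$-order bulk terms $2Hess(\varphi)(\nablag u,\nablag u)-(\Lap\varphi)|\nablag u|_g^2$ and the two boundary terms in the statement. Term (b) is handled similarly by writing $u\gl{\nablag\varphi}{\nablag u}=\tfrac12\gl{\nablag\varphi}{\nablag(u^2)}$ and integrating by parts: using $\div_g(|\nablag\varphi|_g^2\nablag\varphi)=2Hess(\varphi)(\nablag\varphi,\nablag\varphi)+|\nablag\varphi|_g^2\Lap\varphi$ (the first piece coming from $X|\nablag\varphi|_g^2 = 2Hess(\varphi)(X,\nablag\varphi)$), one recovers precisely the $\tau^3$ bulk terms and the remaining boundary term.

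The only subtlety is regularity: $\varphi\in W^{2,\infty}$ and $f\in W^{1,\infty}$ ensure that every quantity appearing above ($Hess(\varphi)$, $\Lap\varphi$, $\nablag f$) lies in $L^\infty$, which is exactly the order of derivatives allowed by a Lipschitz metric (cf.\ Formula~\ref{F1} and Remark~\ref{r:f-deltaphi-regularity}). All integrations by parts rely on the Riemannian Stokes formula, which was stated for $H^2 \times H^1$ pairings and thus applies directly; to justify the identity $Y|\nablag\varphi|_g^2 = 2Hess(\varphi)(Y,\nablag\varphi)$ when $\varphi$ is only $W^{2,\infty}$, I will argue by density, approximating $\varphi$ by smooth functions, applying the identity for the smooth approximation, and passing to the limit in $L^\infty_{\loc}$ against the $L^1$ test densities provided by $u^2$, $|\nablag u|_g^2$ and $|u||\nablag u|_g$. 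The expected main obstacle is bookkeeping: ensuring that the boundary contributions from (a), (b), (c) assemble exactly as in the stated formula without sign errors, and that the interior contribution coming from $Hess(u)(\nablag\varphi,\nablag u)$ is correctly reinterpreted as the divergence expression above rather than mishandled as a genuine second-derivative-of-$u$ term, which would be incompatible with only controlling $u\in H^2$ against $\varphi\in W^{2,\infty}$.
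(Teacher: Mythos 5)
Your proposal is correct and reproduces the paper's own proof essentially verbatim: the decomposition (a)--(d) is the paper's $-\tau(2J + 2\tau^2 K + L)$ with $L$ merely split one step further into its $\Lap u$ and $\tau^2\gln{\nablag\varphi}$ pieces, and the key moves (Riemannian Stokes, the Levi--Civita product rule exposing $Hess(\varphi)$ and $Hess(u)$, and the identity $\gl{\nablag\varphi}{\nablag\gln{\nablag u}} = 2Hess(u)(\nablag\varphi,\nablag u)$, used again in the $\tau^3$ term as $\gl{\nablag\varphi}{\nablag\gln{\nablag\varphi}} = 2Hess(\varphi)(\nablag\varphi,\nablag\varphi)$) are the same. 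The density digression you add for the Hessian identity is unnecessary --- the pointwise identity $Y\gln{\nablag\varphi}=2Hess(\varphi)(Y,\nablag\varphi)$ holds a.e.\ directly from the Levi--Civita calculus rules the paper records for Lipschitz metrics --- but it does no harm.
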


To conclude the proof of the theorem, we now simply write
\begin{equation}
\label{e:MRgeqM1M2}
2\left\|P_{\varphi} u\right\|^2+2\left\|R_{2}u\right\|^2 \geq \left\|Q_1u+\widetilde{Q_2}u\right\|^2 \geq 2 (Q_1u,\widetilde{Q_2}u) .
\end{equation}
In the estimates of Lemma~\ref{l:computeM1M2}, the remainder term is 
\begin{align*}
R_3 (u)  = - \tau \int u \gl{ \nablag u}{\nablag f}, \qquad 
|R_3 (u)|  \leq \frac{\nor{\nablag f}{L^{\infty}}}{2}\left(\nor{\nablag u}{L^{2}}^{2}+\tau^{2}\nor{u}{L^{2}}^{2}\right) ,
\end{align*}
which, combined with~\eqref{e:MRgeqM1M2}, \eqref{e:estimR2} and Lemma~\ref{l:computeM1M2}, concludes the proof of the Theorem with
\bna
R(u)=\left\|R_{2}u\right\|^2 + |R_3 (u)| .
\ena
\enp

\bnp[Proof of Lemma~\ref{l:computeM1M2}]
We have 
\begin{align}
\label{e:defM1M2JKL}
 (Q_1u,\widetilde{Q_2}u)=\int (- 2 \tau\gl{\nablag \varphi}{\nablag u}  -  \tau f u)(\Lap u +\tau^2\gln{\nablag \varphi}u)
 = - \tau( 2 J + 2 \tau^{2}K + L) , 
\end{align}
with
\begin{align*}
J = \int \gl{\nablag \varphi}{\nablag u}  \Lap u , \\ 
K = \int \gl{\nablag \varphi}{\nablag u}  \gln{\nablag \varphi}u ,  \\
L = \int f u (\Lap u +\tau^2\gln{\nablag \varphi}u) .
\end{align*}
We now perform one (and only one, which is the more we can do with the Lipschitz regularity of $g$) integration by parts in each of these integrals. Firstly, we compute $J$ as 
\bna 
J=\int_{\Sigma}\gl{\nablag u}{\nu}\gl{\nablag \varphi}{\nablag u}-\int \gl{\nablag u}{\nablag (\gl{\nablag \varphi}{\nablag u})} .
\ena
But, we also have
\begin{align*}
\gl{\nablag u}{\nablag (\gl{\nablag \varphi}{\nablag u})}& =D_{\nablag u}(\gl{\nablag \varphi}{\nablag u})\\
& =\gl{D_{\nablag u}\nablag \varphi}{\nablag u}+\gl{\nablag \varphi}{D_{\nablag u}\nablag u} \\
& = Hess(\varphi)(\nablag u,\nablag u) + Hess(u)(\nablag u,\nablag \varphi) ,
\end{align*}
so that
 \bna 
J = \int_{\Sigma}\gl{\nablag u}{\nu}\gl{\nablag \varphi}{\nablag u}-\int Hess(\varphi)(\nablag u,\nablag u)-\int Hess(u)(\nablag u,\nablag \varphi) .
\ena
But we notice that
\begin{align}
\label{e:comput-grad-norm-square}
\gl{\nablag \varphi }{\nablag \gln{\nablag u}}&=d(\gln{\nablag u})(\nablag \varphi ) =D_{\nablag \varphi}(\gl{\nablag u}{\nablag u}) \nonumber \\
& = \gl{D_{\nablag \varphi}\nablag u}{\nablag u}+\gl{\nablag u}{D_{\nablag \varphi}\nablag u}=2 \gl{D_{\nablag \varphi}\nablag u}{\nablag u} \nonumber \\
&=2Hess(u)(\nablag \varphi, \nablag u) ,
\end{align}
so that we have in particular
\bna
 2\int Hess(u)(\nablag \varphi, \nablag u)=\int \gl{\nablag \varphi }{\nablag \gln{\nablag u}}=-\int (\Lap \varphi )\gln{\nablag u}+ \int_{\Sigma}\gl{\nablag \varphi}{ \nu}\gln{\nablag u} .
\ena
Coming back to $J$ this finally implies the expression
\begin{align}
\label{e:comput-J}
2 J & =2 \int_{\Sigma}\gl{\nablag u}{ \nu}\gl{\nablag \varphi}{\nablag u}-2 \int Hess(\varphi)(\nablag u,\nablag u) \nonumber  \\
& \quad +\int (\Lap \varphi )\gln{\nablag u}- \int_{\Sigma}\gl{\nablag \varphi}{\nu}\gln{\nablag u} .
\end{align}
Secondly, remarking that $\nablag |u|^2=2u \nablag u$, we write $K$ as 
\bna
K = \int \gl{\nablag \varphi}{\nablag u} \gln{\nablag \varphi}u=\frac{1}{2}\int \gln{\nablag \varphi} \gl{\nablag \varphi}{ \nablag |u|^2} .
\ena
An integration by parts yields
 \begin{align*}
\int (\Lap \varphi)|u|^2\gln{\nablag \varphi}& =\intS  \gl{\nablag \varphi }{\nu}|u|^2\gln{\nablag \varphi} - \int \gl{\nablag \varphi }{\nablag \left(|u|^2\gln{\nablag \varphi}\right)} \\
& =\intS  \gl{\nablag \varphi }{\nu}|u|^2\gln{\nablag \varphi} \\
& \quad  - \int \gln{\nablag \varphi} \gl{\nablag \varphi }{ \nablag |u|^2} -\int  |u|^2 \gl{\nablag \varphi }{\nablag \gln{\nablag \varphi}} .
\end{align*}
Combining these two formulas, we obtain
\begin{align}
\label{e:comput-K}
2K&=-\int (\Lap \varphi)|u|^2\gln{\nablag \varphi}+\intS \gl{\nablag \varphi}{\nu}|u|^2\gln{\nablag \varphi} - \int \gl{\nablag \varphi }{\nablag \gln{\nablag \varphi}}|u|^2 \nonumber\\
&=-\int (\Lap \varphi)|u|^2\gln{\nablag \varphi}+\intS \gl{\nablag \varphi}{\nu}|u|^2\gln{\nablag \varphi} -2 \int Hess(\varphi)(\nablag \varphi,\nablag\varphi)|u|^2 ,
\end{align}
where we have used as in~\eqref{e:comput-grad-norm-square} that 
$$ \gl{\nablag \varphi}{\nablag \gln{\nablag \varphi}}=D_{\nablag \varphi}\gl{\nablag \varphi}{\nablag \varphi }=2 \gl{D_{\nablag \varphi}\nablag \varphi}{\nablag \varphi} =2 Hess(\varphi)(\nablag \varphi,\nablag \varphi) .$$
Thirdly, let us compute $L$ with one integration by parts as 
\begin{align}
\label{e:comput-L}
L&=\int  f u(\Lap u +\tau^2\gln{\nablag \varphi}u) \nonumber \\
& =  \int_{\Sigma} \gl{\nablag u}{\nu}f u  - \int \gl{ \nablag u}{\nablag (f u)} + \tau^2\int \gln{\nablag \varphi}f |u|^2 \nonumber \\
&=  \int_{\Sigma} \gl{\nablag u}{\nu}f u  - \int f \gln{ \nablag u}  + \tau^2\int \gln{\nablag \varphi}f |u|^2  - \int u \gl{ \nablag u}{\nablag f} .
\end{align}
Coming back to~\eqref{e:defM1M2JKL} and combining the computations of $J, K, L$ in~\eqref{e:comput-J}-\eqref{e:comput-K}-\eqref{e:comput-L}, we have obtained the statement of Lemma~\ref{l:computeM1M2}.
\enp

\begin{remark}
\label{r:conceptual-remark}
We wish to compare the above proof with the more usual proofs of Carleman estimates. Note first that the fact that operators and functions are real-valued implies, for $u \in C^\infty_c(\Int(M))$ that $(Q_1 u, Q_2 u) =  (Q_2 Q_1 u, u) = -(u, Q_1 Q_2 u) = \frac12 \left( [Q_2,Q_1]u , u\right)$. Note also that the principal symbol of the conjugate operator $P_\varphi$ is given by 
$$
p_{\varphi}(x,\xi)  = \sigma(P_\varphi)(x,\xi) = \gln{\xi^\sharp} - \tau^2 \gln{\nablag \varphi} + 2 i\tau \gl{ \nablag \varphi}{\xi^\sharp}
= |\xi|_{g^*}^2 - \tau^2 |d\varphi |_{g^*}^2 + 2i \tau \langle d\varphi , \xi \rangle_{g^*} ,
$$
where $g^*$ is the dual metric on $T^*M$, i.e. $g^* = (g^{ij})$, and $\xi^\sharp$ is defined by $\gl{\xi^\sharp}{X}=\xi(X)$.

Here, a computation shows that we have
$$
\{ \Re p_{\varphi},\Im p_{\varphi}\}(x, \xi) = 4 \tau Hess(\varphi)(\xi^\sharp,\xi^\sharp) + 4 \tau^3Hess(\varphi)(\nablag \varphi,\nablag \varphi) .
$$
As a consequence, the important quantity in the Carleman estimate of Theorem~\ref{t:Carleman} is 
\begin{align*}
\B_{g, \varphi, f}(\xi^\sharp) + \tau^2 \E_{g, \varphi, f} & = (f - \Lap \varphi ) \left(\gln{\xi^\sharp} - \tau^2 \gln{\nablag \varphi}\right)\\
& \quad + 2Hess(\varphi)(\xi^\sharp,\xi^\sharp) +  2 \tau^2 Hess(\varphi)(\nablag \varphi,\nablag \varphi)  \\
& = (f - \Lap \varphi ) \Re p_{\varphi} +\frac{1}{2 \tau}\{ \Re p_{\varphi},\Im p_{\varphi}\} .
\end{align*}
The main assumption under which the Carleman estimate of Theorem~\ref{t:Carleman} holds is hence the existence of a function $F=F(x)$ (of the position variable only) so that 
\bnan
\label{e:symbolic-assumption}
F \Re p_{\varphi} +\frac{1}{2\tau}\{ \Re p_{\varphi},\Im p_{\varphi}\}\geq C ( |\xi |^{2}+\tau^{2}).
\enan
The choice of $F$ under the form $F= f - \Delta \varphi$ is only made in order not to consume regularity of the metric $g$, see above Remark~\ref{r:f-deltaphi-regularity}.

Of course, Assumption~\eqref{e:symbolic-assumption} is stronger than the usual subellipticity of the H\"ormander theorem~\cite[Section~23]{Hoermander:V4}:
$$
\{ \Re p_{\varphi},\Im p_{\varphi}\}>0 \quad \text{ on the set } \quad \left\{ \Re p_{\varphi}  = 0,  \Im p_{\varphi} = 0\right\} .
$$
The proof of the H\"ormander theorem~\cite[Section~23.3]{Hoermander:V4} then uses a symbol $F(x, \xi)$ instead of just a function $F(x)$, for instance having the form $F(x,\xi)=\frac{\Re p_{\varphi}}{\xi^{2}+\tau^{2}}$. 

We refer to~\cite[Section~3.1]{LeLe:09} for a related discussion regarding the Furskiov-Imanuvilov approach to Carleman estimates.
\end{remark}

\subsection{Constructing weight functions via convexification}
In this section, we explain how to construct weight functions $(\varphi, f)$ that satisfy the Assumption of Theorem~\ref{t:Carleman}, via the usual convexification procedure. In the present context (as opposed to the usual situation), this also requires a smart choice of the function $f$.
\begin{lemma}[Explicit convexification]
\label{l:explicit-convexification-G}
Let $\Psi \in W^{2,\infty}_{\loc}(M;\R)$ and $G \in W^{2,\infty}_{\loc}(\R)$, and choose 
\bnan
\label{e:choice-phi-f}
\varphi = G(\Psi)\quad \text{ and } \quad  f = 2 G''(\Psi) \gln{\nablag \Psi}.
\enan
Then we have 
\begin{align*}
\B_{g, \varphi, f}(X) & = 2 G'(\Psi) Hess(\Psi)(X,X) + 2G''(\Psi) \left|\gl{\nablag \Psi }{ X}\right|^2
+ \left(G''(\Psi) \gln{\nablag \Psi} -G'(\Psi) \Lap \Psi \right) \gln{X},\\
\E_{g, \varphi, f} & = G'(\Psi)^2\Big[2 G'(\Psi) Hess(\Psi)(\nablag\Psi,\nablag\Psi)+ G''(\Psi)\left| \nablag \Psi\right|_g^4 
+ G'(\Psi) \Lap \Psi \gln{\nablag \Psi}\Big].
\end{align*}
\end{lemma}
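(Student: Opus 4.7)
The proof is a direct calculation, so the plan is simply to assemble the chain rules for the Riemannian gradient, Laplacian and Hessian of a composition $\varphi = G(\Psi)$, and plug them into the definitions of $\B_{g,\varphi,f}(X)$ and $\E_{g,\varphi,f}$, with the specific choice $f = 2 G''(\Psi)|\nabla_g\Psi|_g^2$ given in~\eqref{e:choice-phi-f}.

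The first step is to establish the three ``chain-rule'' identities
\begin{align*}
\nabla_g \varphi &= G'(\Psi)\,\nabla_g \Psi,\\
\Delta_g \varphi &= G'(\Psi)\,\Delta_g \Psi + G''(\Psi)\,|\nabla_g \Psi|_g^2,\\
\Hess(\varphi)(X,Y) &= G'(\Psi)\,\Hess(\Psi)(X,Y) + G''(\Psi)\,\langle \nabla_g \Psi, X\rangle_g\,\langle \nabla_g \Psi, Y\rangle_g.
\end{align*}
The first follows from the definition of $\nabla_g$ and $d\varphi = G'(\Psi)\, d\Psi$, the second is a consequence of the first together with the Leibniz rule $\div_g (hX) = \langle \nabla_g h, X\rangle_g + h\,\div_g X$ applied to $h = G'(\Psi)$ and $X = \nabla_g \Psi$. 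For the Hessian formula, I would use the identity $\Hess(\varphi)(X,Y) = \langle D_X \nabla_g \varphi, Y\rangle_g$ proved in Section~\ref{s:carleman}, apply the Leibniz rule for $D_X$ acting on the product $G'(\Psi)\nabla_g\Psi$, and note that $D_X G'(\Psi) = X\bigl(G'(\Psi)\bigr) = G''(\Psi)\,\langle \nabla_g \Psi, X\rangle_g$. All three formulas make sense (almost everywhere) with only the Lipschitz regularity of $g$ and the stated regularity of $\Psi$ and $G$.

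The second (and last) step is a substitution. For $\B_{g,\varphi,f}(X)$, inserting the Hessian formula and the expression for $\Delta_g\varphi$ into $2\Hess(\varphi)(X,X) - (\Delta_g\varphi)|X|_g^2 + f|X|_g^2$ and using $f = 2G''(\Psi)|\nabla_g\Psi|_g^2$ makes the two terms in $G''(\Psi)|\nabla_g\Psi|_g^2|X|_g^2$ combine with the right sign, leaving exactly the announced expression. For $\E_{g,\varphi,f}$, the first formula gives $|\nabla_g\varphi|_g^2 = G'(\Psi)^2|\nabla_g\Psi|_g^2$ and the Hessian formula applied with $X=Y=\nabla_g\varphi = G'(\Psi)\nabla_g\Psi$ yields
\[
\Hess(\varphi)(\nabla_g\varphi,\nabla_g\varphi) = G'(\Psi)^3\,\Hess(\Psi)(\nabla_g\Psi,\nabla_g\Psi) + G''(\Psi)\,G'(\Psi)^2\,|\nabla_g\Psi|_g^4.
\]
Gathering the contributions of $2\Hess(\varphi)(\nabla_g\varphi,\nabla_g\varphi)$, $(\Delta_g\varphi)|\nabla_g\varphi|_g^2$ and $-f|\nabla_g\varphi|_g^2$, the cross terms in $G''(\Psi)G'(\Psi)^2|\nabla_g\Psi|_g^4$ combine as $2 + 1 - 2 = 1$, producing the formula for $\E_{g,\varphi,f}$ after factoring out $G'(\Psi)^2$.

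Since the proof is purely computational, there is no real obstacle; the only point requiring some care is ensuring that the specific choice of $f$ makes the $G''$-cross-terms cancel correctly (two from $f|X|_g^2$ versus one from $\Delta_g\varphi$ in the $\B$-computation, and $2+1-2=1$ in the $\E$-computation). This is precisely the algebraic reason for picking $f = 2G''(\Psi)|\nabla_g\Psi|_g^2$ rather than the more standard $f=0$: it keeps only one derivative of the metric inside $\B$ and $\E$, consistent with Remark~\ref{r:f-deltaphi-regularity}.
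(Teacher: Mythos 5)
Your proposal is correct and follows essentially the same route as the paper's proof: derive the three chain-rule identities for $\nabla_g\varphi$, $\Delta_g\varphi$ and $\mathrm{Hess}(\varphi)$, substitute into the definitions of $\B_{g,\varphi,f}$ and $\E_{g,\varphi,f}$, and use the specific choice of $f$ to collapse the $G''(\Psi)\gln{\nablag\Psi}$ cross-terms. Your bookkeeping of those cross-terms ($2-1=1$ for $\B$ and $2+1-2=1$ for $\E$) matches the paper's computation exactly.
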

To state the next corollary, for $B \in \mathcal{T}^2_{L^\infty_{\loc}}(M)$ a $L^\infty_{\loc}$ section of bilinear forms on $TM$, we define $|B|_g (x) = \sup_{X \in T_x M\setminus 0}\frac{|B(x,X,X)|}{\gln{X}}$ which yields a $L^\infty_{\loc}$ function on $M$. 
\begin{corollary}
\label{c:explicit-convexification-exp}
Let $\Psi \in W^{2,\infty}_{\loc}(M;\R)$, $\lambda>0$ and define $\varphi, f$ as in~\eqref{e:choice-phi-f} with $G(t)=e^{\lambda t}$. Then, for any $\lambda >0$ and any vector field $X$, we have almost everywhere on $M$
\begin{align*}
\B_{g, \varphi, f}(X) &\geq
\lambda e^{\lambda \Psi} \gln{X}  \left( \lambda \gln{\nablag \Psi} - 2 |Hess(\Psi)|_g - \Lap \Psi \right) ,\\
\E_{g, \varphi, f}  & \geq
\lambda e^{\lambda\Psi} \gln{\nablag \varphi} \left(  \lambda \left| \nablag \Psi\right|_g^2 - 2 | Hess(\Psi)|_g +  \Lap \Psi \right) .
\end{align*}
\end{corollary}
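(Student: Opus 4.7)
The proof is a direct substitution into the identities of Lemma~\ref{l:explicit-convexification-G}, combined with two elementary pointwise bounds on the Hessian. First, I would set $G(t)=e^{\lambda t}$, so that $G'(\Psi)=\lambda e^{\lambda \Psi}$ and $G''(\Psi)=\lambda^2 e^{\lambda \Psi}$.

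Substituting these into the expression for $\B_{g,\varphi,f}(X)$ given by Lemma~\ref{l:explicit-convexification-G} and factoring out $\lambda e^{\lambda \Psi}$ produces three terms: the Hessian term $2\,Hess(\Psi)(X,X)$, the nonnegative term $2\lambda \bigl|\gl{\nablag \Psi}{X}\bigr|^2$, and $\bigl(\lambda \gln{\nablag \Psi} - \Lap \Psi\bigr)\gln{X}$. Discarding the nonnegative term and using the pointwise bound $|Hess(\Psi)(X,X)| \leq |Hess(\Psi)|_g \gln{X}$ yields exactly the first claimed lower bound.

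For the second inequality, the same substitution into the formula for $\E_{g,\varphi,f}$ produces $\lambda^3 e^{3\lambda \Psi}$ multiplied by $2\,Hess(\Psi)(\nablag \Psi,\nablag \Psi) + \lambda |\nablag \Psi|_g^4 + (\Lap \Psi)\gln{\nablag \Psi}$. I would bound the Hessian term via $|Hess(\Psi)(\nablag \Psi,\nablag \Psi)| \leq |Hess(\Psi)|_g |\nablag \Psi|_g^2$, factor out a copy of $|\nablag \Psi|_g^2$, and finally identify the prefactor $\lambda^3 e^{3\lambda \Psi} |\nablag \Psi|_g^2$ with $\lambda e^{\lambda \Psi} \gln{\nablag \varphi}$ using $\nablag \varphi = \lambda e^{\lambda \Psi} \nablag \Psi$ (which gives $\gln{\nablag \varphi} = \lambda^2 e^{2\lambda \Psi}\gln{\nablag \Psi}$).

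There is no conceptual obstacle; the argument is purely computational bookkeeping of powers of $\lambda$ and $e^{\lambda \Psi}$. The only point worth underlining is that the specific prescription $f = 2G''(\Psi)\gln{\nablag \Psi}$ in~\eqref{e:choice-phi-f} is tailored precisely so that, inside $\E_{g,\varphi,f}$, the contribution $-f\gln{\nablag \varphi}$ cancels the otherwise dominant $G''(\Psi)G'(\Psi)^2 |\nablag \Psi|_g^4$ term coming from $2\,Hess(\varphi)(\nablag \varphi,\nablag \varphi)$; only a moderate $\lambda |\nablag \Psi|_g^4$ term survives, which is what makes assumption~\eqref{e:asspt-Egeq} of Theorem~\ref{t:Carleman} achievable for large $\lambda$ on regions where $\gln{\nablag \Psi}$ is bounded away from zero.
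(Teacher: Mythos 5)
Your proof is correct and follows essentially the same route as the paper's: substitute $G'(\Psi)=\lambda e^{\lambda\Psi}$, $G''(\Psi)=\lambda^2 e^{\lambda\Psi}$ into the identities of Lemma~\ref{l:explicit-convexification-G}, drop the nonnegative cross term, bound the Hessian terms by $|Hess(\Psi)|_g$, and identify $\lambda^3 e^{3\lambda\Psi}\gln{\nablag\Psi}$ with $\lambda e^{\lambda\Psi}\gln{\nablag\varphi}$. You merely spell out the last two steps that the paper leaves as ``which yields the sought result.''
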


\bnp[Proof of Corollary~\ref{c:explicit-convexification-exp}]
With this choice of $G$, Lemma~\ref{l:explicit-convexification-G} gives
$$
\B_{g, \varphi, f}(X)  = \lambda e^{\lambda \Psi}  \left[ 2 Hess(\Psi)(X,X) + 2\lambda  \left|\gl{\nablag \Psi }{ X}\right|^2
- \Lap \Psi \gln{X} + \lambda \gln{\nablag \Psi}  \gln{X} \right] ,
$$
together with 
$$
\E_{g, \varphi, f}  =\lambda^3 e^{3\lambda\Psi} \Big[2 Hess(\Psi)(\nablag\Psi,\nablag\Psi)+ \lambda \left| \nablag \Psi\right|_g^4 
+  \Lap \Psi \gln{\nablag \Psi}\Big] ,
$$
which yields the sought result.
\enp

\bnp[Proof of Lemma~\ref{l:explicit-convexification-G}]
We first have $d \varphi = G'(\Psi) d \Psi$ and $\nablag \varphi = G'(\Psi) \nablag\Psi.$
We then compute the Hessian and the Laplacian as 
\begin{align*}
Hess(\varphi)(X,Y)&=\gl{D_X \nablag \varphi }{ Y}= \gl{D_X (G'(\Psi) \nablag \Psi) }{Y} \\
&= G'(\Psi) \gl{D_X \nablag \Psi }{ Y} + G''(\Psi)d\Psi(X)\gl{\nablag \Psi}{ Y}\\
&= G'(\Psi) Hess(\Psi)(X,Y)+ G''(\Psi) \gl{\nablag \Psi }{ X} \gl{\nablag \Psi}{Y} ,
\end{align*}
and 
\bna
\Lap \varphi= \div_g ( G'(\Psi) \nablag\Psi )= G'(\Psi) \Lap \Psi+ G''(\Psi) \gln{\nablag \Psi} .
\ena
In particular, we have 
\begin{align*}
Hess(\varphi)(\nablag \varphi,\nablag \varphi)&= G'(\Psi) Hess(\Psi)(\nablag\varphi,\nablag\varphi)+G''(\Psi) \left|\gl{\nablag \Psi }{\nablag \varphi}\right|^2\\
&= G'(\Psi)^2  \left[ G'(\Psi)Hess(\Psi)(\nablag\Psi,\nablag\Psi)+G''(\Psi)\left|\gln{\nablag \Psi}\right|^2\right] ,
\end{align*}
together with
$$
\Lap \varphi \gln{\nablag \varphi}=  G'(\Psi)^2 \gln{\nablag \Psi} \left(  G'(\Psi) \Lap \Psi+  G''(\Psi)\gln{\nablag \Psi} \right)
$$
As a consequence, we obtain
\begin{align*}
\B_{g, \varphi, f}(X) 
& = 2 G'(\Psi) Hess(\Psi)(X,X) + 2G''(\Psi) \left|\gl{\nablag \Psi }{ X}\right|^2  \\
& \quad + \left( -G'(\Psi) \Lap \Psi - G''(\Psi) \gln{\nablag \Psi} + f \right) \gln{X},
\end{align*}
as well as
\begin{align*}
\E_{g, \varphi, f}(x) & =
G'(\Psi)^2\Big[2 G'(\Psi) Hess(\Psi)(\nablag\Psi,\nablag\Psi)+2 G''(\Psi)\left| \nablag \Psi\right|_g^4 \\
& \quad +\left(G'(\Psi) \Lap \Psi+G''(\Psi) \gln{\nablag \Psi} - f \right)\gln{\nablag \Psi}\Big].
\end{align*}
Now, recalling the choice $f=2 G''(\Psi) \gln{\nablag \Psi}$ concludes the proof of the lemma.
\enp

\begin{remark}
Note that in this proof, the choice $f =\alpha G''(\Psi) \gln{\nablag \Psi}$ yields a useful lower bound only if $\alpha \in (1,3)$. See also~\cite[Section~3.1]{LeLe:09} for a related discussion. 
\end{remark}

\subsection{Uniformity with respect to the metric}
Until this point, all calculations are exact for a fixed metric. In the present section, we prove uniform estimates in a class of metrics. 
For this, even though the manifold with boundary $M$ is not assumed compact, we will consider only an open subsets $U$ of $M$ such that $\ovl{U}$ is compact in $M$ (not in $\Int(M)$).
On the compact set $\K$, the spaces $W^{k,\infty}(\K)$ are defined intrinsically, even if the associated norms may depend on the metric or the charts chosen.
We fix one of these norms $\| \cdot \|_{W^{1,\infty} (\K)}$ for functions on $M$, as well as for forms on $M$ (still denoted $\| \cdot \|_{W^{1,\infty} (\K)}$).

Now, given a reference metric $g_0$ and two constants $D \geq \eps >0$, we consider the class 
$$
\Gamma_{\eps, D}(\K, g_0) = \left\{ g \text{ metric in } \mathcal{T}^2_{W^{1,\infty}_{\loc}}(M), \quad \|g\|_{W^{1,\infty} (\K)} \leq D, \quad \eps g_0 \leq g \leq D g_0\right\} .
$$
\begin{lemma}
\label{l:uniform-subellipticity}
Let $U$ be an open subset of $M$ such that $\ovl{U}$ is compact (in the topology of $M \supset \d M$) and denote $\Sigma = \d M \cap U$.
Given a metric $g_0 \in \mathcal{T}^2_{W^{1,\infty}_{\loc}}(M)$, $D \geq \eps >0$, and a function $\Psi \in W^{2, \infty} (U)$ such that $|\nabla_{g_0} \Psi |_{g_0}^2 >0$ on $\ovl{U}$, there exists $C_0>0$ and $\lambda>0$ such that for any $g \in \Gamma_{\eps, D}(\ovl{U}, g_0)$, the functions $\varphi=e^{\lambda \Psi}$, $f=2\lambda^{2} \gln{\nablag \Psi}$ satisfy
\begin{align}
\B_{g, \varphi, f}(X) &\geq 2 C_0 \gln{X}, \text{ for all vector fields $X$}, \label{e:B-unif-bound-below} \\
\E_{g, \varphi, f}  & \geq 2 C_0 \gln{\nablag\varphi} ,  \label{e:E-unif-bound-below} 
\end{align}
almost everywhere in $U$.
\end{lemma}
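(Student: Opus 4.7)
The plan is to apply Corollary~\ref{c:explicit-convexification-exp} directly with the given $\Psi$, and then to show that every quantity appearing in its lower bounds is controlled uniformly over the class $\Gamma_{\eps, D}(\ovl{U}, g_0)$, so that choosing $\lambda$ large enough produces a uniform positive constant $C_0$.

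First I would compare norms taken with respect to $g$ and $g_0$. Since $\eps g_0 \leq g \leq D g_0$ on $\ovl{U}$, the induced dual metrics satisfy an analogous two-sided bound with constants depending only on $\eps, D$, so that for any $1$-form $\omega$ and any vector field $X$ we have $|\omega|_{g^*}^2 \simeq |\omega|_{g_0^*}^2$ and $|X|_g^2 \simeq |X|_{g_0}^2$ uniformly in $g \in \Gamma_{\eps, D}(\ovl{U}, g_0)$. Applied to $\omega = d\Psi$, this gives $|\nabla_g \Psi|_g^2 \simeq |\nabla_{g_0}\Psi|_{g_0}^2$. By continuity and compactness of $\ovl{U}$ together with the assumption $|\nabla_{g_0}\Psi|_{g_0}^2 > 0$ on $\ovl{U}$, we obtain a uniform lower bound $|\nabla_g\Psi|_g^2 \geq c_1 > 0$ on $\ovl{U}$ with $c_1$ depending only on $\eps, D, \Psi, g_0$.

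Next I would bound the second-order quantities $|\operatorname{Hess}(\Psi)|_g$ and $|\Delta_g \Psi|$ uniformly from above. In local charts the Christoffel symbols $\Gamma_{ij}^k$ of $g$ are polynomial expressions in $(g^{ij})$ and $(\partial_\ell g_{ij})$. Since $\|g\|_{W^{1,\infty}(\ovl{U})} \leq D$ and $\eps g_0 \leq g$ ensures $\|g^{-1}\|_{L^\infty(\ovl{U})}$ is bounded in terms of $\eps$ and $g_0$, we obtain $\|\Gamma_{ij}^k\|_{L^\infty(\ovl U)} \leq C(\eps, D, g_0)$ uniformly in $g \in \Gamma_{\eps,D}(\ovl U, g_0)$ in any fixed finite atlas covering $\ovl{U}$. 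Formula~\ref{F1} for $\operatorname{Hess}(\Psi)(X,X) = X^iX^j(\partial^2_{ij}\Psi - \Gamma_{ij}^k\partial_k\Psi)$ and for $\Delta_g\Psi$ then yields $|\operatorname{Hess}(\Psi)|_g + |\Delta_g\Psi| \leq C_2$ on $\ovl{U}$, with $C_2$ depending only on $\eps, D, \Psi, g_0$.

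Combining these bounds with Corollary~\ref{c:explicit-convexification-exp}, we have, almost everywhere on $U$,
\begin{align*}
\mathcal{B}_{g,\varphi,f}(X) &\geq \lambda e^{\lambda\Psi}\,|X|_g^2\,\bigl(\lambda c_1 - 3 C_2\bigr),\\
\mathcal{E}_{g,\varphi,f} &\geq \lambda e^{\lambda\Psi}\,|\nabla_g\varphi|_g^2\,\bigl(\lambda c_1 - 3 C_2\bigr),
\end{align*}
and $e^{\lambda\Psi} \geq e^{\lambda \min_{\ovl U}\Psi}$ is a positive constant. Choosing $\lambda$ sufficiently large depending only on $c_1, C_2$ (hence only on $\eps, D, \Psi, g_0$) makes the parenthesis strictly positive, and we obtain~\eqref{e:B-unif-bound-below} and~\eqref{e:E-unif-bound-below} with a uniform $C_0 > 0$.

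The main (and essentially only) obstacle is the uniform bound on the Christoffel symbols and hence on $\operatorname{Hess}(\Psi)$ and $\Delta_g\Psi$: one must verify that these involve only zeroth and first-order derivatives of the metric, which is precisely what the sharp Lipschitz-only proof of Theorem~\ref{thmCarlemancalcul} was designed to accommodate. Once this regularity accounting is set up, the rest is a routine comparison argument using the two-sided bound $\eps g_0 \leq g \leq D g_0$.
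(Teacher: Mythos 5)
Your proposal is correct and follows essentially the same route as the paper's own proof: uniform two-sided comparison of the $g$- and $g_0$-norms over the class $\Gamma_{\eps,D}$ (giving the uniform lower bound on $\gln{\nablag\Psi}$ via compactness of $\ovl{U}$), a uniform $L^\infty$ bound on $|Hess(\Psi)|_g$ and $|\Lap\Psi|$ coming from the $W^{1,\infty}$ bound on $g$ and the ellipticity bound on $g^{-1}$, followed by Corollary~\ref{c:explicit-convexification-exp} and choosing $\lambda$ large. The only difference is that you spell out the Christoffel-symbol bookkeeping that the paper leaves implicit; that is a welcome elaboration rather than a departure in strategy.
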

Note that the constant $C_0$ involved is explicitely computable in terms of $D$ and $\eps$, which we do not write for the sake of readability. Yet, if one is interested in obtaining explicit constants, the choice $G(t) = e^{\lambda t}$ of convexifying function is probably not the best one.
\bnp
Denote by $g^* = (g^{ij})$ the metric on $T^*M$ induced by $g$. For $g \in \Gamma_{\eps, D}(\ovl{U}, g_0)$, we have $\frac{1}{\eps} g_0^* \leq g^* \leq \frac{1}{D} g_0^*$. 
With this notation, we have 
\begin{equation}
\label{e:equiv-norms-g}
\frac{1}{\eps} |\nabla_{g_0} \Psi|_{g_0}^2=\frac{1}{\eps} |d \Psi |_{g^*_0}^2
 \leq \gln{\nablag \Psi} = |d \Psi |_{g^*}^2 
 \leq \frac{1}{D} |d \Psi |_{g^*_0}^2  =  \frac{1}{D} |\nabla_{g_0} \Psi|_{g_0}^2 ,
\end{equation}
where $|\omega |_{g^*}^2 =\langle \omega , \omega \rangle_{g^*}$ is the cotangent squared norm.
Next, using the uniform $W^{1,\infty}(U)$ bound in $\Gamma_{\eps, D}(\ovl{U}, g_0) $, we have
$$
|\Lap \Psi| \leq C(\eps, D) \|\Psi\|_{W^{2,\infty}(U)}, \quad | Hess(\Psi)|_g  \leq C(\eps, D) \|\Psi\|_{W^{2,\infty}(U)} .
$$
Now, the compactness of $\ovl{U}$ with the assumption yields $c_0>0$ such that $|\nabla_{g_0} \Psi |_{g_0}^2 \geq c_0$ everywhere on $\ovl{U}$.
According to Corollary~\ref{c:explicit-convexification-exp} and the above two estimates, we obtain for any $\lambda >0$ and any vector field $X$ 
\begin{align*}
\B_{g, \varphi, f}(X) &\geq
\lambda e^{\lambda \Psi} \gln{X}  \left( \lambda \gln{\nablag \Psi} - 2 |Hess(\Psi)|_g - \Lap \Psi \right) ,\\
&\geq
\lambda e^{\lambda \min_\K \Psi} \gln{X}  \left( \lambda \frac{c_0}{\eps} - 3 C(\eps, D) \|\Psi\|_{W^{2,\infty}(U)}\right) ,
\end{align*}
which yields~\eqref{e:B-unif-bound-below} when taking $\lambda$ large enough. Similarly, \eqref{e:E-unif-bound-below} follows from taking $\lambda$ large enough in
\begin{align*}
\E_{g, \varphi, f}  & \geq
\lambda e^{\lambda \min_\K \Psi }  \gln{\nablag\varphi}  \frac{c_0}{\eps}   \left( \lambda \frac{c_0}{\eps} - 3 C(\eps, D) \|\Psi\|_{W^{2,\infty}(U)}\right) .
\end{align*}
\enp

We directly deduce the following uniform Carleman estimate in the class $\Gamma_{\eps, D}(\ovl{U}, g_0)$. We only state it with the Dirichlet boundary condition here for conciseness (the case without boundary condition writes the same).

\begin{theorem}[Uniform Lipschitz Carleman estimate]
\label{t:Carlemanuniform}
Let $U$ be an open subset of $M$ such that $\ovl{U}$ is compact (in the topology of $M \supset \d M$) and denote $\Sigma = \d M \cap U$.
Given a metric $g_0 \in \mathcal{T}^2_{W^{1,\infty}_{\loc}}(M)$, $D \geq \eps >0$, and a function $\Psi \in W^{2, \infty} (U)$ such that $|\nabla_{g_0} \Psi |_{g_0}^2 >0$ on $\ovl{U}$, there exist $\lambda>0 ,C_1>0, \tau_0 >0$ such that for $\varphi=e^{\lambda \Psi}$ and for any $g \in \Gamma_{\eps, D}(\ovl{U}, g_0)$, 
for all $\tau \geq \tau_0$ and all $v \in C^\infty_c(U)$ such that $v=0$ on $\Sigma$, we have
\begin{align}
\label{e:thm-Carleman-case-dirichlet-bis}
C_1\left( \tau^3 \nor{e^{\tau \varphi}v }{L^{2}(U)}^{2}+ \tau\nor{e^{\tau \varphi}\nabla_{g} v}{L^{2}(U)}^{2}\right)
\leq \nor{e^{\tau \varphi}\Lap v}{L^2(U)}^2 + \tau \int_{\Sigma} e^{2\tau \varphi}  \d_\nu \varphi  |\d_\nu v|^2 ,
\end{align}
\begin{align}
\label{e:thm-Carleman-case-dirichlet-ter}
C_1\left( \tau^3 \nor{e^{\tau \varphi}v }{L^{2}(U)}^{2}+ \tau\nor{e^{\tau \varphi}\nabla_{g_0} v}{L^{2}(U)}^{2}\right)
\leq \nor{e^{\tau \varphi}\Lap v}{L^2(U)}^2 + \tau \int_{\Sigma} e^{2\tau \varphi}  \d_\nu \varphi  |\d_\nu v|^2 . 
\end{align}
\end{theorem}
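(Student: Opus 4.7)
The plan is to deduce this from Theorem~\ref{t:Carleman} (statement~(2), i.e.\ the Dirichlet case~\eqref{e:thm-Carleman-case-dirichlet}) applied with the weight functions $\varphi=e^{\lambda\Psi}$ and $f=2\lambda^{2}\gln{\nablag\Psi}$ supplied by Lemma~\ref{l:uniform-subellipticity}, checking that every constant entering that estimate can be chosen independently of $g\in\Gamma_{\eps,D}(\ovl{U},g_{0})$. Concretely, I would first invoke Lemma~\ref{l:uniform-subellipticity} to fix once and for all a $\lambda>0$ and a $C_{0}>0$ (depending only on $\eps,D,g_{0},\Psi,\ovl{U}$) such that \eqref{e:asspt-Bgeq}--\eqref{e:asspt-Egeq} hold on $U$ for every $g\in\Gamma_{\eps,D}(\ovl{U},g_{0})$. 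With this $\lambda$ frozen, $\varphi$ is a fixed $W^{2,\infty}(U)$ function independent of $g$; in particular $\min_{\ovl U}\gln{\nablag\varphi}=\lambda^{2}e^{2\lambda\Psi}\gln{\nablag\Psi}$ is bounded below by a positive constant uniform in $g$, using the uniform equivalence $\tfrac1D|d\Psi|_{g_{0}^{*}}^{2}\le\gln{\nablag\Psi}\le\tfrac1\eps|d\Psi|_{g_{0}^{*}}^{2}$ from~\eqref{e:equiv-norms-g} and the compactness of $\ovl U$. Hence $c(\varphi)$ is uniformly bounded.

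Second, I would verify that the threshold on $\tau$ in Theorem~\ref{t:Carleman}(2) is uniform. This requires controlling $\nor{f-\Lap\varphi}{L^{\infty}(U)}$ and $\nor{\nablag f}{L^{\infty}(U)}$: since $f=2\lambda^{2}\gln{\nablag\Psi}=2\lambda^{2}g^{ij}\partial_{i}\Psi\,\partial_{j}\Psi$, the assumption $\|g\|_{W^{1,\infty}(\ovl U)}\le D$ together with $g\ge\eps g_{0}$ (so that $g^{ij}$ and its derivatives are bounded in $L^{\infty}$ uniformly in $g\in\Gamma_{\eps,D}$) and $\Psi\in W^{2,\infty}(U)$ yield a uniform bound; the same reasoning applied to $\Lap\varphi=e^{\lambda\Psi}\big(\lambda\Lap\Psi+\lambda^{2}\gln{\nablag\Psi}\big)$ gives a uniform $L^{\infty}$ bound. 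Therefore one may pick a single $\tau_{0}>0$, depending only on $(\eps,D,g_{0},\Psi,\ovl U)$, for which \eqref{e:thm-Carleman-case-dirichlet} holds for every $g\in\Gamma_{\eps,D}(\ovl U,g_{0})$ with the uniform constant $C_{0}/3$.

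Third, to convert \eqref{e:thm-Carleman-case-dirichlet} into \eqref{e:thm-Carleman-case-dirichlet-bis}, I use the uniform lower bound $\gln{\nablag\varphi}\ge c_{*}>0$ on $\ovl U$ just obtained to replace $\nor{e^{\tau\varphi}v\nablag\varphi}{L^{2}(U)}^{2}$ by $c_{*}\nor{e^{\tau\varphi}v}{L^{2}(U)}^{2}$ on the left-hand side. The $L^{2}(U)$-norm computed with $d\Vol_{g}$ is uniformly equivalent to the one computed with $d\Vol_{g_{0}}$ (because $\eps g_{0}\le g\le D g_{0}$ implies $\eps^{d/2}\sqrt{\det g_{0}}\le\sqrt{\det g}\le D^{d/2}\sqrt{\det g_{0}}$), so the statement is independent of which volume form we use up to changing $C_{1}$. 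Finally, \eqref{e:thm-Carleman-case-dirichlet-ter} follows from \eqref{e:thm-Carleman-case-dirichlet-bis} by the pointwise bound $|\nabla_{g_{0}}v|_{g_{0}}^{2}\le D\eps^{-2}|\nabla_{g}v|_{g}^{2}$ (again using the uniform equivalence of the metrics and hence of the dual metrics).

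The only non-routine step is making sure nothing in Theorem~\ref{t:Carleman}'s hypotheses or constants depends on higher regularity of $g$: this is exactly the point of Remark~\ref{r:f-deltaphi-regularity}, which warns us that the choice $f=2\lambda^{2}\gln{\nablag\Psi}$ was made precisely to keep $\nablag f\in L^{\infty}$ with only a Lipschitz metric. Once that observation is in place, the proof is really bookkeeping: every quantity entering the estimate is either (i) a fixed object built from $\Psi$ and $\lambda$ alone, (ii) a quantity bounded in terms of $\|g\|_{W^{1,\infty}}$ and the two-sided bound $\eps g_{0}\le g\le Dg_{0}$, or (iii) a boundary term that already appears in the conclusion and so does not need to be estimated.
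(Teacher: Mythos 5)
Your proof is correct and follows exactly the same route as the paper's: invoke Lemma~\ref{l:uniform-subellipticity} with $f=2\lambda^{2}\gln{\nablag\Psi}$ to get uniform constants $\lambda,C_{0}$, use~\eqref{e:equiv-norms-g} and the $W^{1,\infty}$ bound in $\Gamma_{\eps,D}$ to bound $c(\varphi)$ and the $\tau$-threshold of Theorem~\ref{t:Carleman}(2) uniformly, then pass from $\nablag\varphi$-weighted to unweighted $L^{2}$ norms and from the $g$-gradient to the $g_{0}$-gradient via the uniform equivalence of the metrics and volume forms. The only discrepancy is cosmetic: your constant $D\eps^{-2}$ in the final pointwise bound is not sharp (one has $|\nabla_{g_{0}}v|_{g_{0}}^{2}\leq D\,|\nabla_{g}v|_{g}^{2}$), but any uniform constant suffices.
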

Note that in the second inequality~\eqref{e:thm-Carleman-case-dirichlet-ter}, we implicitely wrote $$\nor{e^{\tau \varphi}\nabla_{g_0} v}{L^{2}(U)}^{2} =\int_U e^{2\tau \varphi}\left|\nabla_{g_0} v \right|_{g_0}^2 d\Vol_{g_0}$$ in the left handside, which does no longer depend on the metric $g$. Hence, the sole dependence on the metric $g$ in~\eqref{e:thm-Carleman-case-dirichlet-ter} is through $\Delta_g$ and $\d_\nu$.

\bnp
We choose $f=2\lambda^{2} \gln{\nablag \Psi}$ and according to Lemma~\ref{l:uniform-subellipticity}, the bounds~\eqref{e:asspt-Bgeq}-\eqref{e:asspt-Egeq} with constant $C_0$ are satisfied for $\lambda$ large enough uniformly in the class $g \in \Gamma_{\eps, D}(\ovl{U}, g_0)$.
According to Theorem~\ref{t:Carleman}, this implies~\eqref{e:thm-Carleman-case-dirichlet-bis} with $C_1 = \frac{C_0}{3}c(\varphi)$ for all 
 $\tau \geq \tau_0(g)$ with $\tau_0(g) = \frac{c(\varphi)}{C_0} \left( \nor{f-\Lap \varphi }{L^{\infty}(U)}^{2} + \frac{1}{2} \nor{\nablag f}{L^\infty(U)}\right)$ with $c(\varphi) = \min \left\{ 1, \left(\min_{\ovl{U}} \gln{\nablag \varphi} \right)^{-1}\right\}$.
Now,~\eqref{e:equiv-norms-g} implies that 
$$ \min \left\{ 1, \eps \left(\min_{\ovl{U}}|\nabla_{g_0} \Psi|_{g_0}^2\right)^{-1}\right\} \leq 
c(\varphi)\leq \min \left\{ 1, D \left(\min_{\ovl{U}}|\nabla_{g_0} \Psi|_{g_0}^2\right)^{-1}\right\}
$$ uniformly for $g \in \Gamma_{\eps, D}(\ovl{U}, g_0)$, and, similarly
$$
\tau_0(g) \leq C(\eps, D) \|\Psi\|_{W^{2,\infty}(U)} ,
$$
uniformly for $g \in \Gamma_{\eps, D}(\ovl{U}, g_0)$. This concludes the proof of~\eqref{e:thm-Carleman-case-dirichlet-bis}. The proof of~\eqref{e:thm-Carleman-case-dirichlet-ter} follows again from~\eqref{e:equiv-norms-g} (applied to $v$) and the fact that $d\Vol_{g_0} \leq \eps^{-d/2} d\Vol_{g}$ (recall that $d=\dim M$).
\enp

Note that for the application that we have in Proposition~\ref{p:unif-interp-boundary-metric} below, it is sufficient to have some stability results in the following sense. If some interpolation inequality or Carleman inequality is true for some metric, it is still true for any metric in a suitable neighborhood, which can be obtained as a byproduct of our results.

\subsection{Uniform interpolation estimate at the boundary}

In this section, we consider a very particular case of the above Carleman estimate to prove a local interpolation estimate in a neighborhood of a boundary point for metrics $g$ in the neighborhood of the constant flat metric.
The manifold $M$ considered is $\R^{n+1}_+ = \R^{n} \times \R_+$ (that is, $d=n+1$) and the reference metric is $g_0= \id$

 Note that the above sections prove much more than needed for this argument.
 
Below, we denote $B_r = B(0,r) \subset \R^{n+1}$ and $B_r^+ = B(0,r)\cap \R^{n+1}_+$.

\begin{proposition}
\label{p:unif-interp-boundary-metric}
There exists $\e>0$, $C>0$ and $\alpha_0\in (0,1)$ so that for any metric $\mathfrak{g} \in \Gamma_{\eps, D}(\ovl{B}_{\R^n}(0,2), \id)$, we have 
\begin{equation*}
\|v\|_{H^1(B_1^+)} \leq C \left(\|(-\d_s^2-\Delta_\mathfrak{g} ) v\|_{L^2(B_2^+)} +  \| \partial_s v|_{s=0} \|_{L^2(B_2\cap \{0\}\times \R^{n})}  \right)^{\alpha_0} \|v\|_{H^1(B_2^+)}^{1-\alpha_0}, 
\end{equation*}
for any $v\in H^2(B_2^+)$ such that $v|_{s=0}=0$.
\end{proposition}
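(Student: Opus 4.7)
The plan is to derive Proposition~\ref{p:unif-interp-boundary-metric} from the uniform Carleman estimate \eqref{e:thm-Carleman-case-dirichlet-ter} of Theorem~\ref{t:Carlemanuniform}, applied on the half-space manifold $M=\R^{n+1}_+$ equipped with the product metric $g=ds^2\oplus\mathfrak{g}$, whose negative Laplace--Beltrami operator is exactly $-\d_s^2-\Delta_\mathfrak{g}$. As $\mathfrak{g}$ ranges in $\Gamma_{\eps,D}(\overline{B}_{\R^n}(0,2),\id)$, the corresponding family of product metrics $g$ on $\R^{n+1}_+$ sits in a fixed class $\Gamma_{\min(1,\eps),\max(1,D)}(\overline{B_2^+},\id)$, so Theorem~\ref{t:Carlemanuniform} applies with constants uniform in $\mathfrak{g}$. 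The Dirichlet boundary $\d M=\{s=0\}$ has outward normal $\nu=-\d_s$. The overall strategy is classical: combine the Carleman estimate with a smooth interior cutoff to handle the non-Dirichlet part of $\d B_2^+$, then invoke the Robbiano optimization in $\tau$.

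As weight I would take $\Psi(s,x)=-(s+c)^2-|x|^2$ for a small fixed $c>0$. Then $|\nabla\Psi|^2=4\bigl((s+c)^2+|x|^2\bigr)\geq 4c^2>0$ on $\overline{B_2^+}$, and on $\Sigma:=\{s=0\}\cap B_2$ one has $\d_\nu\Psi=-\d_s\Psi|_{s=0}=2c>0$, so in \eqref{e:thm-Carleman-case-dirichlet-ter} the boundary term $\tau\int_{\Sigma}e^{2\tau\varphi}\d_\nu\varphi|\d_\nu w|^2$ appears with the right sign to produce the observation $\|\d_s v|_{s=0}\|_{L^2}$. Setting $\varphi=e^{\lambda\Psi}$ with $\lambda$ large as in Theorem~\ref{t:Carlemanuniform}, the strict monotonicity of $\Psi$ with respect to $(s+c,x)$ lets me pick $1<r_1<r_2<2$ and three values $A<B<C^*$ such that $\varphi\geq B$ on $\overline{B_1^+}$, $\varphi\leq A$ on $\overline{B_{r_2}^+}\setminus B_{r_1}^+$, and $\varphi\leq C^*$ on $\overline{B_2^+}$. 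I would then pick $\chi\in C^\infty_c(B_{r_2})$ with $\chi\equiv 1$ on $B_{r_1}$ and apply \eqref{e:thm-Carleman-case-dirichlet-ter} to $w=\chi v$: it vanishes on $\Sigma$ by the Dirichlet assumption on $v$, is compactly supported in $\overline{B_2^+}\cap M$, and satisfies $\d_\nu w|_{s=0}=\chi\,\d_\nu v|_{s=0}$. Writing $(-\d_s^2-\Delta_\mathfrak{g})w=\chi(-\d_s^2-\Delta_\mathfrak{g})v+[-\d_s^2-\Delta_\mathfrak{g},\chi]v$, with the commutator a first-order differential operator supported in $B_{r_2}\setminus B_{r_1}$ whose coefficients are uniformly bounded in the class (since $\chi$ is fixed and $\mathfrak{g}\in\Gamma_{\eps,D}$), and absorbing polynomial factors of $\tau$ into exponentials for $\tau$ large, one obtains
\begin{align*}
e^{2\tau B}\|v\|_{H^1(B_1^+)}^2\leq Ce^{2\tau C^*}\Big(\|(-\d_s^2-\Delta_\mathfrak{g})v\|_{L^2(B_2^+)}^2+\|\d_s v|_{s=0}\|_{L^2(B_2\cap\{s=0\})}^2\Big)+Ce^{2\tau A}\|v\|_{H^1(B_2^+)}^2,
\end{align*}
for all $\tau\geq\tau_0$, with $\tau_0$ uniform in $\mathfrak{g}$; the extension from $C^\infty_c$ to $H^2(B_2^+)$ functions vanishing on $\Sigma$ is by density.

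Dividing by $e^{2\tau B}$ and setting $\mathcal{R}^2:=\|(-\d_s^2-\Delta_\mathfrak{g})v\|_{L^2(B_2^+)}^2+\|\d_s v|_{s=0}\|_{L^2}^2$ and $\mathcal{M}^2:=\|v\|_{H^1(B_2^+)}^2$, this reduces to
\begin{align*}
\|v\|_{H^1(B_1^+)}^2\leq Ce^{2\tau(C^*-B)}\mathcal{R}^2+Ce^{-2\tau(B-A)}\mathcal{M}^2,\qquad \tau\geq\tau_0.
\end{align*}
The standard Robbiano optimization (balance the two terms when $\mathcal{R}\ll\mathcal{M}$, use the trivial bound $\|v\|_{H^1(B_1^+)}\leq\mathcal{M}$ otherwise; see e.g.\ \cite[Lemma~5.2]{LeLe:09}) then yields the desired interpolation inequality with exponent $\alpha_0=\frac{B-A}{C^*-A}\in(0,1)$. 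The main obstacle is ensuring that every constant along the way is uniform in $\mathfrak{g}\in\Gamma_{\eps,D}$: the constants in the Carleman estimate are uniform by Theorem~\ref{t:Carlemanuniform}, while the cutoff--commutator bounds and the equivalence between the $H^1$ norms associated to $g$ and to the flat metric are uniform because the class $\Gamma_{\eps,D}$ provides a uniform two-sided equivalence of metrics and a uniform $W^{1,\infty}$ bound on the coefficients.
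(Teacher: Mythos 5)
Your proposal is correct and follows essentially the same strategy as the paper: interpret the operator $-\partial_s^2-\Delta_\mathfrak{g}$ as the Laplace--Beltrami operator of the product metric $\id\otimes\mathfrak{g}$ lying uniformly in $\Gamma_{\min(1,\eps),\max(1,D)}(\overline{B_2^+},\id)$, invoke the uniform Lipschitz Carleman estimate \eqref{e:thm-Carleman-case-dirichlet-ter}, cut off to kill the lateral boundary, and conclude by the Robbiano optimization. The only cosmetic difference is the weight: you take $\Psi(s,x)=-(s+c)^2-|x|^2$ (minus the squared distance to $(-c,0)$), while the paper takes $\Psi(s,x)=-|(s,x)-(-a,0)|$ (minus the distance itself); these have the same (spherical) level sets centered outside $\{s\geq 0\}$, both have nonvanishing gradient on $\overline{B_2^+}$, and both make $\d_\nu\varphi=-\d_s\varphi|_{s=0}>0$ so that the boundary term in the Carleman estimate becomes the observation $\|\d_s v|_{s=0}\|_{L^2}$; the verification that $\inf_{\overline{B_1^+}}\Psi > \sup_{\overline{B_{r_2}^+}\setminus B_{r_1}^+}\Psi$ reduces in both cases to $1+2c<r_1^2$ (resp.\ $1+2a<r_1^2$), which holds for $c$ (resp.\ $a$) small and $r_1$ slightly larger than $1$.
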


\begin{proof}
In the proof, we shall denote by $\x = (s, x) \in \R_+ \times \R^n$ the overall variable and recall that all balls are centered at zero.
We choose a point  $\x^a = (-a , 0, \cdots , 0 ) \notin \overline{\R^{n+1}_+}$. 
We define the weight function $\Psi (\x)=  - |\x-\x^a|$, which is smooth and satisfies $\Psi <0$ and $d\Psi \neq 0$ in $\ovl{B_2^+}$.

For $a$ sufficiently small, there exist $0<\rho_1 < \rho_2 $ such that we have
\bnan
\label{e:defW1W2}
\ovl{B_1^+} \subset W_1\subset \ovl{W_1}  \subset W_2 \subset \ovl{W_2} \subset B_2^+, \quad \text{with} \quad   W_j = \{\psi > -\rho_j\} \cap \overline{\R^{n+1}_+}, \quad j=1,2.
\enan

As a consequence of Theorem~\ref{t:Carlemanuniform}, there exist $\lambda>0 ,C_1>0, \tau_0 >0$ such that for $\varphi=e^{\lambda \Psi}$ and for any $g =\id \otimes \mathfrak{g} \in \Gamma_{\eps, D}(\ovl{B_2^+}, \id)$, for all $\tau \geq \tau_0$ and all $u \in C^\infty_c(B_2^+)$ such that $u=0$ on $\{s=0\}$, we have
\begin{align}
\label{e:thm-Carleman-case-dirichlet-interp}
C_1\left( \tau^3 \nor{e^{\tau \varphi} u}{L^{2}(B_2^+)}^{2}+ \tau\nor{e^{\tau \varphi}\nabla u}{L^{2}(B_2^+)}^{2}\right)
\leq \nor{e^{\tau \varphi}\Lap u}{L^2(B_2^+)}^2 + \tau \int_{\{s=0\}} e^{2\tau \varphi}  \d_\nu \varphi  |\d_\nu u|^2 . 
\end{align}
Here, the ball, the gradient and the volume element are taken w.r.t. the Euclidean metric. Moreover, the normal vector-field $\d_\nu$ is that associated to the metric $g =\id \otimes \mathfrak{g}$, and hence $\d_\nu = -\d_s$ (and does not depend on $\mathfrak{g}$). The sole dependence on the metric in~\eqref{e:thm-Carleman-case-dirichlet-interp} is thus in $\Lap = \d_s^2 +\Delta_\mathfrak{g}$.

Note that levelsets of $\varphi$ are those of $\psi$ i.e. pieces of spheres.
Note also that we have $\varphi \leq \varphi(0)$ on $B_2^+$ and define
$$
\varphi(0) > \varphi_1 := \min_{\ovl{B_1^+}} \varphi > \varphi_1'  := \min_{\ovl{W_1}} \varphi = e^{- \lambda \rho_1} = \max_{\ovl{W_2}\setminus W_1} \varphi  ,
$$
which only depend the geometric setting (not on the metric).

We let $\chi \in C^\infty_c(\R^{n+1})$ such that, with $W_j$ as in~\eqref{e:defW1W2}, $\chi=1$ on $W_1$ and  $\chi=0$ on $B_2^+ \setminus W_2$, and apply~\eqref{e:thm-Carleman-case-dirichlet-interp} to $u = \chi v \in C^\infty_c(B_2^+)$ with $v \in C^\infty(B_2^+)$ satisfies $v|_{s=0}=0$. We have $\d_\nu u |_{s=0}= - \chi|_{s=0} \d_s v|_{s=0}$ since $v|_{s=0}=0$ and hence
\bna
\int_{\{s=0\}} e^{2\tau \varphi}  \d_\nu \varphi  |\d_\nu u|^2 \leq C e^{2\tau \varphi(0)}\| \chi|_{s=0} \partial_{s} v|_{s=0}\|_{L^2(W_2 \cap\{s=0\})}^2  .
\ena
Using that $\chi=1$ on $W_1 \supset B_1^+$, we have that
\begin{align*}
 \tau^3 \nor{e^{\tau \varphi} u}{L^{2}(B_2^+)}^{2}+ \tau\nor{e^{\tau \varphi}\nabla u}{L^{2}(B_2^+)}^{2}
& \geq  \tau^3 \nor{e^{\tau \varphi} u}{L^{2}( B_1^+)}^{2}+ \tau\nor{e^{\tau \varphi}\nabla u}{L^{2}( B_1^+)}^{2} \\
 & \geq  \tau e^{2 \tau \varphi_1} \| v \|_{H^1( B_1^+)}^2  .
\end{align*}
Finally, we have $ \Lap \chi v = \chi \Lap v + [\Lap, \chi] v$, where $[\Lap , \chi]$ (recall $\Lap =  \partial_s^2 + \Delta_{\mathfrak{g}}$) is a first order differential operator with $L^\infty$ coefficients supported in $\ovl{W_2} \setminus W_1$, and such that $\nor{[\Lap , \chi]}{L^2 \to L^2} \leq C D$ on that set uniformly for $\mathfrak{g}\in \Gamma_{\eps, D}(\ovl{B}_{\R^n}(0,2), \id)$. Moreover, we have $\varphi \leq \varphi_1'$ on $\ovl{W_2} \setminus W_1$. Thus, we have
\begin{align*}
 \nor{e^{\tau \varphi}\Lap u}{L^2(B_2^+)}^2  & \leq   \| e^{\tau \varphi} \chi \Lap v \|_{L^2(B_2^+)}^2 +  \| e^{\tau \varphi}  [\Lap , \chi] v \|_{L^2(B_2^+)}^2 \\
 & \leq e^{2 \tau \varphi(0)} \| \Lap v \|_{L^2(B_2^+)}^2 + CD e^{2 \tau \varphi_1'} \| v \|_{H^1(B_2^+)}^2  .
\end{align*}
Combining the last three estimates with~\eqref{e:thm-Carleman-case-dirichlet-interp}, we find that there is $C,\tau_0>0$ such that for all
 $g =\id \otimes \mathfrak{g} \in \Gamma_{\eps, D}(\ovl{B_2^+}, \id)$, for all $\tau \geq \tau_0$ and all $v \in C^\infty(B_2^+)$ such that $v=0$ on $\{s=0\}$, we have
\bna
e^{2\tau \varphi_1 } \| v \|_{H^1( B_1^+)}^2 \leq C e^{2\tau \varphi(0)} \left( \| \partial_{s} v|_{s=0}\|_{L^2(B_2^+ \cap\{s=0\})}^2 
+  \| \Lap v \|_{L^2(B_2^+)}^2 \right) + C e^{2 \tau \varphi_1'} \| v \|_{H^1(B_2^+)}^2  .
\ena
Recalling that $\varphi(0)> \varphi_1 > \varphi_1'$ and after an optimization in the parameter $\tau$ (see~\cite{Robbiano:95}), this yields the result of the lemma.
\end{proof}

\subsection{A uniform Lebeau-Robbiano spectral inequality}

In this section, we give a proof of Theorem~\ref{t:uniform-LR-ineq-metric}. For this, we follow the strategy of proof of~\cite[Section~2]{BHLR:10} with our uniform Carleman estimates (Theorem~\ref{t:Carlemanuniform}). The original proof of~\cite{LR:95} also works (see the above Section~\ref{s:unif-LR-ineq}) but is less straightforward in the present setting.
We recall that $\M$ is the ambient compact manifold with boundary $\d \M$, and set $M= [0,S_0] \times \M$, having piecewise $C^1$ and graph-Lipschitz boundary $\d M = \{0\}\times \M \cup \{S_0\}\times \M\cup [0,S_0] \times \d \M$. We denote by $(s, x)$ the variable in $M$. the metric is $g= \id \otimes \mathfrak{g}$. Note finally that $\d_\nu = \d_{\nu_x}$ on $[0,S_0]\times \d \M$, were $\nu_x$ denotes here the outward unit normal to $\M$ at $\d \M$, that $\d_\nu = \d_s$ on $\{S_0\}\times \M$, and that $\d_\nu = - \d_s$ on $\{0\}\times \M$.

\begin{lemma}
\label{l:psi-jerome}
Let $\mathfrak{g}_0 \in \mathcal{T}^2_{W^{1,\infty}(\M)}$ be a metric on $\M$. Then, there exists a function $\psi \in C^2(M;\R)$ and $c>0$ such that 
\begin{equation*}
\begin{array}{ll}
|\nabla_{\mathfrak{g}_0}\psi|_{\mathfrak{g}_0} \geq c \text{ in } M , & \quad \d_{\nu_x}\psi <0 \text{ on }[0,S_0] \times \d\M  , \\
\d_s \psi \geq c \text{ on } \{0\} \times (\M \setminus \omega),  & \quad \nabla_{\mathfrak{g}_0}\psi = 0 \text{ and } \d_s \psi \leq -c \text { on } \{S_0\} \times \M .
\end{array}
\end{equation*}
\end{lemma}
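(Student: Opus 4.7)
The plan is to assemble $\psi$ from a boundary-adapted weight on $\M$ combined with an $s$-dependent profile on the cylinder $M = [0, S_0]\times\M$.

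First, I would invoke the classical Fursikov-Imanuvilov construction on $\M$: for the nonempty open $\omega \subset \M$, there exists $\psi_0 \in C^2(\M;\mathbb{R})$ with $\psi_0 \geq c_0 > 0$ on $\M$ (after adding a positive constant if needed), $\d_{\nu_x}\psi_0 < 0$ on $\d\M$, and critical set $\{x \in \M : \nabla_{\mathfrak{g}_0}\psi_0(x) = 0\} \subset \omega$; see for instance \cite{FI:96}. Then I would try the separable ansatz
\[
\psi(s, x) = f(s) + \mu g(s)\psi_0(x),
\]
with small $\mu > 0$ and $f, g \in C^2([0, S_0])$ chosen so that $g > 0$ on $[0, S_0)$, $g(S_0) = 0$, $g'(0) = g'(S_0) = 0$ (e.g.\ $g(s) = \cos^2(\pi s/(2S_0))$), and $f$ strictly concave with $f'(0) \geq 2c$, $f'(S_0) \leq -2c$ (e.g.\ $f(s) = s(S_0 - s)/S_0$ scaled appropriately).

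The four boundary requirements then reduce to direct computations: at $s=S_0$ one has $\nabla_{\mathfrak{g}_0}\psi = \mu g(S_0)\nabla_{\mathfrak{g}_0}\psi_0 = 0$ and $\d_s\psi = f'(S_0) \leq -2c$; at $s=0$ over $\M\setminus\omega$, $\d_s\psi = f'(0) \geq 2c$; and on the lateral face, $\d_{\nu_x}\psi = \mu g(s)\d_{\nu_x}\psi_0 < 0$ on $[0, S_0)\times\d\M$. For the non-degeneracy $|\nabla_g\psi|_g \geq c$ on $M$, expanding gives
\[
|\nabla_g\psi|_g^2 = \bigl(f'(s) + \mu g'(s)\psi_0(x)\bigr)^2 + \mu^2 g(s)^2 \, |\nabla_{\mathfrak{g}_0}\psi_0|_{\mathfrak{g}_0}^2,
\]
and the second term is uniformly bounded below on $[0, S_0-\delta]\times(\M\setminus\omega)$ (combine $|\nabla_{\mathfrak{g}_0}\psi_0|\geq c_0$ with $g\geq g_\delta>0$) while the first term dominates in a neighborhood of $s=S_0$.

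The main obstacle is the intermediate regime over the critical set of $\psi_0$ inside $\omega$: there $\nabla_{\mathfrak{g}_0}\psi$ vanishes and the gradient collapses to $|f'(s) + \mu g'(s)\psi_0(x_c)|$, a function of $s$ alone which, by the intermediate value theorem applied to the sign-changing function $f'$ together with $g'(0)=g'(S_0)=0$, must vanish at some $s_{**} \in (0, S_0)$. To bypass this obstruction I would enrich the ansatz by a third term,
\[
\psi(s, x) = f(s) + \mu g(s)\psi_0(x) + \nu h(s)\eta(x),
\]
where $\eta \in C^\infty_c(\omega)$ is chosen to be strictly positive at every critical point of $\psi_0$ (and with $\nabla_{\mathfrak{g}_0}\eta \neq 0$ at the residual degenerate points), and $h \in C^2([0, S_0])$ is chosen with $h'(0) < 0$ of large enough magnitude while keeping $h(S_0) = h'(S_0) = 0$. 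Since the lemma only requires $\d_s\psi \geq c$ over $\M\setminus\omega$, this perturbation leaves the previous boundary conditions intact but makes $\d_s\psi(0, x_c) < 0$ at each critical $x_c$; strict concavity of the added profile then forces $\d_s\psi(\cdot, x_c)$ to remain strictly negative on all of $[0, S_0]$, ruling out the IVT zero. Choosing $\nu$ small relative to $\mu$ preserves all the outer estimates, and we obtain $|\nabla_g\psi|_g \geq c > 0$ throughout $M$, completing the construction.
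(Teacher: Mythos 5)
The paper does not actually prove this lemma: it cites \cite[Appendix~C]{BHLR:10} and moves on, so your attempt is a blind reconstruction. Your diagnosis of the obstruction is right and is the genuine content here: for the two-term ansatz $\psi=f(s)+\mu g(s)\psi_0(x)$ with $g'(0)=g'(S_0)=0$, the conditions $\d_s\psi(0,\cdot)\geq c$ on $\M\setminus\omega$ and $\d_s\psi(S_0,\cdot)\leq -c$ force $f'$ to change sign, so over any critical point $x_c$ of $\psi_0$ the full gradient must vanish at some $(s_{**},x_c)$. Correctly, the only escape is to make $\d_s\psi<0$ at $s=0$ over the critical set (which lies in $\omega$, where no sign constraint is imposed), and adding $\nu h(s)\eta(x)$ is a sensible way to do it.

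Where the argument falls short is the verification of $|\nabla_{g_0}\psi|_{g_0}\geq c$. First, the stated parameter regime is inconsistent: you need $f'(0)+\nu h'(0)\eta(x_c)<0$, so the product $\nu h'(0)$ must exceed a fixed size $2c/\min_{x_c}\eta(x_c)$; saying ``$\nu$ small relative to $\mu$'' and ``$h'(0)$ of large magnitude'' doesn't decouple --- what matters is that $\nu h\eta$ is \emph{not} a small perturbation in $W^{1,\infty}$, so the ``outer estimates'' are not automatically preserved. Second, and more seriously, you only rule out zeros of the gradient at the critical fibers $\{(s,x_c)\}$ and outside $\supp\eta$. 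On the set $\{\eta>0\}\setminus\{\nabla_{\mathfrak{g}_0}\psi_0=0\}$, the tangential gradient $\mu g(s)\nabla_{\mathfrak{g}_0}\psi_0(x)+\nu h(s)\nabla_{\mathfrak{g}_0}\eta(x)$ can cancel at some $(s,x)$ (the two vectors can be antiparallel), and then you need $\d_s\psi(s,x)\neq 0$ there, which you have not established; the phrase about ``residual degenerate points'' gestures at this but is not an argument. One way to make the structure tractable is to take $\eta=\theta(\psi_0)$ for a cutoff $\theta$, so that $\nabla_{\mathfrak{g}_0}\eta$ is everywhere parallel to $\nabla_{\mathfrak{g}_0}\psi_0$ and the tangential gradient is a scalar multiple of $\nabla_{\mathfrak{g}_0}\psi_0$; then vanishing of the tangential gradient happens only on a codimension-one set in $(s,x)$ and one can arrange $\d_s\psi\neq 0$ there by explicit choice of $f,g,h$ --- but you would still need to carry this out. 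Finally, note that since $g(S_0)=h(S_0)=0$ your ansatz gives $\d_{\nu_x}\psi(S_0,\cdot)=0$, so the strict sign in the second condition can only hold on $[0,S_0)\times\d\M$; this is also forced by internal consistency of the lemma's hypotheses (at a corner point of $\{S_0\}\times\d\M$ one cannot simultaneously have $\nabla_{\mathfrak{g}_0}\psi=0$ and $\d_{\nu_x}\psi<0$), and is consistent with how the lemma is used in Theorem~\ref{t:Carleman-global}, but you should say so.
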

We refer to~\cite[Appendix~C]{BHLR:10} for the proof of this result.
With this weight function in hand, we obtain the following global uniform Carleman estimate.

\begin{theorem}[Global uniform Lipschitz Carleman estimate]
\label{t:Carleman-global}
Given a metric $\mathfrak{g}_0 \in \mathcal{T}^2_{W^{1,\infty}(\M)}$, and $\Psi$ as in Lemma~\ref{l:psi-jerome}, for any $D \geq \eps >0$, there exist $\lambda>0 ,C_1>0, \tau_0 >0$ such that for $\varphi=e^{\lambda \Psi}$ and for any $\mathfrak{g} \in \Gamma_{\eps, D}(\M, \mathfrak{g}_0)$, 
for all $\tau \geq \tau_0$ and all $v \in H^2([0,S_0] \times \M)$ such that $v=0$ on $\{0\}\times \M \cup [0,S_0] \times \d \M$, we have with $M=[0,S_0] \times \M$ and $g= \id \otimes \mathfrak{g}$,
\begin{align}
\label{e:thm-Carleman-final-jerome}
 &\tau^3 \nor{e^{\tau \varphi}v }{L^{2}(M)}^{2}+ \tau\nor{e^{\tau \varphi}\nabla_{g} v}{L^{2}(M)}^{2} \nonumber\\
&+   \tau e^{2\tau \varphi(S_0)}  \left( \int_{\M} |\d_s v (S_0, \cdot)|^2 
    +  \tau^2 \int_{\M} |v(S_0, \cdot)|^2 \right)  + \tau \int_{\M \setminus \omega} e^{2\tau \varphi(0,\cdot)}  |\d_s v(0,\cdot)|^2 \nonumber \\
& \leq C \left( \nor{e^{\tau \varphi}(-\d_s^2 - \Delta_{\mathfrak{g}}) v}{L^2(M)}^2
 + \tau \int_{\omega} e^{2\tau \varphi(0,\cdot)}  |\d_s v(0, \cdot)|^2 
 +  \tau e^{2\tau \varphi(S_0)} \int_{\M} |\nabla_{\mathfrak{g}} v(S_0, \cdot)|_{\mathfrak{g}}^2 \right). 
\end{align}
\end{theorem}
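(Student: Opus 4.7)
The plan is to apply the computational Carleman identity of Theorem~\ref{thmCarlemancalcul} directly to the weight $\varphi = e^{\lambda\Psi}$ with $\Psi$ given by Lemma~\ref{l:psi-jerome}, and then dispatch the boundary term $BT(u)$ (with $u = e^{\tau\varphi}v$) piecewise among the three components of $\partial M = \{0\}\times \M \cup \{S_0\}\times \M \cup [0,S_0]\times \partial \M$, as sketched in Remark~\ref{r:splitting-boundary}. First, with the auxiliary function $f = 2\lambda^{2}|\nabla_{g}\Psi|_{g}^{2}$, Corollary~\ref{c:explicit-convexification-exp} together with the lower bound $|\nabla_{\mathfrak{g}_{0}}\Psi|_{\mathfrak{g}_{0}} \geq c$ on $M$ yields (as in the proof of Lemma~\ref{l:uniform-subellipticity}) that, for $\lambda$ large enough, conditions~\eqref{e:asspt-Bgeq}--\eqref{e:asspt-Egeq} hold with some uniform $C_{0}>0$ for every $\mathfrak{g}\in\Gamma_{\eps,D}(\M,\mathfrak{g}_{0})$. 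Consequently, Theorem~\ref{thmCarlemancalcul} combined with the absorption of the remainder $R(u)$ as in the proof of Theorem~\ref{t:Carleman} gives, for $\tau\geq \tau_{0}$ uniform in $\mathfrak{g}$,
\begin{equation*}
C_{0}\tau\|u\|_{H^{1}_{\tau}}^{2}+BT(u)\leq \nor{e^{\tau\varphi}(-\d_{s}^{2}-\Delta_{\mathfrak{g}})v}{L^{2}(M)}^{2}.
\end{equation*}

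Next, I would handle each boundary piece. On $[0,S_{0}]\times\partial\M$, $v=0$ so $u=0$ and $\nabla_{g}u=\d_{\nu}u\,\nu$; the boundary contribution reduces to $-\tau\int \d_{\nu}\varphi|\d_{\nu}u|^{2}$, which is \emph{nonnegative} because $\d_{\nu_{x}}\Psi<0$ and hence $\d_{\nu}\varphi = \lambda e^{\lambda\Psi}\d_{\nu_{x}}\Psi<0$; this piece can simply be dropped. On $\{0\}\times\M$ the same Dirichlet reduction applies with $\d_{\nu}=-\d_{s}$, yielding $\tau\int_{\M}\d_{s}\varphi\,e^{2\tau\varphi(0,\cdot)}|\d_{s}v(0,\cdot)|^{2}$; splitting the integral over $\M\setminus\omega$ and $\omega$ and using that $\d_{s}\Psi\geq c$ on $\M\setminus\omega$ (while $|\d_{s}\varphi|$ is uniformly bounded on $\omega$) provides a positive contribution on $\M\setminus\omega$, up to a term of the form $C\tau\int_{\omega}e^{2\tau\varphi(0,\cdot)}|\d_{s}v(0,\cdot)|^{2}$ to be sent to the right-hand side. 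On $\{S_{0}\}\times\M$, no boundary condition is imposed on $v$, but $\Psi$ is constant there (since $\nabla_{\mathfrak{g}_{0}}\Psi=0$ on this face) and $\d_{\nu}\Psi=\d_{s}\Psi\leq -c<0$, so we are in the exact setting of case~(3) of Theorem~\ref{t:Carleman}: following its proof verbatim on this face produces the good boundary terms $\tfrac{\tau}{8}\tfrac{m^{3}}{M^{2}}\int_{\M}e^{2\tau\varphi(S_{0})}|\d_{s}v(S_{0},\cdot)|^{2}+\tau^{3}\tfrac{m^{3}}{4}\int_{\M}e^{2\tau\varphi(S_{0})}|v(S_{0},\cdot)|^{2}$ on the left, at the cost of $M\tau\int_{\M}e^{2\tau\varphi(S_{0})}|\nabla_{T}v|_{g}^{2}$ on the right; since $\nabla_{T}$ at this face coincides with $\nabla_{\mathfrak{g}}$, this yields exactly the last term of the right-hand side of~\eqref{e:thm-Carleman-final-jerome}.

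Finally, going back from $u$ to $v$ via the bounds $\|e^{\tau\varphi}\nabla_{g}v\|^{2}\leq 2\|u\|_{H^{1}_{\tau}}^{2}$ and $\|u\|_{H^{1}_{\tau}}^{2}\leq 2\|e^{\tau\varphi}\nabla_{g}v\|^{2}+2\tau^{2}\|e^{\tau\varphi}v\nabla_{g}\varphi\|^{2}$ (as in~\eqref{e:conj-phi-uv}), and absorbing the bad term on $\omega$ into the right-hand side, produces the claimed inequality~\eqref{e:thm-Carleman-final-jerome}, with constants depending only on $(\eps,D,\Psi,\lambda)$, hence uniform for $\mathfrak{g}\in\Gamma_{\eps,D}(\M,\mathfrak{g}_{0})$. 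The main technical obstacle is the case~(3) boundary analysis on the face $\{S_{0}\}\times\M$: we must carefully separate normal and tangential derivatives, show that the $f$-dependent terms on this face are absorbed for $\tau$ sufficiently large (exactly as in the last part of the proof of Theorem~\ref{t:Carleman}), and verify that the constants $m(\varphi),M(\varphi)$ remain bounded away from $0$ and $\infty$ uniformly for $\mathfrak{g}\in\Gamma_{\eps,D}$, a fact that only depends on $\Psi$ and $\mathfrak{g}_{0}$ since $\nabla_{\mathfrak{g}_{0}}\Psi=0$ and $\d_{s}\Psi\leq -c$ on this face.
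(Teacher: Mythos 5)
Your proposal is correct and follows essentially the same route as the paper: split the boundary into the three faces as in Remark~\ref{r:splitting-boundary}, use the Dirichlet-type boundary terms from~\eqref{e:thm-Carleman-case-dirichlet} on $\{0\}\times\M$ and $[0,S_0]\times\d\M$ (where the sign of $\d_\nu\varphi$, dictated by Lemma~\ref{l:psi-jerome}, makes the contributions favorable away from $\{0\}\times\omega$), use case~(3) of Theorem~\ref{t:Carleman} on $\{S_0\}\times\M$ where $\varphi$ is constant and $\d_\nu\varphi<0$, and invoke Lemma~\ref{l:uniform-subellipticity} for uniformity in the class of metrics. Your additional remark that the $m(\varphi),M(\varphi)$ constants on $\{S_0\}\times\M$ depend only on $\Psi$ (and not on $\mathfrak{g}$) because $\d_\nu = \d_s$ there is precisely the right observation, and the sign and scaling checks you record (e.g.\ $\d_s u|_{s=0}=e^{\tau\varphi}\d_sv|_{s=0}$ from the Dirichlet condition, $\nabla_T=\nabla_{\mathfrak{g}}$ on $\{S_0\}\times\M$) correctly reproduce the terms in~\eqref{e:thm-Carleman-final-jerome}.
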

\bnp
We use the Carleman estimates~\eqref{e:thm-Carleman-case-dirichlet}-\eqref{e:Carleman-Jerome} together with Remark~\ref{r:splitting-boundary} and Lemma~\ref{l:uniform-subellipticity} for the uniformity in the metric. More precisely, on the boundary $\{0\}\times \M \cup [0,S_0] \times \d \M$, the Dirichlet boundary condition is prescribed and the only boundary term is $+ \tau \int_{\Sigma} e^{2\tau \varphi}  \d_\nu \varphi  |\d_\nu v|^2$, according to~\eqref{e:thm-Carleman-case-dirichlet}. That $ \d_\nu \varphi \leq -c <0$ on $\{0\}\times (\M \setminus \omega) \cup [0,S_0] \times \d \M$ implies that the associated integral is dominated on that set, whereas the only observation term on that part of the boundary is $- \tau \int_{\omega} e^{2\tau \varphi(0,\cdot)}  \d_s \varphi(0,\cdot)  |\d_s v(0,\cdot)|^2$.

Now, on the part $\{S_0\} \times \M$ of the boundary, we have the observation term $\tau \int_{\Sigma} e^{2\tau \varphi}\gln{\nabla_T v} = \tau e^{2\tau \varphi(S_0)} \int_{\M} |\nabla_{\mathfrak{g}} v(S_0, \cdot)|_{\mathfrak{g}}^2$. On the other side of the inequality, we have the two observed terms 
$$
 \frac{\tau}{8} \frac{m(\varphi)^3}{M(\varphi)^2}\intS e^{2\tau \varphi}|\d_\nu v |^2 
    + \tau^3\frac{m(\varphi)^3}{4} \intS |v|^2 
    \geq C  \tau e^{2\tau \varphi(S_0)}  \left( \int_{\M} |\d_s v (S_0, \cdot)|^2 
    +  \tau^2 \int_{\M} |v(S_0, \cdot)|^2 \right) .
$$
Finally, we are left with the existence of $C,\tau_0>0$ such that for all $v \in H^2([0,S_0]\times \M)$, $\mathfrak{g} \in \Gamma_{\eps, D}(\M, \mathfrak{g}_0)$, and $\tau \geq \tau_0$, we have~\eqref{e:thm-Carleman-final-jerome}.
\enp

From Theorem~\ref{t:Carleman-global}, we now deduce a proof of Theorem~\ref{t:uniform-LR-ineq-metric}, following closely (and carefully)~\cite[Proof of Theorem~1.1]{BHLR:10}.
\bnp[Proof of Theorem~\ref{t:uniform-LR-ineq-metric}]
Given $w \in E_{\leq \lambda}^{\mathfrak{g}}$ take the function
$$
v(s) = \frac{\sinh(s \sqrt{-\Delta_{\mathfrak{g}}})}{\sqrt{-\Delta_{\mathfrak{g}}}} \Pi_+^{\mathfrak{g}} w + s \Pi_0^{\mathfrak{g}} w ,
$$
where $\Delta_{\mathfrak{g}}$ is the Dirichlet Laplacian, $\Pi_0^{\mathfrak{g}}$ the orthogonal projector on $\ker(\Delta_{\mathfrak{g}})$ and $\Pi_+^{\mathfrak{g}} = \id-\Pi_0^{\mathfrak{g}}$, that is $v$ is the unique solution to 
$$
(-\d_s^2-\Delta_{\mathfrak{g}})v = 0 , \quad v|_{(0,S_0)\times \d M}=0,  \quad (v, \d_s v)|_{s=0} = (0 , w) .
$$
We may now apply~\eqref{e:thm-Carleman-final-jerome}, keeping only the last term in the left hand-side:
\begin{align*}
 e^{2\tau \varphi(S_0)}  \tau^3 \int_{\M} |v(S_0, \cdot)|^2  
\leq C \left(\tau \int_{\omega} e^{2\tau \varphi(0,\cdot)}  |\d_s v(0, \cdot)|^2 
 +  \tau e^{2\tau \varphi(S_0)} \int_{\M} |\nabla_{\mathfrak{g}} v(S_0, \cdot)|_{\mathfrak{g}}^2 \right). 
\end{align*}
Now, we have 
$$
\int_{\omega} e^{2\tau \varphi(0,\cdot)}  |\d_s v(0, \cdot)|^2  \leq e^{2 \tau \sup_\M \varphi(0,\cdot)} \nor{w}{L^2(\omega)}^2,
$$
together with (using an integration by parts, together with $w \in E_{\leq \lambda}^{\mathfrak{g}}$),
$$
\int_{\M} |\nabla_{\mathfrak{g}} v(S_0, \cdot)|_{\mathfrak{g}}^2 =  \left(- \Delta_{\mathfrak{g}} v(S_0, \cdot), v(S_0, \cdot) \right)_{L^2(\M, d \Vol_{\mathfrak{g}})} \leq \lambda \left( v(S_0, \cdot), v(S_0, \cdot) \right)_{L^2(\M, d \Vol_{\mathfrak{g}})} .
$$
The last three inequalities imply for all $\tau \geq \tau_0$
\begin{align*}
 \tau^2 \nor{v(S_0, \cdot)}{L^2}^2
\leq C \left( e^{2\tau  \left( \sup_\M \varphi(0,\cdot)- \varphi(S_0) \right)}  \nor{w}{L^2(\omega)}^2 
 + \lambda \nor{v(S_0, \cdot)}{L^2}^2\right),
\end{align*}
and hence, when choosing $\tau =\max\{2 \sqrt{\lambda} , \tau_0\}$, we obtain 
$$
\nor{v(S_0, \cdot)}{L^2}^2
\leq C e^{4 \sqrt{\lambda}  \left( \sup_\M \varphi(0,\cdot)- \varphi(S_0) \right)}  \nor{w}{L^2(\omega)}^2 .
$$
Finally, using $\frac{\sinh(S_0\ell)}{\ell}\geq S_0$ and the orthogonality of the eigenfunctions, we also have 
$$
\int_{\M} |v(S_0, \cdot)|^2  = \left(  \frac{\sinh^2(S_0 \sqrt{-\Delta_{\mathfrak{g}}})}{-\Delta_{\mathfrak{g}}} \Pi_+^{\mathfrak{g}} w, \Pi_+^{\mathfrak{g}} w \right)_{L^2(\M, d \Vol_{\mathfrak{g}})} + S_0^2 \| \Pi_0^{\mathfrak{g}} w\|_{L^2(\M, d \Vol_{\mathfrak{g}})} ^2 \geq S_0^2 \| w\|_{L^2}^2 .
$$
The last two inequalities conclude the proof of the theorem.
\enp

\section{Local behavior of vanishing functions}
In this appendix, we give an explicit link between the different definitions of the vanishing rate of a function.
\begin{lemma}
\label{lmordrevanishinL2}
Let $f\in C^{\infty}(B_{\R^n}(0,1))$ and assume that there are $C,D>0$ such that we have uniformly for $0<r<1$ the estimate
\bnan
\label{e:vanishorderD}
\nor{f}{L^2(B_{\R^n}(0,r))}\leq C r^D .
\enan
Then, we have $\partial^{\alpha}f(0)=0$ for all $|\alpha|<D-n/2$.

Conversely, assume $f\in C^{\infty}(B_{\R^n}(0,1))$ satisfies $\partial^{\alpha}f(0)=0$ for all $|\alpha|\leq k$, $k \in \N$. Then we have \eqref{e:vanishorderD} with $D= k+1+n/2$.
\end{lemma}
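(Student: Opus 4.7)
\medskip
\noindent\textbf{Proof proposal.} The plan is to compare $f$ with its Taylor polynomial at $0$ and compute $L^2$ norms on small balls by scaling. The two directions are essentially contrapositives of one another through this comparison.

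For the converse direction, which is the easier one, I would start with the Taylor formula with integral remainder: since $\d^\alpha f(0)=0$ for all $|\alpha|\leq k$ and $f$ is smooth on a neighborhood of the closed ball $\ovl{B_{\R^n}(0,1/2)}$, we obtain a uniform pointwise bound $|f(x)|\leq C|x|^{k+1}$ on, say, $B_{\R^n}(0,1/2)$. Passing to $L^2$ norms in polar coordinates then gives
\[
\nor{f}{L^2(B_{\R^n}(0,r))}^2\leq C^2\int_{B_{\R^n}(0,r)}|x|^{2(k+1)}\,dx = C'r^{2(k+1)+n}
\]
for all $r\in(0,1/2)$, and a trivial bound from $\nor{f}{L^2(B_{\R^n}(0,1))}$ handles $r\in[1/2,1)$. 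This yields the estimate with $D=k+1+n/2$.

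For the direct implication I would argue by contradiction. Suppose some $\d^\alpha f(0)\neq 0$ with $|\alpha|<D-n/2$, and let $k$ be the smallest integer for which there exists some multi-index of modulus $k$ with $\d^\alpha f(0)\neq 0$; then $k<D-n/2$. The Taylor expansion at $0$ reads $f(x)=P_k(x)+R(x)$, where $P_k$ is the homogeneous polynomial of degree $k$ given by the leading nonzero term, and $|R(x)|\leq C|x|^{k+1}$ on a fixed ball. The key computation is the scaling identity
\[
\nor{P_k}{L^2(B_{\R^n}(0,r))}^2=\int_{B_{\R^n}(0,1)}|P_k(ry)|^2 r^n\,dy=r^{2k+n}\int_{B_{\R^n}(0,1)}|P_k(y)|^2\,dy=c_k r^{2k+n},
\]
with $c_k>0$ since $P_k\not\equiv 0$. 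The remainder satisfies $\nor{R}{L^2(B_{\R^n}(0,r))}\leq C' r^{k+1+n/2}$, so a reverse triangle inequality yields $\nor{f}{L^2(B_{\R^n}(0,r))}\geq \tfrac12\sqrt{c_k}\,r^{k+n/2}$ for $r$ small enough. Combined with the hypothesis $\nor{f}{L^2(B_{\R^n}(0,r))}\leq Cr^D$, we conclude that $r^{k+n/2}\lesssim r^D$ for all small $r$, forcing $k+n/2\geq D$, which contradicts the choice of $k$.

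The only mild point to be careful about is the strict inequality $|\alpha|<D-n/2$ versus $|\alpha|\leq k$: the contradiction produces $k\geq D-n/2$, and since $k$ is an integer and $|\alpha|<D-n/2$ is a strict inequality, the vanishing of all $\d^\alpha f(0)$ with $|\alpha|<D-n/2$ follows. There is no substantial obstacle; the main ``trick'' is simply the homogeneity-based lower bound $\nor{P_k}{L^2(B_{\R^n}(0,r))}\gtrsim r^{k+n/2}$, which isolates the critical exponent $D=k+1+n/2$ in both directions.
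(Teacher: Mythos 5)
Your proof is correct and follows essentially the same route as the paper's: decompose $f$ at $0$ into its homogeneous Taylor leading term $P_k$ plus a remainder $R_k=O(|x|^{k+1})$, obtain $\nor{P_k}{L^2(B(0,r))}\sim r^{k+n/2}$ by scaling, observe the remainder is of strictly higher order, and compare exponents; the converse is the straightforward pointwise Taylor bound integrated in polar coordinates. The paper phrases the direct implication slightly more compactly (taking $k$ to be the minimal vanishing order and concluding $k+n/2\geq D$ directly rather than via an explicit contradiction), but the underlying computation and the key reverse-triangle step are identical to yours.
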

\bnp
Denote by $k = \inf \{ |\alpha| , \d^{\alpha}f(0) \neq 0\} \in \N \cup \{\infty\}$ and, in case $k<\infty$, write the Taylor expansion of $f$ at zero as $f=P_k+R_k$ with $P_k$ homogeneous of degree $k$ and $|R_k|\leq C|x|^{k+1}$. We obtain 
\bna
\nor{P_k}{L^2(B(0,r))}=r^{n/2+k}\nor{P_k}{L^2(B(0,1))}, \quad \text{and}\quad 
\nor{R_k}{L^2(B(0,r))}\leq Cr^{n/2+k+1}.
\ena
Using~\eqref{e:vanishorderD} for $r$ small implies $n/2+k\geq D$ and thus $\partial^{\alpha}f(0)=0$ for all $|\alpha|<D-n/2$.

Conversely, if $\partial^{\alpha}f(0)=0$ for all $|\alpha|\leq k$, then we have $|f(x)|\leq C |x|^{k+1}$ and thus $$\nor{f}{L^2(B_{\R^n}(0,r))}\leq C  \nor{|x|^{k+1}}{L^2(B_{\R^n}(0,r))} \leq Cr^{k+1+n/2}.$$
\enp

\small
\bibliographystyle{alpha}
\bibliography{bibli}
\end{document}